\pdfoutput=1
 \documentclass[a4paper,10pt]{amsart}
\usepackage{amsfonts, amssymb, amsthm, amsmath}
\usepackage{hyperref}

\usepackage{tikz}
\usepackage{enumerate}
\usetikzlibrary{matrix,arrows,decorations.pathmorphing, decorations.pathreplacing, arrows.meta,decorations.markings}

\usepackage{tikz-cd}
\usepackage{yhmath}
\usepackage{MnSymbol}
\usepackage{graphicx}
\usepackage{booktabs,array}
\usepackage[font=small,labelfont=bf]{caption}

\newtheorem{thm}{Theorem}

\newtheorem{lemma}[thm]{Lemma}
\newtheorem{remark}[thm]{Remark}
\newtheorem{defn}[thm]{Definition}
\newtheorem{prop}[thm]{Proposition}

\newtheorem{assumption}[thm]{Assumption}
\newtheorem{construction}[thm]{Construction}

\numberwithin{thm}{section}
\numberwithin{equation}{section}
\DeclareMathOperator{\rend}{End}
\DeclareMathOperator{\Aut}{Aut}
\newcommand{\sk}{\operatorname{Sk}}
\newcommand{\ex}{\mathbf}
\providecommand {\e}[1]{\mathfrak t^{#1}}
\providecommand{\C}[2]{\ensuremath {C^{#1,\underline{#2}}}}
\DeclareMathOperator{\id}{id}
\DeclareMathOperator{\coker}{coker}
\DeclareMathOperator{\Conf}{Conf}
\DeclareMathOperator{\Cone}{Cone}
\newcommand{\dbar}{\bar{\partial}}
\DeclareMathOperator{\ind}{ind}
\providecommand{\abs}[1]{\left\lvert #1\right\rvert}

\tikzset{
    knot/.style={thick, line cap=round, line join=round},
    halo/.style={white, line width=5pt, line cap=round, line join=round}
}

\newcommand{\Lpos}{
\begin{tikzpicture}[baseline=-0.65ex, scale=0.5]
  \draw[thick, ->] (1,-1) -- (-1,1);
  \draw[white, line width=5pt] (-1,-1) -- (1,1);
  \draw[thick, ->] (-1,-1) -- (1,1);
\end{tikzpicture}
}

\newcommand{\Lneg}{
\begin{tikzpicture}[baseline=-0.65ex, scale=0.5]
  \draw[thick, ->] (-1,-1) -- (1,1);
  \draw[white, line width=5pt] (1,-1) -- (-1,1);
  \draw[thick, ->] (1,-1) -- (-1,1);
\end{tikzpicture}
}

\newcommand{\Lzero}{
\begin{tikzpicture}[baseline=-0.65ex, scale=0.5]
  \draw[thick, ->] (-1,-1) .. controls (-0.2,-0.2) and (-0.2,0.2) .. (-1,1);
  \draw[thick, ->] (1,-1) .. controls (0.2,-0.2) and (0.2,0.2) .. (1,1);
\end{tikzpicture}
}

\newcommand{\Unknot}{
\begin{tikzpicture}[baseline=-0.65ex, scale=0.5]
  \draw[knot] (0,0) circle (0.9);
\end{tikzpicture}
}

\newcommand{\Straight}{
\begin{tikzpicture}[baseline=-0.65ex, scale=0.5]
  \draw[knot, ->] (0,-1.2) -- (0,1.2);
\end{tikzpicture}
}

\newcommand{\NegTwist}{
\begin{tikzpicture}[baseline=-0.65ex, scale=0.5]
  \draw[knot, ->] (-0.4, -1.2) to[out=70, in=225] (0,0) to[out=45, in=90] (1, -0.2)
                  to[out=270, in=315] (0,0) to[out=135, in=290] (-0.4, 1.2);

  \begin{scope}
    \clip (-0.4, -0.4) rectangle (0.4, 0.4);
    \draw[halo] (1, -0.2) to[out=270, in=315] (0,0) to[out=135, in=290] (-0.4, 1.2);
    \draw[knot] (1, -0.2) to[out=270, in=315] (0,0) to[out=135, in=290] (-0.4, 1.2);
  \end{scope}
\end{tikzpicture}
}

\newcommand{\PosTwist}{
\begin{tikzpicture}[baseline=-0.65ex, scale=0.5]
  \draw[knot, ->] (-0.4, -1.2) to[out=70, in=225] (0,0) to[out=45, in=90] (1, -0.2)
                  to[out=270, in=315] (0,0) to[out=135, in=290] (-0.4, 1.2);

  \begin{scope}
    \clip (-0.4, -0.4) rectangle (0.4, 0.4);
    \draw[halo] (-0.4, -1.2) to[out=70, in=225] (0,0) to[out=45, in=90] (1, -0.2);
    \draw[knot] (-0.4, -1.2) to[out=70, in=225] (0,0) to[out=45, in=90] (1, -0.2);
  \end{scope}
\end{tikzpicture}
}

\author{Brett Parker}
\email{ }
\thanks{I wish to thank Tobias Ekholm for helpful conversations, suggestions, and explanations. In drafting and editing this paper, and searching the literature, I made use of the AI models ChatGPT, Gemini, and Claude. AI was also extremely helpful for finding and fixing a large, even, number of sign errors.   I am not confident that I would have found all such errors myself. }

\title[Orientations for skein-valued curve counting]{Orientations for skein-valued curve counting}

\begin{document}
\begin{abstract}
Ekholm and Shende's skeins-on-branes formalism provides invariant counts of holomorphic curves with boundary on a Maslov-0 Lagrangian within a Calabi–Yau 3-fold. This paper proves the orientation relations required for worldsheet skein relations to cancel codimension-one boundary strata. However, standard Cauchy–Riemann Fredholm orientations reveal an unexpected sign puzzle: the orientation relation at a curve with a  hyperbolic  boundary crossing  depends on whether the crossing lies on distinct boundary components or the same component. This domain-dependent sign contradicts the uniform crossing relations of the HOMFLYPT skein algebra. We show that this discrepancy is resolved by passing to an anti-holomorphic orientation convention.

 We establish all required orientation comparisons in the general setting of an oriented Lagrangian submanifold of a 6-dimensional symplectic manifold.
\end{abstract}

\maketitle
\tableofcontents

\section{Introduction}\label{sec:introduction}

Let $L$ be an oriented Lagrangian submanifold of a $6$-dimensional symplectic manifold $X$. The skeins-on-branes formalism~\cite{ekholm2019skeins,worldsheetskein} allows us to define invariant counts of holomorphic curves with boundary on $L$, valued in a worldsheet skein module~\cite{worldsheetskein}. With suitable extra data, this module maps to the HOMFLYPT skein module of oriented links in $L$. The original work of Ekholm and Shende~\cite{ekholm2019skeins} constructs such skein-valued invariants for Maslov $0$ Lagrangians in symplectic Calabi--Yau $3$-folds. Worldsheet skein-valued counting should work more generally, without the Maslov $0$ or Calabi--Yau conditions. This paper calculates signs for the worldsheet skein module and proves that orientations of the moduli space of curves satisfy the required relations.

Given a suitable normal-crossing degeneration of $(X,L)$, we expect these skein-valued invariants to be recoverable from relative skein-valued invariants via a tropical gluing formula. There is, however, a tension between the skein algebra and tropical curve counting, illustrated in Figure~\ref{fig:tc1}. Consider the Lagrangian $U^1\times U^1\times\mathbb R_{>0}\subset (\mathbb C^*)^3$. The skein module of this ``thickened torus'' is an algebra, calculated by Morton and Samuelson~\cite{MortonSamuelson2017}, with a commutator coefficient of
\[
V_{\Delta}=q^{\Delta/2}-q^{-\Delta/2},
\]
which should correspond to an enumerative tropical vertex contribution~\cite{3d},
\[
V_{\Delta}=\frac{q^{\Delta/2}-q^{-\Delta/2}}{i}\ .
\]

\begin{figure}[htbp]
\begingroup
\definecolor{tcL}{RGB}{0,112,91}
\definecolor{tcAlpha}{RGB}{53,109,164}
\definecolor{tcBeta}{RGB}{182,106,44}
\newcommand{\tcLocus}{%
  \draw[tcL,line width=.8pt] (0,0,-1.17)--(0,0,1.28);
  \draw[tcL,line width=.6pt,densely dotted]
    (0,0,-1.17)--(0,0,-1.44) (0,0,1.28)--(0,0,1.55);
  \node[tcL,anchor=west] at (.12cm,1.22cm) {$L$};
}
\begin{tikzpicture}[
  scale=.75,
  x={(.90cm,.18cm)},y={(-.45cm,.30cm)},z={(0cm,1cm)},
  line cap=round,line join=round,>=stealth,
  alpha/.style={draw=tcAlpha,line width=.85pt},
  beta/.style={draw=tcBeta,line width=.85pt},
  every node/.style={font=\scriptsize,inner sep=1.2pt}
]
  \tcLocus
  \draw[alpha,->] (0,0,.50)--(-3,-1.5,.50);
  \draw[beta,->] (0,0,-.50)--(-1.5,-4.5,-.50);
  \fill[tcAlpha] (0,0,.50) circle[radius=1.25pt];
  \fill[tcBeta] (0,0,-.50) circle[radius=1.25pt];
  \node[tcAlpha,anchor=north] at (-1.49cm,-.54cm) {$(-a,-b,0)$};
  \node[tcBeta,anchor=west] at (.52cm,-1.45cm) {$(-c,-d,0)$};
  \node[font=\small] at (-.30cm,-2.90cm) {$P_{(a,b)}P_{(c,d)}$};

  \begin{scope}[xshift=4.80cm]
    \tcLocus
    \draw[alpha,->] (0,0,-.50)--(-3,-1.5,-.50);
    \draw[beta,->] (0,0,.50)--(-1.5,-4.5,.50);
    \fill[tcAlpha] (0,0,-.50) circle[radius=1.25pt];
    \fill[tcBeta] (0,0,.50) circle[radius=1.25pt];
    \node[tcAlpha,anchor=north] at (-1.49cm,-1.54cm) {$(-a,-b,0)$};
    \node[tcBeta,anchor=west] at (.52cm,.10cm) {$(-c,-d,0)$};
    \node[font=\small] at (-.30cm,-2.90cm) {$P_{(c,d)}P_{(a,b)}$};
  \end{scope}

  \begin{scope}[xshift=9.60cm]
    \tcLocus
    \coordinate (tcVertex) at (-2.1,-2.8,0);
    \draw[line width=.85pt] (0,0,0)--(tcVertex);
    \draw[alpha,->] (tcVertex)--(-4.5,-4.0,0);
    \draw[beta,->] (tcVertex)--(-3.1,-5.8,0);
    \fill (0,0,0) circle[radius=1.25pt];
    \fill (tcVertex) circle[radius=1.45pt];
    \node[anchor=east] at (-.39cm,-.46cm) {$(-a-c,-b-d,0)$};
    \node[tcAlpha,anchor=south] at (-1.91cm,-1.52cm) {$(-a,-b,0)$};
    \node[tcBeta,anchor=west] at (.14cm,-1.94cm) {$(-c,-d,0)$};
    \node at (-.92cm,-1.86cm) {$V_\Delta$};
    \node[font=\small] at (-.30cm,-2.90cm) {$V_\Delta P_{(a+c,b+d)}$};
  \end{scope}
  \node[font=\normalsize] at (2.10cm,-2.90cm) {$-$};
  \node[font=\normalsize] at (6.90cm,-2.90cm) {$=$};
\end{tikzpicture}
\endgroup
\captionof{figure}{Tropical pictures of holomorphic curves with boundaries on $U^1\times U^1\times\mathbb R_{>0}\subset(\mathbb C^*)^3$. The Morton--Samuelson skein algebra implies that
 $V_\Delta=q^{\Delta/2}-q^{-\Delta/2}$, with
$\Delta=ad-bc$, but the tropical gluing formula implies that $V_\Delta=(q^{\Delta/2}-q^{-\Delta/2})/i$. This apparent discrepancy is resolved by counting \emph{anti}-holomorphic curves.}
\label{fig:tc1}
\end{figure}

For Gromov--Witten invariants, $q^{1/2}=e^{i\hbar/2}$, but the above contribution of tropical vertices holds quite generally for any curve-counting theory with a similar tropical gluing formula. To remove the incompatible factor of $i$, the tropical gluing formula needs to use different orientation conventions---corresponding to counting anti-holomorphic curves. This anti-holomorphic convention modifies the contribution of tropical vertices to $V_{\Delta}=q^{\Delta/2}-q^{-\Delta/2}$, with $q^{1/2}$ specialising to $e^{\hbar/2}$ in the case of anti-holomorphic stable maps.

This global mismatch between the Morton--Samuelson torus skein algebra and tropical gluing formulae is reflected in a mismatch of local orientation comparisons in the moduli space of curves. To define invariant counts of curves with boundary on $L$, any curve-counting method must account for codimension-one boundaries of the moduli space, which occur when curves develop a hyperbolic or elliptic boundary node on $L$. Such boundaries coincide with codimension-one walls in a simpler moduli space obtained by normalising the node: these walls occur when the boundary of the normalised curve intersects itself in $L$ (in the hyperbolic case), or when the interior of the normalised curve intersects $L$ (in the elliptic case). Skein-valued curve counting requires comparing the boundary orientation with the wall orientation induced by the chambers of the normalised moduli space. There are, however, \emph{two} distinct hyperbolic cases (Figure~\ref{fig:hyperbolic-surgery}):
\begin{enumerate}
\item In the \emph{disconnected crossing} case, two separate boundary components of the normalised curve intersect in $L$; smoothing the corresponding hyperbolic node merges these two boundary components into one.
\item In the \emph{connected crossing} case, a single boundary component of the normalised curve intersects itself in $L$; smoothing the corresponding hyperbolic node splits this boundary component into two.
\end{enumerate}
We show that, under the Fredholm orientation conventions standard in holomorphic curve counting, the orientation comparison between the wall and the smoothing boundary is \emph{opposite} in these two hyperbolic cases (Theorem~\ref{thm:main-orientation-comparison}). This creates an obstruction for skein-valued curve counting, because the local crossing relation in the HOMFLYPT skein module does not distinguish whether the crossing strands belong to the same boundary component or to different components. We resolve this obstruction by passing to the anti-holomorphic orientation convention, which introduces a topological sign twist that reverses the disconnected hyperbolic comparison while preserving the connected hyperbolic and elliptic comparisons, rendering the skein signs uniform across all wall types.

In \cite[Appendix A5]{ekholm2019skeins}, this anti-holomorphic convention is described as changing the orientation of the complex plane (our signs differ from those in version 5 of that paper). In physics language, this choice could be thought of as using an imaginary string coupling constant $g_s=\frac{2\pi i}{k+N}$~\cite[Equation 7.1]{topologicalvertex} to be compatible with the relationship between open string theory in $(X,L)$ and Chern--Simons theory on $L$ \cite{Witten1992,OoguriVafa2000}. We also give an alternative orientation convention in Section~\ref{sec:ramified-covers} using an auxiliary complex structure on the normal bundle to an immersed curve.

With the anti-holomorphic orientation convention, one can generalise Ekholm and Shende's skein-valued counts of bare curves \cite{bare,GhostCensorship} to counts of holomorphic curves with boundary on an oriented $3$-dimensional Lagrangian submanifold $L$ of a compact symplectic manifold. With a choice of Spin structure on $L$, the resulting invariants should take values in the (sign-updated) worldsheet skein module \cite{worldsheetskein}, and with a suitable bounding chain for $L$,  take values in the HOMFLYPT skein module of $L$.

\smallskip
In this paper, we establish the orientation comparisons for all three codimension-one wall types—disconnected hyperbolic, connected hyperbolic, and elliptic—in a general Fredholm setting that requires neither the Calabi--Yau condition on $X$, the Maslov $0$ condition on $L$, nor transversality (Theorem~\ref{thm:main-orientation-comparison}). We formulate the linearised deformation problem for a bordered nodal curve in terms of a coupled bundle pair. A neck-stretching argument  reduces the orientation comparison at any wall to a universal local model. We compute the three universal comparisons using simple, unobstructed holomorphic model families, and check each of them by a second, independent calculation.

Section~\ref{sec:skein-modules} explains the worldsheet skein module of Ekholm, Longhi and Nakamura with our updated signs and shows that twisting by the anti-holomorphic sign correction yields a well-defined homomorphism to the HOMFLYPT skein module. Section~\ref{sec:linear-setting} sets up the abstract Fredholm deformation complexes and the incidence and smoothing exact triples at a wall. Section~\ref{orientation-section} constructs the holomorphic and anti-holomorphic orientations, proves the neck-stretching reduction to local models (yielding a direct proof of the elliptic comparison), and extends the formalism to curves with capped strip and cylindrical ends; detailed determinant-line conventions are collected in Appendix~\ref{app:determinant-lines}, and Appendix~\ref{app:notation} tabulates the main notation. Sections~\ref{sec:disconnected-crossing}, \ref{sec:elliptic-crossing}, and \ref{connected-hyperbolic-section} calculate the orientations using explicit model families for the disconnected hyperbolic, elliptic, and connected hyperbolic walls. Finally, Section~\ref{sec:ramified-covers} orients ramified covers of immersed curves using an auxiliary complex structure on the normal bundle, and uses this to recompute both hyperbolic comparisons simultaneously. With respect to this auxiliary orientation, the smoothing boundary and positive chamber orientations agree at both types of hyperbolic wall; the difference between the connected and disconnected cases in the holomorphic convention, and its absence in the anti-holomorphic convention, comes entirely from comparing the auxiliary orientation with these two conventions (Lemma~\ref{Jcomparison2}).

\subsection{Wall and chamber structure} \label{sec:chamber}

The geometric motivation for a skein relation is that codimension-one boundaries of the moduli space of curves of Euler characteristic $\chi$ correspond to walls within the simpler moduli space of Euler characteristic $\chi+1$. At such a wall, the curve either undergoes a crossing change of its boundary link in $L$ (a hyperbolic crossing) or has an interior point pass through $L$ (an elliptic incidence). At the corresponding boundary, the curve develops a hyperbolic or elliptic boundary node. The skein relation equates the jump between the two chambers across the wall with the boundary contribution of the terminating family, with the sign fixed by the orientation comparison (Figure~\ref{fig:wall-schematic}).

\begin{figure}[htbp]
\centering
\begin{tikzpicture}[
  x=1cm, y=1cm,
  line width=0.8pt,
  line cap=round,
  line join=round,
  every node/.style={font=\small, inner sep=2pt}
]
  \coordinate (wall) at (0,0);

  \draw (-4.1,-0.65)
    .. controls (-2.9,-0.13) and (-1.4,0) .. (wall)
    .. controls (1.5,0) and (2.8,-0.22) .. (4.3,-0.72);

  \draw (wall)
    .. controls (-0.06,1.10) and (0.53,2.55) .. (1.38,3.48);

  \node[anchor=north] at (0,-0.12) {wall};
  \node[anchor=west] at (0.17,0.34) {boundary};
  \node[anchor=west] at (1.13,2.87) {\shortstack{smoothing family\\ Euler characteristic $\chi$}};
  \node[anchor=north] at (2.92,-0.66) {\shortstack{normalised family\\ Euler characteristic $\chi+1$} };
\end{tikzpicture}
\caption{A schematic picture of the moduli space, identifying curves when they agree off $L$. The boundary of the smoothing family is cancelled by a wall in the normalised family.}
\label{fig:wall-schematic}
\end{figure}

At a wall, there is a smoothing boundary orientation, induced from the terminating moduli space of Euler characteristic $\chi$. Below, we specify locally defined chambers near a wall, used to define wall orientations.

\subsubsection*{1. Hyperbolic walls (Boundary crossing changes)}
A generic curve $v\colon(\Sigma,\partial \Sigma)\to (X,L)$ on a hyperbolic wall is smooth with immersed boundary $\partial v \subset L$ having exactly one self-intersection point $p\in L$, with linearly independent tangent vectors $w_1, w_2$ in $T_pL$ (oriented using the standard boundary orientation of $\partial \Sigma$).
\begin{itemize}
    \item \textbf{Positive-crossing chamber ($\mathcal{M}_{\mathrm{norm}}^+$):}
    The chamber containing deformations of $v$ where the crossing is resolved positively: the ordered tangents $(w_1, w_2)$ of the crossing strands and the displacement vector pointing from strand 2 to strand 1 form an oriented basis of $T_p L$.
    \[\Lpos\]
    \item \textbf{Negative-crossing chamber ($\mathcal{M}_{\mathrm{norm}}^-$):}
    The chamber containing deformations of $v$ with the opposite, negative-crossing resolution.
    \[\Lneg\]
    \item \textbf{Smoothing sector:}
    The smoothing sector consists of deformations of the curve $u$ with a hyperbolic boundary node created by joining the two sheets at $p$. These deformations smooth the resulting hyperbolic boundary node. At the boundary level, this replaces the crossing with its smoothed resolution $\Lzero$.
\end{itemize}

\begin{figure}[htbp]
  \centering
  \begin{minipage}[c]{.48\linewidth}
    \centering
    \includegraphics[width=\linewidth]{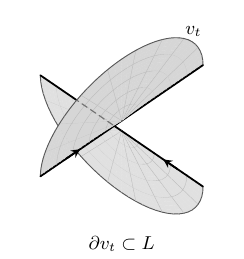}
  \end{minipage}\hfill
  \begin{minipage}[c]{.48\linewidth}
    \centering
    \includegraphics[width=\linewidth]{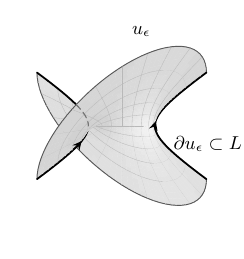}
  \end{minipage}
    \caption{Local views near a hyperbolic wall.
    Left: a curve $v_t$ in the positive-crossing chamber.
    Right: a curve $u_\epsilon$ in the smoothing sector, joining the two local sheets.
   }
  \label{fig:hyperbolic-wall-local}
\end{figure}

\begin{defn}[Types of hyperbolic crossing]
\label{def:hyperbolic-crossing-types}
The crossing is called \emph{connected} if the two crossing preimages lie on the same connected component of $\partial \Sigma$ (smoothing splits one boundary component into two), and \emph{disconnected} if they lie on distinct components (smoothing merges two boundary components into one). The type of a hyperbolic node means the type of the crossing in its normalisation.
\end{defn}

\begin{figure}
\begingroup
\definecolor{surface}{RGB}{247,247,247}
\begin{tikzpicture}[
  x=1cm,y=1cm,font=\small,
  line cap=round,line join=round,
  boundary/.style={draw=black,line width=.85pt},
  contour/.style={draw=black!65,line width=.65pt},
  hidden/.style={boundary,dash pattern=on 2.5pt off 2.5pt},
  flow/.style={-{Latex[length=1.9mm,width=1.25mm]},draw=black!60,line width=.55pt},
  dot/.style={circle,fill=black,inner sep=1.3pt},
  info/.style={font=\footnotesize,text=black!75},
  every node/.style={inner sep=2pt}
]
\node at (0,2.05) {Normalisation};
\node at (4.25,2.05) {Boundary node};
\node at (8.5,2.05) {Smoothing};

\node[anchor=west] at (-1.55,1.47)
  {\textbf{Connected crossing}\quad same boundary component};

\filldraw[fill=surface,boundary] (0,0)
  ellipse[x radius=1.04,y radius=.92];
\node[dot] at (-1.04,0) {};
\node[dot] at (1.04,0) {};
\node[anchor=east] at (-1.09,0) {$p_1$};
\node[anchor=west] at (1.09,0) {$p_2$};
\node at (0,-1.28) {disc};

\draw[flow] (1.65,0) -- (2.65,0);

\begin{scope}[shift={(4.25,0)}]
  \filldraw[fill=surface,boundary] (0,0)
    ellipse[x radius=1.04,y radius=.98];
  \filldraw[fill=white,boundary]
    (1.04,0)
    .. controls (.47,.40) and (.15,.49) .. (-.15,.465)
    .. controls (-.585,.44) and (-.585,-.44) .. (-.15,-.465)
    .. controls (.15,-.49) and (.47,-.40) .. cycle;
  \node[dot] at (1.04,0) {};
  \node[anchor=west] at (1.10,.17) {$q$};
  \node at (0,-1.28) {$p_1\sim p_2=q$};
\end{scope}

\draw[flow] (5.78,0) -- (6.88,0);

\begin{scope}[shift={(8.5,0)}]
  \filldraw[fill=surface,boundary] (0,0)
    ellipse[x radius=1.04,y radius=.98];
  \filldraw[fill=white,boundary]
    (.54,0)
    .. controls (.54,.315) and (.15,.49) .. (-.15,.465)
    .. controls (-.585,.44) and (-.585,-.44) .. (-.15,-.465)
    .. controls (.15,-.49) and (.54,-.315) .. cycle;
  \node at (0,-1.28) {annulus};
  \node[info] at (0,-1.65) {$b_{\mathbb R}: 1\longrightarrow 2$};
\end{scope}

\node[anchor=west] at (-1.55,-2.35)
  {\textbf{Disconnected crossing}\quad different boundary components};

\begin{scope}[shift={(0,-3.95)}]
  \fill[surface]
    (-1.12,-.62)
    .. controls (-1.12,.40) and (-.68,1.05) .. (0,1.05)
    .. controls (.68,1.05) and (1.12,.40) .. (1.12,-.62)
    arc[start angle=0,end angle=-180,x radius=.39,y radius=.20]
    .. controls (.34,-.10) and (.29,.38) .. (0,.38)
    .. controls (-.29,.38) and (-.34,-.10) .. (-.34,-.62)
    arc[start angle=0,end angle=-180,x radius=.39,y radius=.20] -- cycle;
  \draw[contour]
    (-1.12,-.62)
    .. controls (-1.12,.40) and (-.68,1.05) .. (0,1.05)
    .. controls (.68,1.05) and (1.12,.40) .. (1.12,-.62);
  \draw[contour]
    (-.34,-.62)
    .. controls (-.34,-.10) and (-.29,.38) .. (0,.38)
    .. controls (.29,.38) and (.34,-.10) .. (.34,-.62);
  \draw[hidden] (-1.12,-.62)
    arc[start angle=180,end angle=0,x radius=.39,y radius=.20];
  \draw[hidden] (.34,-.62)
    arc[start angle=180,end angle=0,x radius=.39,y radius=.20];
  \draw[boundary] (-1.12,-.62)
    arc[start angle=180,end angle=360,x radius=.39,y radius=.20];
  \draw[boundary] (.34,-.62)
    arc[start angle=180,end angle=360,x radius=.39,y radius=.20];
  \node[dot] at (-.34,-.62) {};
  \node[dot] at (.34,-.62) {};
  \node[anchor=north east] at (-.32,-.85) {$p_1$};
  \node[anchor=north west] at (.32,-.85) {$p_2$};
  \node at (0,-1.48) {annulus};
\end{scope}

\draw[flow] (1.65,-3.95) -- (2.65,-3.95);

\begin{scope}[shift={(4.25,-3.95)}]
  \fill[surface]
    (-1.05,-.62)
    .. controls (-1.05,.40) and (-.60,1.05) .. (0,1.05)
    .. controls (.60,1.05) and (1.05,.40) .. (1.05,-.62)
    arc[start angle=0,end angle=-180,x radius=.525,y radius=.21]
    arc[start angle=0,end angle=-180,x radius=.525,y radius=.21] -- cycle;
  \fill[white]
    (0,-.62)
    .. controls (-.14,-.29) and (-.30,-.05) .. (-.28,.21)
    .. controls (-.24,.64) and (.24,.64) .. (.28,.21)
    .. controls (.30,-.05) and (.14,-.29) .. cycle;
  \draw[contour]
    (-1.05,-.62)
    .. controls (-1.05,.40) and (-.60,1.05) .. (0,1.05)
    .. controls (.60,1.05) and (1.05,.40) .. (1.05,-.62);
  \draw[contour]
    (0,-.62)
    .. controls (-.14,-.29) and (-.30,-.05) .. (-.28,.21)
    .. controls (-.24,.64) and (.24,.64) .. (.28,.21)
    .. controls (.30,-.05) and (.14,-.29) .. cycle;
  \draw[hidden] (-1.05,-.62)
    arc[start angle=180,end angle=0,x radius=.525,y radius=.21];
  \draw[hidden] (0,-.62)
    arc[start angle=180,end angle=0,x radius=.525,y radius=.21];
  \draw[boundary] (-1.05,-.62)
    arc[start angle=180,end angle=360,x radius=.525,y radius=.21];
  \draw[boundary] (0,-.62)
    arc[start angle=180,end angle=360,x radius=.525,y radius=.21];
  \node[dot] at (0,-.62) {};
  \node[anchor=north] at (0,-.91) {$q$};
  \node at (0,-1.48) {$p_1\sim p_2=q$};
\end{scope}

\draw[flow] (5.78,-3.95) -- (6.88,-3.95);

\begin{scope}[shift={(8.5,-3.95)}]
  \fill[surface]
    (-1.05,-.62)
    .. controls (-1.05,.40) and (-.60,1.05) .. (0,1.05)
    .. controls (.60,1.05) and (1.05,.40) .. (1.05,-.62)
    arc[start angle=0,end angle=-180,x radius=1.05,y radius=.29] -- cycle;
  \draw[contour]
    (-1.05,-.62)
    .. controls (-1.05,.40) and (-.60,1.05) .. (0,1.05)
    .. controls (.60,1.05) and (1.05,.40) .. (1.05,-.62);
  \filldraw[fill=white,contour]
    (-.235,.08)
    .. controls (-.09,.31) and (.09,.31) .. (.235,.08)
    .. controls (.12,-.18) and (-.12,-.18) .. cycle;
  \draw[contour] (-.34,.48)
    .. controls (-.32,.24) and (-.29,.17) .. (-.235,.08);
  \draw[contour] (.34,.48)
    .. controls (.32,.24) and (.29,.17) .. (.235,.08);
  \draw[hidden] (-1.05,-.62)
    arc[start angle=180,end angle=0,x radius=1.05,y radius=.29];
  \draw[boundary] (-1.05,-.62)
    arc[start angle=180,end angle=360,x radius=1.05,y radius=.29];
  \node at (0,-1.48) {one-holed torus};
  \node[info] at (0,-1.85) {$b_{\mathbb R}: 2\longrightarrow 1$};
\end{scope}

\end{tikzpicture}
\endgroup
\caption{The two types of hyperbolic boundary surgery.}
\label{fig:hyperbolic-surgery}

\end{figure}

\subsubsection*{2. Elliptic walls (Interior Lagrangian incidences)}
A generic curve $v$ on an elliptic wall is embedded close to $L$, but the interior of the curve intersects $L$ at a single point $q \in \Sigma^\circ$ such that $v$ is holomorphic to leading order at $q$ with $p = v(q) \in L$, and $T_pL\oplus T_qv(T_q\Sigma)$ spans a $5$-dimensional subspace of $T_pX$.

\begin{itemize}
    \item \textbf{Positive-incidence chamber ($\mathcal{M}_{\mathrm{norm}}^+$):}
    The deformations of $v$ moving the image of $q$ in a direction $n$ such that
      \[T_pL\oplus T_qv(T_q\Sigma)\oplus \mathbb R n\]
    is a positively oriented decomposition of $T_pX$.
    \item \textbf{Negative-incidence chamber ($\mathcal{M}_{\mathrm{norm}}^-$):}
    The opposite chamber, consisting of deformations moving $q$ in a direction $n$ such that the above decomposition of $T_pX$ is negatively oriented.
    \item \textbf{Smoothing sector:}
    Deformations of the nodal curve $u$ created by adding an elliptic boundary node to $v$ at $q$. (An alternate model is deformations of $v$ attached at an internal node to a constant disc at $q$.) Smoothing this node opens up a small isolated boundary component around $p$, creating an unknot component
    \[\Unknot.\]
\end{itemize}

Equivalently, we can define the positive- and negative-incidence chambers by locally choosing a bounding $4$-chain for $L$ as follows. Choose smooth, oriented, embedded submanifolds $C_{\pm}$ with $\partial C_{\pm}=L$, intersecting only at $L$, whose union is a smooth submanifold $|C|$ transverse to $v$ at $p$. Curves in the positive-incidence chamber have intersection number $1$ with the chain $C_++C_-$ near $p$, and curves in the negative-incidence chamber have intersection number $-1$ with this chain near $p$. When this chain is globally defined as a $4$-chain $C$ with $\partial C=2L$, $C$ intersects the interior of curves in the positive-incidence chamber with algebraic intersection number $2$ greater than that of curves in the negative-incidence chamber.

\begin{figure}[htbp]
\centering
\begin{tikzpicture}[x=1cm,y=1cm,line cap=round,line join=round,
  every node/.style={font=\small,inner sep=2pt}]
  \coordinate (L) at (0.16,0.70);
  \coordinate (intersection) at (0,1.70);
  \coordinate (Cplus-end) at (-0.216,3.05);
  \coordinate (Cminus-end) at (0.376,-0.65);

  \draw[black!80,line width=0.80pt] (L) -- (Cplus-end);
  \draw[black!80,line width=0.80pt] (L) -- (Cminus-end);

\fill[black!5] (-2.85000,0.76300) -- (1.55000,1.46700) -- (2.70000,2.50200) -- (-1.70000,1.79800) -- cycle;
\draw[black!27,line width=0.18pt] (-2.48333,0.82167) -- (-1.33333,1.85667);
\draw[black!27,line width=0.18pt] (-2.11667,0.88033) -- (-0.96667,1.91533);
\draw[black!27,line width=0.18pt] (-1.75000,0.93900) -- (-0.60000,1.97400);
\draw[black!27,line width=0.18pt] (-1.38333,0.99767) -- (-0.23333,2.03267);
\draw[black!27,line width=0.18pt] (-1.01667,1.05633) -- (0.13333,2.09133);
\draw[black!27,line width=0.18pt] (-0.65000,1.11500) -- (0.50000,2.15000);
\draw[black!27,line width=0.18pt] (-0.28333,1.17367) -- (0.86667,2.20867);
\draw[black!27,line width=0.18pt] (0.08333,1.23233) -- (1.23333,2.26733);
\draw[black!27,line width=0.18pt] (0.45000,1.29100) -- (1.60000,2.32600);
\draw[black!27,line width=0.18pt] (0.81667,1.34967) -- (1.96667,2.38467);
\draw[black!27,line width=0.18pt] (1.18333,1.40833) -- (2.33333,2.44333);
\draw[black!27,line width=0.18pt] (-2.70625,0.89237) -- (1.69375,1.59638);
\draw[black!27,line width=0.18pt] (-2.56250,1.02175) -- (1.83750,1.72575);
\draw[black!27,line width=0.18pt] (-2.41875,1.15112) -- (1.98125,1.85513);
\draw[black!27,line width=0.18pt] (-2.27500,1.28050) -- (2.12500,1.98450);
\draw[black!27,line width=0.18pt] (-2.13125,1.40987) -- (2.26875,2.11388);
\draw[black!27,line width=0.18pt] (-1.98750,1.53925) -- (2.41250,2.24325);
\draw[black!27,line width=0.18pt] (-1.84375,1.66862) -- (2.55625,2.37263);
\draw[line width=0.65pt] (-2.85000,0.76300) -- (1.55000,1.46700) -- (2.70000,2.50200) -- (-1.70000,1.79800) -- cycle;

  \draw[white,line width=1.9pt] (intersection) -- (Cplus-end);
  \draw[black!80,line width=0.80pt] (intersection) -- (Cplus-end);
  \fill (intersection) circle[radius=1.3pt];
  \fill (L) circle[radius=1.7pt];

  \node[anchor=west] at (0.08,2.78) {$C_+$};
  \node[anchor=west] at (0.55,-0.32) {$C_-$};
  \node[anchor=west] at (0.37,0.70) {$L$};
  \node[anchor=west] at (2.96,2.40) {$v_t$};
\end{tikzpicture}

\caption{A picture of a curve $v_t$ in the positive-incidence chamber close to an elliptic wall. The picture is a $3$-dimensional slice transverse to $L$. For $t$ positive, $v_t$ intersects $C_++C_-$ positively in the region shown.}
\label{elliptic-v}
\end{figure}

\begin{figure}[htbp]
  \centering
  \begin{minipage}[c]{0.48\linewidth}
    \centering
    \includegraphics[width=\linewidth]{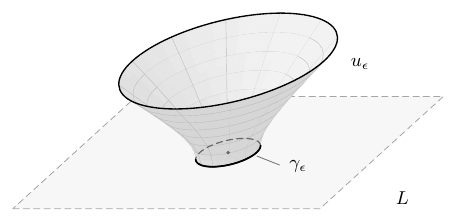}
  \end{minipage}\hfill
  \begin{minipage}[c]{0.48\linewidth}
    \centering
    \includegraphics[width=\linewidth]{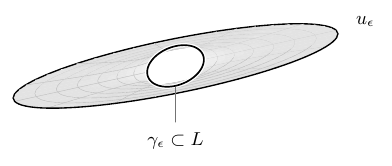}
  \end{minipage}

  \caption{Two views of a curve $u_\epsilon$ in the smoothing sector near an elliptic wall, with boundary
    $\gamma_\epsilon\subset L$ that shrinks to a point at the boundary of this sector.
    Left: a view suppressing the normal direction to $L$.
    Right: a linear projection, showing that the curve departs $L$ in a direction determined by $J$ of the tangent to $\gamma_\epsilon$.}
  \label{fig:u-epsilon}
\end{figure}

\subsubsection*{The orientation comparison theorem}

In Section~\ref{subsec:wall-problems}, we formulate the linearised deformation problem at a curve on a wall as an abstract \emph{wall problem} (Definition~\ref{def:wall-problem}), with wall complex $D_{\mathrm{wall}}$~\eqref{eq:D-wall-def}, normalised complex $D_{\mathrm{norm}}$, and nodal complex $D_{\mathrm{node}}$ (Section~\ref{subsec:nodal-smoothing}). Given compatible orientation data (Definition~\ref{def:compatible-orientation-data}), the holomorphic orientation convention (Definition~\ref{def:holomorphic-orientation}) and the anti-holomorphic orientation convention (Definition~\ref{def:antiholomorphic-orientation}) each induce a \emph{positive chamber orientation} $\mathsf o^+_\partial$, a \emph{negative chamber orientation} $\mathsf o^-_\partial=-\mathsf o^+_\partial$, and a \emph{smoothing boundary orientation} $\mathsf o^{\mathrm{sm}}_\partial$ on the determinant line $\det D_{\mathrm{wall}}$ (and their anti-holomorphic counterparts $\mathsf o^{\pm,\mathrm{ah}}_\partial$ and $\mathsf o^{\mathrm{sm,ah}}_\partial$; see Definition~\ref{def:boundary-orientations}). The same definitions extend to generalised wall problems with capped strip and cylindrical ends on the real blowup $\Sigma^{KN}$ (Section~\ref{subsec:capping-and-generalisations}).

Our main result is the following comparison of orientations on $\det D_{\mathrm{wall}}$, proved by combining Proposition~\ref{prop:wall-model-reduction}, Lemma~\ref{lem:elliptic comparison}, Proposition~\ref{prop:disconnected-hyperbolic-orientation}, and Lemma~\ref{lem:connected-hyperbolic-orientation}. We give a second proof of each case: Section~\ref{sec:elliptic-crossing} recovers the elliptic comparison in an explicit model family, and Section~\ref{sec:ramified-covers} recovers both hyperbolic comparisons.

\begin{thm}[Wall and smoothing orientation comparisons]
\label{thm:main-orientation-comparison}
Let $\Sigma$ be a bordered Riemann surface with no boundary marked points, and let $(E,F,D_\Sigma,\phi)$ be a wall problem (Definition~\ref{def:wall-problem}), equipped with compatible orientation data (Definition~\ref{def:compatible-orientation-data}).
\begin{enumerate}[(1)]
\item \textbf{Holomorphic orientation convention:} Using the holomorphic orientations of Definition~\ref{def:holomorphic-orientation} and Definition~\ref{def:boundary-orientations}, the smoothing boundary orientation $\mathsf o^{\mathrm{sm}}_\partial$ of $\det D_{\mathrm{wall}}$ agrees with:
\begin{enumerate}[(i)]
\item at a disconnected hyperbolic wall, the positive-crossing chamber orientation $\mathsf o^+_\partial$ (Proposition~\ref{prop:disconnected-hyperbolic-orientation});
\item at a connected hyperbolic wall, the negative-crossing chamber orientation $\mathsf o^-_\partial$ (Lemma~\ref{lem:connected-hyperbolic-orientation});
\item at an elliptic wall, the negative-incidence chamber orientation $\mathsf o^-_\partial$ (Lemma~\ref{lem:elliptic comparison}).
\end{enumerate}
\item \textbf{Anti-holomorphic orientation convention:} Using the anti-holomorphic orientations of Definition~\ref{def:antiholomorphic-orientation} and Definition~\ref{def:boundary-orientations}, the smoothing boundary orientation $\mathsf o^{\mathrm{sm,ah}}_\partial$ of $\det D_{\mathrm{wall}}$ agrees with the negative chamber orientation $\mathsf o^{-,\mathrm{ah}}_\partial$ in all three cases:
\[
\mathsf o^{\mathrm{sm,ah}}_\partial = \mathsf o^{-,\mathrm{ah}}_\partial.
\]
\end{enumerate}
\end{thm}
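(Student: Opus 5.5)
The plan follows the structure announced before the statement. First I would reduce the comparison at an arbitrary wall to a universal local model by neck-stretching, as in Proposition~\ref{prop:wall-model-reduction}. Near the wall point $p$ (or the incidence point $q$), I degenerate $\Sigma$ so that a small neighbourhood of the crossing or incidence splits off, attached by long necks (strips or cylinders) to the rest of the curve. Gluing of determinant lines identifies $\det D_{\mathrm{wall}}$, $\det D_{\mathrm{norm}}$ and $\det D_{\mathrm{node}}$ with tensor products of the same determinant line for the complement and the determinant line of the local model. The compatible orientation data of Definition~\ref{def:compatible-orientation-data} ensure that the complement factor carries the same orientation in all three. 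The incidence and smoothing exact triples are compatible with this gluing, so the sign $\mathsf o^{\mathrm{sm}}_\partial/\mathsf o^+_\partial$ is a product of a universal local sign and Koszul signs. The Koszul signs come from reordering factors. Because the local model has even-dimensional pieces in the holomorphic convention, I expect them to cancel, except for the rank of boundary contributions. That rank depends on whether the two crossing preimages lie on one or two boundary components, which is where the connected and disconnected cases separate.

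Second, I would compute each local sign in an explicit unobstructed holomorphic model family.
\begin{enumerate}[(i)]
\item For the disconnected hyperbolic wall, take two half-planes in $\mathbb C^3$ with boundaries on $\mathbb R^3$ crossing transversally. Vary a translation parameter $t$ to pass from $\mathsf o^-$ to $\mathsf o^+$, and take the smoothing family $u_\epsilon$ obtained by a strip-gluing of the node with gluing parameter $\epsilon>0$. Then compare the outward normal $-\partial_\epsilon$ with $\partial_t$ via the tangent map, which gives Proposition~\ref{prop:disconnected-hyperbolic-orientation}.
\item For the elliptic wall, use a disc family near a point of $L$ whose boundary shrinks to the point, together with the $4$-chain description of the chambers, giving Lemma~\ref{lem:elliptic comparison}.
\item For the connected wall, I would not build a new model. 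Instead I would obtain it from the disconnected computation by changing which boundary components are identified. Under neck-stretching the local model is the same; only the global reordering sign, i.e.\ the parity of the number of boundary components and the associated $b_{\mathbb R}$ change, differs. This flips the answer, giving Lemma~\ref{lem:connected-hyperbolic-orientation}.
\end{enumerate}

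Third, for part (2), I compare Definitions~\ref{def:antiholomorphic-orientation} and~\ref{def:holomorphic-orientation} on each of $\det D_{\mathrm{wall}}$, $\det D_{\mathrm{norm}}$ and $\det D_{\mathrm{node}}$. The anti-holomorphic orientation differs from the holomorphic one by a topological sign depending on the index, Euler characteristic and number of boundary components, $(-1)^{f(\chi,b_{\mathbb R})}$. I compute how $f$ changes across each smoothing:
\begin{enumerate}[(a)]
\item disconnected: $\chi\mapsto\chi-1$ and $b_{\mathbb R}\mapsto b_{\mathbb R}-1$;
\item connected: $\chi\mapsto\chi-1$ and $b_{\mathbb R}\mapsto b_{\mathbb R}+1$;
\item elliptic: $\chi\mapsto\chi-1$ and $b_{\mathbb R}\mapsto b_{\mathbb R}+1$.
\end{enumerate}
The claim to verify is that the relative twist is $-1$ exactly in case (a) and $+1$ in cases (b) and (c). Combined with part (1), this gives $\mathsf o^{\mathrm{sm,ah}}_\partial=\mathsf o^{-,\mathrm{ah}}_\partial$ uniformly.

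The main obstacle is the sign bookkeeping in step two. It requires the determinant-line conventions for exact triples, the boundary orientation convention (outward normal first), and the identification of the gluing parameter direction with the positive smoothing sector. I would check each sign twice. The independent check is the auxiliary normal-bundle complex structure of Section~\ref{sec:ramified-covers}. There both hyperbolic walls give agreement of $\mathsf o^{\mathrm{sm}}$ with $\mathsf o^+$, and Lemma~\ref{Jcomparison2} supplies the convention comparison, which serves as a consistency test of the computed signs.
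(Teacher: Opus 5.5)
\textbf{The gap is in step (iii)}, together with the part of step one it relies on. The connected comparison cannot be obtained from the disconnected one by a reordering sign.

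In the reduction that actually works (Proposition~\ref{prop:wall-model-reduction}), the retained neighbourhood $U$ is a collar of the \emph{whole} boundary component(s) through $p_1,p_2$. Every neck is then an interior circle, the node factors have even index, and excision (Lemma~\ref{lem:wall-excision}) introduces no signs at all. The price is that the two local models genuinely differ: one disc with $p_1,p_2$ on its boundary circle, versus two discs. So nothing transfers from one case to the other.

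If instead you split off half-disc neighbourhoods of $p_1,p_2$ along strip necks, the local model is indeed the same in both cases. But the pieces then acquire strip ends, which Definition~\ref{def:holomorphic-orientation} does not orient without capping data. The factors at the resulting boundary nodes are also unsafe (a real line in degree $-1$). Whether these boundary gluings respect the holomorphic orientation, which is built by stretching along closed boundary circles, is exactly where the global connectivity of $\partial\Sigma$ enters. That sign is the whole content of the connected/disconnected discrepancy, and you assert its value rather than compute it.

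The mechanism you name also cannot produce the difference. A sign depending only on the parity of $b_{\mathbb R}$ changes under \emph{both} hyperbolic smoothings, since $b_{\mathbb R}$ changes by $+1$ in the connected case and by $-1$ in the disconnected case. It therefore shifts both comparisons equally; this is the point of Remark~\ref{rem:alternative-orientation-conventions}. What separates the cases is the closed-normalisation term: in Lemma~\ref{Jcomparison2} the exponent $1-\widetilde g+b_{\mathbb R}$ changes by $+1$ in the connected case but by $-2$ in the disconnected case.

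What is missing is an independent computation at a connected wall. The paper gives two.
\begin{enumerate}
\item The cusp family $v_{s,t}(z)=(z^2,z(z^2+s),tz)$ in $(\mathbb{CP}^3,\mathbb{RP}^3)$. Its wall $t=0$ is connected hyperbolic for $s<0$ and elliptic for $s>0$. The negative-chamber orientations (Lemma~\ref{Lcross}) and the smoothing boundary orientations of the family $u_{s,h}$ match across the cusp, so Lemma~\ref{lem:elliptic comparison} transfers to the connected wall.
\item The $\mathcal J$-complex orientation on ramified covers of a half-cylinder. There both hyperbolic walls give $\mathsf o^{\mathrm{sm},\mathcal J}_\partial=\mathsf o^{+,\mathcal J}_\partial$ uniformly, and Lemma~\ref{Jcomparison2} converts this into the holomorphic and anti-holomorphic statements (Proposition~\ref{prop:ramified-hyperbolic}).
\end{enumerate}
You list the second only as a consistency check, but one of these arguments has to carry the connected case.

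A smaller point concerns your disconnected model. Comparing $-\partial_\epsilon$ with $\partial_t$ presupposes that you already know the holomorphic orientation of the smoothing family, which is not a product orientation. The paper obtains it by degenerating the glued family to a three-punctured sphere with a stable elliptic node, and it uses half-cylinders with closed boundary circles rather than half-planes.

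Your plans for the elliptic case, and for part (2) via $\epsilon_{\mathrm{ah}}=(-1)^{(3\chi-b_{\mathbb R})/2}$, are in line with the paper.
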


\begin{table}[htbp]
\centering
\small
\renewcommand{\arraystretch}{1.25}
\begin{tabular}{@{}>{\raggedright\arraybackslash}p{0.22\linewidth}>{\raggedright\arraybackslash}p{0.22\linewidth}>{\raggedright\arraybackslash}p{0.22\linewidth}>{\raggedright\arraybackslash}p{0.22\linewidth}@{}}
\toprule
& \textbf{Elliptic} & \textbf{Connected hyperbolic} & \textbf{Disconnected hyperbolic}\\
\midrule
At the wall
 & an interior point of the curve meets $L$
 & the boundary crosses itself on one boundary component
 & two boundary components cross\\
Smoothing the node
 & creates a small boundary circle: $b_{\mathbb R}$ increases by $1$
 & splits a boundary component: $b_{\mathbb R}$ increases by $1$
 & merges two boundary components: $b_{\mathbb R}$ decreases by $1$\\
\midrule
Holomorphic: $\mathsf o^{\mathrm{sm}}_\partial=$
 & $\mathsf o^-_\partial$ & $\mathsf o^-_\partial$ & $\mathsf o^+_\partial$\\
Skein relation (\S\ref{sec:skein-modules})
 & $[v_1]-[v_{-1}]=[u_1]$ & $[v_1]-[v_{-1}]=[u_1]$ & $[v_1]-[v_{-1}]=-[u_1]$\\
Anti-holomorphic: $\mathsf o^{\mathrm{sm},\mathrm{ah}}_\partial=$
 & $\mathsf o^{-,\mathrm{ah}}_\partial$ & $\mathsf o^{-,\mathrm{ah}}_\partial$ & $\mathsf o^{-,\mathrm{ah}}_\partial$\\
$\mathcal J$-complex (\S\ref{sec:ramified-covers}): $\mathsf o^{\mathrm{sm},\mathcal J}_\partial=$
 & --- & $\mathsf o^{+,\mathcal J}_\partial$ & $\mathsf o^{+,\mathcal J}_\partial$\\
\midrule
First calculation
 & Lemma~\ref{lem:elliptic comparison}: direct, in the local model
 & Lemma~\ref{lem:connected-hyperbolic-orientation}: cusp family joining this wall to an elliptic wall (\S\ref{connected-hyperbolic-section})
 & Prop.~\ref{prop:disconnected-hyperbolic-orientation}: two half-cylinders glued at a crossing (\S\ref{sec:disconnected-crossing})\\
Second calculation
 & \S\ref{sec:elliptic-crossing}: half-cylinders with an elliptic nodal limit
 & \S\ref{sec:ramified-covers}: ramified covers of a half-cylinder (Prop.~\ref{prop:ramified-hyperbolic})
 & \S\ref{sec:ramified-covers}: ramified covers of a half-cylinder (Prop.~\ref{prop:ramified-hyperbolic})\\
\bottomrule
\end{tabular}
\caption{The orientation comparisons of Theorem~\ref{thm:main-orientation-comparison} at the three types of wall. In every case $\chi$ decreases by $1$ under smoothing. In the anti-holomorphic convention, the twisted classes $[u]_{\mathrm{ah}}=\epsilon_{\mathrm{ah}}(u)[u]$ satisfy the uniform relation $[v_1]_{\mathrm{ah}}-[v_{-1}]_{\mathrm{ah}}=[u_1]_{\mathrm{ah}}$ (Remark~\ref{rem:antiholomorphic-skein-signs}).}
\label{tab:wall-comparisons}
\end{table}

Theorem~\ref{thm:main-orientation-comparison} compares Fredholm determinant orientations at the nodal curve $u$ and its normalisation $v$ without requiring transversality. In the transverse case, it gives the corresponding comparisons of ordinary boundary orientations between the smoothing sector and the normalised chambers. In particular, passing to the anti-holomorphic orientation convention changes the relative sign in the disconnected hyperbolic case while preserving the connected hyperbolic and elliptic signs (see Remark~\ref{rem:antiholomorphic-skein-signs}), ensuring that the two hyperbolic cases obey the same skein relation. Table~\ref{tab:wall-comparisons} summarises the three comparisons and where each is calculated.

\section{Worldsheet skein modules and HOMFLYPT comparison}
\label{sec:skein-modules}

In this section, we define a version of the worldsheet skein module of Ekholm, Longhi, and Nakamura~\cite[Section 3.2.1]{worldsheetskein}, with updated signs encoding the wall and chamber conventions of Section~\ref{sec:chamber}. To present the algebraic structure and updated signs in their simplest form, we restrict to the case of a compact $6$-dimensional symplectic manifold $X$ and a compact, oriented Lagrangian submanifold $L\subset X$.  In Section~\ref{skein-homomorphism-section}, we construct a homomorphism to the HOMFLYPT skein module of $L$, again following \cite{worldsheetskein}.

\subsection{The worldsheet skein module for compact \texorpdfstring{$X$}{X}}
\label{subsec:worldsheet-conventions}

Let $X$ be a compact $6$-dimensional symplectic manifold with a compatible almost complex structure $J$, and let $L\subset X$ be a compact, oriented Lagrangian submanifold.

\begin{defn}[Generic worldsheets]\label{def:generic-worldsheet}
A \emph{generic worldsheet} in $(X,L)$ is a smooth embedded map $u\colon(\Sigma,\partial\Sigma)\to (X,L)$ from a smooth compact (non-nodal, possibly disconnected)  bordered Riemann surface, satisfying:
\begin{enumerate}[(i)]
\item $u$ intersects $L$ only along its boundary $\partial\Sigma$; and
\item $u$ satisfies the Cauchy--Riemann equation to first order along $\partial\Sigma$.
\end{enumerate}
\end{defn}

\begin{defn}[Worldsheet skein module]\label{def:worldsheet-skein-modules}
The \emph{worldsheet skein module} $\sk(X,L)$ is the $\mathbb R$-module generated by generic  worldsheets $u$ in $(X,L)$, subject to:
\begin{enumerate}[(i)]
\item \emph{Isotopy:} $[u_0]=[u_1]$ if $u_0$ and $u_1$ are joined by a smooth isotopy of generic worldsheets; and
\item \emph{Local skein relations:} the elliptic, connected hyperbolic, and disconnected hyperbolic relations below.
\end{enumerate}
\end{defn}

For each of the three codimension-one wall events described in Section~\ref{sec:chamber}, let $\{v_t\}_{t\in[-1,1]}$ be a smooth family of maps that are generic embedded worldsheets for $t\neq 0$ and  experience a single transverse wall event at $t=0$, so that $v_t$ is in the positive chamber for $t>0$ and the negative chamber for $t<0$.

Let $u_0$ be the embedded curve with a boundary node whose partial normalisation at this node is $v_0$, and
let  $\{u_\epsilon\}_{\epsilon\in[0,1]}$ be a family of maps, with this boundary node smoothed for $\epsilon>0$, such that $u_\epsilon$ is a generic embedded worldsheet for $\epsilon>0$. In each case,
\[
\chi(u_1)=\chi(v_{\pm 1})-1.
\]

\begin{itemize}
\item \textbf{Elliptic skein relation:}
At the wall, $v_0$ has a single transverse interior intersection with $L$ at $q\in\Sigma^\circ$ where $dv_0$ is injective and complex-linear, with $v_t$ passing from the negative-incidence chamber ($t<0$) to the positive-incidence chamber ($t>0$), as illustrated in Figure~\ref{elliptic-v}. The nodal curve $u_0$ is obtained by introducing an elliptic boundary node at $q$, and $u_\epsilon$ ($\epsilon>0$) opens a small boundary unknot $\gamma_\epsilon\subset L$ around $v_0(q)$ (Figure~\ref{fig:u-epsilon}). The elliptic relation is
\begin{equation}\label{eq:elliptic-embedded}
[v_1]-[v_{-1}]=[u_1].
\end{equation}

\item \textbf{Hyperbolic skein relations:}
At the wall, $\partial v_0$ has a single transverse self-intersection in $L$, with $v_t$ passing from the negative-crossing chamber ($t<0$) to the positive-crossing chamber ($t>0$), while $u_0$ has a hyperbolic boundary node at this crossing with normalisation $v_0$, and $u_\epsilon$ ($\epsilon>0$) is its smoothing (Figure~\ref{fig:hyperbolic-wall-local}). Depending on the crossing type (Definition~\ref{def:hyperbolic-crossing-types}), we impose:
\begin{align}
\text{\emph{Connected crossing} }\quad & [v_1]-[v_{-1}]=[u_1], \label{eq:connected-embedded}\\
\text{\emph{Disconnected crossing}}\quad & [v_1]-[v_{-1}]=-[u_1]. \label{eq:disconnected-embedded}
\end{align}
\end{itemize}

With the holomorphic orientation convention of Definition~\ref{def:holomorphic-orientation} induced by a choice of  Spin structure $\mathfrak s$ on $L$, the signs in \eqref{eq:elliptic-embedded}--\eqref{eq:disconnected-embedded} are those required for the skein relations to cancel codimension-one boundary strata of the moduli space: the negative chamber and smoothing boundary orientations agree in the elliptic and connected hyperbolic cases (Lemma~\ref{lem:elliptic comparison} and Lemma~\ref{lem:connected-hyperbolic-orientation}), and the positive chamber and smoothing boundary orientations agree in the disconnected hyperbolic case (Proposition~\ref{prop:disconnected-hyperbolic-orientation}); see also Section~\ref{sec:ramified-covers} for a uniform calculation of both hyperbolic cases.

\begin{remark}[The anti-holomorphic sign correction]
\label{rem:antiholomorphic-skein-signs} In Section~\ref{subsec:antiholomorphic-orientations}, we define an anti-holomorphic orientation convention which differs by a sign $\epsilon_{\mathrm{ah}}$; see \eqref{eq:antiholomorphic-sign-index}.
For a generic worldsheet $u$ (with no marked points) this sign is
\[
\epsilon_{\mathrm{ah}}(u):=(-1)^{(3\chi(u)-b_{\mathbb R}(u))/2},
\]
where $b_{\mathbb R}$ is the number of boundary components.
In all three wall events, $\chi(u_1)=\chi(v_{\pm 1})-1$. Smoothing adds one boundary component in the elliptic and connected hyperbolic cases ($b_{\mathbb R}(u_1)=b_{\mathbb R}(v_{\pm 1})+1$), and removes one in the disconnected hyperbolic case ($b_{\mathbb R}(u_1)=b_{\mathbb R}(v_{\pm 1})-1$). Consequently,
\[
\epsilon_{\mathrm{ah}}(u_1)=
\begin{cases}
\epsilon_{\mathrm{ah}}(v_{\pm 1}), & \text{elliptic or connected hyperbolic},\\
-\epsilon_{\mathrm{ah}}(v_{\pm 1}), & \text{disconnected hyperbolic}.
\end{cases}
\]
Setting $[u]_{\mathrm{ah}}:=\epsilon_{\mathrm{ah}}(u)[u]$, all three local skein relations take the uniform shape
\begin{equation}\label{eq:antiholomorphic-embedded-relation}
[v_1]_{\mathrm{ah}}-[v_{-1}]_{\mathrm{ah}}=[u_1]_{\mathrm{ah}}.
\end{equation}
After this twist, both hyperbolic relations match \cite[Definition 3.1]{worldsheetskein}. Our labelling of the elliptic chambers is opposite to that of \cite[v2, Definition 3.1]{worldsheetskein}. See Remark~\ref{rem:alternative-orientation-conventions} for the effect of alternative orientation conventions. Our version is required for the HOMFLYPT homomorphism below.
\end{remark}

\subsection{The homomorphism to the HOMFLYPT skein module}
\label{skein-homomorphism-section}

In this section, we define a homomorphism from the worldsheet skein module to the HOMFLYPT skein module, following \cite{worldsheetskein} but applying simplified geometric assumptions for expository purposes.

Let $\sk(L)$ denote the framed HOMFLYPT skein module of oriented framed links in $L$ over $\mathbb R[a^{\pm 1},z^{\pm 1}]$, defined by the local skein relations
\[
\left( \Lpos \right) - \left( \Lneg \right) = z \left( \Lzero \right), \qquad a - a^{-1} = z \, \Unknot,
\]
\[
\PosTwist = a \, \Straight, \qquad \NegTwist = a^{-1} \, \Straight.
\]

To map $\sk(X,L)$ to $\sk(L)$ following~\cite[Section 3.2.2]{worldsheetskein}, we require a global bounding $4$-chain for $2L$.

\begin{assumption}[Bounding-chain data]\label{ass:bounding-chain}
There is a $4$-chain $C$ in $X$ such that:
\begin{enumerate}[(i)]
\item $\partial C=2L$; and
\item on a neighbourhood $U$ of $L$, we have $C|_U=C_+ + C_-$, where $C_\pm\subset U$ are properly embedded, oriented smooth submanifolds with $\partial C_\pm=L$, intersecting only along $L$, such that $C_+\cup C_-$ is a smooth submanifold of $U$.
\end{enumerate}
\end{assumption}

Choose a nonvanishing vector field $\xi$ on $L$ such that $J\xi$ is tangent to $C_+\cup C_-$ along $L$ and points into $C_+$. For a generic worldsheet $u$ whose boundary $\partial u$ is transverse to $\xi$, let $\langle\partial u\rangle\in\sk(L)$ denote the oriented boundary link $\partial u$ framed by $\xi$. Since $\partial u$ is transverse to $\xi$, the lifts of $u$ and $C$ to the real oriented blowup $\mathrm{bl}_L X$ have disjoint boundaries; let
\[
u\cap C\in\mathbb Z
\]
denote their algebraic intersection number (equivalently, the algebraic intersection of the interior of $u$ with $C$).

For a generic embedded worldsheet $u$ with $\partial u$ transverse to $\xi$, set
\begin{equation}\label{eq:embedded-homfly-map}
\rho_{C}(u) := \epsilon_{\mathrm{ah}}(u)\, z^{-\chi(u)} a^{u\cap C} \langle\partial u\rangle.
\end{equation}

\begin{lemma}\label{skein-comparison}
 Equation~\eqref{eq:embedded-homfly-map} defines a module homomorphism
\[
\rho_{C}\colon \sk(X,L)\longrightarrow \sk(L).
\]
\end{lemma}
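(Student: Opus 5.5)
The plan is to check that $\rho_C$ is well defined on generators and then that it respects each relation of Definition~\ref{def:worldsheet-skein-modules}. Throughout I will work with the twisted classes $[u]_{\mathrm{ah}}=\epsilon_{\mathrm{ah}}(u)[u]$. By Remark~\ref{rem:antiholomorphic-skein-signs}, the three local relations are then equivalent to the uniform relation \eqref{eq:antiholomorphic-embedded-relation}. Writing $\rho_C(u)=\epsilon_{\mathrm{ah}}(u)\,\tilde\rho(u)$ with
\[
\tilde\rho(u):=z^{-\chi(u)}a^{u\cap C}\langle\partial u\rangle,
\]
and using $\epsilon_{\mathrm{ah}}^2=1$, it suffices to prove
\[
\tilde\rho(v_1)-\tilde\rho(v_{-1})=\tilde\rho(u_1)
\]
in each of the three wall cases, together with isotopy invariance of $\tilde\rho$. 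The sign $\epsilon_{\mathrm{ah}}$ itself is clearly an isotopy invariant, since $\chi$ and $b_{\mathbb R}$ are topological.

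\textbf{Definedness and isotopy.} A generic worldsheet can be perturbed, keeping it generic, so that $\partial u$ is transverse to $\xi$. Along a generic isotopy of generic worldsheets, $\partial u$ stays transverse to $\xi$ except at isolated times where it is tangent to $\xi$ at a single point. Away from those times, $\chi$ is constant, the blown-up boundaries of $u$ and $C$ stay disjoint so $u\cap C$ is constant, and $\langle\partial u\rangle$ moves by a framed isotopy.

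At a tangency time, the lift of $\partial u$ to the boundary of $\mathrm{bl}_L X$ crosses the lift of $\partial C$, which is the direction field $\pm J\xi$ there. Two things happen at once:
\begin{itemize}
\item $u\cap C$ jumps by $\pm1$;
\item the $\xi$-framing of $\partial u$ changes by one twist of the opposite sign.
\end{itemize}
The framing relations $\PosTwist=a\,\Straight$ and $\NegTwist=a^{-1}\Straight$ then show that $\tilde\rho$ is unchanged. The work in this step is checking that the two signs really are opposite. I would do this in the local model where $u$ is a half-plane with tangent to $\partial u$ sweeping past $\xi$, using the fact that $J\xi$ points into $C_+$. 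This check fixes the convention $a^{+u\cap C}$.

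\textbf{Hyperbolic walls.} The family $v_t$ changes only near the crossing point $p$, and $u_1$ differs from $v_0$ only near $p$. Since $\partial v_{\pm1}$ and $\partial u_1$ can be kept transverse to $\xi$ near $p$, the blown-up boundaries stay disjoint from $C$ there. Hence
\[
v_1\cap C=v_{-1}\cap C=u_1\cap C=:k.
\]
The boundary links are exactly $\Lpos$, $\Lneg$ and $\Lzero$ at $p$, with the chamber conventions of Section~\ref{sec:chamber}, and they carry matching $\xi$-framings. Since $\chi(u_1)=\chi(v_{\pm1})-1$, the HOMFLYPT relation gives
\[
\tilde\rho(v_1)-\tilde\rho(v_{-1})=z^{-\chi(v)}a^{k}\,z\,\langle\partial u_1\rangle=\tilde\rho(u_1).
\]
This argument is the same in the connected and disconnected cases. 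That uniformity is exactly why the twist by $\epsilon_{\mathrm{ah}}$ was needed.

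\textbf{Elliptic walls, the main obstacle.} Here $\langle\partial v_1\rangle=\langle\partial v_{-1}\rangle$, and by the bounding-chain description in Section~\ref{sec:chamber},
\[
v_1\cap C=v_{-1}\cap C+2.
\]
Put $k=v_1\cap C-1$. Then the left side is
\[
z^{-\chi(v)}a^{k}(a-a^{-1})\langle\partial v\rangle=z^{-\chi(v)+1}a^k\,\langle\partial v\rangle\sqcup\Unknot,
\]
using $a-a^{-1}=z\,\Unknot$. So I must show two things in the local model of Figure~\ref{fig:u-epsilon}:
\begin{enumerate}[(a)]
\item $u_1\cap C=k$, that is, the smoothing sits halfway between the two chambers, meeting $C_++C_-$ with local contribution $0$ rather than $\pm1$;
\item the small boundary circle $\gamma_\epsilon\subset L$ is an unknot whose $\xi$-framing is the $0$-framing, with no extra twist.
\end{enumerate}
I would compute both in the linear model: $L=\mathbb R^3$, $C_\pm$ the half-spaces $\pm\mathbb R_{\ge0}\,J\xi$, $v_0$ a complex line through $0$, and $u_\epsilon$ its standard smoothing. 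Its boundary is a small round circle in $L$, and it leaves $L$ in the direction $J$ of the tangent to $\gamma_\epsilon$. Since $\xi$ is essentially constant on $\gamma_\epsilon$, the framing is $0$, which gives (b). For (a), I would count the intersections of the lifted interior of $u_\epsilon$ with $C_+$ and $C_-$ near the two points where the tangent to $\gamma_\epsilon$ is parallel to $\pm\xi$, and check that they cancel, with the averaging consistent with $v_{\pm1}\cap C=k\pm1$.

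The delicate point is this sign bookkeeping, and it is where the chamber labelling of Section~\ref{sec:chamber} is forced. Remark~\ref{rem:antiholomorphic-skein-signs} indicates that this labelling is opposite to that of \cite{worldsheetskein}, so this is where I expect a sign to go wrong if it is going to. Once (a) and (b) hold, $\tilde\rho(u_1)$ equals the displayed expression above, and the lemma follows.
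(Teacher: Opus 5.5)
Your proposal is correct and follows the paper's proof essentially step for step. It uses the same ingredients: isotopy invariance from the compensating changes in $u\cap C$ and in the $\xi$-framing at tangencies, and reduction via $\epsilon_{\mathrm{ah}}$ to the uniform twisted relation. At both kinds of hyperbolic wall it uses unchanged intersection numbers plus the crossing relation. At elliptic walls it uses $v_{\pm1}\cap C=u_1\cap C\pm1$, a $0$-framed small unknot, and $a-a^{-1}=z$ times the unknot.

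One correction to your plan for (a): $\gamma_\epsilon$ has no points where its tangent is parallel to $\pm\xi$, so there is nothing to cancel.
\begin{itemize}
\item In the linear model, $\gamma_\epsilon$ is a round circle in the plane $P\subset T_pL$ onto which $T_qv_0$ projects along $JT_pL$.
\item Since $T_pL+T_qv_0=T_pL\oplus JP$, transversality of $C_+\cup C_-$ to $v_0$ at $p$ is equivalent to $\xi\notin P$.
\item This transversality is exactly what your framing argument in (b) needs. It is also what makes $u_1\cap C$ defined at all.
\item Consequently $u_\epsilon$, which leaves $L$ in the directions $J\dot\gamma_\epsilon\in JP\not\ni\pm J\xi$, is disjoint from $C$ near $p$, as in the explicit model of Section~\ref{subsec:elliptic-bounding-chain}.
\end{itemize}
So its local contribution is $0$ outright, and your conclusion (a) stands.
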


\begin{proof}
In an isotopy of worldsheets $u$, we can lift $u$ to $\mathrm{bl}_L X$ because $u$ is immersed near $L$.
During a generic isotopy of worldsheets, $\partial u$ remains transverse to $\xi$ away from isolated parameter values where a strand of $\partial u$ becomes tangent to $\pm\xi$.  At such a tangency, the lift of $u$ to $\mathrm{bl}_LX$ crosses the boundary of the lift of $C_+$ or $C_-$, changing $u\cap C$ by $\pm 1$ while introducing a compensating $\mp 1$ framing twist in $\langle\partial u\rangle$. By the local calculation of~\cite[Lemma 2.12]{ekholm2019skeins} (checked explicitly in our cusp model in Remark~\ref{framing-model} and Equation~\eqref{eq:connected-framing-invariance}), the product $a^{u\cap C}\langle\partial u\rangle$ is invariant in $\sk(L)$. Since $\chi(u)$ and $b_{\mathbb R}(u)$ are topological invariants of the domain, $\rho_{C}(u)$ is invariant under isotopy.

By Remark~\ref{rem:antiholomorphic-skein-signs}, checking compatibility with \eqref{eq:elliptic-embedded}, \eqref{eq:connected-embedded}, and \eqref{eq:disconnected-embedded} is equivalent to checking that $\rho_{C  }([v_1]_{\mathrm{ah}}) - \rho_{C  }([v_{-1}]_{\mathrm{ah}}) = \rho_{C  }([u_1]_{\mathrm{ah}})$, where $\rho_{C  }([w]_{\mathrm{ah}}) = z^{-\chi(w)}a^{w\cap C}\langle\partial w\rangle$.

For the elliptic relation, our chamber convention gives $v_{\pm 1}\cap C = u_1\cap C \pm 1$, while $\chi(u_1)=\chi(v_{\pm 1})-1$ and the small boundary circle around $v_0(q)$ is a $0$-framed unknot (as verified in the local model in Section~\ref{subsec:elliptic-bounding-chain}), so \[\langle\partial u_1\rangle = \langle\partial v_{\pm 1}\rangle \cup \Unknot.\] Using $a - a^{-1} = z\,\Unknot$, we obtain
\[
\begin{aligned}
\rho_{C  }([v_1]_{\mathrm{ah}}) - \rho_{C  }([v_{-1}]_{\mathrm{ah}})
&= z^{-\chi(v_1)} a^{u_1\cap C}(a - a^{-1})\langle\partial v_1\rangle \\
&= z^{-\chi(v_1)+1} a^{u_1\cap C}\langle\partial u_1\rangle
= \rho_{C  }([u_1]_{\mathrm{ah}}).
\end{aligned}
\]

For both the connected and disconnected hyperbolic relations, the interior intersections with $C$ are unchanged across the wall ($v_{\pm 1}\cap C = u_1\cap C$), while the framed boundary links undergo a crossing change and smoothing:
\[
\langle\partial v_1\rangle - \langle\partial v_{-1}\rangle = z\langle\partial u_1\rangle.
\]
Since $\chi(u_1)=\chi(v_{\pm 1})-1$, we have $z^{-\chi(v_{\pm 1})}z = z^{-\chi(u_1)}$, so
\[
\rho_{C  }([v_1]_{\mathrm{ah}}) - \rho_{C  }([v_{-1}]_{\mathrm{ah}})
= z^{-\chi(v_1)}a^{u_1\cap C}\bigl(\langle\partial v_1\rangle - \langle\partial v_{-1}\rangle\bigr)
= \rho_{C  }([u_1]_{\mathrm{ah}}). \qedhere
\]
\end{proof}

\section{Abstract Fredholm problems}      \label{sec:linear-setting}

In this section, we formulate the linear comparison problem in the abstract setting of real bundle pairs over bordered, possibly nodal Riemann surfaces, without strip ends.

The deformation of a smooth bordered holomorphic curve $u:(\Sigma,\partial \Sigma)\to (X,L)$ involves two coupled problems:
\begin{itemize}
    \item \textbf{Map variations:} Governed by the fixed-domain linearisation of the $\dbar$  operator
    \[
    D_\Sigma \colon \Omega^0(u^*TX, u^*TL) \longrightarrow \Omega^{0,1}(u^*TX).
    \]
    \item \textbf{Domain variations:} Governed by the Cauchy--Riemann operator on vector fields tangent to the boundary,
    \[
    \Conf_\Sigma \colon \Omega^0(T\Sigma, T\partial\Sigma) \longrightarrow \Omega^{0,1}(T\Sigma).
    \]
    Its kernel consists of infinitesimal automorphisms, and its cokernel parametrises variations of conformal structure.
\end{itemize}

The derivative $du$ defines a chain map from $\Conf_\Sigma$ to $D_\Sigma$. The full linearisation $D_u$, including variations of the domain modulo infinitesimal reparametrisations, is represented by the mapping cone
\[
\Cone(\Conf_\Sigma \xrightarrow{\,du\,} D_\Sigma).
\]
For a stable curve, this complex has cohomology concentrated in degree zero and degree one, so this three-term complex is quasi-isomorphic to a Fredholm operator.

Below, we extend this description to domains with nodes and marked points,  and replace $(u^*TX, u^*TL)$ with a real bundle pair $(E,F)$ over $(\Sigma,\partial \Sigma)$, equipped with a doubleable Cauchy--Riemann operator (Assumption~\ref{ass:doubleable}). Section~\ref{subsec:nodal-surfaces} describes the domains and their logarithmic tangent bundles, Section~\ref{subsec:intro-linear-setup} the bundle pairs and coupled deformation complexes, and Sections~\ref{subsec:wall-problems}--\ref{subsec:nodal-smoothing} the wall problem with its incidence and smoothing exact triples.

\subsection{Nodal bordered surfaces and the logarithmic tangent bundle}\label{subsec:nodal-surfaces}

Let $(\Sigma, \partial\Sigma)$ be a compact, bordered, nodal Riemann surface with $n$ interior marked points and $m$ boundary marked points. Let $2\Sigma:=(\Sigma, j)\cup_{\partial \Sigma}(\Sigma,-j)$ be its double, with anti-holomorphic involution $\sigma$ swapping the two halves. We consider nodes of three types:
\begin{itemize}
    \item \emph{Interior nodes}, where two interior branches of $\Sigma$ meet;
    \item \emph{Hyperbolic boundary nodes}, where two branches of $\partial \Sigma$ meet, corresponding to a hyperbolic real node on the doubled surface $2\Sigma$;
    \item \emph{Elliptic boundary nodes}, which are isolated real nodes of $2\Sigma$ where $\sigma$ permutes the two branches. On $\Sigma$, we can also think of an elliptic boundary node as a special marked point, considered to be an isolated component of $\partial \Sigma$.

    \end{itemize}

To ensure that variations of complex structure and vector fields behave continuously across degenerations and node smoothings, we work with the \emph{logarithmic tangent bundle} $T^{\log}\Sigma$.

 Near a marked point $z=0$, $T^{\log}\Sigma$ is generated by the real and imaginary parts of $z\partial_z$, and the logarithmic cotangent space is generated by the real and imaginary parts of $z^{-1}dz$. Geometrically, this corresponds to thinking of an interior marked point as a cylindrical end, and a boundary marked point as a strip end.

 \begin{defn}
Let $v$ be a real section of the logarithmic tangent bundle,
and let $z$ be a holomorphic coordinate vanishing at a marked
point $p$. Write
\[
v(z)=a(z)z.
\]
The residue of $v$ at $p$ is
\[
\operatorname{res}_p v:=a(p)\in\mathbb C.
\]
\end{defn}

The residue does not depend on the choice of holomorphic coordinate, and canonically identifies $T^{\log}_p\Sigma$ with $\mathbb C$. In notation identifying a real vector field with its $(1,0)$ part we can alternately write  $\operatorname{res}_pa(z)z\partial_z=a(0)$.

 At a node with local holomorphic coordinates $z$ and $w$ with $zw = 0$, the  logarithmic tangent bundle\footnote{A nodal curve arises as the underlying scheme of a log smooth curve over a log point, which despite being called a ``point", can be thought of as parametrising infinitesimal gluing data at nodes~\cite[Section 5]{elc}. Our log tangent bundle corresponds to the vertical tangent bundle of such a log curve; see Remark~\ref{log residue}. } is generated by the vector field
\[
(z\partial_z, - w\partial_w).
\]
 More precisely, locally, a section of $T^{\log}\Sigma$ consists of a pair of logarithmic vector fields $(a(z)z\partial_z, b(w)w\partial_w)$ on the two branches satisfying the residue-matching condition
\[
a(0) + b(0) = 0.
\]

The above suffices to define the logarithmic tangent space of a closed Riemann surface. The logarithmic tangent space of a bordered Riemann surface $\Sigma$ is the restriction of $T^{\log}2\Sigma$ to $\Sigma\subset 2\Sigma$. Over $\partial \Sigma$, we have the real subbundle $T^{\log}\partial \Sigma\subset T^{\log}\Sigma\rvert_{\partial \Sigma}$, consisting of the fixed locus of the involution, or alternately, generated by real logarithmic vector fields tangent to the boundary.

If $p$ is a boundary marked point and $v$ is a section of $T^{\log}\partial\Sigma$, then  $\operatorname{res}_pv\in\mathbb R$, and the restriction of the real part of $ z\partial_z$ to $T^{\log}_p\Sigma$ is a canonical inward pointing vector generating $T_p^{\log}\partial \Sigma$.

We can choose coordinates $z,w$ centred on a real node of $2\Sigma$ so that $\sigma(z,w)=(\bar z,\bar w)$ in the hyperbolic case, and $\sigma(z,w)=(\bar w,\bar z)$ in the elliptic case. Near such a node, $T^{\log}\partial \Sigma$ is generated by the real part of $(z\partial_z,-w\partial_w)$ in the hyperbolic case, and the imaginary part of $z\partial_z$ in the elliptic case. In the elliptic case, we write $\partial_\vartheta:=-iz\partial_z$, where $z$ is the coordinate on the branch in $\Sigma$; it can be thought of as generating rotation around an infinitesimal boundary component at the node, with the usual boundary orientation. See Section~\ref{subsec:elliptic-orientation-triple}. A logarithmic vector field $v$ is tangent to the boundary at an elliptic node $q$ if $\operatorname{Re}\operatorname{res}_q v=0$, and tangent to the boundary at a hyperbolic boundary node joining points $p_1$ and $p_2$ if $\operatorname{Im}\operatorname{res}_{p_1}v=0$ and it also satisfies the residue-matching condition $\operatorname{res}_{p_1}v+\operatorname{res}_{p_2}v=0$.

\subsubsection{Infinitesimal gluing data and smoothing nodes}
\label{sec:gluing}

In Section~\ref{sec:conf}, we define a Fredholm operator $\Conf_\Sigma \colon \Omega^0(T^{\log}\Sigma, T^{\log}\partial\Sigma) \to \Omega^{0,1}(T^{\log}\Sigma)$ governing, in the case that $\Sigma$ is smooth,  variations of the complex structure of $\Sigma$. Using the logarithmic tangent space ensures suitable continuity of this operator in families that form nodes, essentially because the logarithmic tangent bundle forms a vector bundle over the total space of such families. When $\Sigma$ has nodes, $\Conf_\Sigma$ governs variations of complex structure together with infinitesimal gluing information at nodes, or equivalently, variations of complex structure on an  associated exploded~\cite{iec} curve $\ex \Sigma$.

Accordingly, we will think of our Riemann surfaces as having an extra choice of infinitesimal gluing data at nodes. With a choice of such data on a closed Riemann surface $\Sigma$, there is an exploded curve~\cite[Definition 8.3]{iec} $\ex \Sigma$ with smooth part $\Sigma$, and the tangent bundle of $\ex \Sigma$ is the pullback of the logarithmic tangent bundle of $\Sigma$. Around a node, corresponding to coordinates $z$ and $w$ of $\Sigma$ with $zw=0$,  $\ex \Sigma$ has coordinates $\tilde z$ and $\tilde w$ satisfying the relation
\[\tilde z\tilde w=\tilde \tau\]
where  $\tilde z$ and $\tilde w$ take values in $\mathbb C^*\e{\mathbb R_{\geq 0}}$~\cite[Section 2]{iec}, and $\tilde \tau=c\e l\in \mathbb C^*\e{\mathbb R_{>0}}$ is constant.  We think of $\mathfrak t$ as infinitesimal, so a value of $\tilde z$ or $\tilde w$ with positive exponent corresponds to a zero of $z$ or $w$.

The extra information at this node is  $c\in \mathbb C^*$, thought of as an infinitesimal gluing parameter, and the real positive length $l$, thought of as large scale gluing information and the length of an internal edge of an associated tropical curve. Intrinsically, when a node joins the points $p_1$ and $p_2$ on the normalised surface $\Sigma_n$, the infinitesimal gluing parameter is given by a nonzero element $v_1\otimes v_2$ of $T_{p_1}\Sigma_n\otimes_{\mathbb C} T_{p_2}\Sigma_n$, and $c=(v_1z)(v_2w)$.

Allowing $\tilde\tau$ to vary within  $\mathbb C^*\e{\mathbb R_{\geq 0}}$, we obtain a family of curves $\ex \Sigma_{\tilde \tau}$, corresponding to a family $\Sigma_\tau$ of curves with local plumbing coordinates
\[zw=\tau,\]
defined for $\tau\in \mathbb C$ sufficiently small. While the map $(z,w)\mapsto zw$  is not a submersion at $z=w=0$, the family $\ex \Sigma_{\tilde \tau}$ is a smooth family of exploded curves, and the map $(\tilde z,\tilde w)\mapsto \tilde z\tilde w$ is a submersion. Reflecting this, the logarithmic tangent bundle of $\Sigma_{\tau}$, generated by $(z\partial_z,-w\partial_w)$, is a smooth complex line bundle over the total space of this family.

There is also a natural notion of a bordered exploded curve $(\ex \Sigma,\partial\ex \Sigma)$ with smooth part $(\Sigma,\partial\Sigma)$. Near a hyperbolic boundary node of $\Sigma$, with local coordinates
\[z\in\mathbb H, \quad w\in\bar{\mathbb H},\quad zw=0\]
in the upper and lower complex half-planes, $\ex \Sigma$ has coordinates $\tilde z$ and $\tilde w$ satisfying
\[\tilde z\tilde w=\tilde \epsilon, \quad \tilde\epsilon\in \mathbb R^*_{>0}\e{\mathbb R_{>0}},\]
where
\[\tilde z\in (\mathbb H\cap \mathbb C^*)\e{\mathbb R_{\geq 0}},\quad \tilde w\in (\bar{\mathbb H}\cap \mathbb C^*)\e{\mathbb R_{\geq 0}} .\]
Allowing $\tilde \epsilon$ to vary, we obtain a family of exploded manifolds $\ex \Sigma_{\tilde \epsilon}$  with smooth part a family of curves $\Sigma_\epsilon$ smoothing this boundary node, with local plumbing coordinates
\[zw=\epsilon, \quad \epsilon\in \mathbb R_{\geq 0}\]
defined for $\epsilon$ sufficiently small.

We also choose infinitesimal gluing information at elliptic boundary nodes. Centred on an elliptic real node of  $2\Sigma$, there are coordinates $z$ and $w$ on the branches such that $\sigma(z,w)=(\bar w,\bar z)$. After a choice of infinitesimal gluing information compatible with this involution, we obtain coordinates $\tilde z$ and $\tilde w$ in $\mathbb C^*\e{\mathbb R_{\geq 0}}$ satisfying
\[\tilde z\tilde w=\tilde \epsilon, \quad \tilde\epsilon\in \mathbb R^*_{>0}\e{\mathbb R_{>0}},\]
and there is an infinitesimal boundary component where $|\tilde z|^2=\tilde \epsilon$.

\begin{remark}\label{disc model}There is an alternate model $\Sigma'$ for a curve with an elliptic boundary node at $q\in \Sigma$, consisting of a disc attached at an interior node to $q$ (Figure~\ref{fig:disc-model}). Given a choice of infinitesimal gluing information at the elliptic node as above, with $z$ the coordinate on $\Sigma$, there are induced coordinates $w'$ with $|w'|\leq 1$ on the disc with induced infinitesimal gluing data at the internal node in the form $\tilde z\tilde w'=\sqrt{\tilde \epsilon}$. This expands the infinitesimal boundary at the elliptic boundary node to the  boundary $|w'|=1$ of the attached disc.  The corresponding map of exploded curves $\pi:\ex \Sigma'\to \ex \Sigma$ is called a refinement~\cite[Section 10]{iec}, and is related to logarithmic modification.

The logarithmic tangent bundle of $\Sigma'$ is naturally isomorphic to the pullback of the logarithmic tangent bundle of $\Sigma$ via $\pi$. In particular, if $v$ is a logarithmic vector  field on $\Sigma$ tangent to the boundary at $q$, then we define $\pi^*v$ to be the unique logarithmic vector field on $\Sigma'$ that agrees with $v$ on $\Sigma\subset \Sigma'$, and is holomorphic and tangent to the boundary on the extra disc component.   When we study Cauchy--Riemann operators on bundle pairs, $\pi^*$ will induce natural quasi-isomorphisms, so we can use either model as convenient.
\end{remark}

\begin{figure}[htbp]
\begin{tikzpicture}[
  x=1.08cm,y=1.08cm,
  line cap=round,line join=round,
  surface/.style={line width=0.65pt},
  real/.style={line width=1.05pt},
  rear/.style={line width=0.6pt,densely dashed},
  every node/.style={font=\small},
  >={latex}
]
  \begin{scope}
    \path[fill=white]
      (0,0) .. controls (-0.08,1.47) and (0.90,2.00) .. (2.08,1.91)
      .. controls (2.91,1.87) and (3.43,1.44) .. (3.65,1.04)
      .. controls (3.04,1.08) and (2.10,0.76) .. (1.83,0)
      .. controls (2.10,-0.76) and (3.04,-1.08) .. (3.65,-1.04)
      .. controls (3.43,-1.44) and (2.91,-1.87) .. (2.08,-1.91)
      .. controls (0.90,-2.00) and (-0.08,-1.47) .. (0,0) -- cycle;
    \path[fill=black!15]
      (0,0) .. controls (-0.08,1.47) and (0.90,2.00) .. (2.08,1.91)
      .. controls (2.91,1.87) and (3.43,1.44) .. (3.65,1.04)
      .. controls (3.04,1.08) and (2.10,0.76) .. (1.83,0)
      .. controls (1.52,-0.28) and (0.29,-0.28) .. (0,0) -- cycle;
    \draw[surface]
      (0,0) .. controls (-0.08,1.47) and (0.90,2.00) .. (2.08,1.91)
      .. controls (2.91,1.87) and (3.43,1.44) .. (3.65,1.04)
      .. controls (3.04,1.08) and (2.10,0.76) .. (1.83,0)
      .. controls (2.10,-0.76) and (3.04,-1.08) .. (3.65,-1.04)
      .. controls (3.43,-1.44) and (2.91,-1.87) .. (2.08,-1.91)
      .. controls (0.90,-2.00) and (-0.08,-1.47) .. (0,0);
    \path[fill=white]
      (0.70,1.22) .. controls (0.96,1.42) and (1.56,1.42) .. (1.81,1.22)
      .. controls (1.50,0.99) and (1.01,0.99) .. (0.70,1.22);
    \draw[surface]
      (0.46,1.42) .. controls (0.82,0.98) and (1.60,0.91) .. (2.04,1.37);
    \draw[surface]
      (0.70,1.22) .. controls (0.96,1.42) and (1.56,1.42) .. (1.81,1.22);
    \draw[surface]
      (0.46,-1.42) .. controls (0.82,-0.98) and (1.60,-0.91) .. (2.04,-1.37);
    \draw[surface]
      (0.70,-1.22) .. controls (0.96,-1.42) and (1.56,-1.42) .. (1.81,-1.22);
    \draw[rear] (0,0) .. controls (0.29,0.28) and (1.52,0.28) .. (1.83,0);
    \draw[real] (0,0) .. controls (0.29,-0.28) and (1.52,-0.28) .. (1.83,0);
    \node at (1.17,0.62) {$\Sigma$};
    \node at (1.17,-0.66) {$\sigma(\Sigma)$};
    \path[fill=white] (3.65,0) ellipse[x radius=0.67,y radius=1.04];
    \path[fill=black!15]
      (2.98,0) arc[start angle=180,end angle=0,x radius=0.67,y radius=1.04]
      .. controls (4.09,-0.24) and (3.21,-0.24) .. (2.98,0);
    \draw[surface] (3.65,0) ellipse[x radius=0.67,y radius=1.04];
    \draw[rear] (2.98,0) .. controls (3.21,0.24) and (4.09,0.24) .. (4.32,0);
    \draw[real] (2.98,0) .. controls (3.21,-0.24) and (4.09,-0.24) .. (4.32,0);
    \node[font=\footnotesize] at (3.65,0.43) {$\Sigma_{\mathrm{disc}}$};
    \node[font=\footnotesize] at (3.65,-0.45) {$\sigma(\Sigma_{\mathrm{disc}})$};
    \fill (3.65,1.04) circle[radius=1.35pt];
    \fill (3.65,-1.04) circle[radius=1.35pt];
    \node[anchor=south west,inner sep=2pt] at (3.68,1.08) {$q\sim0$};
    \node[anchor=north west,inner sep=2pt] at (3.68,-1.08) {$\sigma(q)\sim\infty$};
    \node at (1.96,-2.38) { $2\Sigma'$};
  \end{scope}
  \draw[->,line width=0.8pt] (4.68,0) -- (6.05,0);
  \node[font=\footnotesize] at (5.36,0.35) {contract};
  \node[font=\footnotesize] at (5.36,-0.35) {$2\Sigma_{\mathrm{disc}}$};
  \begin{scope}[xshift=6.75cm]
    \path[fill=white]
      (0,0) .. controls (-0.08,1.47) and (0.90,2.00) .. (2.08,1.91)
      .. controls (2.98,1.84) and (3.54,0.85) .. (3.65,0)
      .. controls (3.00,0.60) and (2.22,0.63) .. (1.83,0)
      .. controls (2.22,-0.63) and (3.00,-0.60) .. (3.65,0)
      .. controls (3.54,-0.85) and (2.98,-1.84) .. (2.08,-1.91)
      .. controls (0.90,-2.00) and (-0.08,-1.47) .. (0,0) -- cycle;
    \path[fill=black!15]
      (0,0) .. controls (-0.08,1.47) and (0.90,2.00) .. (2.08,1.91)
      .. controls (2.98,1.84) and (3.54,0.85) .. (3.65,0)
      .. controls (3.00,0.60) and (2.22,0.63) .. (1.83,0)
      .. controls (1.52,-0.28) and (0.29,-0.28) .. (0,0) -- cycle;
    \draw[surface]
      (0,0) .. controls (-0.08,1.47) and (0.90,2.00) .. (2.08,1.91)
      .. controls (2.98,1.84) and (3.54,0.85) .. (3.65,0)
      .. controls (3.00,0.60) and (2.22,0.63) .. (1.83,0)
      .. controls (2.22,-0.63) and (3.00,-0.60) .. (3.65,0)
      .. controls (3.54,-0.85) and (2.98,-1.84) .. (2.08,-1.91)
      .. controls (0.90,-2.00) and (-0.08,-1.47) .. (0,0);
    \path[fill=white]
      (0.70,1.22) .. controls (0.96,1.42) and (1.56,1.42) .. (1.81,1.22)
      .. controls (1.50,0.99) and (1.01,0.99) .. (0.70,1.22);
    \draw[surface]
      (0.46,1.42) .. controls (0.82,0.98) and (1.60,0.91) .. (2.04,1.37);
    \draw[surface]
      (0.70,1.22) .. controls (0.96,1.42) and (1.56,1.42) .. (1.81,1.22);
    \draw[surface]
      (0.46,-1.42) .. controls (0.82,-0.98) and (1.60,-0.91) .. (2.04,-1.37);
    \draw[surface]
      (0.70,-1.22) .. controls (0.96,-1.42) and (1.56,-1.42) .. (1.81,-1.22);
    \draw[rear] (0,0) .. controls (0.29,0.28) and (1.52,0.28) .. (1.83,0);
    \draw[real] (0,0) .. controls (0.29,-0.28) and (1.52,-0.28) .. (1.83,0);
    \node at (1.17,0.62) {$\Sigma$};
    \node at (1.17,-0.66) {$\sigma(\Sigma)$};
    \fill (3.65,0) circle[radius=1.8pt];
    \node[anchor=west,inner sep=4pt] at (3.65,0) {$q\sim\sigma(q)$};
    \node at (1.96,-2.38) { $2\Sigma$};
  \end{scope}
\end{tikzpicture}
\caption{An alternate model $\Sigma'$ for $\Sigma$ with an elliptic node at $q$ is a constant disc attached to $q$ at an internal node.}
\label{fig:disc-model}
\end{figure}

Again, we can smooth an elliptic boundary node by allowing $\epsilon$ to vary in $\mathbb R^*_{>0}\e{\mathbb R_{\geq 0}}$. In each of the above cases, the logarithmic tangent bundle and $T^{\log}\partial \Sigma_{\epsilon}$ form smooth bundles over the total space of this family.

In each case, we can double $(\ex \Sigma_{\tilde \epsilon},\partial \ex \Sigma_{\tilde \epsilon})$ to obtain a closed exploded curve  $2\ex \Sigma_{\tilde \epsilon}$, with an involution that acts by complex conjugation on our (doubled) local coordinates $\tilde z$ and $\tilde w$ in the hyperbolic case, and acts by permutation and complex conjugation in the elliptic case.
This doubled family sits inside the smooth family of exploded curves $2\ex \Sigma_{\tilde \epsilon e^{i\theta}}$ locally defined by $\tilde z\tilde w= \tilde \epsilon e^{i\theta}$. There  are compatible anti-holomorphic involutions on the total space and base of this family acting by complex conjugation on  $e^{i\theta}$, and our family of doubled curves $2\ex\Sigma_{\tilde \epsilon}$ is the restriction of this family to $e^{i\theta}=1$. (The other half of the real locus, where $e^{i\theta}=-1$, removes the isolated boundary component in the elliptic case, and  locally glues different halves of $2\ex \Sigma$ in the hyperbolic case, so the real locus may no longer separate $2\ex\Sigma_{\tilde \tau}$ into two halves.)

Similarly, all nodes of $(\Sigma,\partial \Sigma)$ can be smoothed in a family $\Sigma_{(\tau_k,\epsilon_j)}$ with gluing coordinates $\tau_k\in\mathbb C$ and $\epsilon_j\in\mathbb R_{\geq 0}$ sufficiently small, and this family can be naturally embedded inside a log smooth doubled family $2\Sigma_{(\tau_k,\tau'_k, \epsilon_j e^{i\theta_j})}$ of curves, with a compatible anti-holomorphic involution on the total space and base (acting by $(\tau_k,\tau'_k, \epsilon_j e^{i\theta_j})\mapsto (\bar\tau'_k,\bar \tau_k, \epsilon_j e^{-i\theta_j})$). This log smooth family of curves is itself the smooth part of a smooth family $2\ex \Sigma_{(\tilde \tau_k,\tilde \tau'_k, \tilde \epsilon_j e^{i\theta_j})}$ of exploded manifolds. Accordingly, we can apply the regularity results of \cite{reg} to families of Cauchy--Riemann operators on such a family $\Sigma_{(\tau_k,\epsilon_j)}$ smoothing the nodes of $\Sigma$.

\begin{remark}\label{rem:smoothing direction} If we drop the residue-matching condition on a vector field, its flow will not preserve the infinitesimal gluing data. Such vector fields arise naturally on the total space of a family in which the gluing data vary.
Let $\pi:2\hat\Sigma\to\mathbb C$ be the complexified
doubled plumbing family, with parameter
$\tau=\epsilon e^{i\theta}$ sufficiently small and local equation $zw=\tau$.
The anti-holomorphic involution on the total space covers
complex conjugation on the base.
 Choose a real logarithmic vector field $V$ on its total space, invariant under the involution, and  with
$\pi_*V=\epsilon\partial_\epsilon$. Near the plumbing region, we can take
\[V=\left(\frac 12 z\partial_z,\frac 12 w\partial_w\right)\]
(so the flow of $V$ for time $t$ scales $z$ by $e^{t/2}$). Away from the
plumbing region, the family is a product, and we extend
$V$ by patching it with the product lift of
$\epsilon\partial_\epsilon$.

On the normalisation of the central fibre, $V$ restricts
to logarithmic vector fields $v_1,v_2$ compactly supported near the node, with
\[
\operatorname{res}_{p_1}v_1
=\operatorname{res}_{p_2}v_2=\frac12.
\]

Let $j$ be the complex structure on the vertical logarithmic
tangent bundle. Since $V$ projects to
$\epsilon\partial_\epsilon$, its flow identifies nearby fibres,
and $L_Vj$ is the resulting infinitesimal variation of their
complex structures. Under the identification of such variations
with tangent-valued $(0,1)$-forms, this variation is represented by
\[
\frac{1}{2i}L_Vj=\dbar_{\mathrm{vert}}V.
\]
Its restriction to the central fibre is
$(\dbar v_1,\dbar v_2)$, which therefore represents the
logarithmic smoothing variation induced by
$\epsilon\partial_\epsilon$.

\end{remark}

\begin{remark}\label{log residue}From the perspective of log geometry, the central fibre of the log smooth family  $2\Sigma_{(\tau_k,\tau'_k, \epsilon_j e^{i\theta_j})}$ is a log curve  over a log point $\Sigma^\dagger\to p^\dagger$. From the perspective of exploded manifolds, this log point parametrises choices of infinitesimal gluing data.  A projectable logarithmic vector field on $\Sigma^\dagger$
induces a vector in $T^{\log}p^\dagger$. Its component in
the gluing direction at a node is the sum of the two
branch residues. The relative logarithmic tangent bundle
consists of vertical fields, whose restrictions to the
normalisation satisfy the residue-matching condition.
\end{remark}

\begin{construction}[Stretching the neck] \label{stretch} We can stretch the neck around a smooth real analytic embedded circle in the interior of $(\Sigma,\partial \Sigma)$ or a smooth boundary component with no marked points. In the case of an embedded circle $C$,  choose a local holomorphic coordinate $z$ on a neighbourhood of $C \subset \Sigma$ such that $|z|=1$ on $C$, and let $w=1/z$. There is an associated plumbing family $\Sigma_{\tau}$ with local plumbing coordinates $zw=\tau$ for $|\tau|\leq 1$. A homotopy stretching the neck around $C$ is the family $\Sigma_{1-t}$ for $t\in [0,1]$, with $C$ contracted to an interior node at $t=1$. Similarly, in the case of a smooth boundary component, stretching the neck around the corresponding circle in $2\Sigma$ gives a family $\Sigma_{1-t}$ of bordered Riemann surfaces, with an elliptic boundary node when $t=1$.

When we apply this neck-stretching construction, we will simultaneously homotope a Cauchy--Riemann operator on a bundle pair $(E,F)$ that has a chosen trivialisation around $C$ or the boundary component, and is the standard $\dbar$ operator there. We extend the bundle trivially over the stretched region, with the standard operator $\dbar$ there. In the case of a coupled bundle pair, with coupling map $\phi\colon(T^{\log}\Sigma,T^{\log}\partial\Sigma)\to(E,F)$, if $\phi(\partial_\vartheta)$ is constant in our trivialisation of $E$, where $\partial_\vartheta=iz\partial_z$, then we extend the coupling to remain constant on the stretched region.
\end{construction}

\subsubsection{Metric, nodal, and real blowup spaces}\label{subsec:domain-topologies}

There are three natural topological spaces mapping to a bordered curve $(\Sigma,\partial \Sigma)$ with infinitesimal gluing data at nodes. Figure~\ref{fig:domain-topologies} shows them near nodes, and Figure~\ref{fig:domain-topologies-ends} near marked points and ends.

\begin{figure}[htbp]
  \centering
  \includegraphics[width=0.8\linewidth]{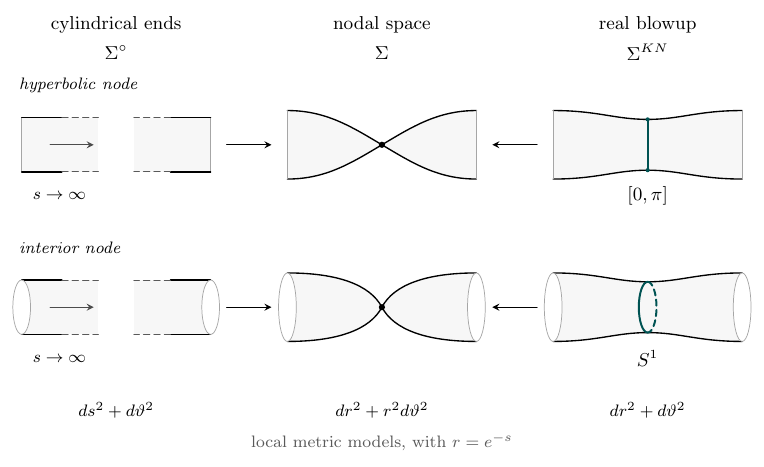}
  \caption{Local pictures of the three spaces near a hyperbolic node
    (top) and an interior node (bottom).
    In $\Sigma^\circ$, the branches have separate infinite strip or
    cylindrical ends. In $\Sigma$, they meet at a single point.
    In $\Sigma^{KN}$, infinitesimal gluing data identify the angular
    intervals or circles on the two branches, producing the common
    interval or circle.}
  \label{fig:domain-topologies}
\end{figure}

The first, called the \emph{curve with cylindrical ends}, is $\Sigma^{\circ}\subset\Sigma$, obtained by removing all marked points and nodes and choosing a metric on the logarithmic tangent space.  If $z=e^{-s-i\vartheta}$ gives coordinates around a marked point, strip end, or  branch of a node, so that $s\to\infty$ at the end, the associated metric is comparable to the Euclidean metric in $(s,\vartheta)$ coordinates. In these coordinates, $\partial_s=-z\partial_z$ points into the end and $\partial_\vartheta=-iz\partial_z$. So, $\Sigma^{\circ}$  is generally noncompact, and locally,  the two sides of a  node are separated from each other. It is this perspective that leads to the terminology of stretching the neck.

The nodal space $\Sigma$ is $\Sigma$ with the \emph{nodal topology}   pulled back from $2\Sigma$. In this nodal topology, the two sides of $\partial \Sigma$ at a hyperbolic node or strip end are identified, so a nonvanishing continuous section of $T^{\log}\partial\Sigma $ swaps orientation at each strip end.  As the normalisation of $2\Sigma$ is smooth, this space  comes with a natural equivalence class of smooth metrics. In this perspective, stretching the neck around a circle appears to contract the circle.

The third space is called the \emph{real blowup space}
\[\Sigma^{KN}\to\Sigma\ ,\]
related to the Kato--Nakayama space from logarithmic geometry~\cite{kato1999}. Unlike the other two spaces, its topology does not change in connected families of curves~\cite{NakayamaOgusRelativeRounding}. Its construction requires infinitesimal gluing information at nodes.
We construct $\Sigma^{KN}$ by taking the real oriented blowup of each component of $\Sigma$ at each strip end, marked point and node, and then gluing the resulting extra boundary at the nodes using the infinitesimal gluing data. In the case $\tilde z\tilde w= \tilde \epsilon \in \mathbb R^*_{> 0} \e{\mathbb R_{>0}}$, we identify the boundaries so that $\arg z+\arg w=0$. The topology on $\Sigma^{KN}$ is induced by a metric that degenerates at boundary nodes and strip ends. In the coordinates $z=e^{-s-i\vartheta}$, this metric is comparable to the Euclidean metric in $(e^{-s},\vartheta)$. In this real blowup space, each hyperbolic node or boundary marked point corresponds to an interval, parametrised by an interval of length $\pi$ in $\vartheta$, and each elliptic boundary node, marked point, or interior node corresponds to a circle, parametrised by $\vartheta\in\mathbb R/2\pi \mathbb Z$.
Each section of the logarithmic tangent space lifts to a vector field on $\Sigma^{KN}$ that is tangent to these special circles and intervals.  We use $(\partial \Sigma)^{KN}\subset \partial\Sigma^{KN}$  to mean the boundary of $\Sigma^{KN}$, minus the extra boundary intervals from strip ends and extra boundary circles from marked points.

\begin{figure}[htbp]
  \centering
  \includegraphics[width=0.8\linewidth]{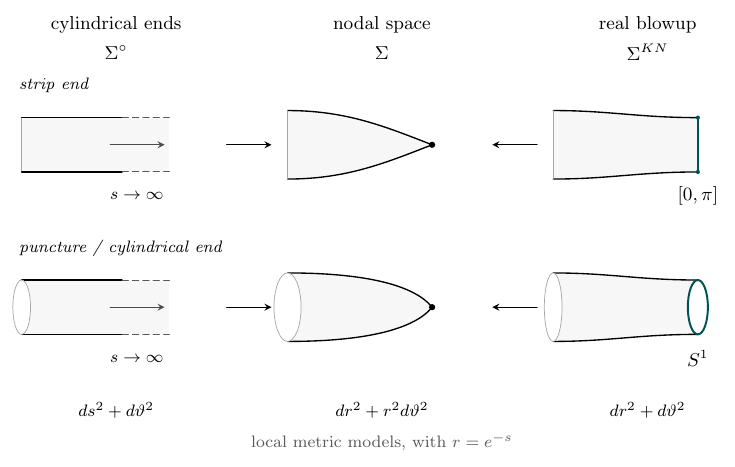}
  \caption{The three spaces near a boundary marked point or strip end (top) and an interior marked
    point or cylindrical end (bottom).
    The additional intervals at strip ends and circles at cylindrical ends
    are excluded from $(\partial\Sigma)^{KN}$. The additional circles count towards the boundary number $b(\Sigma)$, but do not count towards the number $b_{\mathbb R}(\Sigma)$ of real boundary components. }
  \label{fig:domain-topologies-ends}
\end{figure}

\begin{defn}[Euler characteristic and real boundary components]
\label{def:euler-characteristic}
Define the boundary number $b(\Sigma)$ of $\Sigma$ to be
\[b(\Sigma)=|\pi_0(\partial\Sigma^{KN})|\]
and, when $\Sigma$ has $n$ interior marked points, define  the number of real boundary components  of $\Sigma$ to be
\[b_{\mathbb R}(\Sigma)=b(\Sigma)-n.\]

     Define
\begin{equation}\label{eq:euler-characteristic}
\chi(\Sigma):=\frac12\chi\bigl((2\Sigma)^{KN}\bigr),
\end{equation}
where $(2\Sigma)^{KN}$ is the  real oriented blowup of the doubled domain. For a
smooth connected domain of genus $g$, with  $b_{\mathbb R}(\Sigma)$ real boundary components, $n$ interior marked points and $m$ boundary marked points, this is
\begin{equation}\label{eq:euler-characteristic-smooth}
\chi(\Sigma)=2-2g-b_{\mathbb R}(\Sigma) -n-\frac m2.
\end{equation}
\end{defn}

\subsubsection{Domain variations}\label{sec:conf}

Domain variations are governed by the natural $\dbar$ operator on vector fields tangent to the boundary:
\[
\Conf_\Sigma \colon \Omega^0(T^{\log}\Sigma, T^{\log}\partial\Sigma) \longrightarrow \Omega^{0,1}(T^{\log}\Sigma).
\]

Its kernel $\ker\Conf_\Sigma$ consists of holomorphic sections of the logarithmic tangent bundle tangent to $\partial \Sigma$, or ordinary holomorphic vector fields tangent to $\partial\Sigma$,  marked points and nodes, and with matched residues at nodes. We think of these matched residues as preserving infinitesimal gluing data, so $\ker\Conf_\Sigma$ represents infinitesimal automorphisms  preserving marked points and gluing data. Its cokernel $\coker\Conf_\Sigma$ parametrises variations of conformal structure and infinitesimal gluing parameters, or variations of complex structure on $\ex \Sigma$.

For example, if $\Sigma$ is a disc attached by an interior node to a stable curve, $\Sigma$ has no such infinitesimal automorphisms, whereas a disc with an interior marked point has infinitesimal automorphisms generating the rotations.

\begin{figure}[htbp]
\begin{tikzpicture}[
  x=0.82cm,y=0.82cm,
  line cap=round,line join=round,
  outline/.style={draw=black,line width=0.8pt},
  back rim/.style={draw=black,line width=0.6pt},
  surface/.style={fill=black!2}
]
  \coordinate (left) at (0.08,0.85);
  \coordinate (right) at (2.99,0.63);
  \coordinate (node) at (8.25,1.34);
  \coordinate (disc left) at (7.10,0.12);
  \coordinate (disc right) at (9.60,0.21);

  \path[surface,outline]
    (node)
    .. controls (7.64,1.17) and (7.34,0.79) .. (disc left)
    .. controls (7.60,-0.06) and (9.08,-0.06) .. (disc right)
    .. controls (9.46,0.66) and (8.74,1.19) .. (node)
    -- cycle;

  \path[surface,outline]
    (left)
    .. controls (0.62,3.30) and (3.50,6.55) .. (5.78,6.40)
    .. controls (6.80,6.39) and (8.00,6.07) .. (8.42,5.75)
    .. controls (9.27,5.20) and (10.30,3.76) .. (10.26,2.70)
    .. controls (10.24,1.65) and (9.35,1.34) .. (node)
    .. controls (7.18,1.35) and (7.17,1.98) .. (6.38,2.32)
    .. controls (5.60,2.69) and (4.54,2.59) .. (3.85,1.88)
    .. controls (3.30,1.33) and (3.08,0.84) .. (right)
    .. controls (2.46,-0.27) and (0.17,0.12) .. (left)
    -- cycle;

  \path[fill=white]
    (left) .. controls (0.69,1.53) and (2.64,0.90) .. (right)
    .. controls (2.46,-0.27) and (0.17,0.12) .. (left) -- cycle;
  \draw[back rim]
    (left) .. controls (0.69,1.53) and (2.64,0.90) .. (right);
  \draw[outline]
    (right) .. controls (2.46,-0.27) and (0.17,0.12) .. (left);

  \path[fill=white]
    (disc left) .. controls (7.90,0.58) and (9.02,0.59) .. (disc right)
    .. controls (9.08,-0.06) and (7.60,-0.06) .. (disc left) -- cycle;
  \draw[back rim]
    (disc left) .. controls (7.90,0.58) and (9.02,0.59) .. (disc right);
  \draw[outline]
    (disc left) .. controls (7.60,-0.06) and (9.08,-0.06) .. (disc right);

  \draw[outline]
    (4.78,4.73) .. controls (5.43,3.96) and (6.03,3.93) .. (7.00,4.67);
  \draw[outline]
    (5.41,4.25) .. controls (5.63,4.57) and (6.10,4.61) .. (6.34,4.42);

  \fill (node) circle[radius=1.25pt];
\end{tikzpicture}

\caption{The pictured bordered Riemann surface has no infinitesimal automorphisms that preserve infinitesimal gluing information at the node. It has Euler characteristic $-2$.}
\label{elliptic-fig}
\end{figure}

Here and below, for a vector bundle $E$, $\Omega^{0,1}(E)$ means $E$-valued ordinary $(0,1)$-forms on the normalisation of $\Sigma$, with nodes and marked points forgotten, and no implied matching condition at nodes.

With our definition of Euler characteristic, the index of $\Conf_{\Sigma}$ is
\[\ind\Conf_{\Sigma}=3\chi(\Sigma)+n+m/2\]
when $\Sigma$ has $n$ interior marked points  and $m$ boundary marked points.

\subsection{Bundle pairs and Cauchy--Riemann deformation complexes}
\label{subsec:intro-linear-setup}

A rank $k$ \emph{bundle pair} $(E, F)$ over $(\Sigma, \partial\Sigma)$ consists of:
\begin{enumerate}[(i)]
    \item A smooth complex vector bundle $E \to \Sigma$ of rank $k$;
    \item A totally real, smooth  subbundle $F \subset E|_{\partial\Sigma}$ of real rank $k$.
\end{enumerate}
At each node, the fibres of $E$ on the two branches are identified; at boundary nodes, this identification respects the real subbundle $F$. So,  sections of $(E, F)$ have matching values at nodes.

Apart from the rank $1$ bundle pair $(T^{\log}\Sigma, T^{\log}\partial \Sigma)$, we will generally be interested in rank $3$ bundle pairs. We will also usually restrict to the case that $F$ is oriented.

A \emph{real Cauchy--Riemann operator} on $(E, F)$ is a first-order differential operator
\[
D_\Sigma \colon \Omega^0(E, F) \longrightarrow \Omega^{0,1}(E)
\]
satisfying the Leibniz rule $D_\Sigma(f\xi) = (\dbar f)\otimes \xi + f D_\Sigma\xi$ for real-valued smooth functions $f$. In local trivialisations, $D_\Sigma = \dbar + A$, where $A \in \Omega^{0,1}(\rend_{\mathbb{R}} E)$ is a smooth ordinary (non-logarithmic) $(0,1)$-form valued in $\rend_{\mathbb R}E$.

\begin{assumption}[Doubleable Cauchy--Riemann operators]\label{ass:doubleable}
Except in Section~\ref{subsec:capping-and-generalisations}, all real Cauchy--Riemann operators considered in this paper are \emph{doubleable}: the bundle pair $(E,F)$ doubles to a smooth complex vector bundle $2E$ over $2\Sigma$ with a $\mathbb C$-antilinear lift of the anti-holomorphic involution $\sigma$, such that $2E\rvert_{\Sigma}=E$ and the fixed set of $\sigma$ over $\partial\Sigma$ is $F$, and $D_\Sigma$ is the restriction of a smooth real Cauchy--Riemann operator $D_{2\Sigma}$ on $2E$ that commutes with $\sigma$. Correspondingly, $\Omega^0(E,F)$ denotes the restrictions to $\Sigma$ of smooth $\sigma$-invariant sections of $2E$, and $\Omega^{0,1}(E)$ the restrictions of smooth $\sigma$-invariant $(0,1)$-forms, so that $D_\Sigma$ is $D_{2\Sigma}$ restricted to the $\sigma$-invariant parts of its domain and codomain.
\end{assumption}

The operator $\Conf_\Sigma$ is doubleable in this sense, with double $\Conf_{2\Sigma}$. Not all real Cauchy--Riemann operators are doubleable, but this distinction does not matter for orientation purposes, as the space of real Cauchy--Riemann operators on a real bundle pair is contractible, and contains doubleable Cauchy--Riemann operators as an affine subspace.  Section~\ref{subsec:capping-and-generalisations} explains how to orient more general Cauchy--Riemann operators.

A real Cauchy--Riemann operator $D_\Sigma$ is Fredholm in the sense that it is quasi-isomorphic to a finite-dimensional map.

Define the Maslov index $\mu(E,F)$ to be $c_1(2E)$, which is even when $F$ is oriented. With our definition of Euler characteristic,
\[\ind D_{\Sigma}= k(\chi(\Sigma)+n+m/2) +\mu(E,F)\]
when $\Sigma$ has $n$ interior marked points and $m$ boundary marked points, and $k$ is the rank of $(E,F)$.

\begin{remark}[Gluing Cauchy--Riemann operators]\label{rem:smoothing-family-regularity}
Given a family of curves $\Sigma_{\tau_k,\epsilon_j}$ smoothing the nodes of $(\Sigma,\partial \Sigma)$ as in Section~\ref{sec:gluing}, we can extend $(E,F)$ and $D_\Sigma$ to a smooth family of bundle pairs and doubleable Cauchy--Riemann operators. Since $E$ and the vertical logarithmic cotangent bundle $T^*_{\mathrm{vert}}$ are smooth vector bundles over the total space, smooth sections of $\Omega^{0,1}(E)$ over the family correspond to smooth sections of $(T^*_{\mathrm{vert}}\otimes E)^{0,1}$ vanishing at the nodes (with additional smoothness at nodes). This provides a natural notion of a smooth, finite-dimensional obstruction subbundle $V\subset \Omega^{0,1}(E)$ over the parameter space.

Applying \cite[Theorem 1.2]{reg} to the doubled family (equivariantly with respect to the involution $\sigma$), if $V_p$ is transverse to $D_{\Sigma_p}$ at a parameter $p$, then $V_q$ remains transverse to $D_{\Sigma_q}$ for all $q$ in a neighbourhood of $p$. Moreover, the preimages $K_q:=D^{-1}_{\Sigma_q}(V_q)$ assemble into a $\C{\infty}{1}$ vector bundle $K$, rendering the family of operators $D_{\Sigma_q}$ locally quasi-isomorphic to the family of finite-dimensional linear maps $K_q\to V_q$.

For this paper, we only require that $K_q\to V_q$ is a continuous family of finite-dimensional linear maps, ensuring that $\det D_{\Sigma_q}$ forms a continuous line bundle across the smoothing family—a fact that also follows from standard gluing analysis with significantly weaker assumptions on our Cauchy--Riemann operators.
\end{remark}

\subsubsection{The coupled deformation complex}

To couple domain variations to bundle variations, we fix a complex-linear bundle morphism
\[
\phi \colon T^{\log}\Sigma \longrightarrow E, \qquad \text{with } \phi(T^{\log}\partial\Sigma) \subset F,
\]
satisfying the compatibility condition that it induces a map of complexes:
\[
\begin{tikzcd} \Omega^0(T^{\log}\Sigma,T^{\log}\partial\Sigma)\dar{\Conf_\Sigma} \rar{\phi_0} &  \Omega^0(E,F)\dar{D_\Sigma}
\\ \Omega^{0,1}(T^{\log}\Sigma) \rar{\phi_1} & \Omega^{0,1}(E)
\end{tikzcd}
\]
The full deformation problem is represented by the mapping cone
\begin{equation}
\label{eq:full-def-cone}
D_{\Sigma, \phi} := \Cone\bigl(\Conf_\Sigma \xrightarrow{\;\phi\;} D_\Sigma\bigr),
\end{equation}
regarded as a three-term complex placed in degrees $-1, 0, 1$:
\[
\Omega^0(T^{\log}\Sigma, T^{\log}\partial\Sigma) \xrightarrow{\;(\phi_0, \Conf_\Sigma)\;} \Omega^0(E,F) \oplus \Omega^{0,1}(T^{\log}\Sigma) \xrightarrow{\;D_\Sigma - \phi_1\;} \Omega^{0,1}(E).
\]
When $(\phi, \Conf_\Sigma)$ is injective, this complex is quasi-isomorphic to a two-term complex. We refer to this as the stable case.

\begin{defn}[Coupled bundle pair]
\label{def:coupled-bundle-pair}
A \emph{coupled bundle pair} $(E,F,D,\phi)$ on $\Sigma$
consists of a rank $3$ bundle pair $(E,F)$ with $F$ oriented, a doubleable real
Cauchy--Riemann operator $D=D_\Sigma$ on $(E,F)$, and a complex-linear
coupling map $\phi\colon T^{\log}\Sigma\to E$ satisfying the compatibility condition above, with
$\phi(T^{\log}\partial\Sigma)\subset F$.
 Its \emph{deformation complex}
is the Fredholm complex
\[
D_{\Sigma,\phi}:=\Cone\bigl(\Conf_\Sigma\xrightarrow{\;\phi\;}D_\Sigma\bigr)
\]
in degrees $-1,0,1$, as in \eqref{eq:full-def-cone}.
\end{defn}

Given a continuous family of domains $\Sigma_t$, with a continuous family of coupled bundle pairs, the corresponding family of Fredholm complexes $D_{\Sigma_t,\phi_t}$ is quasi-isomorphic to a continuous family of finite-dimensional complexes. This remains true even if we allow nodes to form or be smoothed in the family, as in continuous base changes of the smoothing families from Section~\ref{sec:gluing}.

\subsection{Wall problems}
\label{subsec:wall-problems}

A wall problem abstracts the linearisation at a curve lying on a
codimension-one wall: a curve whose boundary crosses itself
(hyperbolic), or whose interior meets $L$ (elliptic).

\begin{defn}[Wall problems]\label{def:wall-problem}
A \emph{wall problem}  is a coupled bundle pair
$(E,F,D,\phi)$ on $\Sigma$, together with one of the following.
\begin{itemize}
\item[(Hyperbolic)]  Distinct (smooth, unmarked) points $p_1,p_2\in\partial\Sigma$
  and a complex-linear isomorphism $I\colon E_{p_1}\to E_{p_2}$ that
  restricts to an orientation-preserving isomorphism
  $F_{p_1}\to F_{p_2}$.  The wall problem is
  \emph{connected} if $p_1$ and $p_2$ lie on the same component of
  $\partial\Sigma^{KN}$, and \emph{disconnected} otherwise; compare
  Definition~\ref{def:hyperbolic-crossing-types}.
\item[(Elliptic)]  A (smooth, unmarked) point $q$ in the interior of $\Sigma$,
 and an oriented, $3$-dimensional totally real subspace
  $F_q\subset E_q$.
\end{itemize}
We call $\Sigma$ the \emph{normalised surface}, and write
$D_{\mathrm{norm}}:=D_{\Sigma,\phi}$.
\end{defn}

In the elliptic case, $E_q=F_q\oplus JF_q$ as $F_q$ is totally real; write $\operatorname{pr}_{JF_q}$ for the projection to $JF_q$ along $F_q$. As $E$ has rank $3$, crossings and incidences have
codimension one: $\dim F_{p_1}=3=2+1$ and $\dim JF_q=3=2+1$.

The \emph{incidence map} of a  wall problem  is
\begin{alignat}{3}
\label{eq:K-hyp-def}
&K\colon T_{p_1}\partial\Sigma\oplus T_{p_2}\partial\Sigma\to F_{p_1},
&\qquad& K(v_1,v_2):=\phi(v_1)-I^{-1}\phi(v_2)
&\qquad&\text{(hyperbolic)},\\
\label{eq:K-ell-def}
&K\colon T_q\Sigma\to JF_q,
&& K(v):=\operatorname{pr}_{JF_q}\phi(v)
&&\text{(elliptic)}.
\end{alignat}
The wall problem is \emph{transverse} if $K$ is injective. In that case
$\coker K$ is a real line. A \emph{positive normal} is a vector
$n_+$ such that:
\begin{itemize}
\item (hyperbolic) $(\phi(v_1),I^{-1}\phi(v_2),n_+)$ is an oriented
  basis of $F_{p_1}$, where $v_i\in T_{p_i}\partial\Sigma$ are positively
  oriented for the boundary orientation;
\item (elliptic) $n_+\in JF_q$, and $E_q=F_q\oplus\phi(T_q\Sigma)\oplus\mathbb R n_+$ is an
  oriented decomposition, where $E_q$ and $T_q\Sigma$ carry their
  complex orientations.
\end{itemize}
The class $[n_+]\in \coker K$ is nonzero, and the ray
$\mathbb R_{>0}[n_+]$ does not depend on these choices.

\begin{remark}[Chambers]\label{rem:abstract-chambers}
For a curve $v$ on a wall, a
deformation of $v$ moves the crossing or incidence by the image of its
variation in $\coker K$. In the hyperbolic case, the class of
$\xi(p_1)-I^{-1}\xi(p_2)$ is the displacement of strand~$1$ from
strand~$2$. In the elliptic case, the class of $\operatorname{pr}_{JF_q}\xi(q)$ is the
displacement of $v(q)$ off $L$. The side $\mathbb R_{>0}[n_+]$ is the
positive-crossing, respectively positive-incidence, chamber
$\mathcal M^+_{\mathrm{norm}}$ of Section~\ref{sec:chamber}, and its outward normal is $-[n_+]$.

In the hyperbolic case the ray of $[n_+]$ is unchanged if the labels of
$p_1$ and $p_2$ are exchanged and $I$ is replaced by $I^{-1}$. The two
resulting quotients $F_{p_1}$ and $F_{p_2}$ are identified by $-I$, and
$-I$ sends a positive normal to a positive normal for the relabelled
wall problem.
\end{remark}

\subsection{The wall complex and incidence exact triple}\label{subsec:incidence-triple}

The wall complex describes deformations of a curve on the wall that preserve the crossing or incidence; it is a subcomplex of $D_{\mathrm{norm}}$ with quotient $\Cone(K)$.

Let $\Conf_{\Sigma;p_1,p_2}$ be the restriction of  $\Conf_{\Sigma}$ to vector fields vanishing at
$p_1,p_2$, and let $\Conf_{\Sigma;q}$ be the restriction of $\Conf_{\Sigma}$ to vector fields vanishing at $q$.
These operators are quasi-isomorphic to the corresponding operators on the surfaces with marked points added. Let $\Sigma_{p_1,p_2}$ indicate $\Sigma$ with extra boundary marked points at $p_1$ and $p_2$, and $\Sigma_q$ indicate $\Sigma$ with an extra interior marked point at $q$. In coordinates $z$ centred on a marked point, a logarithmic vector field is in the form $a(z)z\partial_z$, so defines an ordinary vector field vanishing at that point, and there are canonical injective quasi-isomorphisms:
\begin{equation}\label{conf models}\Conf_{\Sigma_{p_1,p_2}}\xrightarrow{\cong} \Conf_{\Sigma;p_1,p_2},\quad \Conf_{\Sigma_{q}}\xrightarrow{\cong} \Conf_{\Sigma;q}\ .\end{equation}
 In each case the kernel of these operators consists of
infinitesimal automorphisms fixing the added points, and the cokernel
includes their motions. Let $D_{\Sigma;p_1\sim p_2}$ and
$D_{\Sigma;q\in F_q}$ be the restrictions of $D_\Sigma$ to the sections
\[
\{\xi\in\Omega^0(E,F):\xi(p_2)=I\xi(p_1)\}
\qquad\text{and}\qquad
\{\xi\in\Omega^0(E,F):\xi(q)\in F_q\},
\]
respectively. The coupling $\phi$ sends vector fields vanishing at the
added points to sections vanishing there, so it restricts to these
operators. The \emph{wall complex} is
\begin{equation}\label{eq:D-wall-def}
D_{\mathrm{wall}}:=
\begin{cases}
\Cone\bigl(\Conf_{\Sigma;p_1,p_2}\xrightarrow{\;\phi\;}
D_{\Sigma;p_1\sim p_2}\bigr) & \text{(hyperbolic)},\\[2pt]
\Cone\bigl(\Conf_{\Sigma;q}\xrightarrow{\;\phi\;}D_{\Sigma;q\in F_q}\bigr)
& \text{(elliptic)}.
\end{cases}
\end{equation}
In Section~\ref{subsec:nodal-smoothing}, we use the canonically quasi-isomorphic complex $D'_{\mathrm{wall}}$, defined using $\Conf_{\Sigma_{p_1,p_2}}$ and $\Conf_{\Sigma_q}$ in place of $\Conf_{\Sigma;p_1,p_2}$ and $\Conf_{\Sigma;q}$.

Evaluation at the added points gives a map of short exact sequences of
two-term complexes:
\[
\begin{tikzcd}[column sep=large]
0\arrow[r]&\Conf_{\Sigma;\bullet}\arrow[r]\arrow[d,"\phi"]
&\Conf_\Sigma\arrow[r,"\mathrm{ev}"]\arrow[d,"\phi"]
&(A\to0)\arrow[r]\arrow[d,"K"]&0\\
0\arrow[r]&D_{\Sigma;\bullet}\arrow[r]
&D_\Sigma\arrow[r,"\mathrm{ev}"]&(B\to0)\arrow[r]&0
\end{tikzcd}
\]
Here $K\colon A\to B$ is \eqref{eq:K-hyp-def} or \eqref{eq:K-ell-def}.
The evaluation maps are $v\mapsto(v(p_1),v(p_2))$ and
$\xi\mapsto\xi(p_1)-I^{-1}\xi(p_2)$ in the hyperbolic case, and
$v\mapsto v(q)$ and $\xi\mapsto\operatorname{pr}_{JF_q}\xi(q)$ in the elliptic case. Taking
cones of the vertical maps gives the \emph{incidence exact triple}
\begin{equation*}\tag{$\mathfrak T_{\mathrm{inc}}$}\label{eq:wall-triangle}
0\longrightarrow D_{\mathrm{wall}}\longrightarrow D_{\mathrm{norm}}
\longrightarrow\Cone(K)\longrightarrow0.
\end{equation*}
If the wall problem is transverse, the cohomology of $\Cone(K)$ is the line $\coker K$,
concentrated in degree $0$. Hence $\det\Cone(K)=\det \coker K$ and
$\ind D_{\mathrm{wall}}=\ind D_{\mathrm{norm}}-1$.

\subsection{The nodal problem and smoothing exact triple}
\label{subsec:nodal-smoothing}

The nodal problem models the curve with a boundary node at which the smoothing family terminates; its deformation complex is related to $D_{\mathrm{wall}}$ by the smoothing exact triple~\eqref{eq:smoothing-triangle}.

Corresponding to a wall problem, we define a coupled bundle pair $(E_{\mathrm{node}},\allowbreak F_{\mathrm{node}},\allowbreak D_{\Sigma_{\mathrm{node}}},\allowbreak\phi)$ on a nodal Riemann surface $\Sigma_{\mathrm{node}}$. The nodal domains are defined as follows:

\begin{itemize}
\item[(Hyperbolic)] Let $\Sigma_{\mathrm{node}}:=\Sigma/(p_1\sim p_2)$, with a hyperbolic
  boundary node at $p_1\sim p_2$.

\item[(Elliptic)] Let $\Sigma_{\mathrm{node}}$ be $\Sigma$ with an elliptic boundary node glued at $q$, so
\[
2\Sigma_{\mathrm{node}}
:=(2\Sigma)/(q\sim\sigma(q)).
\]

\end{itemize}
In each case, we can arbitrarily choose infinitesimal gluing data at the new node. The associated Fredholm problems are not affected by the choice of such gluing data.

The bundle pairs $(E_{\mathrm{node}},F_{\mathrm{node}})$ and Cauchy--Riemann operators $D_{\Sigma_{\mathrm{node}}}$ over $\Sigma_{\mathrm{node}}$ are defined using the quotient map $\pi: \Sigma\to\Sigma_{\mathrm{node}}$ as follows:
\begin{itemize}
\item[(Hyperbolic)] Using $I:E_{p_1}\to E_{p_2}$, the pair $(E,F)$ descends
  to a bundle pair $(E_{\mathrm{node}},F_{\mathrm{node}})$. So, we have
  $(E,F)=\pi^*(E_{\mathrm{node}},F_{\mathrm{node}})$. Pullback via $\pi$ defines a canonical isomorphism between $\Omega^0(E_{\mathrm{node}},F_{\mathrm{node}})$ and sections $\zeta$ of $(E,F)$ such that $\zeta(p_2)=I\zeta(p_1)$. Moreover, pullback via $\pi$ identifies $\Omega^{0,1}(E_{\mathrm{node}})$ with $\Omega^{0,1}(E)$, so canonically identifies the domains and codomains  of $D_{\Sigma_{\mathrm{node}}}$ and $D_{\Sigma; p_1\sim p_2}$. We then define
  \begin{equation}\label{eq:D Sigma hyp}D_{\Sigma_{\mathrm{node}}}=D_{\Sigma;p_1\sim p_2}\ .\end{equation}
\item[(Elliptic)] We define the bundle pair $(E_{\mathrm{node}},F_{\mathrm{node}})$ over $\Sigma_{\mathrm{node}}$ so that $\pi^*E_{\mathrm{node}}=E$, and $\pi^*(F_{\mathrm{node}})=F\cup F_q$. Pullback via $\pi$ defines a canonical isomorphism between $\Omega^0(E_{\mathrm{node}},F_{\mathrm{node}})$ and sections $\zeta$ of $(E,F)$ such that $\zeta(q)\in F_q$, and a canonical isomorphism between $\Omega^{0,1}(E_{\mathrm{node}})$ and $\Omega^{0,1}(E)$, so identifies the domains and codomains of $D_{\Sigma_{\mathrm{node}}}$ and $D_{\Sigma; q\in F_q}$. We then define
 \begin {equation}\label{eq:D Sigma node}D_{\Sigma_{\mathrm{node}}}=D_{\Sigma;q\in F_q}\ .\end{equation}

\end{itemize}

In both cases, a section $v$ of $T^{\log}\Sigma_{\mathrm{node}}$ pulls back to a unique logarithmic vector field $\pi^*v$ on $\Sigma$ that  vanishes at the added points and agrees with $v$ away from these added points, so $\phi(\pi^*v)$ satisfies the extra matching conditions and pushes forward to a  section of
$(E_{\mathrm{node}},F_{\mathrm{node}})$. This defines our coupled bundle pair  on
$\Sigma_{\mathrm{node}}$, with deformation complex
\[
D_{\mathrm{node}}:=
\Cone\bigl(\Conf_{\Sigma_{\mathrm{node}}}\xrightarrow{\;\phi\;}
D_{\Sigma_{\mathrm{node}}}\bigr).
\]
Since $\Conf_{\Sigma_{\mathrm{node}}}$ includes infinitesimal gluing data
at the node, $D_{\mathrm{node}}$ includes the smoothing direction. By
\eqref{eq:smoothing-triangle} below,
$\ind D_{\mathrm{node}}=\ind D_{\mathrm{wall}}+1=\ind D_{\mathrm{norm}}$.

\subsubsection{The smoothing triple}

\

The smoothing direction comes from varying the infinitesimal
gluing data. We first identify this variation in the conformal
deformation problem, then include the bundle variations.

\textbf{Hyperbolic case.}
The domain of $\Conf_{\Sigma_{\mathrm{node}}}$ is the kernel of
\[
\rho:=\operatorname{res}_{p_1}+\operatorname{res}_{p_2}
\]
on the domain of $\Conf_{\Sigma_{p_1,p_2}}$, and the two
operators have the same codomain. We therefore have the
following exact triple of operators:
\begin{equation}\label{hyp conf triple}
\begin{tikzcd}[column sep=small,row sep=large]
\Omega^0(T^{\log}\Sigma_{\mathrm{node}},
         T^{\log}\partial\Sigma_{\mathrm{node}})
  \arrow[r,hook]
  \arrow[d,"\Conf_{\Sigma_{\mathrm{node}}}"']
&
\Omega^0(T^{\log}\Sigma_{p_1,p_2},
         T^{\log}\partial\Sigma_{p_1,p_2})
  \arrow[r,"\rho"]
  \arrow[d,"\Conf_{\Sigma_{p_1,p_2}}"]
&
\mathbb R_{\mathrm{res}}
  \arrow[d]
\\
\Omega^{0,1}(T^{\log}\Sigma_{\mathrm{node}})
  \arrow[r,"\cong"]
&
\Omega^{0,1}(T^{\log}\Sigma_{p_1,p_2})
  \arrow[r]
&
0 .
\end{tikzcd}
\end{equation}

The connecting homomorphism sends
$r\in\mathbb R_{\mathrm{res}}$ to
\[
[\dbar v]\in\coker\Conf_{\Sigma_{\mathrm{node}}},
\qquad \rho(v)=r.
\]
We may choose $v$ supported near $p_1,p_2$ and holomorphic
near these points. Any two choices differ by a vector field
with matched residues, so this class is independent of the
choice. By Remark~\ref{rem:smoothing direction}, it represents
the logarithmic smoothing variation
$r\epsilon\partial_\epsilon$ in the plumbing family
$zw=\epsilon$.

Using the identification
$D_{\Sigma_{\mathrm{node}}}=D_{\Sigma;p_1\sim p_2}$,
the inclusion in \eqref{hyp conf triple} identifies
$D_{\mathrm{node}}$ as a subcomplex of
\[
D'_{\mathrm{wall}}
:=
\Cone\bigl(
\Conf_{\Sigma_{p_1,p_2}}
\xrightarrow{\;\phi\;}
D_{\Sigma;p_1\sim p_2}
\bigr).
\]
This complex is canonically quasi-isomorphic to
$D_{\mathrm{wall}}$ by \eqref{conf models}.
The resulting exact triple of cones is displayed below.
\[
\begin{tikzcd}[
  ampersand replacement=\&,
  column sep=1.1em,
  row sep=2.4em
]
{}
\& D_{\mathrm{node}}
\& D'_{\mathrm{wall}}
\& \mathbb R_{\mathrm{res}}
\\[-1em]
-1
\& \Omega^0(T^{\log}\Sigma_{\mathrm{node}},
             T^{\log}\partial\Sigma_{\mathrm{node}})
   \arrow[r,hook]
   \arrow[d,"d^{-1}"']
\& \Omega^0(T^{\log}\Sigma_{p_1,p_2},
             T^{\log}\partial\Sigma_{p_1,p_2})
   \arrow[r,"\rho"]
   \arrow[d,"d^{-1}"]
\& \mathbb R
   \arrow[d]
\\
0
\& \begin{gathered}
     \Omega^0(E_{\mathrm{node}},F_{\mathrm{node}})\\
     {}\oplus\Omega^{0,1}(T^{\log}\Sigma_{\mathrm{node}})
   \end{gathered}
   \arrow[r,equal]
   \arrow[d,"d^0"']
\& \begin{gathered}
     \Omega^0(E_{\mathrm{node}},F_{\mathrm{node}})\\
     {}\oplus\Omega^{0,1}(T^{\log}\Sigma_{\mathrm{node}})
   \end{gathered}
   \arrow[r]
   \arrow[d,"d^0"]
\& 0
   \arrow[d]
\\
1
\& \Omega^{0,1}(E_{\mathrm{node}})
   \arrow[r,equal]
\& \Omega^{0,1}(E_{\mathrm{node}})
   \arrow[r]
\& 0 .
\end{tikzcd}
\]
Here we have used the preceding identifications to write
the common terms on $\Sigma_{\mathrm{node}}$.
The vertical differentials are
\[
d^{-1}v=(\phi_0v,\dbar v),
\qquad
d^0(\xi,\kappa)
=D_{\Sigma_{\mathrm{node}}}\xi-\phi_1\kappa.
\]

\textbf{Elliptic case.}
Similarly, $\Conf_{\Sigma_{\mathrm{node}}}$ is a subcomplex
of $\Conf_{\Sigma_q}$ with the same codomain, and its domain
is the kernel of
\[
\rho:= 2\operatorname{Re}\operatorname{res}_q.
\]
Indeed, on the double, an invariant vector field with
residue $a$ at $q$ has residue $\bar a$ at $\sigma(q)$,
so the residue-matching condition is
$a+\bar a=0$.

We therefore have an exact triple of $\Conf$ operators
as in \eqref{hyp conf triple}, with $\Sigma_q$ replacing
$\Sigma_{p_1,p_2}$. Its connecting homomorphism sends
$r$ to $[\dbar v]$, where
$\operatorname{Re}\operatorname{res}_qv=r/2$.
The sum of the residues on the two branches of the double
is $r$, so this class represents the logarithmic smoothing
variation $r\epsilon\partial_\epsilon$ in the doubled
plumbing family $zw=\epsilon$.

Using
$D_{\Sigma_{\mathrm{node}}}=D_{\Sigma;q\in F_q}$,
we obtain the same exact triple of cones, now with
\[
D'_{\mathrm{wall}}
:=
\Cone\bigl(
\Conf_{\Sigma_q}
\xrightarrow{\;\phi\;}
D_{\Sigma;q\in F_q}
\bigr).
\]
Again, \eqref{conf models} identifies this complex
canonically up to quasi-isomorphism with
$D_{\mathrm{wall}}$. In the displayed cone triple,
only the degree $-1$ term of $D'_{\mathrm{wall}}$
and the definition of $\rho$ change.

In both cases, we write the resulting smoothing triple as
\begin{equation*}
\tag{$\mathfrak T_{\mathrm{sm}}$}
\label{eq:smoothing-triangle}
0\longrightarrow D_{\mathrm{node}}
\longrightarrow D'_{\mathrm{wall}}
\xrightarrow{\;\rho\;}
\mathbb R_{\mathrm{res}}
\longrightarrow0,
\end{equation*}
where $\mathbb R_{\mathrm{res}}$ is concentrated in degree
$-1$. Its determinant factor is therefore
$(\det\mathbb R_{\mathrm{res}})^{[-1]}$.

\begin{remark}[The smoothing direction]
\label{rem:residue-sign}
The connecting homomorphism of
\eqref{eq:smoothing-triangle} sends $r\in\mathbb R_{\mathrm{res}}$ to
\[
\delta(r)=[(\phi_0v,\dbar v)]
\in H^0(D_{\mathrm{node}}),
\qquad \rho(v)=r.
\]
The domain component is the smoothing class described
above. In particular,  Remark~\ref{rem:smoothing direction} implies that the domain component of $\delta(1)$ corresponds to the inward normal direction $\epsilon\partial_\epsilon$, so, to comply with an outward-normal orientation convention, we give $\mathbb R_{\mathrm{res}}$ the negative orientation.
\end{remark}

\section{Orientations}
\label{orientation-section}

In this section, we construct the holomorphic and anti-holomorphic orientations for coupled bundle pairs (Sections~\ref{subsec:determinant-summary}--\ref{subsec:antiholomorphic-orientations}), establish their compatibility with gluing along interior nodes (Lemma~\ref{lem:gluing-orientation}), and define the chamber and smoothing boundary orientations on the wall complex $D_{\mathrm{wall}}$. Using excision at separating interior nodes, we then show that the relative sign between these wall orientations reduces to a universal local model for each of the three wall types (Proposition~\ref{prop:wall-model-reduction}) and compute the elliptic comparison directly. Finally, in Section~\ref{subsec:capping-and-generalisations}, we explain how to apply this orientation formalism in more general geometric settings, where holomorphic curves can have strip ends asymptotic to Reeb chords.

Our approach to orientation is a more detailed version of the approach sketched in \cite{ekholm2019skeins},  building on the well-developed literature on orientations for Cauchy--Riemann operators with Lagrangian boundary conditions \cite[Chapter 8]{FOOO2009}, \cite{SolomonIntersectionTheory}, \cite{GeorgievaOrientabilityOpenGW}, \cite{GeorgievaZingerRealGWConstruction}, \cite{WehrheimWoodwardOrientationsQuilts}, \cite{EkholmEtnyreSullivanOrientations}.

\subsection{Determinant lines: notation and properties}
\label{subsec:determinant-summary}

We use graded determinant lines with the conventions recorded in
Appendix~\ref{app:determinant-lines}. Our conventions are a slight generalisation of Zinger's conventions~\cite{ZingerDetLine}, similar to \cite[Definition 2.20]{KnudsenDetFunctors}, allowing Fredholm complexes with more than two terms. The following notation and properties
suffice for the orientation arguments below.

For a finite-dimensional real vector space $V$, write
$\det V=\Lambda^{\dim V}V$, with index $\dim V$. Interchanging graded
lines of indices $p$ and $q$ introduces the Koszul sign $(-1)^{pq}$.
We denote the right and left duals of $\det V$ by  $\det V^{[1]}$ and $\det V^{[-1]}$ respectively, both with index $-\dim V$.

Their evaluations occur in the orders
$\det V\otimes\det V^{[1]}$ and
$\det V^{[-1]}\otimes\det V$, with the pairings
\[(e_1\wedge \dotsb \wedge e_n )\otimes (e_n^{[1]}\wedge\dotsb \wedge e_1^{[1]})\mapsto 1, \quad (e_n^{[-1]}\wedge\dotsb \wedge e_1^{[-1]})\otimes  (e_1\wedge \dotsb \wedge e_n )\mapsto 1.\]
 We keep these duals distinct. In this reversed duals convention the complex orientation of $\det E^{[\pm 1]}$ has the reversed order and is determined by
 \[ (Je_1)^{[\pm 1]}\wedge e_1^{[\pm 1]}\wedge (Je_2)^{[\pm 1]}\wedge e_2^{[\pm 1]}\wedge (Je_3)^{[\pm 1]}\wedge e_3^{[\pm 1]},\]
consistent with the complex dual $(ie_1)^*=-i(e_1^*)$.
A Fredholm map $D$ is placed in degrees $0$ and $1$, and has determinant
\begin{equation}\label{eq:determinant-summary}
\det D=\det\ker D\otimes\det(\coker D)^{[1]},
\end{equation}
of index $\ind D$. For a bounded Fredholm complex, the determinant is
the ordered graded determinant of its cohomology.

For a bounded complex of finite-dimensional vector spaces $(V_\bullet, d_\bullet)$, there is a canonical \emph{Euler isomorphism}
\[
\Phi_{V_\bullet} \colon \det d_\bullet \xrightarrow{\;\cong\;} \det V_\bullet := \bigotimes_k \det V_k^{[k]},
\]
identifying the determinant of cohomology $\det d_\bullet$ with the determinant of the graded vector space $\det V_\bullet$.  Its signs and compatibility with exact triples are detailed in Appendices~\ref{app:euler-isomorphism} and \ref{app:additivity}. We can use the Euler isomorphism to make determinants into continuous real line bundles in families; in the case of a family of infinite-dimensional Fredholm maps, we choose a continuous quasi-isomorphism with a family of finite-dimensional complexes; see Remark~\ref{rem:smoothing-family-regularity}.

For an exact triple $\mathfrak T$ of Fredholm complexes
$0\to D'_\bullet\to D_\bullet\to D''_\bullet\to0$, we use the additivity
isomorphism
\begin{equation}\label{eq:additivity-summary}
\Psi_{\mathfrak T}:\det D'_\bullet\otimes\det D''_\bullet
\longrightarrow\det D_\bullet.
\end{equation}
It agrees with the ordered wedge isomorphism when cohomology is
concentrated in degree $0$, is natural under quasi-isomorphisms of exact
triples, and varies continuously under homotopies. Thus the comparison
of two orientations constructed from such triples can be transported
along a homotopy to a model problem. Its construction and the signs
 are given in Appendix~\ref{app:additivity}.

\subsection{The elliptic boundary node exact triple}
\label{subsec:elliptic-orientation-triple}

A curve whose boundary consists entirely of elliptic nodes can be
compared with a closed curve by replacing each node with an interior
marked point. We use the resulting exact triple to orient the limiting
problem in our construction of the standard orientation.

Let $\Sigma$  have  elliptic boundary nodes
$q_1,\ldots,q_b$, with no other boundary components, and let
$\Sigma_c$ be its closed partial normalisation at these nodes, retaining these nodes as
interior marked points. Below, $q$ always denotes one of these nodes, and sums and products indexed by $q$ run over them. Let $(E,F,D,\phi)$ be a coupled bundle pair
on $\Sigma$. Write $E_c$ for the pullback of $E$ to $\Sigma_c$, and $D_{\Sigma_c}$ and $\phi_c$ for the operator
and coupling on $\Sigma_c$ obtained by removing the boundary conditions at the nodes. As $F_q$ is totally real, $E_q=F_q\oplus JF_q$.

In the real blowup $\Sigma^{KN}$ of Section~\ref{subsec:domain-topologies}, each node $q$ is a boundary circle $S^1_q$; see Figure~\ref{fig:Q-geometry}. If $z$ is a holomorphic coordinate on $\Sigma_c$ centred at $q$, then
\[
\nu_q:=-z\partial_z
\qquad\text{and}\qquad
\partial_\vartheta:=j\nu_q=-iz\partial_z
\]
are the outward normal and boundary orientation of $S^1_q$: $\nu_q$ points out of $\Sigma$ into the node, $\partial_\vartheta$ is clockwise around $q$ in $\Sigma_c$, and $T^{\log}_q\Sigma_c=\mathbb R(\nu_q)\oplus\mathbb R(\partial_\vartheta)$, with $T^{\log}_q\partial\Sigma=\mathbb R(\partial_\vartheta)$. We use the same symbol $\partial_\vartheta:=iz\partial_z$ for the anticlockwise rotation of the boundary of the unit disc and, more generally, for the vector field tangent to a smooth boundary component given by a holomorphic coordinate $z$ identifying a collar of that component with an annulus $\{r\leq|z|\leq1\}$. In every case $\partial_\vartheta=j\nu$ for the outward normal $\nu$, and $\partial_\vartheta$ orients the boundary.

\begin{figure}[htbp]
\centering
\begingroup
\definecolor{qfigF}{HTML}{287A68}
\definecolor{qfigCurve}{HTML}{356DA4}
\definecolor{qfigTau}{HTML}{B66A2C}
\begin{tikzpicture}[
  line cap=round,line join=round,
  every node/.style={font=\small,inner sep=2pt},
  tip/.style={-{Stealth[length=2.2mm,width=1.6mm]}},
  tau/.style={draw=qfigTau,line width=1.1pt,tip},
  nu/.style={draw=qfigCurve,line width=1.1pt,tip},
  map/.style={draw=black!70,line width=.6pt,-{Stealth[length=2mm,width=1.5mm]}}
]
\begin{scope}
  \def\r{.58}\def\a{.21}\def\L{2.4}
  \shade[left color=white,right color=black!9]
    (-\L,-\r)--(0,-\r) arc[start angle=-90,end angle=90,x radius=\a,y radius=\r]
    --(-\L,\r)--cycle;
  \fill[white] (0,0) ellipse[x radius=\a,y radius=\r];
  \draw[black!55,line width=.5pt] (-\L+.35,\r)--(0,\r) (-\L+.35,-\r)--(0,-\r);
  \draw[black!25,line width=.5pt,densely dashed] (-\L,\r)--(-\L+.35,\r) (-\L,-\r)--(-\L+.35,-\r);
  \draw[line width=1pt] (0,0) ellipse[x radius=\a,y radius=\r];
  \node[text=black!60] at (-1.75,0) {$\Sigma$};
  \node[anchor=west] at (\a+.02,-.5) {$S^1_q$};
  \draw[tau] (-\a,0) -- (-\a,.5);
  \draw[nu]  (-\a,0) -- (.5,0);
  \fill (-\a,0) circle[radius=1.2pt];
  \node[text=qfigTau,anchor=east] at (-\a-.04,.34) {$\partial_\vartheta$};
  \node[text=qfigCurve,anchor=south] at (.42,0) {$\nu_q$};
  \node[anchor=base] at (-1.0,1.25) {$\Sigma^{KN}$ near $q$};
\end{scope}
\begin{scope}[xshift=3.75cm,x={(1cm,0cm)},y={(.55cm,.36cm)},z={(0cm,1cm)}]
  \draw[qfigCurve!60,line width=.5pt,densely dashed]
    (-1.3,0,0)--(-1.3,0,-1.0)--(1.3,0,-1.0)--(1.3,0,0);
  \fill[qfigF,fill opacity=.16] (-1.3,-1,0)--(1.3,-1,0)--(1.3,1,0)--(-1.3,1,0)--cycle;
  \draw[qfigF,line width=.7pt] (-1.3,-1,0)--(1.3,-1,0)--(1.3,1,0)--(-1.3,1,0)--cycle;
  \draw[qfigF,line width=.6pt,densely dashed] (0,-1,0)--(0,1,0);
  \fill[qfigCurve,fill opacity=.18] (-1.3,0,0)--(1.3,0,0)--(1.3,0,1.15)--(-1.3,0,1.15)--cycle;
  \draw[qfigCurve,line width=.7pt] (-1.3,0,0)--(-1.3,0,1.15)--(1.3,0,1.15)--(1.3,0,0);
  \draw[qfigCurve,line width=.5pt] (-1.3,0,0)--(1.3,0,0);
  \draw[tau] (0,0,0) -- (1.0,0,0);
  \draw[nu]  (0,0,0) -- (0,0,-.85);
  \fill (0,0,0) circle[radius=1.2pt];
  \node[text=qfigTau,anchor=south] at (.82,0,.02) {$v_q$};
  \node[text=qfigCurve,anchor=west] at (.03,0,-.72) {$-Jv_q$};
  \node[text=qfigCurve,anchor=west] at (-1.25,0,.85) {curve};
  \node[text=qfigF,anchor=north west] at (-1.3,-1,0) {$F_q$};
  \node[text=qfigF,anchor=north,inner sep=1.5pt] at (0,-1,0) {$N^{\mathbb R}_q$};
  \node[anchor=base] at (0,0,1.25) {$E_q=F_q\oplus JF_q$};
\end{scope}
\draw[map] (.95,.15) -- node[above]{$\phi_q$} (1.75,.15);
\end{tikzpicture}
\endgroup
\caption{The geometry of $Q_{q,\phi}$ in the stable case. Left: at the node $q$, $\Sigma^{KN}$ has a boundary circle $S^1_q$, with outward normal $\nu_q$ and boundary orientation $\partial_\vartheta$. Right: $\phi_q$ sends $\partial_\vartheta$ to the velocity $v_q\in F_q$ of $S^1_q$ and $\nu_q$ to $-Jv_q\in JF_q$, so $H^0(Q_{q,\phi})=JF_q/\mathbb R(Jv_q)$ is $J$ applied to the directions $N^{\mathbb R}_q=F_q/\mathbb R(v_q)$ in $F_q$ transverse to $v_q$.}
\label{fig:Q-geometry}
\end{figure}

A logarithmic vector field $v$ on $\Sigma_c$ descends to a vector field on $\Sigma$ tangent to the boundary precisely when each $v(q)\in\mathbb R(\partial_\vartheta)$, and a section $\xi$ of $E_c$ descends to a section of $(E,F)$ precisely when each $\xi(q)\in F_q$. Write
\[
\rho\colon\Omega^0(T^{\log}\Sigma_c)\longrightarrow\bigoplus_q\mathbb R(\nu_q),
\qquad
\operatorname{ev}\colon\Omega^0(E_c)\longrightarrow\bigoplus_q JF_q
\]
for the $\mathbb R(\nu_q)$ and $JF_q$ components of $v(q)$ and $\xi(q)$; so $\rho(v)=-(\operatorname{Re}\operatorname{res}_qv)\,\nu_q$. Then $\Conf_\Sigma$ and $D_\Sigma$ are the subcomplexes of $\Conf_{\Sigma_c}$ and $D_{\Sigma_c}$ with quotients $\bigoplus_q\mathbb R(\nu_q)$ and $\bigoplus_q JF_q$, concentrated in the degree of the domain. As $\phi(\partial_\vartheta)\in F_q$, we have $\phi(\nu_q)=-J\phi(\partial_\vartheta)\in JF_q$, so $\operatorname{ev}\circ\phi_c=\phi\circ\rho$, and taking cones gives the exact triple
\begin{equation*} \tag{$\mathfrak T_{\mathrm{ell}}$}\label{eq:elliptic-orientation-triple}
0\longrightarrow D_{\Sigma,\phi}
\longrightarrow D_{\Sigma_c,\phi_c}
\longrightarrow \bigoplus_q Q_{q,\phi}
\longrightarrow0,
\qquad
Q_{q,\phi}:=\bigl(\mathbb R(\nu_q)\xrightarrow{\;\phi\;}JF_q\bigr),
\end{equation*}
with $Q_{q,\phi}$ in degrees $-1$ and $0$. Explicitly,
\[
\begin{tikzcd}[column sep=1.1em,row sep=2.4em]
-1\colon&
\Omega^0(T^{\log}\Sigma,T^{\log}\partial\Sigma)
  \arrow[r]\arrow[d,"d^{-1}"']
&
\Omega^0(T^{\log}\Sigma_c)
  \arrow[r,"\rho"]\arrow[d,"d^{-1}"]
&
\displaystyle\bigoplus_q\mathbb R(\nu_q)
  \arrow[d,"\phi"]
\\
0\colon&
\begin{gathered}
\Omega^0(E,F)\\
{}\oplus\Omega^{0,1}(T^{\log}\Sigma)
\end{gathered}
  \arrow[r]\arrow[d,"d^0"']
&
\begin{gathered}
\Omega^0(E_c)\\
{}\oplus\Omega^{0,1}(T^{\log}\Sigma_c)
\end{gathered}
  \arrow[r,"\operatorname{ev}"]\arrow[d,"d^0"]
&
\displaystyle\bigoplus_q JF_q
  \arrow[d]
\\
1\colon&
\Omega^{0,1}(E)
  \arrow[r]
&
\Omega^{0,1}(E_c)
  \arrow[r]
&
0,
\end{tikzcd}
\]
where $d^{-1}v=(\phi_0v,\Conf v)$ and $d^0(\xi,\kappa)=D\xi-\phi_1\kappa$ are the cone differentials, and the degree-$0$ quotient map is $(\xi,\kappa)\mapsto\operatorname{ev}(\xi)$.

\begin{defn}[Orientation of $\det Q_{q,\phi}$] \label{def:Q orientation}
Orient $\mathbb R(\nu_q)$ by $\nu_q$, and $JF_q$ by applying $J$ to the given orientation of $F_q$. As $E$ has rank $3$, the latter is the quotient orientation of $JF_q\cong E_q/F_q$ determined by $\det(E_q/F_q)\otimes\det F_q\to\det E_q$. The Euler isomorphism
\[
\det Q_{q,\phi}
\xrightarrow{\;\cong\;}
\det\mathbb R(\nu_q)^{[-1]}
\otimes\det JF_q
\]
then orients $\det Q_{q,\phi}$, which has index $3-1=2$.
\end{defn}

\begin{remark}[The geometry of $Q_{q,\phi}$]\label{rem:Q-geometry}
 The map \(\phi:\mathbb R(\nu_q)\to JF_q\) is the restriction of the coupling to the outward logarithmic normal line.  As $\phi_q\colon T^{\log}_q\Sigma_c\to E_q$ is complex-linear, either $\phi_q=0$ or $\phi_q$ is injective.
\begin{itemize}
\item \emph{The stable case.} If $\phi_q$ is injective, then $v_q:=\phi(\partial_\vartheta)\in F_q$ is nonzero.  This case occurs at the new nodes of Definition~\ref{def:holomorphic-orientation} when $\phi(\partial_\vartheta)$ is nowhere zero on $\partial\Sigma$, for example, for the standard punctured disc of Lemma~\ref{lem:standard-punctured-disc} with a constant nonzero coupling map, which models the half-cylinders of Section~\ref{half-cylinder-section}. The vector  $v_q$ is the velocity of $S^1_q$, and the curve leaves $F_q$ in the direction $Jv_q=-\phi(\nu_q)$. So $Q_{q,\phi}$ is quasi-isomorphic to
\[
H^0(Q_{q,\phi})=JF_q/\mathbb R(\nu_q)=J F_q/\mathbb R(-Jv_q),
\]
the $2$-dimensional space of directions normal to both $F_q$ and the curve. It is given the standard quotient orientation, so that $\phi(\nu_q)$ followed by an oriented basis of $H^0(Q_{q,\phi})$ is an oriented basis of $JF_q$.

\item \emph{The unstable case.} If $\phi_q=0$, $H^{-1}(Q_{q,0})=\mathbb R(\nu_q)$ and $H^0(Q_{q,0})=JF_q$. This case occurs at the new node of the nodal problem of an elliptic wall problem (Section~\ref{subsec:nodal-smoothing}), or if we homotope the coupling map to $0$.
\end{itemize}
\end{remark}

The additivity isomorphism for
\eqref{eq:elliptic-orientation-triple} is
\begin{equation}\label{eq:en comparison}\Psi_{\mathfrak T_{\mathrm{ell}}}:
\det D_{\Sigma,\phi}\otimes
\bigotimes_q\det Q_{q,\phi}
\xrightarrow{\;\cong\;}
\det D_{\Sigma_c,\phi_c}.
\end{equation}
An orientation of the closed-curve problem thus determines an
orientation of $D_{\Sigma,\phi}$. The result is independent of
the ordering of the elliptic nodes, because each node factor
has even index.

In particular, when $D_{\Sigma_c}$ is complex-linear, the cone
$D_{\Sigma_c,\phi_c}$ has its complex orientation, and
\eqref{eq:en comparison} orients $D_{\Sigma,\phi}$. As all of these orientations vary continuously along the homotopy $s\mapsto s\phi$, the result is the transport of the orientation obtained from the model with $\phi=0$, in which every $Q_{q,0}$ has zero differential. We will obtain the standard orientation for a general
bordered curve by homotoping its deformation problem to such
a model.

\begin{remark}[The constant-disc model]
\label{rem:disc-refinement-orientation}
The orientation construction above can also be described using
the constant-disc model of Remark~\ref{disc model}. Replace each
elliptic boundary node $q$ of $\Sigma$ by a disc attached at an
interior node, and write $\pi\colon\Sigma'\to\Sigma$ for the projection contracting the added discs. On the associated exploded curves, this is a refinement, so $T^{\log}\Sigma'\cong\pi^*T^{\log}\Sigma$, and pulling back the coupled bundle pair gives a natural quasi-isomorphism
\[
\pi^*\colon D_{\Sigma,\phi}\longrightarrow D_{\Sigma',\pi^*\phi}.
\]
Indeed, on the disc attached at $q$, the pulled-back data are the constant pair $(E_q,F_q)$, the operator $\dbar$ and the constant coupling $\phi_q$. Pullback extends sections and vector fields constantly over the disc (a vector field with residue $a$ at $q$ extends as $-a\,w\partial_w$, as required by residue matching), and extends target forms by zero; the quotient complexes are acyclic because the disc operators are surjective with kernels the constants.

Each added disc is a copy of the standard punctured disc of Lemma~\ref{lem:standard-punctured-disc}, whose boundary rotation corresponds to $\partial_\vartheta\in T^{\log}_q\Sigma_c$, so it contributes the complex $C^{\mathbb R}_q:=\bigl(\mathbb R(\partial_\vartheta)\xrightarrow{\;\phi\;}F_q\bigr)$, oriented by $\partial_\vartheta$ and the orientation of $F_q$. Gluing it to $\Sigma_c$ at $q$ contributes the complex-oriented gluing complex $C_q:=\bigl(T^{\log}_q\Sigma_c\xrightarrow{\;\phi\;}E_q\bigr)$ of Lemma~\ref{lem:gluing-orientation}. As $T^{\log}_q\Sigma_c=\mathbb R(\partial_\vartheta)\oplus\mathbb R(\nu_q)$ and $E_q=F_q\oplus JF_q$, we have $C_q=C^{\mathbb R}_q\oplus Q_{q,\phi}$, and the proof of Lemma~\ref{lem:standard-punctured-disc} shows that this splitting preserves orientations. So, by Lemma~\ref{lem:gluing-orientation}, the orientation defined by $\mathfrak T_{\mathrm{ell}}$ agrees under $\pi^*$ with the orientation obtained by gluing these disc factors to the complex-oriented closed problem; as each disc factor has index $2$, their ordering does not matter. Consequently, the homotopy used to define the standard orientation may be carried out in either model.
\end{remark}

\subsection{Orientations of coupled bundle pairs}
\label{subsec: bundle pair orientation}

In this section, we define orientations for coupled bundle pairs in the case without strip ends. So, $(\Sigma,\partial\Sigma)$ will be a bordered Riemann surface with no boundary marked points.

In the geometric situation of a holomorphic curve $u:(\Sigma,\partial \Sigma)\to (X,L)$ with $(E,F)=u^*(TX,TL)$,  a Spin structure on $L$ distinguishes a homotopy class of trivialisation of $u^*TL\rvert_{\partial \Sigma}$. Such a trivialisation of $F$ can be used to orient $\det D_{\Sigma}$.

\begin{defn}[Orientation of bundle pairs]\label{def:bundle-pair-orientation}
 Let  $(E,F)$ be a bundle pair  over a bordered Riemann surface $(\Sigma,\partial \Sigma)$ with no boundary marked points. Define the orientation module $\mathcal O_{(E,F)}$ of $(E,F)$ to be the rank-one $\mathbb Z$-module generated by oriented trivialisations $\tau:F\cong \partial\Sigma\times \mathbb R^3$, under the relation that if $\tau$ and $\tau'$ are related by $g\colon \partial\Sigma \to GL^+(3,\mathbb R)$,
\[[\tau]=(-1)^{\langle g^*\eta,[\partial\Sigma]\rangle}[\tau'],\]
where $\eta$ is the generator of
$H^1(GL^+(3,\mathbb R);\mathbb Z/2)\cong\mathbb Z/2$.
An orientation of the bundle pair $(E,F)$ is a generator $\mathfrak o$ of $\mathcal O_{(E,F)}$.
\end{defn}

The following is a version of the standard approach to orienting Cauchy--Riemann operators with real boundary conditions; see \cite[Chapter 8]{FOOO2009}, \cite[Proposition 2.8]{SolomonIntersectionTheory}, \cite[Lemma 6.37]{LiuOpenGW}. In outline: after smoothing any hyperbolic boundary nodes, we put the data in standard form near $\partial\Sigma$, stretch the neck so that each boundary component becomes an elliptic boundary node (Figure~\ref{fig:orientation-neck-stretching}), orient the closed normalisation using its complex-linear part, and orient the nodal problem from this using the elliptic boundary node triple~\eqref{eq:elliptic-orientation-triple} and Definition~\ref{def:Q orientation}.

\begin{defn}[The holomorphic orientation]
\label{def:holomorphic-orientation}
Let $(E,F,D,\phi)$ be a coupled bundle pair on $\Sigma$ with
orientation $\mathfrak o$. The \emph{holomorphic orientation}
$\mathsf o_{\mathfrak o}(D_{\Sigma,\phi})$ of $\det D_{\Sigma,\phi}$ is the
orientation constructed as follows.

In the case that $\Sigma$ has no hyperbolic boundary nodes:
\begin{enumerate}[(i)]
\item If $\phi(\partial_\vartheta)$ vanishes somewhere on $\partial\Sigma$, reduce to the case $\phi=0$ by transport along the linear homotopy $s\mapsto D_{\Sigma,s\phi}$, $s\in[0,1]$. In either case, choose a collar of $\partial\Sigma$ with a holomorphic coordinate $z$ as in Construction~\ref{stretch}, so that $\partial_\vartheta=iz\partial_z$ there, and a trivialisation $\tau$ of $F$ with $[\tau]=\mathfrak o$, extended to a trivialisation of $E$ over the collar in which $\phi(\partial_\vartheta)$ is constant.\footnote{If $\phi(\partial_\vartheta)$ is nowhere zero, shrink the collar so that $\phi(\partial_\vartheta)$ is nowhere zero on it. Complete $\phi(\partial_\vartheta)$ to an oriented frame of $F$, rotating the last two vectors once around $\phi(\partial_\vartheta)$ along one boundary component as necessary to match $\mathfrak o$. Then extend this frame to a complex frame of $E$ over the collar whose first vector is $\phi(\partial_\vartheta)$, and which doubles to a smooth $\sigma$-invariant frame of $2E$.}
\item Homotope $D_\Sigma$ over the collar, through doubleable operators, to the standard operator $\dbar$, keeping $\phi$ fixed. Then stretch the neck along each smooth boundary circle of  $\Sigma$ as in Construction~\ref{stretch}, keeping the data in standard form and the coupling constant; see Figure~\ref{fig:orientation-neck-stretching}. The limit is a nodal surface $\Sigma'$ with an elliptic node $q$ for each component of $\partial\Sigma$. Let $\Sigma_c$ be its closed normalisation, which replaces each elliptic node with an interior marked point. If $\phi\neq0$, each such $q$ is stable in the sense of Remark~\ref{rem:Q-geometry}, and $v_q$ is the constant value of $\phi(\partial_\vartheta)$.
\item Orient $\det D_{\Sigma_c,\phi_c}$ on $\Sigma_c$ using the linear homotopy of $D_{\Sigma_c}$ to the complex-linear map $D^{(1,0)}_{\Sigma_c}:=\frac 12(D_{\Sigma_c}-JD_{\Sigma_c}J)$. That is, transport the complex orientation of $\Cone(\Conf_{\Sigma_c}\xrightarrow{\phi_c}D^{(1,0)}_{\Sigma_c})$ back along this homotopy to define the complex orientation of $\det D_{\Sigma_c,\phi_c}$.
\item Orient $\det D_{\Sigma',\phi}$ by the additivity isomorphism~\eqref{eq:en comparison} of the exact triple~\eqref{eq:elliptic-orientation-triple}, using the above complex orientation of $\det D_{\Sigma_c,\phi_c}$ and Definition~\ref{def:Q orientation}. Transport this
  orientation back along the homotopies.
\end{enumerate}

In the case that $\Sigma$ has hyperbolic boundary nodes, first homotope the coupling map $\phi$ to $0$, then smooth these boundary nodes in a family $\Sigma_\epsilon$ as in Section~\ref{sec:gluing}, extending $(E,F)$ and $D_\Sigma$ to a smooth family of doubleable operators over this family of domains. Gluing analysis~\cite[Theorem 1.2]{reg} then implies that the resulting family of Cauchy--Riemann operators $D_{\Sigma_\epsilon}$ is quasi-isomorphic to a continuous family of finite-dimensional maps, so we can transport orientations of $\det D_{\Sigma_{\epsilon}}$, $\det\Conf_{\Sigma_{\epsilon}}$ and $\det D_{\Sigma_{\epsilon},0}$ back along this homotopy.  Any trivialisation of $F$ over boundary components of $\Sigma$ extends to a trivialisation over $\partial\Sigma_{\epsilon}$. The holomorphic orientation of $\det D_{\Sigma,\phi}$ is the transport of the holomorphic orientation of $\det D_{\Sigma_{\epsilon},0}$ back along this homotopy.
\end{defn}

\begin{remark}[Constant couplings]\label{rem:constant-coupling}
The homotopies in Definition~\ref{def:holomorphic-orientation} preserve the compatibility $D_\Sigma\circ\phi_0=\phi_1\circ\Conf_\Sigma$ of a coupled bundle pair. On the collar, write $D_\Sigma=\dbar+A$ in our trivialisation, with $\phi(\partial_\vartheta)=v$ constant. As $\partial_\vartheta$ is holomorphic, compatibility says exactly that $A(fv)=0$ for every complex-valued function $f$, so it is preserved by the homotopy $\dbar+(1-t\beta)A$, $t\in[0,1]$, where $\beta$ is a cutoff function supported in the collar and equal to $1$ near $\partial\Sigma$. As our trivialisation doubles smoothly, $\dbar$ and $A$ are doubleable, and so is this homotopy. On $\Sigma_c$, applying compatibility to $iw$ for vector fields $w$ shows that $D_{\Sigma_c}$ is complex-linear on the image of $\phi_0$, so it agrees there with $D^{(1,0)}_{\Sigma_c}$, and the linear homotopy in (iii) preserves compatibility.

If $\phi(\partial_\vartheta)$ is nowhere zero, Definition~\ref{def:holomorphic-orientation} may also be applied after homotoping $\phi$ to $0$, with the same result. Indeed, steps (ii)--(iv) can be run for the couplings $s\phi$, $s\in[0,1]$, with the same trivialisation and homotopies; as noted after~\eqref{eq:en comparison}, the orientations in (iv) vary continuously in $s$, so the resulting orientations of $\det D_{\Sigma,s\phi}$ do too. At $s=0$, this is the construction after homotoping $\phi$ to $0$.
\end{remark}

  \begin{figure}[htbp]
  \centering

  \begingroup
\colorlet{neckcollar}{teal!65!black}
\begin{tikzpicture}[x=1cm,y=1cm,line cap=round,line join=round,
  line width=.65pt,every node/.style={font=\small,inner sep=2pt}]

\def\neckhandle#1#2{%
  \begin{scope}[shift={(#1,#2)}]
    \fill[white]
      (-.256,-.1005) .. controls (-.11,.065) and (.11,.065) .. (.256,-.1005)
      .. controls (.10,-.155) and (-.10,-.155) .. cycle;
    \draw (-.60,.12) .. controls (-.22,-.23) and (.22,-.23) .. (.60,.12);
    \draw (-.256,-.1005) .. controls (-.11,.065) and (.11,.065) .. (.256,-.1005);
  \end{scope}%
}

\def\neckcollarband#1#2{%
  \begin{scope}[shift={(#1,#2)}]
    \def\bandpath{(0,.45) -- (.48,.45)
      arc[start angle=90,end angle=-90,x radius=.14cm,y radius=.45cm]
      -- (0,-.45) -- cycle}
    \fill[neckcollar!13] \bandpath;
    \draw[neckcollar,line width=.6pt]
      (.48,.45) arc[start angle=90,end angle=-90,x radius=.14cm,y radius=.45cm];
    \filldraw[fill=white,line width=.75pt]
      (0,0) ellipse[x radius=.22cm,y radius=.45cm];
  \end{scope}%
}

\begin{scope}
  \def\surfaceoutline{
    (.40,3.10) -- (1.00,3.10)
    .. controls (1.66,3.09) and (1.86,3.45) .. (2.61,3.45)
    .. controls (3.71,3.45) and (4.49,2.66) .. (4.49,1.69)
    .. controls (4.49,.71) and (3.91,.10) .. (3.05,.10)
    .. controls (2.18,.10) and (1.89,.20) .. (1.18,.20)
    -- (.40,.20) -- (.40,1.10) -- (.98,1.10)
    .. controls (1.98,1.10) and (1.94,2.20) .. (1.00,2.20)
    -- (.40,2.20) -- cycle}
  \fill[black!2] \surfaceoutline;
  \neckcollarband{.40}{2.65}
  \neckcollarband{.40}{.65}
  \draw (.40,3.10) -- (1.00,3.10)
    .. controls (1.66,3.09) and (1.86,3.45) .. (2.61,3.45)
    .. controls (3.71,3.45) and (4.49,2.66) .. (4.49,1.69)
    .. controls (4.49,.71) and (3.91,.10) .. (3.05,.10)
    .. controls (2.18,.10) and (1.89,.20) .. (1.18,.20)
    -- (.40,.20);
  \draw (.40,1.10) -- (.98,1.10)
    .. controls (1.98,1.10) and (1.94,2.20) .. (1.00,2.20)
    -- (.40,2.20);
  \neckhandle{2.88}{1.77}
  \node at (2.31,3.91) {$\Sigma$};
\end{scope}

\draw[->,>=stealth,line width=.6pt] (4.91,1.72) -- (5.67,1.72);

\begin{scope}[shift={(6.00,0)}]
  \def\ellipticoutline{
    (.45,2.65) -- (1.45,3.10)
    .. controls (1.90,3.30) and (2.27,3.50) .. (2.77,3.50)
    .. controls (3.73,3.50) and (4.14,2.72) .. (4.14,1.80)
    .. controls (4.14,.76) and (3.53,.16) .. (2.64,.16)
    .. controls (2.00,.16) and (1.77,.20) .. (1.45,.20)
    -- (.45,.65) -- (1.45,1.10)
    .. controls (2.35,1.10) and (2.35,2.20) .. (1.45,2.20)
    -- cycle}
  \fill[black!2] \ellipticoutline;
  \foreach \cy in {2.65,.65}{%
    \begin{scope}[shift={(0,\cy)}]
      \def\conecollar{
        (.45,0) -- (1.30,.3825)
        arc[start angle=90,end angle=-90,x radius=.12cm,y radius=.3825cm]
        -- cycle}
      \fill[neckcollar!13] \conecollar;
      \draw[neckcollar,line width=.6pt]
        (1.30,.3825)
        arc[start angle=90,end angle=-90,x radius=.12cm,y radius=.3825cm];
    \end{scope}
  }
  \draw \ellipticoutline;
  \fill (.45,2.65) circle[radius=1.15pt];
  \fill (.45,.65) circle[radius=1.15pt];
  \node[anchor=south east] at (.41,2.78) {$q_1$};
  \node[anchor=north east] at (.41,.52) {$q_2$};
  \neckhandle{2.96}{1.99}
  \node at (2.59,3.96) {$\Sigma'$};
\end{scope}
\end{tikzpicture}
\endgroup

  \caption{The neck-stretching degeneration used to construct the orientation,
    illustrated with two boundary components.
    We stretch the neck along each boundary component, keeping the data in standard form on the shaded collars.
    In the nodal limit $\Sigma'$, elliptic boundary nodes are drawn as cone points. An equivalent model replaces these elliptic boundary nodes with constant discs, attached at internal nodes to $\Sigma_c$, seen by stretching the neck along the inner boundary of the collar neighbourhood.
   }
  \label{fig:orientation-neck-stretching}
\end{figure}

The above construction, after homotoping $\phi$ to $0$ if necessary (Remark~\ref{rem:constant-coupling}), can be applied continuously to a family of coupled bundle pairs over a family of bordered Riemann surfaces with a trivialisation of $F$ over the boundary. The resulting orientation is therefore continuous in families.

\begin{lemma}[Dependence on the boundary trivialisation]
\label{lem:orientation-trivialisation}
Let $\tau$ and $\tau'=g\tau$ be oriented trivialisations of $F$.
The orientations constructed using $\tau$ and $\tau'$ differ
by the sign
\[
(-1)^{\langle g^*\eta,[\partial\Sigma]\rangle}.
\]
So, the construction respects the defining relations of
$\mathcal O_{(E,F)}$.
\end{lemma}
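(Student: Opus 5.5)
The strategy is to track how a change of trivialisation propagates through the steps (i)--(iv) of Definition~\ref{def:holomorphic-orientation}, and to isolate the only step where the trivialisation matters. First I reduce to the case without hyperbolic nodes and with $\phi=0$. The hyperbolic-node case is defined by transport from a smoothing family $\Sigma_\epsilon$. Trivialisations of $F$ over $\partial\Sigma$ and their transition map $g$ extend over $\partial\Sigma_\epsilon$. The pairing $\langle g^*\eta,[\partial\Sigma]\rangle$ is unchanged, since the boundary fundamental classes correspond under the smoothing. By Remark~\ref{rem:constant-coupling}, homotoping $\phi$ to $0$ does not change the construction.

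Next, the sign is multiplicative and local to boundary components. If $g$ is homotopic to $g'$ through maps $\partial\Sigma\to GL^+(3,\mathbb R)$, then the two frames are homotopic. The whole construction (collar, extension to $E$, homotopy to $\dbar$, neck-stretching, and the triple~\eqref{eq:elliptic-orientation-triple}) can then be run continuously in the homotopy parameter, so the orientations agree by continuity of transport. Since $\pi_1(GL^+(3,\mathbb R))=\mathbb Z/2$ is detected by $\eta$, it suffices to treat a $g$ that is constant except on a single boundary component $C$. On $C$, $g$ is either null-homotopic or represents the generator. In the generator case I will take $g$ to be one full rotation of two frame vectors about the third, as in the footnote to Definition~\ref{def:holomorphic-orientation}.

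The remaining step, and the main obstacle, is to show that the generator reverses the orientation. After neck-stretching, the trivialisation is used only to extend $E$ over the collar near the elliptic node $q_C$, producing the limiting pair on $\Sigma_c$. Changing $\tau$ by a loop $g$ changes the clutching of $E_c$ near $q_C$. If the data are kept in standard form on the collar, the two limiting problems differ by a loop $\tilde g$ in $GL(3,\mathbb C)$, the complexification of $g$, which is null-homotopic. The difficulty is that the induced identification of $\det D_{\Sigma_c}$ and of $Q_{q_C}$ need not respect the complex orientation.

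I would handle this with the constant-disc model of Remark~\ref{rem:disc-refinement-orientation}. In that model the trivialisation enters only through the standard disc problem with boundary condition $F_{q}$ twisted by $g$ along the rim. The comparison then reduces to the classical fact that the determinant line of the rank-3 problem on the disc is non-orientably twisted along the nontrivial loop in $GL^+(3,\mathbb R)$ (\cite[Chapter 8]{FOOO2009}, \cite[Proposition 2.8]{SolomonIntersectionTheory}). Concretely, I would split $F=\mathbb R\oplus\mathbb R^2$ and rotate the $\mathbb R^2$ factor once. Using $\mathbb R^2\cong\mathbb C$ with loop $e^{i\theta}$, the rank-1 problem with boundary condition given by the loop of real lines $\mathbb R e^{i\theta}$ in $\mathbb C$, for $\theta\in[0,2\pi]$, has index $2$. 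Transporting around the loop reverses a basis of its $2$-dimensional kernel: the sections $\operatorname{Re}$ and $\operatorname{Im}$ of $e^{i\theta/2}\cdot$ constants acquire a sign change, so the kernel orientation is flipped. This gives the factor $-1$. All other factors in the gluing of Lemma~\ref{lem:gluing-orientation} have even index or are complex oriented, so reordering them contributes no sign, and the result $(-1)^{\langle g^*\eta,[\partial\Sigma]\rangle}$ follows. The final sentence of the statement is then immediate from Definition~\ref{def:bundle-pair-orientation}.
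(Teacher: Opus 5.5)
The decisive step, that the generator of $\pi_1(GL^+(3,\mathbb R))$ reverses the orientation, is not established. Your preliminary reductions are fine: homotoping $\phi$ to $0$, smoothing hyperbolic nodes, and invariance under homotopy of $g$.

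First, a loop $g$ in $SO(3)$ satisfies $g\,\mathbb R^3=\mathbb R^3$, so changing the trivialisation does not twist the boundary condition. There is no ``disc problem with boundary condition $F_q$ twisted by $g$ along the rim''. The two constructions apply the same recipe to the same $(E,F,D)$. They differ only in which operator is declared standard on the collar, and each limiting problem is oriented by the same local rules. The sign appears only when the two limits are joined by a path of operators and the loop is closed up by the gauge transformation relating the two frames. It is the monodromy of the determinant line around a \emph{closed family} of problems, and you never construct such a family.

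Second, the rank-one computation you offer for the factor $-1$ is wrong. Identifying $\mathbb R^2\cong\mathbb C$ confuses the real rotation of two frame vectors with a phase rotation of a complex line. The real rotation is an element of $SO(2)\subset SU(2)$ acting on $\mathbb C^2=\mathbb R^2\otimes\mathbb C$, and it preserves $\mathbb R^2$. The loop of lines $\mathbb R e^{i\theta}$, $\theta\in[0,2\pi]$, has Maslov index $2$, so that disc problem has index $3$, not $2$. The sign change of $e^{i\theta/2}$ occurs as you move along the boundary circle, not in a family parameter. It reflects the non-orientability of the Maslov-one real line bundle $\mathbb R e^{i\theta/2}$. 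Since $F$ is oriented and preserved by $g$, that mechanism plays no role here.

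The missing idea is the clutching family used in the paper's proof:
\begin{enumerate}
\item Homotope $g$ to the identity through $SU(3)$ to obtain a bundle automorphism $G$ of $(E,F)$, supported near $\partial\Sigma$, with $\tau=\tau'\circ G|_F$.
\item Glue the constant family over an interval by $G$ to get bundle pairs $(E',F')$ over $S^1$. Then $\tau$ and $\tau'$ induce the same orientation exactly when $\det D_\Sigma$ has trivial monodromy around this circle.
\item Evaluate the monodromy with Georgieva's orientability criterion, using $\langle w_2(F'),[\partial\Sigma\times S^1]\rangle=\langle g^*\eta,[\partial\Sigma]\rangle$.
\item Since $\Conf_\Sigma$ is constant in the family, $\det D_{\Sigma,0}$ has the same monodromy.
\end{enumerate}
This treats all boundary components at once. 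So it also avoids the multiplicativity of the sign, which your reduction to a single component assumes without proof.

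Your localisation can be salvaged in the same spirit. Using a null-homotopy of the complexified $g$ in $SU(3)$, choose the collar extension of $\tau'$ to agree with that of $\tau$ near the stretched circle. Then $\Sigma_c$ and the node factors are literally unchanged. The remaining comparison on the disc is exactly the monodromy of the clutched disc family. That is the classical fact you cite from FOOO or Solomon, and it is not what your rank-one calculation computes.
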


\begin{proof}
Below, we construct a family of bundle pairs parametrised by a circle, whose associated determinant line is orientable if and only if $\tau$ and $\tau'$ induce the same orientation. We then  apply \cite[Proposition 3.1]{GeorgievaOrientabilityOpenGW}, which determines whether the determinant line of a family of Cauchy--Riemann operators is orientable.

First, homotope $\phi$ to zero and smooth any  nodes, reducing to the case of a smooth bordered Riemann surface $(\Sigma,\partial\Sigma)$.

Given $\tau$ and $\tau'=g\tau$, we construct a family of Cauchy--Riemann operators using a clutching construction as follows. Without loss of generality, the gauge transformation $g$ takes values in $SO(3)\subset SU(3)$, so we have $g:\partial \Sigma\to SO(3)\subset SU(3)$. Using a homotopy of $g$ to the identity within $SU(3)$,  we can then construct a bundle automorphism $G$ of $(E,F)$, supported near the boundary so that $\tau=\tau'\circ G\rvert_F$.   Pull back $(E,F)$ to the trivial family of Riemann surfaces parametrised by the interval, then glue at the endpoints of this interval to produce a family of bundle pairs $(E',F')$ over the circle $S^1$, with monodromy $G$. Choose a continuous family of doubleable Cauchy--Riemann operators on this family of bundle pairs.

The second Stiefel--Whitney class $w_2(F')\in H^2(\partial \Sigma\times S^1,\mathbb Z/2)$ measures the obstruction to a Spin structure on $F'$, and this vanishes on each connected component if and only if $\tau$ and $\tau'$ represent the same Spin structure on $F$ over the corresponding boundary component. So,
\[
\left\langle w_2(F'),[\partial \Sigma \times S^1]\right\rangle
=
\left\langle g^*\eta,[\partial\Sigma]\right\rangle .
\]
Proposition 3.1 of \cite{GeorgievaOrientabilityOpenGW} states that the determinant line of a family of Cauchy--Riemann operators on $(E',F')$ is orientable if and only if $\left\langle w_2(F'),[\partial \Sigma \times S^1]\right\rangle=0$. The monodromy of the determinant line is trivial if and only if $\tau$ and $\tau'$ induce the same orientation of $\det D_{\Sigma}$, so $\tau$ and $\tau'$ induce the same orientation on $D_{\Sigma}$ if and only if $\left\langle g^*\eta,[\partial\Sigma]\right\rangle=0$.

As  $\Conf_{\Sigma}$ is fixed in this family, the monodromy of its determinant line is trivial, and therefore the monodromy of $\det D_{\Sigma,0}$ agrees with the monodromy of $\det D_\Sigma$. It follows that the orientations of $\det D_{\Sigma,\phi}$ induced by $\tau$ and $\tau'$ are related by
 \[\mathsf o_{[\tau']}=(-1)^{\left\langle g^*\eta,[\partial\Sigma]\right\rangle}\mathsf o_{[\tau]}\ .\]

\end{proof}

\begin{remark}\label{rem:holomorphic-orientation-properties}
The holomorphic orientation  $\mathsf o_{\mathfrak o}$ of $\det D_{\Sigma,\phi}$ has
the following properties.
\begin{enumerate}[(a)]
\item It is independent of the homotopies and the collar, and of any
  ordering of the components of $\partial\Sigma$, and, by Remark~\ref{rem:constant-coupling}, of whether $\phi$ is first homotoped to $0$. Each elliptic node contributes
  a factor of even index $(-1)+3$.
\item It varies continuously in families of coupled bundle pairs
  with continuously varying orientations, including families that form or smooth nodes.
\item It depends on the trivialisation $\tau$ in
  Definition~\ref{def:holomorphic-orientation} only through
  $\mathfrak o=[\tau]$, and $\mathsf o_{-\mathfrak o}=-\mathsf o_{\mathfrak o}$.
\end{enumerate}
\end{remark}

\begin{lemma}[The standard punctured disc]\label{lem:standard-punctured-disc}
Let $\mathbb D_0=\{z\in\mathbb C:|z|\leq 1\}$ be the unit disc with an interior marked point at $0$, equipped with the bundle pair $(\mathbb C^3,\mathbb R^3)$, the operator $\dbar$, a constant coupling map $\phi$, and the standard trivialisation $(e_1,e_2,e_3)$ of $\mathbb R^3$. Here $\partial_\vartheta=iz\partial_z$ is the anticlockwise boundary rotation. As $\Conf_{\mathbb D_0}$ and $\dbar$ are surjective with kernels $\mathbb R(\partial_\vartheta)$ and $\mathbb R^3$, the complex $D_{\mathbb D_0,\phi}$ is quasi-isomorphic to
\[
C^{\mathbb R}_\phi:=\bigl(\mathbb R(\partial_\vartheta)\xrightarrow{\;\phi\;}\mathbb R^3\bigr)
\]
in degrees $-1$ and $0$. The holomorphic orientation of $\det D_{\mathbb D_0,\phi}$ is induced by $\partial_\vartheta$ and $e_1\wedge e_2\wedge e_3$ via the Euler isomorphism. So for $\phi=0$ it is $\partial_\vartheta^{[-1]}\otimes(e_1\wedge e_2\wedge e_3)$, and for $\phi\neq0$ a basis $([v_1],[v_2])$ of $H^0(D_{\mathbb D_0,\phi})=\mathbb R^3/\mathbb R(\phi(\partial_\vartheta))$ is oriented if and only if $(\phi(\partial_\vartheta),v_1,v_2)$ is an oriented basis of $\mathbb R^3$.
\end{lemma}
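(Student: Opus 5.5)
The plan is to run Definition~\ref{def:holomorphic-orientation} directly on $\mathbb D_0$. Its data are already in standard form on a collar of $\partial\mathbb D_0$: the operator is $\dbar$, the coupling is constant, and $\tau=(e_1,e_2,e_3)$ extends constantly to a complex frame of $\mathbb C^3$. By Remark~\ref{rem:constant-coupling} and the continuity of all the orientations in $s\phi$, it is enough to treat $\phi=0$ first. The case $\phi\neq0$ then follows by transporting along $s\mapsto s\phi$, because the Euler isomorphism for the finite-dimensional model complexes varies continuously.

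\textbf{Step 1 (the nodal limit).} Stretching the neck along $\partial\mathbb D_0$ (Construction~\ref{stretch}) gives the nodal surface $\Sigma'$ with one elliptic node $q$. Its closed normalisation $\Sigma_c$ is $\mathbb{CP}^1$ with marked points at $0$ and at $q=\infty$. On $\Sigma_c$:
\begin{itemize}
\item $\Conf_{\Sigma_c}$ is surjective, with kernel $\mathbb C\cdot z\partial_z=\mathbb R(\partial_\vartheta)\oplus\mathbb R(\nu_q)$;
\item $\dbar$ on $\mathbb C^3$ is surjective, with kernel the constants $\mathbb C^3$.
\end{itemize}
So $D_{\Sigma_c,\phi_c}$ is quasi-isomorphic to $(\mathbb C\xrightarrow{\phi}\mathbb C^3)$ in degrees $-1,0$. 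Since $D_{\Sigma_c}$ is already complex-linear, step (iii) of Definition~\ref{def:holomorphic-orientation} just gives this complex its complex orientation. Likewise $D_{\Sigma',\phi}$ is quasi-isomorphic to $C^{\mathbb R}_\phi=(\mathbb R(\partial_\vartheta)\to F_q=\mathbb R^3)$. Its constant extension identifies it with $D_{\mathbb D_0,\phi}$, since the neck-stretching family consists of discs with surjective operators and constant kernels. By naturality of $\Psi$ under quasi-isomorphism, the triple~\eqref{eq:elliptic-orientation-triple} reduces to the finite-dimensional exact triple
\[
0\to\bigl(\mathbb R(\partial_\vartheta)\to\mathbb R^3\bigr)\to\bigl(\mathbb C\to\mathbb C^3\bigr)\to\bigl(\mathbb R(\nu_q)\to J\mathbb R^3\bigr)\to0 .
\]

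\textbf{Step 2 (the sign check, the main obstacle).} With $\phi=0$ all differentials vanish. We must show that the additivity isomorphism sends
\[
\bigl(\partial_\vartheta^{[-1]}\otimes e_{123}\bigr)\otimes\bigl(\nu_q^{[-1]}\otimes Je_{123}\bigr),
\]
where the second factor is the orientation of Definition~\ref{def:Q orientation}, to the complex orientation of $\det\mathbb C^{[-1]}\otimes\det\mathbb C^3$. I would compute this using the conventions of Appendix~\ref{app:additivity} and collect three signs.
\begin{itemize}
\item \emph{Koszul reordering.} Moving the two degree-$0$ factors past each other and past the $\nu_q^{[-1]}$ factor, taking account of the graded indices.
\item \emph{Dual factor.} Since $\nu_q=-j\partial_\vartheta$, the reversed-dual complex orientation $(j\partial_\vartheta)^{[-1]}\wedge\partial_\vartheta^{[-1]}$ equals $-\nu_q^{[-1]}\wedge\partial_\vartheta^{[-1]}$. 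Comparing this with the order $\partial_\vartheta^{[-1]},\nu_q^{[-1]}$ produced by the triple gives a further sign.
\item \emph{Permutation.} Reordering $e_1\wedge e_2\wedge e_3\wedge Je_1\wedge Je_2\wedge Je_3$ into the complex order $e_1\wedge Je_1\wedge e_2\wedge Je_2\wedge e_3\wedge Je_3$ costs $(-1)^{3}$.
\end{itemize}
The content of the lemma is that these three signs multiply to $+1$. I expect this bookkeeping to be the only genuinely delicate point, and I would carry it out explicitly with the Euler-isomorphism sign rules. The alternative route through Remark~\ref{rem:disc-refinement-orientation}, with the splitting $C_q=C^{\mathbb R}_q\oplus Q_{q,\phi}$, relies on this lemma, so it cannot be used here.

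\textbf{Step 3 (the case $\phi\neq0$).} With $v=\phi(\partial_\vartheta)\neq0$, the Euler isomorphism for $\mathbb R(\partial_\vartheta)\xrightarrow{\phi}\mathbb R^3$ identifies $\partial_\vartheta^{[-1]}\otimes e_{123}$ with the quotient orientation on $\mathbb R^3/\mathbb R v$. That is, $([v_1],[v_2])$ is positively oriented exactly when $(v,v_1,v_2)$ is an oriented basis of $\mathbb R^3$, using the degree-$(-1)$ pairing convention $\det V^{[-1]}\otimes\det V\to\mathbb R$. Continuity in $s\in[0,1]$ then shows that this is the transported holomorphic orientation, which completes the proof.
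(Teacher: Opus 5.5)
Your outline follows the paper's proof step for step: you stretch the neck to $\mathbb{CP}^1_{0,\infty}$ with an elliptic node at $\infty$, replace $\mathfrak T_{\mathrm{ell}}$ by the split finite-dimensional triple $C^{\mathbb R}_\phi\to C_\phi\to Q_{\infty,\phi}$, reduce to $\phi=0$ by continuity in $s\phi$, and read off the case $\phi\neq0$ from the Euler isomorphism. Your Step~3 is correct.

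The gap is Step~2, which is the entire content of the lemma. You assert that the signs multiply to $+1$ without computing them. Moreover, two of the three entries in your sign ledger are misidentified, so the assertion is not supported by what you wrote.

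First, the graded wedge \eqref{gwedge} never interchanges the two degree-$0$ factors. The only Koszul sign is $(-1)^{a_0b_{-1}}=(-1)^3$, from passing $e_1\wedge e_2\wedge e_3$ (index $3$) over $\nu_q^{[-1]}$ (index $-1$). The result is
\[
-\bigl(\partial_\vartheta^{[-1]}\wedge\nu_q^{[-1]}\bigr)\otimes\bigl(e_1\wedge e_2\wedge e_3\wedge Je_1\wedge Je_2\wedge Je_3\bigr).
\]

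Second, the dual comparison contributes no sign. By your own formula,
\[
(j\partial_\vartheta)^{[-1]}\wedge\partial_\vartheta^{[-1]}=-\nu_q^{[-1]}\wedge\partial_\vartheta^{[-1]}=\partial_\vartheta^{[-1]}\wedge\nu_q^{[-1]},
\]
which is exactly the degree-$(-1)$ factor above. Equivalently, it is $(\nu_q\wedge\partial_\vartheta)^{[-1]}$, the left dual of the complex orientation $\nu_q\wedge j\nu_q$.

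The only remaining sign is your permutation sign $(-1)^3$, which cancels the Koszul sign and yields the complex orientation of $C_0$. Had the dual comparison really produced ``a further sign'' on top of the Koszul and permutation signs, the product would be $-1$ and the lemma would be false. So you need to replace the deferred bookkeeping with this explicit three-line computation, which is what the paper does.
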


\begin{proof}
Stretching the neck along $\partial\mathbb D_0$, keeping the coupling constant, just enlarges the disc, and in the limit gives $\mathbb{CP}^1$ with an interior marked point at $0$ and an elliptic boundary node at $q=\infty$, with closed normalisation $\Sigma_c=\mathbb{CP}^1_{0,\infty}$. At $q$, the outward normal is $\nu_\infty=z\partial_z$, so $\partial_\vartheta=i\nu_\infty$. As $\Conf_{\Sigma_c}$ and $\dbar$ are surjective with kernels $\mathbb C(\nu_\infty)$ and $\mathbb C^3$, the triple~\eqref{eq:elliptic-orientation-triple} is quasi-isomorphic to
\[
0\longrightarrow C^{\mathbb R}_\phi\longrightarrow C_\phi\longrightarrow Q_{\infty,\phi}\longrightarrow0,
\qquad
C_\phi:=\bigl(\mathbb C(\nu_\infty)\xrightarrow{\;\phi\;}\mathbb C^3\bigr),
\]
which is split, $C_\phi=C^{\mathbb R}_\phi\oplus Q_{\infty,\phi}$, because $\phi(\partial_\vartheta)\in\mathbb R^3$ and $\phi(\nu_\infty)\in i\mathbb R^3$. As all orientations involved vary continuously along $s\mapsto s\phi$, it suffices to check that, for $\phi=0$, the graded wedge of the given orientation of $C^{\mathbb R}_0$ and the orientation $\nu_\infty^{[-1]}\otimes(ie_1\wedge ie_2\wedge ie_3)$ of $Q_{\infty,0}$ from Definition~\ref{def:Q orientation} is the complex orientation of $C_0$. Moving $\det\mathbb R^3$ past $\det\mathbb R(\nu_\infty)^{[-1]}$ introduces the Koszul sign $(-1)^3$, so this graded wedge is
\[
\begin{aligned}
&-\bigl(\partial_\vartheta^{[-1]}\wedge\nu_\infty^{[-1]}\bigr)\otimes(e_1\wedge e_2\wedge e_3\wedge ie_1\wedge ie_2\wedge ie_3)\\
&\qquad=(\nu_\infty\wedge\partial_\vartheta)^{[-1]}\otimes(e_1\wedge ie_1\wedge e_2\wedge ie_2\wedge e_3\wedge ie_3),
\end{aligned}
\]
which is the complex orientation, as $\partial_\vartheta=i\nu_\infty$.
\end{proof}

\subsection{The anti-holomorphic orientation convention}
\label{subsec:antiholomorphic-orientations}

Orientations in this convention differ from holomorphic orientations by the explicit sign $\epsilon_{\mathrm{ah}}$ of~\eqref{eq:antiholomorphic-sign-index}. This sign changes under a disconnected hyperbolic smoothing but not under an elliptic or connected hyperbolic smoothing (Remark~\ref{rem:antiholomorphic-skein-signs}), which is what makes the skein relations uniform.

Each holomorphic curve $u:(\Sigma,j)\to(X,J)$ corresponds to an
anti-holomorphic curve $\bar u:(\bar\Sigma,-j)\to(X,J)$, where
$\bar\Sigma$ has the same underlying surface as $\Sigma$, with the
opposite complex structure, and $\bar u$ is the same underlying map. The moduli space of anti-holomorphic curves has its own natural orientation.
In the anti-holomorphic case, the equation $\dbar_Ju=0$ becomes $\partial_J\bar u=0$.
This identifies the two deformation problems as real Fredholm
complexes, but changes the complex structures used to orient them.

Similarly, each coupled bundle pair $(E,F,D_\Sigma,\phi)$ corresponds to an anti-holomorphic coupled bundle pair $(E,F,D_{\bar \Sigma},\phi)$.  For this, we  retain $(E,F)$
and its orientation data, and regard $D_\Sigma$ as
\[
D_{\bar\Sigma}:\Omega^0(E,F)
\longrightarrow\Omega^{1,0}_{\bar\Sigma}(E),
\]
using the identification
$\Omega^{0,1}_{\Sigma}(E)=\Omega^{1,0}_{\bar\Sigma}(E)$.
The coupling is now complex-antilinear:
\[
\phi:T^{\log}\bar\Sigma\longrightarrow E,
\qquad \phi\circ(-j)=-J\circ\phi.
\]
It still maps $T^{\log}\partial\bar\Sigma$ into $F$ and induces
a real chain map from $\Conf_{\bar\Sigma}$ to $D_{\bar\Sigma}$.
In degree one this map is again composition with $\phi$, now taking
tangent-valued $(0,1)$-forms to $E$-valued $(1,0)$-forms.

In the case that $\Sigma$ has no boundary, the identification $D_{\Sigma}\cong D_{\bar\Sigma}$ is complex-linear, and therefore preserves the natural complex orientation. In contrast, identifying vector fields and their $\dbar$-derivatives as real
tensors gives a conjugate-linear chain isomorphism
$\Conf_\Sigma\cong\Conf_{\bar\Sigma}$
which therefore modifies the natural complex orientation by a factor of $(-1)^{(\ind\Conf_\Sigma)/2}$.
These two identifications induce a canonical real isomorphism
\[
D_{\Sigma,\phi}\cong
D_{\bar\Sigma,\phi}
:=\Cone\bigl(\Conf_{\bar\Sigma}
\xrightarrow{\;\phi\;}D_{\bar\Sigma}\bigr),
\]
which therefore changes the canonical complex orientation by the same sign.

For domains with boundary, extending this convention also requires
a choice of orientation for the local boundary factors.

\begin{defn}[The anti-holomorphic orientation]
\label{def:antiholomorphic-orientation}
Apply the homotopy construction of
Definition~\ref{def:holomorphic-orientation} to the corresponding
anti-holomorphic problem. After homotoping $\phi$ to zero, use the
complex orientations of the fixed-domain and conformal factors
on the closed normalisation. In the elliptic boundary node exact
triple, retain the orientation of $JF_q$ from Definition~\ref{def:Q orientation},
but orient each line $\mathbb R(\nu_q)$ by the \emph{inward} normal $-\nu_q$. Thus each node factor has the
opposite orientation to that used in the holomorphic convention.

The resulting orientation, transported back along the homotopies
and the canonical real isomorphism above, is the
\emph{anti-holomorphic orientation}
$\mathsf o^{\mathrm{ah}}_{\mathfrak o}(D_{\Sigma,\phi})$.
\end{defn}

To compare the two conventions,  let $\Sigma_c$ be the closed normalisation in the elliptic
node model. The conformal exact triple gives
\[
\ind\Conf_{\Sigma_c}=\ind\Conf_\Sigma+b_{\mathbb R}(\Sigma).
\]
Conjugation on the closed conformal factor therefore contributes
$(-1)^{(\ind\Conf_\Sigma+b_{\mathbb R}(\Sigma))/2}$, and reversing the $b_{\mathbb R}(\Sigma)$ node
factors contributes $(-1)^{b_{\mathbb R}(\Sigma)}$. Consequently,
\[
\mathsf o^{\mathrm{ah}}_{\mathfrak o}(D_{\Sigma,\phi})
=\epsilon_{\mathrm{ah}}(\Sigma)\,
\mathsf o_{\mathfrak o}(D_{\Sigma,\phi}),
\]
where
\begin{equation}
\label{eq:antiholomorphic-sign-index}
\epsilon_{\mathrm{ah}}(\Sigma)
:=(-1)^{(\ind\Conf_\Sigma-b_{\mathbb R}(\Sigma))/2}
=(-1)^{(3\chi(\Sigma)+n-b_{\mathbb R}(\Sigma))/2}.
\end{equation}
Here $n$ is the number of interior marked points, and we have only described the case with no
boundary marked points. For a smooth connected domain of genus
$g$, the exponent is $3-3g-2b_{\mathbb R}-n$, so it is an integer
and the sign is $(-1)^{1-g-n}$. In particular, adding a real
boundary component without changing $g$ or $n$ leaves this sign
unchanged. For nodal domains the comparison follows by smoothing
and continuity.

The choice of the inward normal in the definition is a boundary
normalisation of the convention. Keeping the outward normals $\nu_q$
instead would multiply the anti-holomorphic
orientation by $(-1)^{b_{\mathbb R}(\Sigma)}$.

\begin{remark}[Changing orientation conventions]
\label{rem:alternative-orientation-conventions}

Our orientation convention requires choosing an orientation for each elliptic node factor $\det Q_{q,\phi}$ in (\ref{eq:en comparison}), or equivalently, an orientation for the determinant line of the model Fredholm operator $D_{\rm disc}$ on the disc; see Remark~\ref{rem:disc-refinement-orientation}.  The opposite convention changes orientations by $(-1)^{b_{\mathbb R}(u)}$, where $b_{\mathbb R}(u)$ is the number of real boundary components.  This flips the relationship between the negative chamber and smoothing boundary wall orientations, because in each wall problem the number of boundary components changes by $\pm 1$. Welschinger used this opposite convention to orient discs in \cite{WelschingerOpenGW6}, resulting in an opposite sign comparison at the disconnected hyperbolic walls that occur in such a moduli space.

Another orientation convention could use the homotopy to a standard model to orient both $ D_{\Sigma}$ and $\Conf_{\Sigma}$ separately, with both orientations depending on an ordering of real boundary components. This would also modify the orientation convention by a sign depending on the parity of the real boundary components.

Both of the above modifications  fail to reconcile the sign discrepancy between the connected and disconnected crossing cases observed in Proposition~\ref{prop:disconnected-hyperbolic-orientation}
and Lemma~\ref{lem:connected-hyperbolic-orientation}.

If we instead use the anti-holomorphic orientation convention,  the relationship between the negative chamber  and smoothing boundary orientations of $\det D_{\rm wall}$ becomes identical for both  connected and disconnected crossing cases.

Section~\ref{sec:ramified-covers} explains this uniformity for ramified covers of an immersed curve. There, an auxiliary $\mathcal J$-complex orientation gives the same comparison at connected and disconnected hyperbolic walls, and differs from the holomorphic orientation by $(-1)^{1-g+b_{\mathbb R}}$ and from the anti-holomorphic orientation by $(-1)^{b_{\mathbb R}+n}$ (Lemma~\ref{Jcomparison2}). Under a hyperbolic smoothing, the second sign always changes, whereas the first changes only in the connected case.
\end{remark}

\subsection{Gluing compatibility}

We now check that the orientations are compatible with normalising interior nodes (Lemma~\ref{lem:gluing-orientation}), disjoint unions (Remark~\ref{rem:disjoint-union-orientation}), and adding interior marked points (Remark~\ref{rem:marked-point-orientation}). The first two allow us to discard components of the domain away from the wall (Lemma~\ref{lem:wall-excision}).

\begin{lemma}[Gluing orientation comparison]\label{lem:gluing-orientation}
Let $\Sigma$ be a bordered Riemann surface with no boundary marked points and a chosen interior node $\mathrm{n}$, and let $\pi\colon \Sigma'\to \Sigma$ be the partial normalisation of $\Sigma$ at $\mathrm{n}$, with interior marked points $p_1$ and $p_2$ recording the preimages of $\mathrm{n}$. Given a coupled bundle pair $(E,F,D_\Sigma,\phi)$ over $\Sigma$, let $(E',F',D_{\Sigma'},\phi')$ denote the pulled-back coupled bundle pair over $\Sigma'$.

Choosing an ordering $(p_1,p_2)$ of the preimages of $\mathrm{n}$ defines a natural gluing exact triple
\begin{equation*}\tag{$\mathfrak T_{\mathrm{n}}$}\label{eq:interior-gluing-triple}
0 \longrightarrow D_{\Sigma,\phi} \longrightarrow D_{\Sigma',\phi'} \longrightarrow \bigl(T^{\log}_{\mathrm{n}}\Sigma\xrightarrow{\;\phi_{\mathrm{n}}\;} E_{\mathrm{n}}\bigr) \longrightarrow 0,
\end{equation*}
where the two-term complex $\bigl(T^{\log}_{\mathrm{n}}\Sigma\xrightarrow{\;\phi_{\mathrm{n}}\;} E_{\mathrm{n}}\bigr)$ is placed in degrees $-1$ and $0$. The additivity isomorphism
\[
\Psi_{\mathfrak T_{\mathrm{n}}}\colon \det D_{\Sigma,\phi}\otimes \det \bigl(T^{\log}_{\mathrm{n}}\Sigma\xrightarrow{\;\phi_{\mathrm{n}}\;} E_{\mathrm{n}}\bigr) \xrightarrow{\;\cong\;} \det D_{\Sigma',\phi'}
\]
combined with the Euler isomorphism $\det \bigl(T^{\log}_{\mathrm{n}}\Sigma\xrightarrow{\;\phi_{\mathrm{n}}\;} E_{\mathrm{n}}\bigr) \xrightarrow{\;\cong\;} (\det T^{\log}_{\mathrm{n}}\Sigma)^{[-1]}\otimes \det E_{\mathrm{n}}$ defines a natural gluing isomorphism
\begin{equation}\label{eq:interior-gluing-isomorphism}
\det D_{\Sigma,\phi}\otimes (\det T^{\log}_{\mathrm{n}}\Sigma)^{[-1]}\otimes \det E_{\mathrm{n}} \xrightarrow{\;\cong\;} \det D_{\Sigma',\phi'}
\end{equation}
which is independent of the ordering of $p_1$ and $p_2$.

Given an oriented trivialisation of $F=F'$ over $\partial \Sigma=\partial \Sigma'$, the gluing isomorphism \eqref{eq:interior-gluing-isomorphism} preserves orientations when:
\begin{enumerate}[(i)]
\item $\det D_{\Sigma,\phi}$ and $\det D_{\Sigma',\phi'}$ are given their induced holomorphic orientations, and $\det T^{\log}_{\mathrm{n}}\Sigma$ and $\det E_{\mathrm{n}}$ are given their complex orientations; or
\item $\det D_{\Sigma,\phi}$ and $\det D_{\Sigma',\phi'}$ are given their induced anti-holomorphic orientations, $\det E_{\mathrm{n}}$ is given its complex orientation, and $\det T^{\log}_{\mathrm{n}}\Sigma$ is given the complex conjugate orientation.
\end{enumerate}
\end{lemma}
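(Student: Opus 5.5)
First I will construct the gluing triple $\mathfrak T_{\mathrm n}$ and prove the ordering independence; then I will check orientations. Sections on $\Sigma'$ descend to $\Sigma$ exactly when their values at $p_1,p_2$ agree. I will take the quotient map $\xi\mapsto\xi(p_1)-\xi(p_2)\in E_{\mathrm n}$. Logarithmic vector fields on $\Sigma'$ descend exactly when their residues match. On the conformal side I will take $v\mapsto \operatorname{res}_{p_1}v+\operatorname{res}_{p_2}v$, identified with $T^{\log}_{\mathrm n}\Sigma$ via the residue on the $p_1$ branch. For $\phi'$ to give a chain map I must check compatibility: $\phi(v)(p_1)-\phi(v)(p_2)=\phi_{\mathrm n}(\text{residue sum})$. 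This holds because $\phi$ maps a log field with residue $a$ at $p_i$ to $\phi_{p_i}(a\, z\partial_z)$, and the node identifications of $T^{\log}$ on the two branches carry a sign (the generator is $(z\partial_z,-w\partial_w)$). I will fix these signs so that they agree, and then take cones.

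Swapping $p_1,p_2$ multiplies both quotient maps by $-1$. It therefore acts on the quotient complex by $-1$ on $T^{\log}_{\mathrm n}\Sigma$ (real dimension 2) and on $E_{\mathrm n}$ (real dimension 6). Both determinants change by $(-1)^{\mathrm{even}}$, so \eqref{eq:interior-gluing-isomorphism} is unchanged.

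For (i), I will use continuity of all three orientations (Remark~\ref{rem:holomorphic-orientation-properties}(b), and continuity of $\Psi$ under homotopy) to deform to a standard model. Concretely, I will homotope $\phi$ to $0$, smooth any hyperbolic nodes, put the data in standard form near $\partial\Sigma$, and stretch necks along the boundary circles away from $\mathrm n$. The same homotopies apply to $\Sigma$ and $\Sigma'$ simultaneously, since $\mathrm n$ is interior. This reduces the problem to surfaces whose boundary consists of elliptic nodes $q_j$. There both orientations are defined through $\mathfrak T_{\mathrm{ell}}$ from the closed normalisations $\Sigma_c$ and $\Sigma'_c$. The gluing triple at $\mathrm n$ and the triples $\mathfrak T_{\mathrm{ell}}$ fit into a $3\times3$ diagram of exact triples. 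The associativity of additivity isomorphisms from Appendix~\ref{app:additivity} then reduces the claim to the closed case. In moving factors past one another, the node factors $Q_{q,\phi}$ have even index $2$ and the gluing factor has even index $6-2=4$, so no Koszul signs arise.

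In the closed case every term is complex after the homotopy to $D^{(1,0)}$. The gluing triple is then a triple of complex-linear Fredholm complexes, and complex orientations are compatible with the additivity isomorphism. The one subtlety is the sign convention for the reversed duals $\det V^{[-1]}$, and I expect this to be the main obstacle. I will handle it by checking that the complex orientation of $(T^{\log}_{\mathrm n}\to E_{\mathrm n})$ under the Euler isomorphism is the one stated, with $(\det T^{\log}_{\mathrm n})^{[-1]}$ carrying the reversed-order complex dual orientation recorded in Section~\ref{subsec:determinant-summary}. I will verify this in a finite-dimensional model: a complex-linear finite-dimensional exact triple and the Appendix formula for $\Psi$, with cohomology in degrees $-1$ and $0$.

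For (ii), I will run the same reduction. The node factors $Q_{q}$ are reoriented identically on both sides, so they cancel. On the closed normalisation, the real identification $D_{\Sigma,\phi}\cong D_{\bar\Sigma,\phi}$ is complex-linear on the bundle parts $D$ and $E_{\mathrm n}$. It is conjugate-linear on the conformal parts $\Conf_{\Sigma'_c}$, $\Conf_{\Sigma_c}$ and $T^{\log}_{\mathrm n}$. The anti-holomorphic complex orientation of the quotient term therefore gives $E_{\mathrm n}$ its complex orientation and $T^{\log}_{\mathrm n}\Sigma$ its conjugate orientation. The exact triple still respects the $\bar\Sigma$ complex structures, so applying the complex case to $\bar\Sigma$ gives (ii) directly.
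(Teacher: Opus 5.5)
Your proposal is correct. The construction of $\mathfrak T_{\mathrm n}$, the ordering argument, and part (i) follow the paper's proof essentially step for step.

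For the construction, the paper identifies both fibres $T^{\log}_{p_i}\Sigma'$ with $T^{\log}_{\mathrm n}\Sigma$ via $\operatorname{res}_{p_1}$ and $-\operatorname{res}_{p_2}$. Because $\phi'$ is pulled back, the compatibility $\phi'(v)(p_1)-\phi'(v)(p_2)=\phi_{\mathrm n}(v(p_1)-v(p_2))$ is then automatic rather than a sign you need to fix. For (i), the paper uses the same $3\times 3$ grid of elliptic node triples and gluing triples, together with Lemma~\ref{lem:exact-squares} and the even indices of $Q_{q,0}$ and $C_{\mathrm n,0}$, to reduce to the complex-linear closed problem. Your extra check on reversed duals is harmless but automatic, since the complex orientation of $\det V^{[\pm1]}$ is defined to be the dual orientation.

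You differ from the paper in (ii). The paper does not rerun the reduction. It deduces (ii) from (i) using the sign \eqref{eq:antiholomorphic-sign-index}. Since $\ind\Conf_{\Sigma'}=\ind\Conf_\Sigma+2$ and $b_{\mathbb R}(\Sigma')=b_{\mathbb R}(\Sigma)$, one gets $\epsilon_{\mathrm{ah}}(\Sigma')=-\epsilon_{\mathrm{ah}}(\Sigma)$. Conjugating the orientation of the complex line $T^{\log}_{\mathrm n}\Sigma$ supplies exactly this sign.

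Your direct route is the alternative the paper mentions (``can be proved similarly''). It is self-contained and explains why the conjugate orientation appears: it is just the complex orientation of $T^{\log}_{\mathrm n}\bar\Sigma$. The cost is two checks, both immediate:
\begin{enumerate}
\item the reoriented node factors (oriented by $-\nu_q$) occur identically in both rows of the grid;
\item the real identification $D_{\Sigma,\phi}\cong D_{\bar\Sigma,\phi}$ carries the gluing triple for $\Sigma$ to the one for $\bar\Sigma$.
\end{enumerate}
The paper's route is a one-line index count once $\epsilon_{\mathrm{ah}}$ is available, and $\epsilon_{\mathrm{ah}}$ is needed anyway for the skein sign comparisons.
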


\begin{proof}
We construct the triple and check that it is independent of the ordering, then compare orientations by following the homotopies of Definition~\ref{def:holomorphic-orientation} to a closed complex-linear problem, and finally deduce the anti-holomorphic case from~\eqref{eq:antiholomorphic-sign-index}.

Under the canonical identification $T^{\log}\Sigma' = \pi^*T^{\log}\Sigma$, the fibres $T^{\log}_{p_1}\Sigma'$ and $T^{\log}_{p_2}\Sigma'$ are both identified with $T^{\log}_{\mathrm{n}}\Sigma$ (in residue coordinates, $\operatorname{res}_{p_1}$ and $-\operatorname{res}_{p_2}$ define the same complex-linear isomorphism $T^{\log}_{\mathrm{n}}\Sigma\cong\mathbb C$). A section $v$ of $(T^{\log}\Sigma',T^{\log}\partial\Sigma')$ descends to $(T^{\log}\Sigma,T^{\log}\partial\Sigma)$ if and only if $v(p_1)-v(p_2)=0$ in $T^{\log}_{\mathrm{n}}\Sigma$ (equivalently, $\operatorname{res}_{p_1}v+\operatorname{res}_{p_2}v=0$), while $\Omega^{0,1}(T^{\log}\Sigma)=\Omega^{0,1}(T^{\log}\Sigma')$. Likewise, a section $\xi$ of $(E',F')$ descends to $(E,F)$ if and only if $\xi(p_1)-\xi(p_2)=0$ in $E_{\mathrm{n}}$, while $\Omega^{0,1}(E)=\Omega^{0,1}(E')$. Since $\phi'$ is pulled back from $\phi$, we have $\phi'(v)(p_1)-\phi'(v)(p_2)=\phi_{\mathrm{n}}(v(p_1)-v(p_2))$. Taking cones of the vertical maps in the resulting commutative diagram of exact triples
\[
\begin{tikzcd}
0 \rar & \Conf_{\Sigma} \rar\dar{\phi} & \Conf_{\Sigma'} \rar{\operatorname{ev}_{\mathrm{n}}}\dar{\phi'} & (T^{\log}_{\mathrm{n}}\Sigma\to 0) \rar\dar{\phi_{\mathrm{n}}} & 0 \\
0 \rar & D_{\Sigma} \rar & D_{\Sigma'} \rar{\operatorname{ev}_{\mathrm{n}}} & (E_{\mathrm{n}}\to 0) \rar & 0
\end{tikzcd}
\]
produces the exact triple \eqref{eq:interior-gluing-triple}. Reversing the ordering of $p_1$ and $p_2$ multiplies the evaluation maps to $T^{\log}_{\mathrm{n}}\Sigma$ and $E_{\mathrm{n}}$ by $-1$, so acts trivially on the determinants, as these spaces are even dimensional.  Therefore the gluing isomorphism \eqref{eq:interior-gluing-isomorphism} is independent of the ordering of $p_1$ and $p_2$.

The gluing isomorphism is continuous in homotopies, so can be followed through the homotopies defining the holomorphic orientation in Definition~\ref{def:holomorphic-orientation}. In particular, we homotope the coupling map to $0$, smooth hyperbolic nodes, use our trivialisation to homotope $(E,F,D_\Sigma)$ to standard data  $(\mathbb C^3,\mathbb R^3,\dbar)$ in a collar neighbourhood of the boundary (not containing our node $\rm n$), and stretch the neck along each boundary component. The partial normalisation at our node and the pullback of coupled bundle pairs remain defined throughout this homotopy, giving a pulled-back homotopy and a continuous family of gluing isomorphisms.

In the limit, we obtain a nodal surface $\Sigma_{\mathrm{lim}}$ whose only boundary components are elliptic boundary nodes $q_1,\dots,q_b$ disjoint from $\mathrm{n}$. Write $\Sigma'_{\mathrm{lim}}$ for the partial normalisation of $\Sigma_{\mathrm{lim}}$ at $\mathrm{n}$. Let $\Sigma_c$ and $\Sigma'_c$ denote their closed normalisations at $q_1,\dots,q_b$; then $\Sigma'_c$ is the partial normalisation of $\Sigma_c$ at $\mathrm{n}$. These fit into the following commuting grid of exact triples:
\[
\begin{tikzcd}
0 \rar & D_{\Sigma_{\mathrm{lim}},0} \rar\dar & D_{\Sigma_c,0} \rar\dar & \bigoplus_q Q_{q,0} \rar\dar{=} & 0 \\
0 \rar & D_{\Sigma'_{\mathrm{lim}},0} \rar\dar & D_{\Sigma'_c,0} \rar\dar & \bigoplus_q Q_{q,0} \rar & 0 \\
& C_{\mathrm{n},0} \rar{=} & C_{\mathrm{n},0} & &
\end{tikzcd}
\]
Here the rows are the elliptic boundary node triples  \eqref{eq:elliptic-orientation-triple}, the first two columns are the interior gluing triples $\mathfrak T_{\mathrm{n}}$, and $C_{\mathrm{n},0} = \bigl(T^{\log}_{\mathrm{n}}\Sigma\xrightarrow{\;0\;} E_{\mathrm{n}}\bigr)$. Since $\det C_{\mathrm{n},0}$ and every $\det Q_{q,0}$ have even index, commuting these factors in the determinant-line additivity isomorphisms introduces no Koszul signs; see Lemma~\ref{lem:exact-squares}. By Definition~\ref{def:holomorphic-orientation}(iv), $\det D_{\Sigma_{\mathrm{lim}},0}$ and $\det D_{\Sigma'_{\mathrm{lim}},0}$ are oriented from $\det D_{\Sigma_c,0}$ and $\det D_{\Sigma'_c,0}$ via the rows using the same orientations on $\det Q_{q,0}$. It therefore suffices to check that the middle column preserves orientations on the closed surfaces $\Sigma_c$ and $\Sigma'_c$.

On $\Sigma_c$, the linear homotopy from $D_{\Sigma_c}$ to its complex-linear part $D^{(1,0)}_{\Sigma_c}$ pulls back to the linear homotopy from $D_{\Sigma'_c}$ to $D^{(1,0)}_{\Sigma'_c}$ on $\Sigma'_c$. When $D_{\Sigma_c}$ is complex-linear, the exact triple
\[
0 \longrightarrow D_{\Sigma_c,0} \longrightarrow D_{\Sigma'_c,0} \longrightarrow C_{\mathrm{n},0} \longrightarrow 0
\]
is an exact triple of complex Fredholm complexes, so the additivity isomorphism preserves the complex orientation. As the gluing isomorphism is orientation preserving in this limiting case, and continuous under homotopies, it follows that the gluing isomorphism was orientation preserving at the start of the homotopy.

The anti-holomorphic case can be proved similarly, but also follows easily from the holomorphic case using \eqref{eq:antiholomorphic-sign-index}. For a surface $S$ without boundary marked points, the anti-holomorphic orientation is related to the holomorphic orientation by
\[
\mathsf o^{\mathrm{ah}}_{\mathfrak o}(D_{S,\phi}) = \epsilon_{\mathrm{ah}}(S)\,\mathsf o_{\mathfrak o}(D_{S,\phi}), \qquad \epsilon_{\mathrm{ah}}(S) = (-1)^{(\ind\Conf_S - b_{\mathbb R}(S))/2}.
\]
The exact sequence $0\to\Conf_{\Sigma}\to\Conf_{\Sigma'}\to (T^{\log}_{\mathrm{n}}\Sigma\to 0)\to 0$ gives
\[
\ind\Conf_{\Sigma'} = \ind\Conf_{\Sigma} + \dim_{\mathbb R} T^{\log}_{\mathrm{n}}\Sigma = \ind\Conf_{\Sigma} + 2,
\]
while $b_{\mathbb R}(\Sigma') = b_{\mathbb R}(\Sigma)$ because normalising an interior node does not affect the number of real boundary components.  Hence $\epsilon_{\mathrm{ah}}(\Sigma') = -\epsilon_{\mathrm{ah}}(\Sigma)$. Using the complex conjugate orientation  of $\det T^{\log}_{\mathrm{n}}\Sigma$  compensates for this sign change, so the gluing isomorphism \eqref{eq:interior-gluing-isomorphism} also preserves orientations in the anti-holomorphic convention.

\end{proof}

\begin{remark}[Disjoint union orientation comparison]\label{rem:disjoint-union-orientation}
Suppose that $\Sigma = \Sigma_1\amalg \Sigma_2$ is a disjoint union of bordered Riemann surfaces with no boundary marked points, carrying a coupled bundle pair $(E,F,D_\Sigma,\phi)$ that restricts to $(E_i,F_i,D_{\Sigma_i},\phi_i)$ on $\Sigma_i$. The direct-sum decomposition $D_{\Sigma,\phi} = D_{\Sigma_1,\phi_1}\oplus D_{\Sigma_2,\phi_2}$ induces the canonical additivity isomorphism
\begin{equation}\label{eq:disjoint-union-isomorphism}
\Psi\colon \det D_{\Sigma_1,\phi_1}\otimes \det D_{\Sigma_2,\phi_2} \xrightarrow{\;\cong\;} \det D_{\Sigma,\phi}.
\end{equation}
Given an oriented trivialisation of $F$ over $\partial\Sigma = \partial\Sigma_1\amalg \partial\Sigma_2$, this isomorphism preserves both the induced holomorphic orientations and the induced anti-holomorphic orientations. Indeed, for the holomorphic convention, the homotopies of Definition~\ref{def:holomorphic-orientation} proceed independently on the two components, and in the elliptic boundary node limit both the closed complex-oriented factors $D_{(\Sigma_i)_c,0}$ and the node factors $Q_{q,0}$ have even index, so no Koszul signs arise from reordering them. For the anti-holomorphic convention, the correction sign \eqref{eq:antiholomorphic-sign-index} is multiplicative under disjoint union because $\ind\Conf_{\Sigma}=\ind\Conf_{\Sigma_1}+\ind\Conf_{\Sigma_2}$ and $b_{\mathbb R}(\Sigma)=b_{\mathbb R}(\Sigma_1)+b_{\mathbb R}(\Sigma_2)$, giving $\epsilon_{\mathrm{ah}}(\Sigma)=\epsilon_{\mathrm{ah}}(\Sigma_1)\epsilon_{\mathrm{ah}}(\Sigma_2)$.
\end{remark}

\begin{remark}[Adding an interior marked point]\label{rem:marked-point-orientation}
Let $\Sigma_p$ be obtained from $\Sigma$ by adding a new interior marked point at a smooth interior point $p\in\Sigma$, and let $(E,F,D_{\Sigma_p},\phi)$ be the induced coupled bundle pair on $\Sigma_p$. Using the canonical quasi-isomorphism $\Conf_{\Sigma_p}\xrightarrow{\;\cong\;}\Conf_{\Sigma;p}$ from \eqref{conf models}, evaluation of vector fields at $p$ gives an exact triple of deformation complexes
\begin{equation}\label{eq:marked-point-triple}
0 \longrightarrow D_{\Sigma_p,\phi} \longrightarrow D_{\Sigma,\phi} \longrightarrow T_p\Sigma \longrightarrow 0,
\end{equation}
where $T_p\Sigma$ is placed in degree $-1$, and thus an additivity isomorphism
\begin{equation}\label{eq:marked-point-isomorphism}
\Psi\colon \det D_{\Sigma_p,\phi}\otimes (\det T_p\Sigma)^{[-1]} \xrightarrow{\;\cong\;} \det D_{\Sigma,\phi}.
\end{equation}
Given an oriented trivialisation of $F$ over $\partial\Sigma$, the isomorphism \eqref{eq:marked-point-isomorphism} preserves orientations when:
\begin{enumerate}[(i)]
\item $\det D_{\Sigma_p,\phi}$ and $\det D_{\Sigma,\phi}$ carry their holomorphic orientations, and $\det T_p\Sigma$ carries its complex orientation; or
\item $\det D_{\Sigma_p,\phi}$ and $\det D_{\Sigma,\phi}$ carry their anti-holomorphic orientations, and $\det T_p\Sigma$ carries the complex conjugate orientation.
\end{enumerate}
Indeed, (i) follows by transporting \eqref{eq:marked-point-triple} along the homotopies of Definition~\ref{def:holomorphic-orientation} to the complex-linear closed problem (since $\det Q_{q,0}$ and $(\det T_p\Sigma)^{[-1]}$ have even index), while (ii) follows from \eqref{eq:antiholomorphic-sign-index} since $\ind\Conf_{\Sigma_p} = \ind\Conf_\Sigma - 2$ and $b_{\mathbb R}(\Sigma_p) = b_{\mathbb R}(\Sigma)$, so that $\epsilon_{\mathrm{ah}}(\Sigma_p) = -\epsilon_{\mathrm{ah}}(\Sigma)$.
\end{remark}

\subsection{Wall problem orientations}

The incidence triple~\eqref{eq:wall-triangle} and the smoothing triple~\eqref{eq:smoothing-triangle} give two ways to orient the wall complex, once $D_{\mathrm{norm}}$ and $D_{\mathrm{node}}$ are oriented. Viewing the wall as the boundary of a chamber in the normalised problem uses
\[
\Psi_{\mathfrak T_{\mathrm{inc}}}\colon
\det D_{\mathrm{wall}}\otimes\det\Cone(K)
\xrightarrow{\;\cong\;}
\det D_{\mathrm{norm}},
\]
where, for a transverse wall problem, $\det\Cone(K)=\det\coker K$ is the incidence normal line. Viewing the nodal problem as the boundary of its smoothings uses
\[
\Psi_{\mathfrak T_{\mathrm{sm}}}\colon
\det D_{\mathrm{node}}\otimes
(\det\mathbb R_{\mathrm{res}})^{[-1]}
\xrightarrow{\;\cong\;}
\det D'_{\mathrm{wall}},
\]
where $\mathbb R_{\mathrm{res}}$ represents the logarithmic smoothing direction. A canonical quasi-isomorphism $D'_{\mathrm{wall}}\simeq D_{\mathrm{wall}}$ identifies the determinant lines, and our main results determine the relative sign of the two resulting orientations of $\det D_{\mathrm{wall}}$.

\begin{defn}[Compatible orientation data]
\label{def:compatible-orientation-data}
 \emph{Compatible orientation data} for a hyperbolic wall problem $(E,F,D,\phi,p_1,p_2,I)$ is a trivialisation  $\tau$ of $F$ in which the identification $I:F_{p_1}\to F_{p_2}$ is the identity, and the induced  trivialisation $\tau_{\mathrm{node}}$ of $F_{\mathrm{node}}\to\Sigma_{\mathrm{node}}$. Compatible orientation data for an elliptic wall problem $(E,F,D,\phi,q,F_q)$ is a trivialisation $\tau_{\mathrm{node}}$ of $F_{\mathrm{node}}$ that is a constant frame of $F_q$ over the new boundary component, and the induced trivialisation $\tau$ of $F$. In both cases, write $\pi\colon\partial\Sigma\to\partial\Sigma_{\mathrm{node}}$ for the quotient map (hyperbolic) or the inclusion (elliptic), so that $\tau=\pi^*\tau_{\mathrm{node}}$.
\end{defn}

\begin{defn}[Orientations of $D_{\mathrm{norm}}$ and $D_{\mathrm{node}}$]
\label{def:norm-node-orientations}
Let $(\tau,\tau_{\mathrm{node}})$ be compatible orientation data, and set
\[\mathsf o_{\mathrm{norm}}:=\mathsf o_{[\tau]}(D_{\mathrm{norm}}), \quad \mathsf o_{\mathrm{node}}:=\mathsf o_{[\tau_{\mathrm{node}}]}(D_{\mathrm{node}}).\]
Similarly, set
\[\mathsf o^{\rm ah}_{\mathrm{norm}}:=\mathsf o^{\rm ah}_{[\tau]}(D_{\mathrm{norm}}), \quad \mathsf o^{\rm ah}_{\mathrm{node}}:=\mathsf o^{\rm ah}_{[\tau_{\mathrm{node}}]}(D_{\mathrm{node}}).\]
\end{defn}

\begin{defn}[Incidence orientation of $\det\Cone(K)$] \label{def:incidence-orientation} Orient the domain of the incidence map $K$ from
Section~\ref{subsec:wall-problems} by the ordered boundary
orientations in the hyperbolic case, and by the complex
orientation of $T_q\Sigma$ in the elliptic case. Orient its codomain
by the given orientation of $F_{p_1}$ in the hyperbolic case,
and by applying $J$ to the given orientation of $F_q$ in the elliptic case, as in Definition~\ref{def:Q orientation}. The Euler isomorphism~\eqref{eq:euler-isomorphism}  gives an orientation
$\mathsf o_{\mathrm{inc}}$ of $\det\Cone(K)$.
\end{defn}
When $K$ is injective, the incidence orientation corresponds to the outward normal
$-[n_+]$ of the positive chamber, where $n_+$ is the positive
normal specified in Section~\ref{subsec:wall-problems}.

\begin{defn}[Wall orientations]
\label{def:boundary-orientations}
Fix compatible orientation data, and use the resulting orientations
$\mathsf o_{\mathrm{norm}}$ and $\mathsf o_{\mathrm{node}}$ from
Definition~\ref{def:norm-node-orientations}. Identify
$\det D'_{\mathrm{wall}}$ with $\det D_{\mathrm{wall}}$ by the
canonical quasi-isomorphism.

\begin{enumerate}[(i)]
\item The \emph{positive chamber orientation}
$\mathsf o^+_\partial$ is determined by
\begin{equation}\label{eq:chamber-orientation}
(-1)^{\ind D_{\mathrm{wall}}}
\Psi_{\mathfrak T_{\mathrm{inc}}}
\bigl(\mathsf o^+_\partial\otimes\mathsf o_{\mathrm{inc}}\bigr)
=\mathsf o_{\mathrm{norm}}.
\end{equation}
The sign places the normal factor first. The \emph{negative
chamber orientation} is $\mathsf o^-_\partial:=-\mathsf o^+_\partial$. As $\ind D_{\mathrm{wall}}$ is odd, the negative chamber orientation is determined by
\begin{equation}\label{eq:negative-chamber-orientation}\Psi_{\mathfrak T_{\mathrm{inc}}}
\bigl(\mathsf o^-_\partial\otimes\mathsf o_{\mathrm{inc}}\bigr)
=\mathsf o_{\mathrm{norm}}.\end{equation}
Similarly, the anti-holomorphic version of the negative chamber orientation is determined by
\begin{equation}\label{eq:ah-chamber}\Psi_{\mathfrak T_{\mathrm{inc}}}
\bigl(\mathsf o^{-,\rm ah}_\partial\otimes\mathsf o_{\mathrm{inc}}\bigr)
=\mathsf o^{\rm ah}_{\mathrm{norm}}.\end{equation}
In the more general case that $\ind D_{\rm norm}$ may be odd, the right-hand sides of \eqref{eq:negative-chamber-orientation} and \eqref{eq:ah-chamber} acquire a sign $(-1)^{\ind D_{\rm norm}}$.
\item The \emph{smoothing boundary orientation}
$\mathsf o^{\mathrm{sm}}_\partial$ and corresponding anti-holomorphic smoothing boundary orientation $\mathsf o^{\mathrm{sm},\rm ah}_\partial$ are determined by
\begin{equation}\label{eq:smoothing-orientation}
\mathsf o^{\mathrm{sm}}_\partial
=\Psi_{\mathfrak T_{\mathrm{sm}}}
\bigl(\mathsf o_{\mathrm{node}}\otimes r_{\mathrm{out}}^{[-1]}\bigr), \quad \mathsf o^{\mathrm{sm},\rm ah}_\partial
=\Psi_{\mathfrak T_{\mathrm{sm}}}
\bigl(\mathsf o^{\rm ah}_{\mathrm{node}}\otimes r_{\mathrm{out}}^{[-1]}\bigr),
\end{equation}
where $r_{\mathrm{out}}=-1\in\mathbb R_{\mathrm{res}}$ is the
negative-residue generator and $r_{\mathrm{out}}^{[-1]}$ is its
left dual.  Its connecting class is the outward logarithmic
smoothing direction; see Remark~\ref{rem:residue-sign}. In the more general case that $\ind D_{\rm node}$ may be odd, both definitions in \eqref{eq:smoothing-orientation} should include an extra sign $(-1)^{\ind D_{\rm node}}$.
\end{enumerate}

\end{defn}

\begin{remark}[Outward-normal recipe for unobstructed models]
\label{rem:transverse-wall-recipe}
In the unobstructed model families of Sections~\ref{sec:disconnected-crossing}--\ref{connected-hyperbolic-section}, all cohomology is concentrated in degree~$0$. If $t\in(-1,1)$ is a transverse coordinate on $\mathcal M_{\mathrm{norm}}$ with $t>0$ in $\mathcal M^+_{\mathrm{norm}}$, and $\epsilon\in[0,1)$ is the plumbing coordinate on $\mathcal M_{\mathrm{smooth}}$ ($zw=\epsilon$), then Definition~\ref{def:boundary-orientations} reduces to the standard \emph{outward-normal-first} rule for a basis $w=(w_1,\dots,w_d)$ of $T\mathcal M_{\mathrm{wall}}$:

\begin{center}
\setlength{\fboxsep}{6pt}
\fbox{\small
\begin{tabular}{@{}l@{\quad}l@{\quad}l@{}}
\textbf{Orientation on $T\mathcal M_{\mathrm{wall}}$} & \textbf{Outward normal} & \textbf{Positively oriented frame} \\[2pt]
\hline\\[-7pt]
Positive chamber $\mathsf o^+_\partial = [w]$ & $-\partial_t$ & $(-\partial_t, w_1,\dots,w_d) \in T\mathcal M_{\mathrm{norm}}$ \\[2pt]
Negative chamber $\mathsf o^-_\partial = [w]$ & $+\partial_t$ & $(+\partial_t, w_1,\dots,w_d) \in T\mathcal M_{\mathrm{norm}}$ \\[2pt]
Smoothing boundary $\mathsf o^{\mathrm{sm}}_\partial = [w]$ & $-\epsilon\partial_\epsilon$ & $(-\epsilon\partial_\epsilon, w_1,\dots,w_d) \in T\mathcal M_{\mathrm{smooth}}$
\end{tabular}}
\end{center}

\smallskip\noindent
Indeed, $\mathsf o_{\mathrm{inc}}=[-\partial_t]$ in degree~$0$, and since $d=\ind D_{\mathrm{wall}}$ is odd,~\eqref{eq:negative-chamber-orientation} gives $\mathsf o_{\mathrm{norm}}=\mathsf o^-_\partial\wedge(-\partial_t)=(+\partial_t)\wedge\mathsf o^-_\partial = (-\partial_t)\wedge\mathsf o^+_\partial$. For $\mathfrak T_{\mathrm{sm}}$, the connecting map $\delta$ sends $r_{\mathrm{out}}=-1\in\mathbb R_{\mathrm{res}}$ (in degree~$-1$) to $-\epsilon\partial_\epsilon\in H^0(D_{\mathrm{node}})$ (Remark~\ref{rem:residue-sign}); by~\eqref{eq:euler-isomorphism-formula}, the $-1\to 0$ cancelling pair places $\delta(r_{\mathrm{out}})$ in the \emph{first} slot of $H^0(D_{\mathrm{node}})$, so $\mathsf o^{\mathrm{sm}}_\partial=\Psi_{\mathfrak T_{\mathrm{sm}}}(\mathsf o_{\mathrm{node}}\otimes r_{\mathrm{out}}^{[-1]})$ means $\mathsf o_{\mathrm{node}}=(-\epsilon\partial_\epsilon)\wedge\mathsf o^{\mathrm{sm}}_\partial$.
\end{remark}

\begin{remark} The comparison between the smoothing boundary and negative chamber orientations of $D_{\mathrm{wall}}$ does not depend on the choice of compatible orientation data, because a change affects $\mathsf o_{\rm node}$ and $\mathsf o_{\rm norm}$ by the same sign. Moreover, the comparison between these two orientations is locally constant in continuous families of wall problems over families of bordered Riemann surfaces, including families that form or smooth nodes. The comparison between the anti-holomorphic smoothing boundary and negative chamber orientations is similarly locally constant in families.  \end{remark}

Theorem~\ref{thm:main-orientation-comparison} states that $\mathsf o_{\partial}^{-,\rm ah}=\mathsf o_{\partial}^{\rm sm,  ah}$, whereas $\mathsf o_{\partial}^{-}=\mathsf o_{\partial}^{\rm sm}$ in the case of elliptic and connected hyperbolic wall problems, but $\mathsf o_{\partial}^{+}=\mathsf o^{\rm sm}_{\partial}$ for disconnected hyperbolic wall problems. We prove this by showing that the relative sign is unchanged by excision (Lemma~\ref{lem:wall-excision}), so that it depends only on the type of wall (Proposition~\ref{prop:wall-model-reduction}), and then computing it in each local model.

\begin{lemma}[Excision for wall orientation comparisons]\label{lem:wall-excision}
Let $(E,F,D_\Sigma,\phi)$ be a wall problem on a bordered Riemann surface $\Sigma$ with no boundary marked points. In both the holomorphic and anti-holomorphic orientation conventions, the relative sign between the smoothing boundary orientation and the negative chamber orientation of $\det D_{\mathrm{wall}}$ satisfies the following invariance properties:
\begin{enumerate}[(i)]
\item \emph{(Normalising an interior node)} If $\mathrm{n}$ is an interior node of $\Sigma$, the relative sign is unchanged by passing to the pulled-back wall problem on the partial normalisation $\Sigma'$ of $\Sigma$ at $\mathrm{n}$.
\item \emph{(Discarding a disjoint component)} If $\Sigma = \Sigma_1\amalg \Sigma_2$ is a disjoint union such that the wall points ($p_1,p_2$ in the hyperbolic case, or $q$ in the elliptic case) lie in $\Sigma_1$, the relative sign is unchanged by restricting the wall problem to $\Sigma_1$.
\end{enumerate}
In particular, if $\mathrm{n}$ is a separating interior node of $\Sigma$, normalising $\mathrm{n}$ and discarding the component not containing the wall points leaves the orientation comparison unchanged.
\end{lemma}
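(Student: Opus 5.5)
We compare the two orientations of $\det D_{\mathrm{wall}}$ by writing both in terms of data that is natural under the operations in (i) and (ii). We then check that each operation multiplies $\mathsf o_{\mathrm{norm}}$ and $\mathsf o_{\mathrm{node}}$ by the same factor. The negative chamber orientation is fixed by \eqref{eq:negative-chamber-orientation} from $\mathsf o_{\mathrm{norm}}$ and $\mathsf o_{\mathrm{inc}}$. The smoothing boundary orientation is fixed by \eqref{eq:smoothing-orientation} from $\mathsf o_{\mathrm{node}}$ and $r_{\mathrm{out}}^{[-1]}$. Both $\mathsf o_{\mathrm{inc}}$ and $r_{\mathrm{out}}$ depend only on a neighbourhood of the wall points. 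So it suffices to show two things. First, the incidence triple \eqref{eq:wall-triangle} and the smoothing triple \eqref{eq:smoothing-triangle} on $\Sigma$ are compatible with the corresponding triples on $\Sigma'$ (respectively $\Sigma_1$), via a commuting grid of exact triples. Second, the passage from $\Sigma$ to $\Sigma'$ (or $\Sigma_1$) changes $\mathsf o_{\mathrm{norm}}$ and $\mathsf o_{\mathrm{node}}$ by the same rule.

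For (i), the interior node $\mathrm n$ is disjoint from the wall points. The gluing triple \eqref{eq:interior-gluing-triple} therefore exists for each of $D_{\mathrm{norm}}$, $D_{\mathrm{wall}}$ (and $D'_{\mathrm{wall}}$) and $D_{\mathrm{node}}$, always with the same quotient $C_{\mathrm n}=(T^{\log}_{\mathrm n}\Sigma\xrightarrow{\phi_{\mathrm n}}E_{\mathrm n})$. I would assemble $3\times3$ grids in the style of Lemma~\ref{lem:exact-squares}: the columns are the gluing triples, and the rows are $\mathfrak T_{\mathrm{inc}}$ (resp. $\mathfrak T_{\mathrm{sm}}$) on $\Sigma$ and on $\Sigma'$. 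The quotient row is $C_{\mathrm n}$ mapped identically, and the third column is $\Cone(K)$ or $\mathbb R_{\mathrm{res}}$ mapped identically, since evaluation at the wall points and the residue map $\rho$ commute with evaluation at $\mathrm n$. Lemma~\ref{lem:exact-squares} then says the two composite additivity isomorphisms agree up to a Koszul sign. That sign is trivial because $\det C_{\mathrm n}$ has even index $6-2=4$. By Lemma~\ref{lem:gluing-orientation}, the gluing isomorphism carries $\mathsf o_{\mathrm{norm}}$ to $\mathsf o'_{\mathrm{norm}}$ and $\mathsf o_{\mathrm{node}}$ to $\mathsf o'_{\mathrm{node}}$, using the same orientation of $C_{\mathrm n}$ in both cases: complex in the holomorphic convention, conjugate on $T^{\log}$ in the anti-holomorphic one. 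The compatible orientation data pull back, because $\partial\Sigma=\partial\Sigma'$. Hence the chamber and smoothing orientations on $\Sigma$ and $\Sigma'$ correspond under the same isomorphism, and the relative sign is unchanged.

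For (ii), the argument is the same with Remark~\ref{rem:disjoint-union-orientation} replacing Lemma~\ref{lem:gluing-orientation}. Each of $D_{\mathrm{norm}}$, $D_{\mathrm{wall}}$, $D_{\mathrm{node}}$ splits as the $\Sigma_1$ version direct sum $D_{\Sigma_2,\phi_2}$, and both triples are the $\Sigma_1$ triples summed with the identity on $D_{\Sigma_2,\phi_2}$. The additivity isomorphisms commute with this splitting, up to the Koszul sign from moving $\det D_{\Sigma_2,\phi_2}$ past $\det\Cone(K)$ or $(\det\mathbb R_{\mathrm{res}})^{[-1]}$. This is the step I expect to need the most care. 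These factors have odd index, so the sign is $(-1)^{\ind D_{\Sigma_2,\phi_2}}$. I will place $D_{\Sigma_2}$ in the same slot (first) in both the incidence and smoothing comparisons, so that the same sign appears on both sides and cancels. One must also check that the sign conventions $(-1)^{\ind D_{\mathrm{norm}}}$ and $(-1)^{\ind D_{\mathrm{node}}}$ in Definition~\ref{def:boundary-orientations} shift consistently. They do, since $\ind D_{\mathrm{norm}}=\ind D_{\mathrm{node}}$, and both shift by $\ind D_{\Sigma_2,\phi_2}$. The final sentence of the lemma then follows by applying (i) and then (ii).
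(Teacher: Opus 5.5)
Your argument for (i) is the paper's: two $3\times 3$ grids with the interior gluing triples as columns and $\mathfrak T_{\mathrm{inc}}$, $\mathfrak T_{\mathrm{sm}}$ as rows, then Lemma~\ref{lem:exact-squares}, the even index $4$ of $\det C_{\mathrm n,\phi}$, and Lemma~\ref{lem:gluing-orientation}. Part (ii) also matches the paper, except that the paper avoids the Koszul sign you were worried about: it notes that $\ind D_{\Sigma_2,\phi_2}=\ind D_{\Sigma_2}-\ind\Conf_{\Sigma_2}=2n_2+\mu(E_2,F_2)$ is even (as it must be, since $\ind D_{\mathrm{norm}}$ is even on both $\Sigma$ and $\Sigma_1$), so that sign is trivial. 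Your argument that the same sign appears in both comparisons and cancels is also valid, but it is not needed.
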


\begin{proof}
Fix compatible orientation data on $\Sigma$, and equip $\Sigma'$ and $\Sigma_1$ with the induced compatible orientation data.

For (i), since the interior node $\mathrm{n}$ is disjoint from the wall points, the evaluation maps at $\mathrm{n}$ commute with the incidence map in $\mathfrak T_{\mathrm{inc}}$ and the residue map in $\mathfrak T_{\mathrm{sm}}$. Writing $C_{\mathrm{n},\phi}=\bigl(T^{\log}_{\mathrm{n}}\Sigma\xrightarrow{\;\phi_{\mathrm{n}}\;} E_{\mathrm{n}}\bigr)$, we obtain two commuting $3\times 3$ grids of exact triples:
\[
\begin{tikzcd}[column sep=small]
 D_{\mathrm{wall}}(\Sigma) \rar\dar & D_{\mathrm{norm}}(\Sigma) \rar\dar & \Cone(K) \dar{=}  \\
 D_{\mathrm{wall}}(\Sigma') \rar\dar & D_{\mathrm{norm}}(\Sigma') \rar\dar & \Cone(K)   \\
 C_{\mathrm{n},\phi} \rar{=} & C_{\mathrm{n},\phi} & &
\end{tikzcd}
\quad
\begin{tikzcd}[column sep=small]
 D_{\mathrm{node}}(\Sigma) \rar\dar & D'_{\mathrm{wall}}(\Sigma) \rar\dar & \mathbb R_{\mathrm{res}} \dar{=} \\
  D_{\mathrm{node}}(\Sigma') \rar\dar & D'_{\mathrm{wall}}(\Sigma') \rar\dar & \mathbb R_{\mathrm{res}}  \\
 C_{\mathrm{n},\phi} \rar{=} & C_{\mathrm{n},\phi} & &
\end{tikzcd}
\]
Here the rows are $\mathfrak T_{\mathrm{inc}}$ on the left and $\mathfrak T_{\mathrm{sm}}$ on the right, while the first two columns of each grid are the interior gluing triples  \eqref{eq:interior-gluing-triple}. By Lemma~\ref{lem:gluing-orientation}, the vertical gluing isomorphisms
\begin{gather*}
\det D_{\mathrm{norm}}(\Sigma)\otimes \det C_{\mathrm{n},\phi} \xrightarrow{\;\cong\;} \det D_{\mathrm{norm}}(\Sigma'),
\\
\det D_{\mathrm{node}}(\Sigma)\otimes \det C_{\mathrm{n},\phi} \xrightarrow{\;\cong\;} \det D_{\mathrm{node}}(\Sigma')
\end{gather*}
preserve orientations (in either the holomorphic or anti-holomorphic convention, with the corresponding orientation of $\det C_{\mathrm{n},\phi}$). Because $\det C_{\mathrm{n},\phi}$ has even index $6-2=4$, commuting it past the normal factors $\det\Cone(K)$ and $(\det\mathbb R_{\mathrm{res}})^{[-1]}$ in the determinant-line additivity isomorphisms for the two grids introduces no Koszul signs; see Lemma~\ref{lem:exact-squares}. So, the vertical gluing isomorphism
\[
\det D_{\mathrm{wall}}(\Sigma)\otimes \det C_{\mathrm{n},\phi} \xrightarrow{\;\cong\;} \det D_{\mathrm{wall}}(\Sigma')
\]
from the additivity isomorphism of the first column,
simultaneously sends $\mathsf o^-_\partial(\Sigma)\otimes\mathsf o(C_{\mathrm{n},\phi})$ to $\mathsf o^-_\partial(\Sigma')$ via the first grid and $\mathsf o^{\mathrm{sm}}_\partial(\Sigma)\otimes\mathsf o(C_{\mathrm{n},\phi})$ to $\mathsf o^{\mathrm{sm}}_\partial(\Sigma')$ via the second grid.

For (ii), the deformation complexes split as direct sums with $D_{\Sigma_2,\phi_2}$, giving the same two $3\times 3$ grids with $\Sigma_1$ in the top row, $\Sigma$ in the middle row, and $D_{\Sigma_2,\phi_2}$ in place of $C_{\mathrm{n},\phi}$ in the bottom row. Since $\Sigma_2$ has no boundary marked points and $\mu(E_2,F_2)$ is even, $\ind D_{\Sigma_2,\phi_2}=\ind D_{\Sigma_2}-\ind\Conf_{\Sigma_2}=2n_2+\mu(E_2,F_2)$ is even, where $n_2$ is the number of interior marked points of $\Sigma_2$. By Remark~\ref{rem:disjoint-union-orientation}, the vertical direct-sum isomorphisms
\[
\det D_{\bullet}(\Sigma_1)\otimes \det D_{\Sigma_2,\phi_2} \xrightarrow{\;\cong\;} \det D_{\bullet}(\Sigma)
\]
preserve the orientations of $D_{\mathrm{norm}}$ and $D_{\mathrm{node}}$, and commuting the even-index factor $\det D_{\Sigma_2,\phi_2}$ past $\det\Cone(K)$ and $(\det\mathbb R_{\mathrm{res}})^{[-1]}$ introduces no Koszul signs. Hence the induced isomorphism $\det D_{\mathrm{wall}}(\Sigma_1)\otimes \det D_{\Sigma_2,\phi_2}\xrightarrow{\;\cong\;}\det D_{\mathrm{wall}}(\Sigma)$ preserves both wall orientations.
\end{proof}

\begin{prop}[Reduction to local standard models]\label{prop:wall-model-reduction}
For any wall problem on a bordered Riemann surface $\Sigma$ with no boundary marked points, the relative sign between the smoothing boundary orientation and the negative chamber orientation of $\det D_{\mathrm{wall}}$ (in either the holomorphic or anti-holomorphic convention) depends only on whether the wall problem is elliptic, connected hyperbolic, or disconnected hyperbolic.
\end{prop}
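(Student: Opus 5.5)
We will show that any two wall problems of the same type can be joined through operations that leave the relative sign between $\mathsf o^{\mathrm{sm}}_\partial$ and $\mathsf o^-_\partial$ unchanged. There are two such operations. The first is continuous deformation of wall problems, including families that form or smooth nodes away from the wall points; the comparison is locally constant in these, by the remark following Definition~\ref{def:boundary-orientations}. The second is excision at separating interior nodes, via Lemma~\ref{lem:wall-excision}. The aim is to deform any wall problem of a given type to a fixed local standard model.

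\textbf{Step 1: standard form near the wall points.} First homotope the coupling $\phi$ to $s\phi$ with $s$ small, or to a constant coupling near the wall points. Next homotope $(E,F,D)$, through doubleable operators, to the standard data $(\mathbb C^3,\mathbb R^3,\dbar)$ on a small neighbourhood $U$ of the wall points. We use a trivialisation compatible with the orientation data, in which $I$ is the identity (hyperbolic case) or $F_q=\mathbb R^3$ (elliptic case). The space of such data is connected once the orientations of $F_{p_i}$ and $F_q$ are fixed. The choice of compatible orientation data does not matter, since both orientations change by the same sign. Transversality of $K$ is not needed, because orientations of $\det D_{\mathrm{wall}}$ are defined regardless. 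Any hyperbolic nodes away from the wall points can be smoothed, since the comparison is continuous through smoothing.

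\textbf{Step 2: neck-stretching to excise.} Choose a real-analytic circle in $2\Sigma$ that surrounds a neighbourhood of the wall points and is invariant under the involution.
\begin{itemize}
\item \emph{Elliptic case.} Take an interior circle around $q$.
\item \emph{Disconnected hyperbolic case.} Take an interior circle in $\Sigma$ enclosing a region containing collars of both boundary components through $p_1$ and $p_2$, together with a connecting strip.
\item \emph{Connected hyperbolic case.} Take a circle enclosing a collar of the boundary component through $p_1$ and $p_2$.
\end{itemize}
Stretch the neck along this circle (Construction~\ref{stretch}), keeping the data standard and the coupling constant there. In the limit the circle becomes a separating interior node. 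Lemma~\ref{lem:wall-excision} then lets us normalise this node and discard the far component. What remains is a model surface depending only on the wall type: a disc with the point $q$ in the elliptic case, a disc or annulus-type piece in the connected case, and a pair of pants or one-holed surface in the disconnected case. The neighbouring data near the new marked point are standard.

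\textbf{Step 3: uniqueness of the model.} It remains to show that, for a fixed wall type, the space of such truncated wall problems is connected modulo excision. The topological type of the retained piece can still vary, for example in its genus or in which other boundary components it carries. I would handle this by stretching further along circles that separate off any extra genus or boundary components, so the retained piece becomes minimal. That piece is a disc for elliptic and connected hyperbolic walls, and an annulus for disconnected walls. On the minimal piece, the Maslov index and the boundary trivialisation can be changed by twisting away from the wall points and excising again. The positions of $p_1,p_2$ or $q$ and the frames $I$, $F_q$ then vary in connected spaces: $GL^+(3)$ and the configuration of points.

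The main obstacle is this last connectivity step. One has to ensure that the excision really yields a single standard model per type, and in particular that the Maslov index and Spin data can be normalised without crossing the wall points. I would try to handle it by showing that gluing a disc or sphere with arbitrary data at a separating node changes $\mathsf o_{\mathrm{norm}}$ and $\mathsf o_{\mathrm{node}}$ by the same factor, which is the argument of Lemma~\ref{lem:wall-excision} applied in reverse.
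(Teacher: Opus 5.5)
Your overall strategy matches the paper's: standardise the data near the wall, stretch the neck along interior circles, and excise with Lemma~\ref{lem:wall-excision}. Two points need repair.

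\textbf{The connecting strip need not exist.} In the disconnected hyperbolic case, your single circle around two collars joined by a strip can fail to exist. A disconnected wall problem only requires $p_1,p_2$ to lie on distinct components of $\partial\Sigma^{KN}$, and these may lie on different connected components of $\Sigma$; the paper's own model is $\mathbb D_1\amalg\mathbb D_2$. Even when $\Sigma$ is connected, your retained piece is a pair of pants (an annulus with a marked point after stretching). You would then still have to show that this gives the same sign as the disjoint-union case, and without that the sign is not shown to depend only on the type. The fix is to take $U$ to be the union of the two collars and stretch along both inner circles separately. Then normalise the two resulting interior nodes using Lemma~\ref{lem:wall-excision}(i), and discard $\Sigma_{\mathrm{rem}}$ using part (ii). This leaves two discs, each with one interior marked point, whether or not $\Sigma$ is connected. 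Also, the circles should be interior circles of $\Sigma$ together with their mirror images in $2\Sigma$. A $\sigma$-invariant circle meeting the real locus would produce a boundary node, which the excision lemma does not cover.

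\textbf{Step 3 is left unproved, but the obstacle disappears.} Your ``main obstacle'' in Step 3 arises only because you standardise the data on a small neighbourhood of the wall points. Your Step 3 is only sketched (``I would try to handle it by\dots''), so as written the uniqueness of the model---which is the content of the proposition---is not established. Instead, first homotope $\phi$ all the way to $0$; this is allowed, since transversality of $K$ is not needed. Then use the compatible trivialisation $\tau$, which extends from the relevant boundary component(s) over the whole collar, to homotope $(E,F,D)$ to $(\mathbb C^3,\mathbb R^3,\dbar)$ on all of $U$, with $I=\mathrm{id}$ or $F_q=\mathbb R^3$. After stretching along $\partial U\setminus\partial\Sigma$, the retained model carries trivial data, Maslov index $0$, the standard frame and zero coupling. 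The genus, the other boundary components, the Maslov index and any nontrivial Spin twisting all lie in $\Sigma_{\mathrm{rem}}$ and are discarded. The only remaining parameters are the positions of the wall points on the standard sphere, disc or pair of discs, and these vary in a connected space. So no further twisting, re-gluing of spheres, or ``reverse excision'' is needed.
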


\begin{proof}
\begin{figure}[htbp]
\centering
\resizebox{\linewidth}{!}{%
\begingroup
\definecolor{neckteal}{RGB}{0,112,101}
\colorlet{neckfill}{neckteal!12}
\begin{tikzpicture}[x=.88cm,y=.88cm,
  font=\small,line cap=round,line join=round,
  every node/.style={inner sep=2pt},
  rim/.style={draw=black,line width=.8pt},
  contour/.style={draw=black!60,line width=.6pt},
  seam/.style={draw=neckteal,line width=.7pt},
  back seam/.style={seam,dash pattern=on 1.7pt off 1.7pt},
  latitude/.style={draw=neckteal!55!black,line width=.35pt},
  flow/.style={-{Latex[length=1.7mm,width=1.2mm]},draw=black!65,line width=.55pt},
  wall/.style={circle,fill=black,inner sep=1.15pt},
  interior mark/.style={circle,fill=neckteal,inner sep=1.25pt},
  detail/.style={font=\footnotesize},
  operation/.style={font=\scriptsize,align=center,text=black!75}
]

\def\neckhandle#1#2{%
  \begin{scope}[shift={(#1,#2)}]
    \fill[white]
      (-.256,-.1005) .. controls (-.11,.065) and (.11,.065) .. (.256,-.1005)
      .. controls (.10,-.155) and (-.10,-.155) .. cycle;
    \draw[contour] (-.60,.12)
      .. controls (-.22,-.23) and (.22,-.23) .. (.60,.12);
    \draw[contour] (-.256,-.1005)
      .. controls (-.11,.065) and (.11,.065) .. (.256,-.1005);
  \end{scope}%
}

\def\initialsurface{%
  \path[fill=black!2]
    (.40,2.95) -- (1.00,2.95)
    .. controls (1.66,2.95) and (1.86,3.25) .. (2.55,3.25)
    .. controls (3.55,3.25) and (4.25,2.55) .. (4.25,1.65)
    .. controls (4.25,.75) and (3.65,.15) .. (2.85,.15)
    .. controls (2.05,.15) and (1.80,.23) .. (1.15,.23)
    -- (.40,.23) -- (.40,1.07) -- (.98,1.07)
    .. controls (1.92,1.07) and (1.92,2.11) .. (1.00,2.11)
    -- (.40,2.11) -- cycle;
  \draw[contour] (.40,2.95) -- (1.00,2.95)
    .. controls (1.66,2.95) and (1.86,3.25) .. (2.55,3.25)
    .. controls (3.55,3.25) and (4.25,2.55) .. (4.25,1.65)
    .. controls (4.25,.75) and (3.65,.15) .. (2.85,.15)
    .. controls (2.05,.15) and (1.80,.23) .. (1.15,.23)
    -- (.40,.23);
  \draw[contour] (.40,1.07) -- (.98,1.07)
    .. controls (1.92,1.07) and (1.92,2.11) .. (1.00,2.11)
    -- (.40,2.11);
  \filldraw[fill=white,rim] (.40,2.53) ellipse[x radius=.20,y radius=.42];
  \filldraw[fill=white,rim] (.40,.65) ellipse[x radius=.20,y radius=.42];
  \neckhandle{2.55}{1.65}
}

\def\collar#1{%
  \begin{scope}[shift={(.40,#1)}]
    \fill[neckfill]
      (0,.42) -- (.64,.42)
      arc[start angle=90,end angle=-90,x radius=.16,y radius=.42]
      -- (0,-.42) -- cycle;
    \draw[back seam] (.64,.42)
      arc[start angle=90,end angle=270,x radius=.16,y radius=.42];
    \draw[seam] (.64,.42)
      arc[start angle=90,end angle=-90,x radius=.16,y radius=.42];
    \draw[contour] (0,.42) -- (.64,.42);
    \draw[contour] (0,-.42) -- (.64,-.42);
    \filldraw[fill=white,rim] (0,0) ellipse[x radius=.20,y radius=.42];
  \end{scope}%
}

\def\nodalcap#1{%
  \begin{scope}[shift={(.40,#1)}]
    \path[fill=neckfill,contour]
      (0,.42) .. controls (.55,.42) and (.88,.20) .. (1.10,0)
      .. controls (.88,-.20) and (.55,-.42) .. (0,-.42);
    \filldraw[fill=white,rim] (0,0) ellipse[x radius=.20,y radius=.42];
  \end{scope}%
}

\def\modeldisc#1#2{%
  \begin{scope}[shift={(.40,#1)}]
    \path[fill=neckfill,contour]
      (0,.42) .. controls (.55,.42) and (1.25,.42) .. (1.25,0)
      .. controls (1.25,-.42) and (.55,-.42) .. (0,-.42);
    \filldraw[fill=white,rim] (0,0) ellipse[x radius=.20,y radius=.42];
    \node[interior mark] at (.99,0) {};
    \node[detail,anchor=east,text=neckteal] at (.91,0) {$#2$};
  \end{scope}%
}

\def\twopoints#1{%
  \node[wall] at ({.40+.20*cos(145)},{#1+.42*sin(145)}) {};
  \node[anchor=east] at ({.34+.20*cos(145)},{#1+.42*sin(145)}) {$p_1$};
  \node[wall] at ({.40+.20*cos(215)},{#1+.42*sin(215)}) {};
  \node[anchor=east] at ({.34+.20*cos(215)},{#1+.42*sin(215)}) {$p_2$};
}
\def\onepoint#1#2{%
  \node[wall] at (.20,#1) {};
  \node[anchor=east] at (.14,#1) {$#2$};
}
\def\steps{%
  \draw[flow] (4.55,1.65) -- (5.40,1.65);
  \node[operation,anchor=south] at (4.975,1.83) {stretch};
  \draw[flow] (10.70,1.65) -- (11.65,1.65);
  \node[operation,anchor=south] at (11.175,1.83) {normalise};
  \node[operation,anchor=north] at (11.175,1.47) {discard\\remainder};
}

\node at (2.18,13.00) {Domain $\Sigma$};
\node at (8.05,13.00) {Nodal limit $\Sigma_{\mathrm{lim}}$};
\node at (13.33,13.00) {Model $\Sigma_{\mathrm{mod}}$};

\begin{scope}[shift={(0,8.50)}]
  \node[anchor=west,font=\small\bfseries] at (-.18,3.76)
    {Disconnected hyperbolic};
  \initialsurface
  \collar{2.53}\collar{.65}
  \onepoint{2.53}{p_1}\onepoint{.65}{p_2}
  \node[detail,text=neckteal] at (.76,3.24) {$U_1$};
  \node[detail,text=neckteal] at (.76,-.06) {$U_2$};
  \steps

  \begin{scope}[shift={(5.90,0)}]
    \nodalcap{2.53}\nodalcap{.65}
    \onepoint{2.53}{p_1}\onepoint{.65}{p_2}
    \path[fill=black!2,contour]
      (1.50,2.53) .. controls (1.95,2.95) and (2.35,3.25) .. (2.95,3.25)
      .. controls (3.85,3.25) and (4.45,2.55) .. (4.45,1.65)
      .. controls (4.45,.75) and (3.85,.15) .. (2.95,.15)
      .. controls (2.35,.15) and (1.95,.35) .. (1.50,.65)
      .. controls (2.05,1.00) and (2.35,1.15) .. (2.35,1.59)
      .. controls (2.35,2.03) and (2.05,2.18) .. cycle;
    \neckhandle{3.25}{1.65}
    \node[detail,text=black!65] at (3.05,2.55) {$\Sigma_{\mathrm{rem}}$};
    \node[interior mark] at (1.50,2.53) {};
    \node[interior mark] at (1.50,.65) {};
    \node[detail,anchor=south,text=neckteal] at (1.50,2.68) {$n_1$};
    \node[detail,anchor=north,text=neckteal] at (1.50,.50) {$n_2$};
  \end{scope}

  \begin{scope}[shift={(12.48,0)}]
    \modeldisc{2.53}{p_{0,1}}\modeldisc{.65}{p_{0,2}}
    \onepoint{2.53}{p_1}\onepoint{.65}{p_2}
    \node[detail] at (1.00,1.65) {$\mathbb D_1\amalg\mathbb D_2$};
  \end{scope}
\end{scope}

\begin{scope}[shift={(0,4.25)}]
  \node[anchor=west,font=\small\bfseries] at (-.18,3.76)
    {Connected hyperbolic};
  \initialsurface
  \collar{2.53}\twopoints{2.53}
  \node[detail,text=neckteal] at (.76,3.24) {$U$};
  \steps

  \begin{scope}[shift={(5.90,0)}]
    \nodalcap{2.53}\twopoints{2.53}
    \path[fill=black!2]
      (1.50,2.53) .. controls (1.95,2.95) and (2.35,3.25) .. (2.95,3.25)
      .. controls (3.85,3.25) and (4.45,2.55) .. (4.45,1.65)
      .. controls (4.45,.75) and (3.85,.15) .. (2.95,.15)
      .. controls (2.15,.15) and (1.85,.23) .. (1.20,.23)
      -- (.60,.23) -- (.60,1.07) -- (1.15,1.07)
      .. controls (2.05,1.07) and (2.15,2.05) .. cycle;
    \draw[contour]
      (1.50,2.53) .. controls (1.95,2.95) and (2.35,3.25) .. (2.95,3.25)
      .. controls (3.85,3.25) and (4.45,2.55) .. (4.45,1.65)
      .. controls (4.45,.75) and (3.85,.15) .. (2.95,.15)
      .. controls (2.15,.15) and (1.85,.23) .. (1.20,.23) -- (.60,.23);
    \draw[contour] (.60,1.07) -- (1.15,1.07)
      .. controls (2.05,1.07) and (2.15,2.05) .. (1.50,2.53);
    \filldraw[fill=white,rim] (.60,.65) ellipse[x radius=.20,y radius=.42];
    \neckhandle{3.25}{1.65}
    \node[detail,text=black!65] at (3.05,2.55) {$\Sigma_{\mathrm{rem}}$};
    \node[interior mark] at (1.50,2.53) {};
    \node[detail,anchor=south,text=neckteal] at (1.50,2.68) {$n$};
  \end{scope}

  \begin{scope}[shift={(12.48,0)}]
    \modeldisc{1.65}{p_0}\twopoints{1.65}
    \node[detail] at (1.00,.92) {$\mathbb D$};
  \end{scope}
\end{scope}

\begin{scope}
  \node[anchor=west,font=\small\bfseries] at (-.18,3.76) {Elliptic};
  \initialsurface
  \filldraw[fill=neckfill,seam] (3.62,1.65) circle[radius=.42];
  \node[wall] at (3.62,1.65) {};
  \node[anchor=west] at (3.67,1.65) {$q$};
  \node[detail,text=neckteal] at (3.62,2.30) {$U$};
  \steps

  \begin{scope}[shift={(5.90,0)}]
    \path[fill=black!2]
      (.40,2.95) -- (.95,2.95)
      .. controls (1.50,2.95) and (1.75,3.25) .. (2.35,3.25)
      .. controls (3.05,3.25) and (3.25,2.35) .. (3.55,1.65)
      .. controls (3.25,.95) and (3.05,.15) .. (2.35,.15)
      .. controls (1.75,.15) and (1.50,.23) .. (.95,.23)
      -- (.40,.23) -- (.40,1.07) -- (.92,1.07)
      .. controls (1.75,1.07) and (1.75,2.11) .. (.95,2.11)
      -- (.40,2.11) -- cycle;
    \draw[contour]
      (.40,2.95) -- (.95,2.95)
      .. controls (1.50,2.95) and (1.75,3.25) .. (2.35,3.25)
      .. controls (3.05,3.25) and (3.25,2.35) .. (3.55,1.65)
      .. controls (3.25,.95) and (3.05,.15) .. (2.35,.15)
      .. controls (1.75,.15) and (1.50,.23) .. (.95,.23) -- (.40,.23);
    \draw[contour] (.40,1.07) -- (.92,1.07)
      .. controls (1.75,1.07) and (1.75,2.11) .. (.95,2.11) -- (.40,2.11);
    \filldraw[fill=white,rim] (.40,2.53) ellipse[x radius=.20,y radius=.42];
    \filldraw[fill=white,rim] (.40,.65) ellipse[x radius=.20,y radius=.42];
    \neckhandle{2.25}{1.65}
    \node[detail,text=black!65] at (2.30,2.55) {$\Sigma_{\mathrm{rem}}$};
    \filldraw[fill=neckfill,contour] (4.10,1.65) circle[radius=.55];
    \draw[latitude,dash pattern=on 1.3pt off 1.5pt] (3.55,1.65)
      arc[start angle=180,end angle=0,x radius=.55,y radius=.16];
    \draw[latitude] (3.55,1.65)
      arc[start angle=180,end angle=360,x radius=.55,y radius=.16];
    \node[interior mark] at (3.55,1.65) {};
    \node[detail,anchor=south,text=neckteal] at (3.55,1.87) {$n$};
    \node[wall] at (4.14,1.36) {};
    \node[anchor=west] at (4.18,1.36) {$q$};
  \end{scope}

  \begin{scope}[shift={(12.48,0)}]
    \filldraw[fill=neckfill,contour] (1.00,1.65) circle[radius=.82];
    \draw[latitude,dash pattern=on 1.3pt off 1.5pt] (.18,1.65)
      arc[start angle=180,end angle=0,x radius=.82,y radius=.20];
    \draw[latitude] (.18,1.65)
      arc[start angle=180,end angle=360,x radius=.82,y radius=.20];
    \node[interior mark] at (.69,1.31) {};
    \node[detail,anchor=north,text=neckteal] at (.69,1.26) {$p_0$};
    \node[wall] at (1.30,1.31) {};
    \node[anchor=north] at (1.30,1.26) {$q$};
    \node[detail] at (1.00,.57) {$\mathbb{CP}^1$};
  \end{scope}
\end{scope}

\filldraw[fill=neckfill,seam] (1.92,-.61) rectangle (2.18,-.40);
\node[detail,anchor=west] at (2.28,-.505) {retained neighbourhood / model};
\end{tikzpicture}
\endgroup%
}
\caption{Reduction to the local standard models. Stretching along the inner boundary of the shaded neighbourhood $U$ produces the separating interior nodes $n_i$ or $n$. Normalising these nodes and discarding $\Sigma_{\mathrm{rem}}$ leaves the shaded components, with the retained preimages of the nodes marked as $p_{0,i}$ or $p_0$. }
\label{fig:wall-model-reduction}
\end{figure}

Fix compatible orientation data $(\tau,\tau_{\mathrm{node}})$ for the wall problem on $\Sigma$. Since the relative sign between $\mathsf o^{\mathrm{sm}}_\partial$ and $\mathsf o^-_\partial$ is locally constant in continuous families of wall problems, we can reduce to a standard local model by the following sequence of homotopies and excisions:
\begin{enumerate}[(i)]
\item Apply the linear homotopy $s\mapsto s\phi$ ($s\in[0,1]$) to homotope the coupling map $\phi$ to $0$, then smooth any hyperbolic boundary nodes (away from the wall points).
\item Choose a neighbourhood $U\subset\Sigma$ of the wall points whose boundary in $\Sigma$ is a union of smooth interior circles disjoint from all nodes and marked points of $\Sigma$:
\begin{itemize}
\item In the elliptic case, take $U$ to be a smooth disc neighbourhood of the interior point $q$.
\item In the connected hyperbolic case, take $U$ to be a collar neighbourhood of the boundary component of $\Sigma$ containing $p_1$ and $p_2$.
\item In the disconnected hyperbolic case, take $U$ to be the union of collar neighbourhoods of the two boundary components of $\Sigma$ containing $p_1$ and $p_2$.
\end{itemize}
\item Using the compatible orientation data, homotope the bundle pair and Cauchy--Riemann operator over $U$, through doubleable operators, to the standard data $(\mathbb C^3,\mathbb R^3,\dbar)$, where $F_q=\mathbb R^3$ in the elliptic case and $I=\id_{\mathbb C^3}$ in the hyperbolic cases.
\item Stretch the neck of $\Sigma$ along the interior boundary circle(s) $\partial U\setminus\partial\Sigma$, keeping the data on $U$ in the standard form $(\mathbb C^3,\mathbb R^3,\dbar)$ with $\phi=0$. In the nodal limit (Figure~\ref{fig:wall-model-reduction}), the domain is a surface $\Sigma_{\mathrm{lim}}$ obtained by attaching a special component (or pair of components) $\Sigma_{\mathrm{mod}}$ to a complementary surface $\Sigma_{\mathrm{rem}}$ along one or two separating interior nodes.
\item By Lemma~\ref{lem:wall-excision}, normalising the separating interior node(s) between $\Sigma_{\mathrm{mod}}$ and $\Sigma_{\mathrm{rem}}$ and discarding $\Sigma_{\mathrm{rem}}$ leaves the relative sign between the smoothing boundary orientation and the negative chamber orientation unchanged.
\end{enumerate}

After this excision, every wall problem of a given type reduces to the corresponding standard model problem on $\Sigma_{\mathrm{mod}}$ with bundle pair $(\mathbb C^3,\mathbb R^3)$, operator $\dbar$, coupling map $\phi=0$, and $I=\id_{\mathbb C^3}$ or $F_q=\mathbb R^3$, where:
\begin{itemize}
\item \emph{(Elliptic model)} $\Sigma_{\mathrm{mod}}$ is a sphere $\mathbb{CP}^1$ with one interior marked point $p_0$ and the wall point $q\neq p_0$;
\item \emph{(Connected hyperbolic model)} $\Sigma_{\mathrm{mod}}$ is a disc $\mathbb D$ with one interior marked point $p_0$ and distinct boundary points $p_1,p_2\in\partial\mathbb D$;
\item \emph{(Disconnected hyperbolic model)} $\Sigma_{\mathrm{mod}} = \mathbb D_1\amalg\mathbb D_2$ is a disjoint union of two discs, each with one interior marked point, with $p_1\in\partial\mathbb D_1$ and $p_2\in\partial\mathbb D_2$.
\end{itemize}
Since the relative sign is completely determined by this standard model in each of the three cases, it depends only on the type of the wall problem.

\end{proof}

We will calculate the relationship between wall orientations in the three cases using enlightening examples in Sections~\ref{sec:disconnected-crossing}--\ref{connected-hyperbolic-section}. It is also possible  to calculate the comparison in all three models directly (this exercise is well within the capability of current AI models); we give the elliptic case below. Section~\ref{sec:ramified-covers} gives a second calculation of both hyperbolic cases, so each of the three comparisons is computed in two independent ways.

\begin{lemma}\label{lem:elliptic comparison} Let $(E,F,D_{\Sigma},\phi, F_q)$ be an elliptic wall problem. Then the smoothing boundary and negative chamber orientations of $D_{\rm wall}$ agree, both in the holomorphic and anti-holomorphic conventions.

\end{lemma}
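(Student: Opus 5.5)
By Proposition~\ref{prop:wall-model-reduction}, I only need to compute the relative sign in the elliptic standard model. This is $\Sigma_{\mathrm{mod}}=\mathbb{CP}^1$ with an interior marked point $p_0$ and wall point $q\neq p_0$, bundle pair $(\mathbb C^3,\mathbb R^3)$, operator $\dbar$, coupling $\phi=0$, and $F_q=\mathbb R^3$. Since $\phi=0$, every cone splits as $\Conf\oplus D$ with a degree shift, so the determinant lines are explicit.

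First, $\Conf_{\mathbb{CP}^1_{p_0}}$ is surjective with kernel the complex $2$-dimensional space of holomorphic vector fields vanishing at $p_0$. The operator $\dbar$ on $\mathbb C^3$ is surjective with kernel the constants $\mathbb C^3$. Consequently:
\begin{itemize}
\item $D_{\mathrm{norm}}$ has $H^{-1}=\ker\Conf$ and $H^0=\mathbb C^3$, with its complex orientation, since $\Sigma_{\mathrm{mod}}$ is closed.
\item $D_{\mathrm{wall}}$ has $H^{-1}=\{v:v(q)=0\}$, which is complex $1$-dimensional, and $H^0=\mathbb R^3=F_q$.
\item $\Cone(K)$ is $T_q\Sigma\xrightarrow{0}JF_q$.
\end{itemize}
The incidence triple is then a direct sum of two sequences: $0\to\ker\Conf_{q}\to\ker\Conf\to T_q\Sigma\to0$ in degree $-1$, and $0\to F_q\to\mathbb C^3\to JF_q\to 0$ in degree $0$. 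With the orientations of Definition~\ref{def:incidence-orientation}, I will compute $\mathsf o^-_\partial$ from \eqref{eq:negative-chamber-orientation} as an explicit graded wedge. The pieces are the complex orientation on the degree $-1$ factors, and $e_1\wedge e_2\wedge e_3\wedge ie_1\wedge ie_2\wedge ie_3$ in degree $0$. The Koszul sign comes from moving $\det F_q$ past $(\det T_q\Sigma)^{[-1]}$.

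On the node side, $D_{\mathrm{node}}$ lives on $\mathbb{CP}^1$ with $p_0$ and an elliptic boundary node at $q$. Its holomorphic orientation is defined by Definition~\ref{def:holomorphic-orientation}(iv), through the triple \eqref{eq:elliptic-orientation-triple} with closed normalisation $\Sigma_c=\mathbb{CP}^1_{p_0,q}$ and the unstable factor $Q_{q,0}$. The chain goes as follows. The orientation of $\det D_{\Sigma_c,0}$ is complex. $Q_{q,0}$ is oriented by $\nu_q^{[-1]}\otimes\det JF_q$. The smoothing triple \eqref{eq:smoothing-triangle} uses $\rho=2\operatorname{Re}\operatorname{res}_q$, and $r_{\mathrm{out}}=-1$. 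Since $\rho(v)=-(\operatorname{Re}\operatorname{res}_q v)\nu_q$ appears in $\mathfrak T_{\mathrm{ell}}$, the generator $r_{\mathrm{out}}$ corresponds to a positive multiple of $\nu_q$. This is the key sign link.

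I will then combine $\mathfrak T_{\mathrm{ell}}$ and $\mathfrak T_{\mathrm{sm}}$ into a single comparison with $\mathfrak T_{\mathrm{inc}}$. Here $D'_{\mathrm{wall}}$ uses $\Conf_{\Sigma_q}$, so its $H^{-1}$ is complex $1$-dimensional and $H^0=F_q$, matching $D_{\mathrm{wall}}$. Lemma~\ref{lem:exact-squares} lets me arrange the triples in a $3\times3$ grid. The comparison then reduces to a finite-dimensional identity: decompose $T_q\Sigma=\mathbb R\nu_q\oplus\mathbb R\partial_\vartheta$, note that the $\partial_\vartheta$ part stays in the node complex while the $\nu_q$ part is $\mathbb R_{\mathrm{res}}$, and check that $\nu_q\wedge\partial_\vartheta$, with $\partial_\vartheta=j\nu_q$, gives the complex orientation of $T_q\Sigma$. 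This mirrors the computation in the proof of Lemma~\ref{lem:standard-punctured-disc}.

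The main obstacle is tracking every Koszul and Euler-isomorphism sign through this grid: the degree $-1$ placements, the left duals, the $(-1)^{\ind}$ conventions in Definition~\ref{def:boundary-orientations}, and the orientation reversal $r_{\mathrm{out}}=-1$. I will carry this out with explicit bases, first for the holomorphic convention. For the anti-holomorphic convention, I will use \eqref{eq:antiholomorphic-sign-index}. $\Sigma_{\mathrm{norm}}$ has $b_{\mathbb R}=0$, while $\Sigma_{\mathrm{node}}$ has $b_{\mathbb R}=1$ and $\ind\Conf$ lower by $1$. Hence $\epsilon_{\mathrm{ah}}$ is the same for both, and the holomorphic agreement transfers directly. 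This is consistent with Remark~\ref{rem:antiholomorphic-skein-signs}.
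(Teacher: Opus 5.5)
Your plan (reduce to the sphere model and settle the sign with explicit bases) can be completed. It differs from the paper mainly in the last step. The paper never picks bases. It composes $\mathfrak T_{\mathrm{ell}}$ and $\mathfrak T_{\mathrm{sm}}$ into an oriented isomorphism $\det D_{\Sigma_q,0}\cong\det D_{\mathrm{wall}}\otimes\det JF_q$, and composes this with $\mathfrak T_{\mathrm{inc}}$. It then recognises the result as the additivity isomorphism of the marked-point triple $D_{\Sigma_q,0}\to D_{\Sigma,0}\to T_q\Sigma$, which preserves orientations by Remark~\ref{rem:marked-point-orientation}(i). That isolates the one geometric input and works for any closed $\Sigma$ with $\phi=0$. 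Your version is more elementary, but two points in it are wrong as stated.

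\textbf{The proposed finite-dimensional identity is on the wrong space, and it is not the decisive check.} $T_q\Sigma$ is the degree $-1$ quotient $\ker\Conf_\Sigma/\ker\Conf_{\Sigma;q}$, given by evaluating at $q$ the fields vanishing at $p_0$. It is not split between $D_{\mathrm{node}}$ and $\mathbb R_{\mathrm{res}}$: in degree $-1$ the cohomology has dimension $4$ for $D_{\mathrm{norm}}$ and $1$ for $D_{\mathrm{node}}$. The splitting $\mathbb R(\nu_q)\oplus\mathbb R(\partial_\vartheta)$ is a splitting of $H^{-1}(D'_{\mathrm{wall}})=\ker\Conf_{\Sigma_q}\cong T^{\log}_q\Sigma_q$.

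Even there, the check is not decisive. Let $\lambda$ be the complex orientation of $H^{-1}(D_{\mathrm{wall}})=H^{-1}(D'_{\mathrm{wall}})$. The identity $\nu_q\wedge j\nu_q=\lambda$ is used once in $\mathfrak T_{\mathrm{ell}}$ and once in $\mathfrak T_{\mathrm{sm}}$, since $r_{\mathrm{out}}$ lifts to a positive multiple of $\nu_q$, so it cancels. What actually links the two sides is already in your incidence computation. First, $\mathsf o(D_{\Sigma_q,0})$ and $\mathsf o_{\mathrm{norm}}$ both use $\lambda$. Second, the sequence $0\to\ker\Conf_{\Sigma;q}\to\ker\Conf_\Sigma\to T_q\Sigma\to0$ is complex-linear, so complex orientations are compatible; this is the model case of the marked-point remark.

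With that, put $f=e_1\wedge e_2\wedge e_3$. Note that $f\wedge(ie_1\wedge ie_2\wedge ie_3)$ is minus the complex orientation of $\mathbb C^3$. The three triples then give:
\begin{itemize}
\item In $\mathfrak T_{\mathrm{inc}}$ the Koszul sign $(-1)^{3\cdot 2}$ is trivial, so this minus sign gives $\mathsf o^-_\partial=-\lambda^{[-1]}\otimes f$.
\item In $\mathfrak T_{\mathrm{ell}}$, moving $f$ past $\nu_q^{[-1]}$ costs $(-1)^3$, which cancels that minus sign and gives $\mathsf o_{\mathrm{node}}=\partial_\vartheta^{[-1]}\otimes f$.
\item In $\mathfrak T_{\mathrm{sm}}$, moving $f$ past $r_{\mathrm{out}}^{[-1]}$ costs $(-1)^3$, giving $\mathsf o^{\mathrm{sm}}_\partial=-\lambda^{[-1]}\otimes f=\mathsf o^-_\partial$.
\end{itemize}
So the Koszul signs that matter are the ones past the one-dimensional degree $-1$ factors. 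The sign you single out, past $(\det T_q\Sigma)^{[-1]}$, is trivial.

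\textbf{The anti-holomorphic step has an arithmetic error.} From $\Sigma$ to $\Sigma_{\mathrm{node}}$, $\ind\Conf$ drops by $3$ (from $4$ to $1$ in the model), not by $1$. With your numbers, $(\ind\Conf-b_{\mathbb R})/2$ would drop by $1$ and $\epsilon_{\mathrm{ah}}$ would change sign, contradicting your conclusion. The correct count is that $3\chi+n-b_{\mathbb R}$ drops by $3+1=4$, so $\epsilon_{\mathrm{ah}}(\Sigma_{\mathrm{node}})=\epsilon_{\mathrm{ah}}(\Sigma)$. A drop by $1$ is what you get relative to $\Sigma_q$, whose sign is opposite to that of $\Sigma$.
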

\begin{proof}
We relate the smoothing boundary orientation to the orientation of $D_{\Sigma_q,0}$ using the elliptic boundary node and smoothing triples, relate the negative chamber orientation to the orientation of $D_{\Sigma,0}$ using the incidence triple, and compare the two using the marked-point triple of Remark~\ref{rem:marked-point-orientation}.

By Proposition~\ref{prop:wall-model-reduction}, we may assume that $\Sigma$ has no boundary and $\phi=0$. Then $\Sigma_{\rm node}$ has a single elliptic node $q$, with closed normalisation $\Sigma_q$, and by Definition~\ref{def:holomorphic-orientation} the elliptic boundary node exact triple $D_{\rm node}\to D_{\Sigma_q,0}\to Q_{q,0}$, where $Q_{q,0}=\bigl(\mathbb R(\nu_q)\xrightarrow{0}JF_q\bigr)$, gives an oriented isomorphism
\begin{equation}\label{eq:1}\det D_{\Sigma_q,0} \cong \det D_{\rm node}\otimes \det \mathbb R(\nu_q)^{[-1]}\otimes \det JF_q,\end{equation}
with $\mathbb R(\nu_q)$ and $JF_q$ oriented as in Definition~\ref{def:Q orientation}. The intermediate complex $D'_{\rm wall}$ of the smoothing triple~\eqref{eq:smoothing-triangle} satisfies $D'_{\rm wall}/D_{\rm node}=\mathbb R(\nu_q)$, identified with $\mathbb R_{\rm res}$ by sending $\nu_q$ to $2r_{\rm out}$, and $D_{\Sigma_q,0}/D'_{\rm wall}=JF_q$. So applying the smoothing boundary orientation to the first two factors of~\eqref{eq:1}, and the quasi-isomorphism $D'_{\rm wall}\simeq D_{\rm wall}$, gives an oriented isomorphism
\[\det D_{\Sigma_q,0}\cong\det D_{\rm wall}\otimes\det JF_q.\]
On the other hand, $\Cone(K)$ is the complex $T_q\Sigma\xrightarrow{0}JF_q$ in degrees $-1$ and $0$, and $JF_q$ carries the same orientation in Definitions~\ref{def:Q orientation} and~\ref{def:incidence-orientation}. With the negative chamber orientation, the incidence triple~\eqref{eq:wall-triangle} therefore gives an oriented isomorphism $\det D_{\rm wall}\otimes\det T_q\Sigma^{[-1]}\otimes\det JF_q\cong\det D_{\Sigma,0}$; see~\eqref{eq:negative-chamber-orientation}. Moving the even factor $\det T_q\Sigma^{[-1]}$, these combine to an isomorphism
\[\det D_{\Sigma,0}\cong \det D_{\rm wall}\otimes \det JF_q\otimes \det T_q\Sigma^{[-1]}\cong \det D_{\Sigma_q,0}\otimes \det T_q\Sigma^{[-1]},\]
which preserves orientations if and only if the negative chamber and smoothing boundary orientations agree. It is the additivity isomorphism of the triple $D_{\Sigma_q,0}\to D_{\Sigma,0}\to T_q\Sigma$ of Remark~\ref{rem:marked-point-orientation}, which preserves orientations by part~(i) of that remark. So the two orientations agree.

The anti-holomorphic case follows, as the anti-holomorphic correction sign~\eqref{eq:antiholomorphic-sign-index} is the same for $\Sigma_{\rm node}$ and $\Sigma$.
\end{proof}

\subsection{Generalisations: bundle pairs on \texorpdfstring{$\Sigma^{KN}$}{Sigma\textasciicircum KN} and capping ends}
\label{subsec:capping-and-generalisations}

In geometric applications—such as curves with strip ends asymptotic to Reeb chords or cylindrical ends with nonconstant asymptotic operators—the linearised Cauchy--Riemann problem does not naturally form a smooth bundle pair over the nodal surface $\Sigma$. Instead, the boundary conditions and asymptotic operators depend on the angular coordinates around the punctures and nodes. To accommodate these settings, we formulate the linearised problem over the real blowup space $\Sigma^{KN}$.

\subsubsection{Bundle pairs over \texorpdfstring{$\Sigma^{KN}$}{Sigma\textasciicircum KN} and weaker regularity}

Let $\Sigma$ be a bordered, possibly nodal Riemann surface with infinitesimal gluing data at its nodes, and let $\Sigma^{KN}$ be its real blowup space (Section~\ref{subsec:domain-topologies}). A \emph{generalised coupled bundle pair} $(E,F,D_\Sigma,\phi)$ over $\Sigma^{KN}$ consists of:
\begin{enumerate}[(i)]
\item a complex vector bundle $E\to\Sigma^{KN}$ of rank $3$;
\item a totally real, oriented subbundle $F\subset E|_{(\partial\Sigma)^{KN}}$ of real rank $3$;
\item a real Cauchy--Riemann operator $D_\Sigma$ and coupling morphism $\phi\colon T^{\log}\Sigma\to E$ with $\phi(T^{\log}\partial\Sigma)\subset F$ inducing a map of complexes $\Conf_{\Sigma}\to D_\Sigma$.
\end{enumerate}

In this setting $D_\Sigma$ and $(E,F)$ need not be doubleable.

At interior nodes and hyperbolic boundary nodes, the gluing of the angular circles or intervals in $\Sigma^{KN}$ ensures that $E$ and $F$ are continuous across the glued seam. Rather than requiring $E$, $F$, and the zero-order terms of $D_\Sigma$ to be smooth on $\Sigma$, we only require sufficient regularity on $\Sigma^{KN}$ near the nodes and ends for standard Fredholm and gluing analysis to hold.   In particular, over each end circle or interval in $\partial\Sigma^{KN}\setminus(\partial\Sigma)^{KN}$, the data $(E,F,D_\Sigma)$ converges to an asymptotic bundle pair and asymptotic operator. We assume throughout that these asymptotic operators are appropriately nondegenerate, so that the coupled deformation complex
\[
D_{\Sigma,\phi} := \Cone\bigl(\Conf_\Sigma \xrightarrow{\;\phi\;} D_\Sigma\bigr)
\]
is Fredholm, and its determinant line varies continuously in families that smooth nodes.

\subsubsection{Orientations without ends}

Suppose first that $\Sigma$ has no strip ends and no cylindrical ends, so that $(\partial\Sigma)^{KN}=\partial\Sigma^{KN}$ is a union of closed circles.

An orientation of $(E,F)$ is again a generator $\mathfrak o$ of the orientation module $\mathcal O_{(E,F)}$ of homotopy classes of oriented trivialisations of $F\to\partial\Sigma^{KN}$, subject to the sign relation of Definition~\ref{def:bundle-pair-orientation}. To orient $\det D_{\Sigma,\phi}$:
\begin{enumerate}[(i)]
\item Homotope the coupling map $\phi$ to $0$.
\item Smooth all  nodes of $\Sigma$ in a family $\Sigma_\epsilon$ ($\epsilon>0$), extending $(E,F,D_\Sigma)$ continuously across the smoothing family. Because the regularity at the nodes is sufficient for gluing analysis, the determinant lines form a continuous line bundle over this family.
\item For $\epsilon>0$, the domain $\Sigma_\epsilon$ is a smooth bordered surface without nodes or ends ($\Sigma_\epsilon^{KN}=\Sigma_\epsilon$).  Homotope $D_{\Sigma_\epsilon}$ to a doubleable operator. Then apply the holomorphic (or anti-holomorphic) orientation construction of Definition~\ref{def:holomorphic-orientation} (or Definition~\ref{def:antiholomorphic-orientation}), and transport the resulting orientation back to $\det D_{\Sigma,\phi}$.
\end{enumerate}

Because the wall modifications in $\mathfrak T_{\mathrm{inc}}$ and $\mathfrak T_{\mathrm{sm}}$ take place at smooth points of the normalised domain away from the nodes, the excision and neck-stretching reduction of Proposition~\ref{prop:wall-model-reduction} applies verbatim. Consequently, our three local wall orientation comparisons hold in this setting as well.

\subsubsection{Capping strip and cylindrical ends}

Now suppose that $\Sigma$ has a nonempty set of ends $e$ (boundary punctures/strip ends and interior punctures/cylindrical ends) where the bundle pair on $\Sigma^{KN}$ does not extend smoothly across the puncture in $\Sigma$. To orient $D_{\Sigma,\phi}$, we cap each end, following~\cite{BourgeoisMohnkeOrientations, EkholmEtnyreSullivanOrientations}.

For each end $e$, let $\Sigma_{\mathrm{cap},e}$ be a standard cap domain:
\begin{itemize}
\item For a strip end $e$, $\Sigma_{\mathrm{cap},e}$ is a disc with one boundary puncture, so $\Sigma_{\mathrm{cap},e}^{KN}$ has a strip-end interval $I_e\subset\partial\Sigma_{\mathrm{cap},e}^{KN}$.
\item For a cylindrical end $e$, $\Sigma_{\mathrm{cap},e}$ is a sphere with  one interior puncture, so $\Sigma_{\mathrm{cap},e}^{KN}$ has an end circle $S^1_e=\partial\Sigma_{\mathrm{cap},e}^{KN}$.
\end{itemize}
Choose a coupled bundle pair $(E_{\mathrm{cap},e},F_{\mathrm{cap},e},D_{\mathrm{cap},e},\phi_{\mathrm{cap},e})$ on $\Sigma_{\mathrm{cap},e}^{KN}$ whose asymptotic restriction to $I_e$ or $S^1_e$ matches that of $(E,F,D_\Sigma,\phi)$ at $e$ (with opposite end orientation), and write $D_e$ for its coupled deformation complex on $\Sigma_{\mathrm{cap},e}^{KN}$. In geometric situations, $\phi$ will generally be injective at $e$, so it is most practical to construct $E_{\mathrm{cap},e}$ as an extension of $T^{\log}\Sigma_{\mathrm{cap},e}$, ensuring that a suitable holomorphic coupling map $\phi_{\mathrm{cap}, e}$ exists.

Glue $\Sigma_{\mathrm{cap},e}$ to $\Sigma$ at each end $e$  to form a nodal surface $\hat\Sigma$ without ends (Figure~\ref{fig:Capped-domain}). On the real blowup $\hat\Sigma^{KN}$, the bundle pairs and operators glue along the common intervals $I_e$ and circles $S^1_e$ to define a capped coupled bundle pair $(\hat E,\hat F,D_{\hat\Sigma},\hat\phi)$ without ends.

\begin{figure}

\begingroup
\definecolor{capseam}{RGB}{0,112,91}
\begin{tikzpicture}[
  x=0.9cm,y=0.9cm,
  line cap=round,line join=round,
  silhouette/.style={draw=black,line width=0.8pt},
  boundary/.style={draw=black,line width=0.85pt},
  hidden boundary/.style={draw=black!55,line width=0.5pt,
    dash pattern=on 2.4pt off 2.2pt}
]
  \coordinate (e) at (4.95,0);
  \coordinate (cap mark) at (7.35,0);

  \def\lowerBoundary{
    (3.20,0.60)
    .. controls (4.02,0.60) and (4.40,0.26) .. (e)
    .. controls (4.40,-0.26) and (4.02,-0.60) .. (3.20,-0.60)
    .. controls (2.36,-0.65) and (1.32,-0.65) .. (0.55,-0.57)
    .. controls (-0.18,-0.49) and (-0.33,0.20) .. (0.39,0.49)
    .. controls (0.95,0.716) and (2.00,0.60) .. (3.20,0.60)
    -- cycle
  }
  \def\capBoundary{
    (e)
    .. controls (5.62,0.32) and (6.20,1.30) .. (7.35,1.30)
    .. controls (8.068,1.30) and (8.65,0.718) .. (8.65,0)
    .. controls (8.65,-0.718) and (8.068,-1.30) .. (7.35,-1.30)
    .. controls (6.20,-1.30) and (5.62,-0.32) .. (e)
    -- cycle
  }

  \def\uprightLeft{
    (0.55,5.20)
    .. controls (0.05,4.57) and (-0.23,4.08) .. (-0.18,3.47)
    .. controls (-0.13,2.75) and (0.38,2.18) .. (0.87,1.92)
    .. controls (1.16,1.766) and (1.45,1.34) .. (1.39,0.96)
    .. controls (1.33,0.58) and (1.08,0.29) .. (0.75,0.19)
  }
  \def\uprightRight{
    (3.00,0.17)
    .. controls (2.69,0.16) and (2.42,0.29) .. (2.28,0.48)
    .. controls (2.06,0.779) and (1.98,1.27) .. (2.16,1.60)
    .. controls (2.55,2.315) and (3.15,3.04) .. (3.20,3.48)
    .. controls (3.27,4.096) and (2.99,4.83) .. (2.55,5.20)
  }
  \def\uprightPatch{\uprightLeft -- \uprightRight -- cycle}

  \path[fill=black!2,boundary] \lowerBoundary;
  \path[fill=black!2,boundary] \capBoundary;

  \path[fill=black!2] \uprightPatch;
  \begin{scope}
    \clip \uprightPatch;
    \draw[hidden boundary] \lowerBoundary;
  \end{scope}
  \draw[silhouette] \uprightLeft;
  \draw[silhouette] \uprightRight;

  \path[fill=white] (1.55,5.20) ellipse[x radius=1.00,y radius=0.30];
  \draw[draw=black,line width=0.65pt]
    (2.55,5.20) arc[start angle=0,end angle=180,x radius=1.00,y radius=0.30];
  \draw[boundary]
    (0.55,5.20) arc[start angle=180,end angle=360,x radius=1.00,y radius=0.30];

  \draw[silhouette]
    (1.00,3.89) .. controls (1.19,3.12) and (1.61,3.21) .. (2.13,3.88);
  \draw[silhouette]
    (1.242,3.409) .. controls (1.39,3.63) and (1.61,3.65) .. (1.74,3.56);

  \fill[capseam] (e) circle[radius=1.35pt];
  \node[text=capseam,anchor=north,inner sep=3pt] at (4.95,-0.13) {$e$};

  \fill (cap mark) circle[radius=1.5pt];
  \node[font=\large] at (7.35,0.58) {$\Sigma_{\mathrm{cap},e}$};
  \node[font=\Large] at (4.55,3.40) {$\hat\Sigma$};
\end{tikzpicture}
\endgroup

\caption{The nodal curve $\hat\Sigma$ is obtained by gluing $\Sigma_{\mathrm{cap},e}$ to the strip end $e$. Drawn with the nodal topology so that the strip end has finite length. }
\label{fig:Capped-domain}
\end{figure}

As in the gluing exact triple of Lemma~\ref{lem:gluing-orientation}, requiring sections on $\Sigma^{KN}$ and $\Sigma_{\mathrm{cap},e}^{KN}$ to match along $I_e$ or $S^1_e$ (or equivalently, smoothing the resulting nodes and applying gluing analysis) yields a natural exact triple of Fredholm complexes
\begin{equation}\label{eq:general-capping-triple}
0 \longrightarrow D_{\hat\Sigma,\hat\phi} \longrightarrow D_{\Sigma,\phi} \oplus \bigoplus_e D_e \longrightarrow \bigoplus_e C_e \longrightarrow 0,
\end{equation}
where $C_e$ is the finite-dimensional asymptotic matching complex at $e$, and thus an additivity isomorphism
\begin{equation}\label{eq:general-capped-orientation}
\Psi_{\mathrm{cap}}\colon \det D_{\hat\Sigma,\hat\phi}\otimes \bigotimes_e \det C_e \xrightarrow{\;\cong\;} \det D_{\Sigma,\phi}\otimes \bigotimes_e \det D_e.
\end{equation}

Since $\hat\Sigma$ has no ends, $\det D_{\hat\Sigma,\hat\phi}$ is oriented by an orientation datum $\hat{\mathfrak o}\in\mathcal O_{(\hat E,\hat F)}$, represented by an oriented trivialisation of $\hat F$ over $\partial\hat\Sigma^{KN}$. An \emph{orientation datum} for $(E,F,D_\Sigma,\phi)$ is therefore determined by:
\begin{enumerate}[(i)]
\item an ordering of the ends $e$, together with a choice of capping problem $D_e$ and orientations of $\det D_e$ and $\det C_e$ for each $e$; and
\item an orientation $\hat{\mathfrak o}\in\mathcal O_{(\hat E,\hat F)}$ of the capped bundle pair.
\end{enumerate}
Via~\eqref{eq:general-capped-orientation}, these choices orient $\det D_{\Sigma,\phi}$. In the presence of ends, there will generally not be a globally defined orientation of the determinant line over the moduli space of curves, but invariants can be defined using coefficients in the orientation local system.

Finally, suppose $(E,F,D_\Sigma,\phi)$ is part of a wall problem on $\Sigma^{KN}$ (with the wall points away from the ends). Capping the ends commutes with the local wall modifications at the crossing or incidence point. Holding the ordering of the ends, the capping problems $D_e$, and their orientations fixed on both sides of the wall, all capping determinant factors in~\eqref{eq:general-capped-orientation} appear in the same ordered positions in $\mathfrak T_{\mathrm{inc}}$ and $\mathfrak T_{\mathrm{sm}}$ and cancel in the comparison (using Lemma~\ref{lem:exact-squares}). The relative sign between the smoothing boundary orientation and the chamber orientation on $\det D_{\mathrm{wall}}$ therefore coincides with that of the capped problem on $\hat\Sigma^{KN}$, which is governed by Proposition~\ref{prop:wall-model-reduction}.

\section{Disconnected hyperbolic crossing}\label{sec:disconnected-crossing}

In this section, we consider an explicit example within $(\mathbb C^*)^3$, proving that at a disconnected hyperbolic crossing, the smoothing boundary
orientation agrees with the boundary orientation of the
positive-crossing chamber. A related calculation by Welschinger (using an orientation convention that reverses the sign comparison) appears in \cite[Propositions 1.7 and 1.9]{WelschingerOpenGW6}.  Section~\ref{sec:ramified-covers} gives a second calculation, treating the disconnected and connected hyperbolic cases simultaneously.

\subsection{Geometric setup and orientation data}
\label{subsec:disconnected-setup}

In this section, we work with the ambient space
\[X=(\mathbb C^*)^3\]
with structure at infinity induced by including $X$ in a toric compactification $\bar X$ with log structure determined by the toric boundary divisors. This log scheme $\bar X$ is a logarithmic Calabi--Yau 3-fold, with the following holomorphic volume form:
\[\Omega=-\frac{dz_1}{z_1}\wedge \frac{dz_2}{z_2}\wedge\frac{dz_3}{z_3}.\]
We use a $(\mathbb C^*)^3$-invariant trivialisation of its logarithmic tangent bundle $T^{\log}\bar X$, such that flowing by the constant vector field $(a,b,c)$ corresponds to $(z_1,z_2,z_3)\mapsto(e^{ia} z_1,e^{ib} z_2, e^c z_3)$. We treat the third coordinate differently to be compatible with the following Lagrangian.

\[L=\{(e^{i\theta_1},e^{i\theta_2},e^t)\in X \text{ such that }(\theta_1,\theta_2,t)\in \mathbb T^2\times \mathbb R \}\]

We can choose a K\"ahler form on $\bar X$ that vanishes on $L$.    This makes $L$  a special Lagrangian in the sense that $\Omega\rvert_L$ is a real volume form. The induced orientation on $L$ is
compatible with the trivialisation defined by $(\partial_{\theta_1},\partial_{\theta_2},\partial_t)$.

{\bf Spin structure on $L$:}

The  frame $(\partial_{\theta_1},\partial_{\theta_2},\partial_t)$ defines a Spin structure on $L$. We will call this the trivial Spin structure. This frame is the restriction of $\mathbb R^3\subset \mathbb C^3$ using our trivialisation of $TX$.

\subsubsection{Half-cylinder orientations}\label{half-cylinder-section}

Let $\mathbb D_0$ be the unit disc $\{z\in\mathbb C:|z|\leq 1\}$ with an interior marked point at $0$, and let $\mathbb D^*:=\mathbb D_0\setminus \{0\}$ be the punctured unit disc. The boundary $\partial \mathbb D_0$ is the unit circle, naturally parametrised by $z=e^{i\vartheta}$.

Given $(a,b)\in \mathbb Z^2\setminus \{0\}=H_1(\mathbb T^2)\setminus \{0\}$, consider the space of finite-energy holomorphic maps $u\colon (\mathbb D^*,\partial\mathbb D^*)\longrightarrow ((\mathbb C^*)^3,L)$ such that $u(\partial \mathbb D^*)\subset L$ represents $(a,b)$. Evaluation at $z=1$ determines a diffeomorphism of this space of maps with $L$. Explicitly,
\begin{equation}\label{eq:disconnected-half-cylinders}
u_{\theta_1,\theta_2,t}(z)=(e^{i\theta_1}z^a,e^{i\theta_2}z^b,e^t)\ .
\end{equation}

\begin{figure}[htbp]
\begingroup
\definecolor{tropicalL}{RGB}{0,112,91}
\definecolor{amoebafill}{RGB}{225,233,239}
\begin{tikzpicture}[
  x={(0.78cm,0.36cm)},y={(-0.78cm,0.36cm)},z={(0cm,1cm)},
  line cap=round,line join=round,
  curve/.style={draw=black,line width=0.85pt},
  ambient/.style={draw=black!28,line width=0.4pt,->},
  every node/.style={font=\small,inner sep=2pt}
]
  \path[use as bounding box] (-2.85cm,-2.00cm) rectangle (2.85cm,2.05cm);
  \draw[ambient] (0,0,0) -- (1.35,0,0);
  \draw[ambient] (0,0,0) -- (0,1.35,0);
  \node[text=black!50,anchor=south west,font=\footnotesize]
    at (1.35,0,0) {$\log|z_1|$};
  \node[text=black!50,anchor=south east,font=\footnotesize]
    at (0,1.35,0) {$\log|z_2|$};
  \draw[tropicalL,line width=0.95pt] (0,0,-1.65) -- (0,0,1.62);
  \draw[tropicalL,line width=0.7pt,densely dotted]
    (0,0,-1.65) -- (0,0,-1.90);
  \draw[tropicalL,line width=0.7pt,densely dotted]
    (0,0,1.62) -- (0,0,1.86);
  \node[tropicalL,anchor=west] at (0.10cm,1.57cm) {$L$};
  \draw[curve,->] (0,0,0) -- (-3.000000,-1.500000,0.000000);
  \fill (0,0,0) circle[radius=1.25pt];
  \node[anchor=west] at (0.10cm,-0.02cm) {$(0,0,t)$};
  \node[anchor=east] at (-0.75cm,-0.70cm) {$(-a,-b,0)$};
\end{tikzpicture}
\caption{Both $L$ and the half-cylinder $u$ project to lines using $(\log |z_1|,\log |z_2|,\log |z_3|)$. The image of the half-cylinder is a ray travelling in direction $(-a,-b,0)$.}
\endgroup

\end{figure}

Each map $u=u_{\theta_1,\theta_2,t}$ extends uniquely to a map $u\colon\mathbb D_0\to \bar X$ compatible with the logarithmic structure, defining a coupled bundle pair $(u^*T^{\log}\bar X,u^*TL, D_{\mathbb D_0},du)$ over $\mathbb D_0$. Our trivialisation of $T^{\log}\bar X$ identifies $u^*(T^{\log}\bar X,TL)$ with the constant bundle pair $(\mathbb C^3,\mathbb R^3)$, where $\mathbb R^3\cong T_{u(1)}L$, and the linearisation $D_{\mathbb D_0}$ of the $\dbar$ operator at $u$ is the standard Cauchy--Riemann operator $\dbar$ on $(\mathbb C^3,\mathbb R^3)$. In the trivialisation of $(T^{\log}\mathbb D_0,T^{\log}\partial\mathbb D_0)$ provided by the rotation vector field $\partial_\vartheta = iz\partial_z$, the coupling map $du\colon T^{\log}\mathbb D_0\to u^*T^{\log}\bar X$ is the constant complex-linear map sending $\partial_\vartheta$ to $(a,b,0)\in\mathbb R^3\subset\mathbb C^3$.

 The standard complexes $\Conf_{\mathbb D_0}$ and $D_{\mathbb D_0}$ are surjective with kernels $\mathbb R\partial_\vartheta\cong T_1\partial\mathbb D_0$ and $\mathbb R^3\cong T_{u(1)}L$. The coupled deformation complex $D_{\mathbb D_0,du}=\Cone(\Conf_{\mathbb D_0}\xrightarrow{\,du\,}D_{\mathbb D_0})$ is therefore quasi-isomorphic to the finite-dimensional two-term complex in degrees $-1$ and $0$:
\begin{equation}\label{eq:half-cylinder-finite-cone}
\bigl(\mathbb R\partial_\vartheta \xrightarrow{\;du\;} \mathbb R^3\bigr)
\;\cong\;
\bigl(T_1\partial\mathbb D_0 \xrightarrow{\;du\;} T_{u(1)}L\bigr),
\qquad c\partial_\vartheta \longmapsto (ac,bc,0).
\end{equation}
Since $(a,b)\neq (0,0)$, $du$ is injective, so $D_{\mathbb D_0,du}$ has cohomology concentrated in degree $0$:
\[
H^0(D_{\mathbb D_0,du}) \cong \mathbb R^3/\mathbb R(a,b,0) \cong T_{u(1)}L / du(T_1\partial\mathbb D_0).
\]
 Lemma~\ref{lem:standard-punctured-disc} immediately implies the following.

\begin{lemma}\label{lem:half-cylinder-orientation}
The holomorphic orientation of $\det D_{\mathbb D_0,du} = \det H^0(D_{\mathbb D_0,du})$ using the trivial Spin structure on $L$ is the quotient orientation induced by the short exact sequence
\begin{equation}\label{eq:half-cylinder-quotient-sequence}
0 \longrightarrow T_1\partial\mathbb D_0 \xrightarrow{\;du\;} T_{u(1)}L \longrightarrow H^0(D_{\mathbb D_0,du}) \longrightarrow 0,
\end{equation}
where $T_1\partial\mathbb D_0$ is given the standard anticlockwise boundary orientation $\partial_\vartheta$ and $T_{u(1)}L\cong\mathbb R^3$ is given the orientation $(\partial_{\theta_1},\partial_{\theta_2},\partial_t)$. That is, an ordered basis $([v_1],[v_2])$ of $H^0(D_{\mathbb D_0,du})$ is positively oriented if and only if $\bigl(du(\partial_\vartheta),v_1,v_2\bigr)$ is a positively oriented basis of $T_{u(1)}L$.
\end{lemma}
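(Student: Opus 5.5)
The plan is to deduce this directly from Lemma~\ref{lem:standard-punctured-disc}, after checking that the half-cylinder deformation problem is exactly the standard punctured disc with a constant coupling, and that the orientation data match.

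First, I will identify the coupled bundle pair $(u^*T^{\log}\bar X,u^*TL,D_{\mathbb D_0},du)$ with the data of Lemma~\ref{lem:standard-punctured-disc}. Our $(\mathbb C^*)^3$-invariant trivialisation of $T^{\log}\bar X$ identifies the bundle pair with $(\mathbb C^3,\mathbb R^3)$ and the operator with the standard $\dbar$. In the trivialisation of $T^{\log}\mathbb D_0$ by $\partial_\vartheta=iz\partial_z$, the coupling is the constant complex-linear map with $\phi(\partial_\vartheta)=(a,b,0)$. It is worth checking, at least briefly, that this matches the formula \eqref{eq:disconnected-half-cylinders}: flowing by $\partial_\vartheta$ sends $z\mapsto e^{i\vartheta}z$, and this multiplies $z_1$ by $e^{ia\vartheta}$ and $z_2$ by $e^{ib\vartheta}$, which is the constant vector field $(a,b,0)$ in our convention.

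Second, I will match the orientation data. The trivial Spin structure on $L$ is given by the frame $(\partial_{\theta_1},\partial_{\theta_2},\partial_t)$, which corresponds to the standard frame $(e_1,e_2,e_3)$ of $\mathbb R^3$. So the orientation class $\mathfrak o$ is the one induced by the standard trivialisation used in Lemma~\ref{lem:standard-punctured-disc}. By Remark~\ref{rem:holomorphic-orientation-properties}(c), the orientation depends only on $\mathfrak o$.

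Third, I will apply Lemma~\ref{lem:standard-punctured-disc} with $\phi\neq0$. This is justified because $(a,b)\neq0$, which makes $du$ injective. The lemma then says that $([v_1],[v_2])$ is an oriented basis of $H^0=\mathbb R^3/\mathbb R\phi(\partial_\vartheta)$ exactly when $(\phi(\partial_\vartheta),v_1,v_2)$ is an oriented basis of $\mathbb R^3$. This is precisely the quotient orientation of the short exact sequence \eqref{eq:half-cylinder-quotient-sequence}, with $T_1\partial\mathbb D_0$ oriented by $\partial_\vartheta$.

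Finally, the identification $\det D_{\mathbb D_0,du}=\det H^0$ goes through the quasi-isomorphism \eqref{eq:half-cylinder-finite-cone} and the Euler isomorphism. That quasi-isomorphism is the same one used in the lemma, so no additional sign appears.

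The only point I expect to need genuine care is the orientation conventions: confirming that $\partial_\vartheta=iz\partial_z$ is the anticlockwise boundary orientation used in the lemma, and that the sign of $c\partial_\vartheta\mapsto(ac,bc,0)$ is correct. Both are settled by the flow convention above.
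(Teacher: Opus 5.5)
Your proposal is correct and takes the same route as the paper, which deduces this lemma immediately from Lemma~\ref{lem:standard-punctured-disc} using the same identifications: the bundle pair with $(\mathbb C^3,\mathbb R^3,\dbar)$, the coupling with the constant map $\partial_\vartheta\mapsto(a,b,0)$, and the trivial Spin structure with the standard frame $(e_1,e_2,e_3)$. Your flow check of the coupling, and your use of the injectivity of $du$ to invoke the $\phi\neq0$ case, supply exactly the details the paper leaves implicit.
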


The moduli space of once-punctured holomorphic discs with boundary on $L$ representing $(a,b)$ is the quotient of the space of maps~\eqref{eq:disconnected-half-cylinders} by the rotation group $\Aut(\mathbb D_0)\cong S^1$, and is thus the quotient of $L$ by the circle action of weight $(a,b)$. If $(a,b)$ is not a primitive vector, this moduli space is a stack; if $(a,b)$ is primitive, Lemma~\ref{lem:half-cylinder-orientation} gives an explicit oriented parametrisation as follows:

Choose $(c,d)\in\mathbb Z^2$ such that $(a,b)\wedge (c,d) = ad-bc = 1$. Then an oriented parametrisation of the moduli space is given by $(\theta,t)\in (\mathbb R/2\pi\mathbb Z)\times\mathbb R$, with
\begin{equation}\label{eq:half-cylinder-oriented-parametrisation}
u_{(\theta,t)}(z) := (e^{ic\theta}z^a, e^{id\theta}z^b, e^t)\ .
\end{equation}

\subsection{The families and their degeneration}
\label{subsec:disconnected-families}

\subsubsection{The normalised family and its crossing wall}
\label{subsec:disconnected-normalised-family}

Consider the moduli space of pairs of once-punctured holomorphic discs with boundary on $L$, representing $(1,0)$ and $(0,1)$. A parametrisation of this moduli space is given by  $(\theta_2,t_1,t_2,\theta_1)$ where the first disc is
\[z\mapsto (z,e^{i\theta_2},e^{t_1})\]
and the second disc is
\[z\mapsto (e^{i\theta_1}, z, e^{t_2}) \ .\]
The boundaries cross when $t_1=t_2$, at the point with coordinates $(\theta_1,\theta_2,t_1=t_2)$ on $L$. The positive-crossing chamber is $t_1\geq t_2$. We compare its boundary orientation with that of the glued family below.

\begin{figure}[htbp]
\begingroup
\definecolor{tropicalL}{RGB}{0,112,91}
\definecolor{amoebafill}{RGB}{225,233,239}
\begin{tikzpicture}[
  x={(0.78cm,0.36cm)},y={(-0.78cm,0.36cm)},z={(0cm,1cm)},
  line cap=round,line join=round,
  curve/.style={draw=black,line width=0.85pt},
  ambient/.style={draw=black!28,line width=0.4pt,->},
  every node/.style={font=\small,inner sep=2pt}
]
  \path[use as bounding box] (-2.85cm,-2.00cm) rectangle (2.85cm,2.05cm);
  \draw[ambient] (0,0,0) -- (1.35,0,0);
  \draw[ambient] (0,0,0) -- (0,1.35,0);
  \node[text=black!50,anchor=south west,font=\footnotesize]
    at (1.35,0,0) {$\log|z_1|$};
  \node[text=black!50,anchor=south east,font=\footnotesize]
    at (0,1.35,0) {$\log|z_2|$};
  \draw[tropicalL,line width=0.95pt] (0,0,-1.65) -- (0,0,1.62);
  \draw[tropicalL,line width=0.7pt,densely dotted]
    (0,0,-1.65) -- (0,0,-1.90);
  \draw[tropicalL,line width=0.7pt,densely dotted]
    (0,0,1.62) -- (0,0,1.86);
  \node[tropicalL,anchor=west] at (0.10cm,1.57cm) {$L$};
  \draw[curve,->] (0.000000,0.000000,0.420000) -- (-3.150000,0.000000,0.420000);
  \draw[curve,->] (0.000000,0.000000,-0.380000) -- (0.000000,-3.150000,-0.380000);
  \fill (0.000000,0.000000,0.420000) circle[radius=1.25pt];
  \fill (0.000000,0.000000,-0.380000) circle[radius=1.25pt];
  \node[anchor=west] at (0.000000,0.000000,0.420000) {$\ t_1$};
  \node[anchor=west] at (0.000000,0.000000,-0.380000) {$\ t_2$};
  \node[anchor=north] at (-1.72cm,-0.82cm) {$(-1,0,0)$};
  \node[anchor=north] at (1.72cm,-1.68cm) {$(0,-1,0)$};
\end{tikzpicture}
\endgroup
\caption{A curve in the positive-crossing chamber of the normalised family.}
\end{figure}

\subsubsection{The glued family and its two limits}\label{glued-section}
We can glue the two discs where they intersect. The glued moduli space is parametrised by $(r,\theta_1,\theta_2,t)\in (0,1]\times L $ as follows:
\begin{equation}\label{eq:disconnected-glued-family}z\mapsto \left( e^{i{\theta_1}}\frac{(z-r)}{rz-1}, e^{i\theta_2} \frac{(z+r)}{rz+1},e^t  \right)\end{equation}
The $r=1$ boundary of this moduli space is the  nodal curve consisting of two discs $z\mapsto(z,e^{i\theta_2},e^t)$ and $z\mapsto (e^{i\theta_1},z,e^t)$ glued at the point $(e^{i\theta_1},e^{i\theta_2},e^t)\in L$.

\begin{figure}[htbp]
\includegraphics{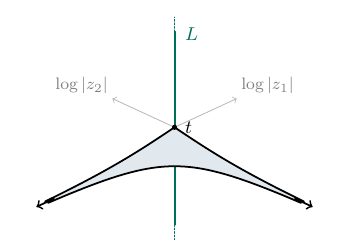}
\caption{The image of a member of the glued family with $r=0.4$.}
\end{figure}

At the other end, $r\to0$, the domain approaches a standard half-cylinder
attached to a three-punctured sphere (Figure~\ref{fig:tropical-limit}). These two limits play different
roles in the argument: we use the $r\to0$ limit to determine the
orientation of the glued family, then read off its smoothing boundary
orientation at $r=1$.

\begin{figure}[htbp]
\begingroup
\definecolor{tropicalL}{RGB}{0,112,91}
\begin{tikzpicture}[
  x={(0.78cm,0.30cm)},y={(-0.40cm,0.50cm)},z={(0cm,1cm)},
  line cap=round,line join=round,
  curve/.style={draw=black,line width=0.85pt},
  ambient/.style={draw=black!28,line width=0.4pt,->},
  every node/.style={font=\small,inner sep=2pt}
]
  \path[use as bounding box] (-3.05cm,-2.65cm) rectangle (2.00cm,2.00cm);

  \draw[ambient] (0,0,0) -- (1.35,0,0);
  \draw[ambient] (0,0,0) -- (0,1.35,0);
  \node[text=black!50,anchor=south west] at (1.35,0,0) {$\epsilon \log|z_1|$};
  \node[text=black!50,anchor=south east] at (0,1.35,0) {$\epsilon \log |z_2|$};

  \draw[tropicalL,line width=0.95pt] (0,0,-2.13) -- (0,0,1.62);
  \draw[tropicalL,line width=0.7pt,densely dotted]
    (0,0,-2.13) -- (0,0,-2.40);
  \draw[tropicalL,line width=0.7pt,densely dotted]
    (0,0,1.62) -- (0,0,1.86);
  \node[tropicalL,anchor=west] at (0.10cm,1.57cm) {$L$};

  \coordinate (attachment) at (0,0,0);
  \coordinate (vertex) at (-1,-1,0);

  \draw[curve] (attachment) -- (vertex);
  \draw[curve,->] (vertex) -- (-3.65,-1,0);
  \draw[white,line width=3.2pt] (-1,-1.82,0) -- (-1,-2.08,0);
  \draw[curve,->] (vertex) -- (-1,-3.65,0);
  \fill (attachment) circle[radius=1.25pt];
  \fill (vertex) circle[radius=1.25pt];

  \node[anchor=east] at (-0.30cm,-0.34cm) {$(-1,-1,0)$};
  \node[anchor=north] at (-1.63cm,-1.52cm) {$(-1,0,0)$};
  \node[anchor=west] at (0.46cm,-1.57cm) {$(0,-1,0)$};
\end{tikzpicture}
\endgroup
\caption{Rescaling coordinates as we approach the $r\to 0$ limit gives a tropical curve with a trivalent vertex. Around this trivalent vertex, our curve is modelled on a holomorphic map of a three-punctured sphere to $(\mathbb C^*)^3$, with one of the punctures joined to a half-cylinder leaving $L$.}
\label{fig:tropical-limit}
\end{figure}

\subsection{Orientation comparison in the model}
\label{subsec:disconnected-model-orientation}

We orient the wall in two ways: as the boundary of the positive-crossing chamber, using the product orientation of the two half-cylinders, and as the smoothing boundary $r=1$ of the glued family, which we orient using its $r\to0$ limit.

\subsubsection*{Positive chamber orientation}

For the normalised family of Section~\ref{subsec:disconnected-normalised-family}, Lemma~\ref{lem:half-cylinder-orientation} and~\eqref{eq:half-cylinder-oriented-parametrisation} give the first half-cylinder (of slope $(1,0)$) the orientation $(\partial_{\theta_2},\partial_{t_1})$ and the second half-cylinder (of slope $(0,1)$) the orientation $(\partial_{t_2},\partial_{\theta_1})$. By Remark~\ref{rem:disjoint-union-orientation}, the normalised family carries the product orientation $(\partial_{\theta_2},\partial_{t_1},\partial_{t_2},\partial_{\theta_1})$.

At the crossing wall $t_1=t_2$, the ordered boundary tangents of the two strands are $w_1=\partial_{\theta_1}$ and $w_2=\partial_{\theta_2}$, and the displacement from strand~$2$ to strand~$1$ is $(t_1-t_2)\partial_t$. Since $(\partial_{\theta_1},\partial_{\theta_2},\partial_t)$ is the orientation of $L$, the positive-crossing chamber is $t_1\geq t_2$, with outward normal $\partial_{t_2}-\partial_{t_1}$ (pointing towards $t_1<t_2$). Placing the outward normal first as in Definition~\ref{def:boundary-orientations}(i), the positive chamber orientation $\mathsf o^+_\partial$ of the wall $t_1=t_2=t$ is given by $(\partial_{\theta_2},\partial_t,\partial_{\theta_1})$, which agrees with the orientation $(\partial_{\theta_1},\partial_{\theta_2},\partial_t)$ of $L$.

\subsubsection*{Orienting the glued family via the elliptic boundary node model}

For $r\in(0,1)$, the domain of the glued map~\eqref{eq:disconnected-glued-family} is the unit disc $\Sigma_r$ with interior marked points at $z=\pm r$, where $z_1$ and $z_2$ vanish. Using our trivialisation of $T^{\log}\bar X$, the pulled-back bundle pair is $(\mathbb C^3,\mathbb R^3)$, and $D_{\Sigma_r}=\dbar$ is surjective with kernel $\mathbb R^3$ spanned by $(\partial_{\theta_1},\partial_{\theta_2},\partial_t)$, while $\Conf_{\Sigma_r}$ is injective with cokernel spanned by the variation $r\partial_r$ of the marked points. So $D_{\Sigma_r,du}$ has cohomology only in degree $0$, namely the tangent space of the glued family, with frame $(r\partial_r,\partial_{\theta_1},\partial_{\theta_2},\partial_t)$.

In the coordinate $w=z/r$, $\Sigma_r$ is the disc $\{|w|\leq 1/r\}$ with marked points at $w=\pm1$, and the limit $r\to0$ stretches the neck along its boundary. The limit $\Sigma'$ is $\mathbb{CP}^1$ with marked points at $w=\pm1$ and an elliptic boundary node at $q=\infty$, with closed normalisation the three-punctured sphere $\Sigma_c=\mathbb{CP}^1_{-1,1,\infty}$. In logarithmic coordinates, $du$ converges to the logarithmic derivative of the three-punctured sphere $w\mapsto(w-1,w+1,1)$, which is injective at $q$. So $q$ is a stable elliptic node in the sense of Remark~\ref{rem:Q-geometry}. As $\Sigma'$ has no smooth boundary, Definition~\ref{def:holomorphic-orientation} orients $\det D_{\Sigma',du}$ directly using~\eqref{eq:en comparison}, without homotoping $du$ to $0$. Here $\partial_\vartheta=iw\partial_w$ is the anticlockwise rotation in $w$, so $v_q=du(\partial_\vartheta)=\partial_{\theta_1}+\partial_{\theta_2}$, and $du(\nu_q)=-Jv_q=\partial_{\rho_1}+\partial_{\rho_2}$, where $\rho_k=\log|z_k|$. This rescales $z_1$ and $z_2$ equally, which is the deformation $r\partial_r$ of the glued family.

As a sphere with three marked points has no infinitesimal automorphisms or moduli, $\Conf_{\Sigma_c}$ is an isomorphism, so $D_{\Sigma_c,du}$ is quasi-isomorphic to $\ker D_{\Sigma_c}=\mathbb C^3$, with its complex orientation. The triple~\eqref{eq:elliptic-orientation-triple} therefore has all its cohomology in degree $0$:
\[
0\longrightarrow H^0(D_{\Sigma',du})\longrightarrow\mathbb C^3\longrightarrow H^0(Q_{q,du})=J\mathbb R^3/\mathbb R(Jv_q)\longrightarrow0,
\]
so $H^0(D_{\Sigma',du})=\mathbb R(du(\nu_q))\oplus\mathbb R^3$. As $(v_q,\partial_{\theta_2},\partial_t)$ is an oriented basis of $\mathbb R^3$, Remark~\ref{rem:Q-geometry} orients $H^0(Q_{q,du})$ by $([J\partial_t],[J\partial_{\theta_2}])$. The basis $(-Jv_q,v_q,\partial_{\theta_2},\partial_t,J\partial_t,J\partial_{\theta_2})$ of $\mathbb C^3$ has the complex orientation. So $(du(\nu_q),v_q,\partial_{\theta_2},\partial_t)$, and hence $(r\partial_r,\partial_{\theta_1},\partial_{\theta_2},\partial_t)$, is the holomorphic orientation of the glued family.

\subsubsection*{Comparison at the nodal boundary}

At the hyperbolic boundary node $r=1$,  the local plumbing coordinate is, to leading order, $\epsilon = (1-r)^2$, so the outward smoothing direction $-\epsilon\partial_\epsilon$ of Remark~\ref{rem:residue-sign} and Definition~\ref{def:boundary-orientations}(ii) points in the direction of $\partial_r$. Removing $\partial_r$ from the first position gives the smoothing boundary orientation $(\partial_{\theta_1},\partial_{\theta_2},\partial_t)$. Under the identification of the common wall with $L$, the two wall orientations in the model are therefore
\begin{equation}\label{eq:disconnected-model-orientations}
\begin{aligned}
\text{smoothing boundary orientation ($\mathsf o^{\mathrm{sm}}_\partial$):}\quad & (\partial_{\theta_1},\partial_{\theta_2},\partial_t),\\
\text{positive chamber orientation ($\mathsf o^+_\partial$):}\quad & (\partial_{\theta_1},\partial_{\theta_2},\partial_t).
\end{aligned}
\end{equation}
They agree. Combining this model calculation with Proposition~\ref{prop:wall-model-reduction} and Remark~\ref{rem:antiholomorphic-skein-signs} yields the general orientation comparison for disconnected hyperbolic crossings, illustrated schematically in Figure~\ref{fig:disconnected-moduli}:

\begin{prop}[Disconnected hyperbolic orientation comparison]
\label{prop:disconnected-hyperbolic-orientation}
Consider a disconnected hyperbolic wall problem $(E,F,D_\Sigma,\phi,p_1,p_2,I)$.
\begin{enumerate}[(i)]
\item In the holomorphic orientation convention, the smoothing boundary orientation $\mathsf o^{\mathrm{sm}}_\partial$ of $\det D_{\mathrm{wall}}$ agrees with the positive chamber orientation $\mathsf o^+_\partial$.
\item In the anti-holomorphic orientation convention, the smoothing boundary orientation $\mathsf o^{\mathrm{sm,ah}}_\partial$ of $\det D_{\mathrm{wall}}$ agrees with the negative chamber orientation $\mathsf o^{-,\mathrm{ah}}_\partial$, matching the hyperbolic skein relation of~\cite[Definition 3.1]{worldsheetskein}.
\end{enumerate}
\end{prop}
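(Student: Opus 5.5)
Part (i) follows by combining Proposition~\ref{prop:wall-model-reduction} with the explicit model family of this section. By Proposition~\ref{prop:wall-model-reduction}, the relative sign between $\mathsf o^{\mathrm{sm}}_\partial$ and $\mathsf o^-_\partial$ (equivalently, between $\mathsf o^{\mathrm{sm}}_\partial$ and $\mathsf o^+_\partial=-\mathsf o^-_\partial$) depends only on the wall type. It therefore suffices to exhibit one disconnected hyperbolic wall problem where the comparison can be computed. The pair of half-cylinders of slopes $(1,0)$ and $(0,1)$ in $((\mathbb C^*)^3,L)$, with the trivial Spin structure, is such a problem. Its linearisation at the wall $t_1=t_2$ is a disconnected hyperbolic wall problem on $\mathbb D_0\amalg\mathbb D_0$ with constant data $(\mathbb C^3,\mathbb R^3,\dbar)$, and $I=\id$ is a compatible choice. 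The trivialisation $(\partial_{\theta_1},\partial_{\theta_2},\partial_t)$ descends to the nodal curve, so it is compatible orientation data in the sense of Definition~\ref{def:compatible-orientation-data}.

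In this model everything is unobstructed, so Remark~\ref{rem:transverse-wall-recipe} lets me compare the two orientations using tangent frames. First I would record the positive chamber orientation. Lemma~\ref{lem:half-cylinder-orientation} and Remark~\ref{rem:disjoint-union-orientation} give $\mathsf o^+_\partial=(\partial_{\theta_2},\partial_t,\partial_{\theta_1})$, which equals the orientation of $L$. Second, I would orient the glued family~\eqref{eq:disconnected-glued-family} by transporting to its $r\to0$ limit, the three-punctured sphere with a stable elliptic node. This gives $(r\partial_r,\partial_{\theta_1},\partial_{\theta_2},\partial_t)$. Third, at $r=1$ the outward direction $-\epsilon\partial_\epsilon$ is a positive multiple of $\partial_r$. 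Removing it from the first slot gives $\mathsf o^{\mathrm{sm}}_\partial=(\partial_{\theta_1},\partial_{\theta_2},\partial_t)$. The two orientations agree, as recorded in~\eqref{eq:disconnected-model-orientations}, which proves (i).

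The step I expect to need the most care is checking that the $r\to0$ transport really computes the holomorphic orientation of Definition~\ref{def:holomorphic-orientation}. Two points have to be confirmed. One is that $\det D_{\Sigma_r,du}$ is continuous along the family all the way to both limits; this follows from Remark~\ref{rem:smoothing-family-regularity} and Remark~\ref{rem:holomorphic-orientation-properties}(b). The other is that the sign of $\epsilon=(1-r)^2$ is correct to leading order, which I would verify by writing the plumbing coordinates at $z=1$ explicitly. A further subtlety is that the Spin structure on the glued curve must be the one induced by $\tau_{\mathrm{node}}$. This holds because the frame is global on $L$.

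For (ii), I would apply Remark~\ref{rem:antiholomorphic-skein-signs}, or directly~\eqref{eq:antiholomorphic-sign-index}. For a disconnected hyperbolic wall, $\Sigma_{\mathrm{node}}$ and $\Sigma$ have the same $\chi$ and $n$ after the hyperbolic smoothing, but $b_{\mathbb R}$ changes by one. In the convention of~\eqref{eq:antiholomorphic-sign-index}, the smoothed surface has $\chi$ one less than $\Sigma$ and $b_{\mathbb R}$ one less, so the exponent $(3\chi+n-b_{\mathbb R})/2$ changes by $(-3+1)/2=-1$. Hence $\epsilon_{\mathrm{ah}}(\Sigma_{\mathrm{node}})=-\epsilon_{\mathrm{ah}}(\Sigma)$, and this sign is constant in the smoothing family by continuity. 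As a result, $\mathsf o^{\mathrm{sm,ah}}_\partial=\epsilon_{\mathrm{ah}}(\Sigma_{\mathrm{node}})\mathsf o^{\mathrm{sm}}_\partial$ while $\mathsf o^{\pm,\mathrm{ah}}_\partial=\epsilon_{\mathrm{ah}}(\Sigma)\mathsf o^{\pm}_\partial$. Combined with (i), this gives $\mathsf o^{\mathrm{sm,ah}}_\partial=-\epsilon_{\mathrm{ah}}(\Sigma)\mathsf o^+_\partial=\mathsf o^{-,\mathrm{ah}}_\partial$.
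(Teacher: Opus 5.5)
Your proposal is correct and follows the paper's own argument. You use the explicit half-cylinder model of Section~\ref{sec:disconnected-crossing}: the product orientation gives $\mathsf o^+_\partial$, the $r\to0$ elliptic-node limit orients the glued family, and $-\epsilon\partial_\epsilon\propto\partial_r$ at $r=1$ gives $\mathsf o^{\mathrm{sm}}_\partial$. You then extend to all disconnected walls by Proposition~\ref{prop:wall-model-reduction}, and obtain (ii) from the flip $\epsilon_{\mathrm{ah}}(\Sigma_{\mathrm{node}})=-\epsilon_{\mathrm{ah}}(\Sigma)$, exactly as in Remark~\ref{rem:antiholomorphic-skein-signs}.

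One slip: the opening sentence of your part (ii) says $\Sigma_{\mathrm{node}}$ and $\Sigma$ have the same $\chi$. That contradicts the correct computation you actually use, in which both $\chi$ and $b_{\mathbb R}$ drop by one.
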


\begin{figure}[htbp]
\centering
\begin{tikzpicture}[
  x=1cm, y=1cm,
  line width=0.8pt,
  line cap=round,
  line join=round,
  every node/.style={font=\small, inner sep=2pt},
  oriented/.style={
    postaction={decorate},
    decoration={markings,
      mark=at position 0.55 with
        {\arrow{Stealth[length=2.3mm,width=1.7mm]}}}
  }
]
  \coordinate (wall) at (0,0);

  \draw[oriented] (-4.1,-0.65)
    .. controls (-2.9,-0.13) and (-1.4,0) .. (wall);
  \draw[oriented] (wall)
    .. controls (1.5,0) and (2.8,-0.22) .. (4.3,-0.72);

  \draw[oriented] (1.38,3.48)
    .. controls (0.53,2.55) and (-0.06,1.10) .. (wall);

  \fill (wall) circle[radius=1.5pt];
  \node[anchor=north] at (0,-0.12) {$\mathcal M_{\mathrm{node}}$};
  \node[anchor=west] at (1.13,2.87)
    {$\mathcal M_{\mathrm{smooth}}$};
  \node[anchor=north] at (-2,-0.4)
    {$\mathcal M_{\mathrm{norm}}^+$};
  \node[anchor=north] at (2,-0.4)
    {$\mathcal M_{\mathrm{norm}}^-$};
\end{tikzpicture}
\caption{A schematic picture of the moduli space near a curve with a hyperbolic node and its normalisation, corresponding to a disconnected hyperbolic wall problem, showing the orientation relationship when we use the holomorphic orientation convention.}
\label{fig:disconnected-moduli}
\end{figure}

\begin{remark}\label{tilted-L}We can perform similarly explicit analysis for a tilted version of $L$ defined by
\[L'=\{(e^{i\theta_1}e^{ta},e^{i\theta_2}e^{tb},e^t)\in X \text{ such that }(\theta_1,\theta_2,t)\in \mathbb T^2\times \mathbb R \}\ .\]
As all curves discussed above have the third coordinate constant, we can shift them by the transformation $(z_1,z_2,z_3)\mapsto (|z_3|^az_1,|z_3|^b z_2,z_3)$ to have boundary on the tilted totally real submanifold $L'$. All calculations of orientations of these moduli spaces of shifted curves are unchanged.
 \end{remark}

\section{Elliptic wall}\label{sec:elliptic-crossing}

At an elliptic incidence, the smoothing boundary orientation agrees
with the boundary orientation of the negative-incidence chamber, which has the
smaller intersection with a bounding chain $C$. We calculate this
sign in a family of half-cylinders degenerating to a plane with an elliptic node, confirming the general comparison of
Lemma~\ref{lem:elliptic comparison}.

Section~\ref{subsec:elliptic-setup} fixes the geometric and Spin data.
Section~\ref{subsec:elliptic-families} describes the two families, and
Section~\ref{subsec:elliptic-model-orientation} brings together the
bounding-chain and Spin corrections in their orientation comparison.

\subsection{Geometric setup and orientation data}
\label{subsec:elliptic-setup}

The ambient space in this case is
\[Y=\left(\mathbb C^2\setminus \{z_1z_2+1=0\}\right)\times  \mathbb C^*\]
with structure at infinity induced from the log structure on a toric compactification $\bar Y$ in which the function $z_1z_2$ is smooth where it is bounded and also has nonvanishing derivative away from $z_1=z_2=0$. (See \cite[Section 2]{tropological} for a discussion of the compactification required and the associated log structure.)

This structure at infinity is compatible with the following holomorphic volume form:
\[\Omega_Y=\frac{dz_1\wedge dz_2}{z_1z_2+1} \wedge\frac{dz_3}{z_3}.\]

Our Lagrangian is
\[L=\{(x+iy,x-iy, e^{i\theta})\in Y \text{ such that }(x,y,\theta)\in \mathbb R^2\times \mathbb T \}\ .\]

An appropriate K\"ahler form comes from the toric compactification $\bar Y$.  It is easy to choose such a K\"ahler form to vanish on $L$, as this toric manifold can be chosen to admit the anti-holomorphic involution $(z_1,z_2,z_3)\mapsto (\bar z_2, \bar z_1, \frac 1{\bar z_3})$, which fixes $L$.  Then $L$ is special Lagrangian in the sense that $\Omega_Y\rvert_L$ is a real volume form. The induced orientation is
compatible with the trivialisation defined by the frame
\[(\partial_{x},\partial_{y},\partial_\theta).\]
Our chosen Spin structure on $L$ is defined using the above frame.

\subsubsection{Comparison with the torus model}
\label{subsec:elliptic-torus-comparison}

To use the half-cylinder orientations of Section~\ref{half-cylinder-section}, compare this model with the torus model: the subset where the real part of $z_1z_2$ is positive embeds into $(\mathbb C^*)^3$ via the map
\begin{equation}\label{eq:elliptic-torus-comparison}
h:(z_1,z_2,z_3)\longmapsto(z_1,z_3,z_1z_2).
\end{equation} The image of this embedding is the subset where the real part of the third coordinate is positive.

Note that $h^*\Omega =\frac{dz_1}{z_1}\wedge \frac{dz_2}{z_2}\wedge \frac{dz_3}{z_3}$, which is a deformation of the above holomorphic volume form when the real part of $z_1z_2$ is positive.

In the region where the real part of $z_1z_2$ is positive, $L$ agrees
with the inverse image of the tilted totally real submanifold $L'$ from
Remark~\ref{tilted-L}, with $(a,b)=(1/2,0)$.

The chosen Spin structure, restricted to $(\mathbb R^2\setminus 0)\times \mathbb T=\mathbb R\times \mathbb T^2$, differs from the trivial Spin structure, which is given by the frame $(x\partial_x+y\partial_y, x\partial_y-y\partial_x, \partial_\theta)$. In particular, relative to the trivial Spin structure, our Spin structure rotates once around a circle where $x^2+y^2=c$, and therefore represents a noncontractible loop in $SO(3)$.

This difference of Spin structures supplies the sign correction when
we orient the half-cylinder family in
Section~\ref{subsec:elliptic-smoothing-model}.

\subsubsection{Tropical pictures}

The circle actions of weight $(1,-1,0)$ and $(0,0,1)$ preserve our Lagrangian $L$, and the corresponding $\mathbb C^*$ actions preserve the function $z_1z_2+1$ and our holomorphic volume form. There is a special Lagrangian torus fibration on $Y$ defined using $\log |z_1z_2+1|$ together with the moment map $\mu$ from the above $\mathbb T^2$ action~\cite[Section 2.2]{tropological}. This has singular fibres  consisting of $S^1$ times a sphere intersecting itself where $z_1=z_2=0$.

Below, we will consider curves that are locally orbits of the $\mathbb C^*$ action of weight $(1,-1,0)$, so their image under $(\log|z_1z_2+1|,\mu)$ will be one dimensional, if not actually straight in the symplectic affine structure on the base. Similarly, the singular locus has image a line, and $L$ has image a ray emanating from this critical line; neither need be straight. The fibre of $L$ where its image meets the critical line  is the circle $(0,0,e^{i\theta})$, whereas the fibre over other points is the torus $(re^{i\alpha},re^{-i\alpha}, e^{i\theta})$.  We will draw schematic pictures in which the singular locus, $L$, and our curves are all straight. These pictures can also be understood in a suitable tropical limit~\cite[Section 3]{tropological}.

\subsection{The families and their degeneration}
\label{subsec:elliptic-families}

\subsubsection{The half-cylinders and their elliptic limit}
\label{subsec:elliptic-half-cylinders}

\begin{figure}[htbp]
  \centering

\begingroup
\usetikzlibrary{arrows.meta}
\definecolor{ellipticL}{HTML}{287A68}
\definecolor{ellipticCurve}{HTML}{356DA4}
\begin{tikzpicture}[
  x={(.82cm,.47cm)}, %
  y={(1cm,0cm)},    %
  z={(0cm,1cm)},    %
  line cap=round,line join=round,
  every node/.style={font=\large,inner sep=2pt},
  singular/.style={draw=black!75,line width=.85pt},
  lagrangian/.style={draw=ellipticL,line width=1.15pt},
  curve/.style={draw=ellipticCurve,line width=1.15pt},
  tip/.style={-{Stealth[length=2.5mm,width=1.8mm]}}
]
  \def\rhoE{1.70}
  \coordinate(O)at(0,0,0);
  \coordinate(P)at(\rhoE,0,0);

  \draw[singular,{Stealth[length=2.3mm,width=1.6mm]}-{Stealth[length=2.3mm,width=1.6mm]}]
    (0,0,-1.40)--(0,0,2.35);
  \draw[lagrangian,tip](O)--(4.25,0,0);

  \draw[curve,tip](P)--(\rhoE,4.55,0);
  \fill[ellipticCurve](P)circle[radius=1.9pt];

  \node[anchor=east,align=right,font=\normalsize]at(0,-.20,1.20)
    {singular locus};
  \node[text=ellipticL,above=5pt]at(3.65,0,0){$L$};
  \node[text=ellipticCurve,above=6pt]at(\rhoE,2.55,0){$u_{\epsilon,\theta}$};
\end{tikzpicture}
\endgroup
\caption{Image of the half-cylinder
$u_{\epsilon,\theta}$ in the torus-fibration base.
The singular locus is the image of $\{z_1=z_2=0\}$, and the
green ray is the image of $L$. Each point of the blue ray
represents a circle in the half-cylinder: its endpoint
represents the boundary circle on $L$. As $\epsilon\to0$, the endpoint
moves towards the intersection of $L$ with the singular locus, producing the elliptic nodal limit.
}
  \label{fig:elliptic-tropical-half-cylinder}
\end{figure}
Consider the following moduli space of holomorphic  half-cylinders  $u_{\epsilon,\theta}:\mathbb D^*\longrightarrow Y$
\begin{equation}\label{eq:elliptic-half-cylinder-family}u_{\epsilon, \theta}(z)=(\epsilon z^{-1},\epsilon z, e^{i\theta})\ ,\end{equation}
defined for $\epsilon>0$. In the limit $\epsilon\to 0$, there is an elliptic boundary in the moduli space, where the boundary component of the punctured disc shrinks to $0$, forming an elliptic node on $L$; see Figure~\ref{fig:elliptic-tropical-half-cylinder}. Changing coordinates by $wz=\epsilon$, we see the map
\[w\mapsto (w,\epsilon^2 w^{-1},e^{i\theta})\]
defined on the half-cylinder where $\abs w\geq \epsilon$. So, in the $\epsilon=0$ limit, we get the map
\begin{equation}\label{eq:elliptic-nodal-limit}w\mapsto (w,0,e^{i\theta})\end{equation}
defined for $w\in \mathbb C$, glued at $w=0$ to a constant disc $z\mapsto (0,0,e^{i\theta})$. The resulting map from the nodal curve is unstable, unless we include infinitesimal gluing information at the node.

\subsubsection{The normalised plane family}
\label{subsec:elliptic-crossing-family}

\begin{figure}[htbp]
  \centering
\begingroup
\usetikzlibrary{arrows.meta}
\definecolor{ellipticL}{HTML}{287A68}
\definecolor{ellipticCurve}{HTML}{356DA4}
\begin{tikzpicture}[
  x={(.82cm,.47cm)}, %
  y={(1cm,0cm)},    %
  z={(0cm,1cm)},    %
  line cap=round,line join=round,
  every node/.style={font=\large,inner sep=2pt},
  singular/.style={draw=black!75,line width=.85pt},
  lagrangian/.style={draw=ellipticL,line width=1.15pt},
  curve/.style={draw=ellipticCurve,line width=1.15pt},
  tip/.style={-{Stealth[length=2.5mm,width=1.8mm]}}
]
  \def\tValue{-.75} %
  \coordinate(O)at(0,0,0);
  \coordinate(P)at(0,0,\tValue);

\fill[ellipticL!8]
  (0,0,-1.20)--(3.5,0,-1.20)--
  (3.5,0,.65)--(0,0,.65)--cycle;

  \draw[singular,{Stealth[length=2.3mm,width=1.6mm]}-{Stealth[length=2.3mm,width=1.6mm]}]
    (0,0,-1.40)--(0,0,2.35);
  \draw[lagrangian,tip](O)--(4.25,0,0);

  \draw[curve,tip](P)--(0,5.944,\tValue);

  \node[anchor=east,align=right,font=\normalsize]at(0,-.20,1.20)
    {singular locus};
  \node[text=ellipticL,above=5pt]at(3.65,0,0){$L$};
  \node[text=ellipticCurve,above=6pt]at(0,3.944,\tValue){$v_{t,\theta}$};
  \node[text=ellipticL!80!black,font=\normalsize]
  at(2.6,0,.32){$C_+$};
\node[text=ellipticL!80!black,font=\normalsize]
  at(2.6,0,-.60){$C_-$};
\end{tikzpicture}
\endgroup

  \caption{The normalised plane $v_{t,\theta}$, shown for
$t<0$ in the same projection as
Figure~\ref{fig:elliptic-tropical-half-cylinder}. The pale green regions depict the bounding chains $C_+$ and
$C_-$ for $L$; see Section~\ref{subsec:elliptic-bounding-chain}. The endpoint of $v_{t,\theta}$ on the singular locus represents the smooth interior point $w=0$, and the unique intersection of this plane with $C_-$ of sign $-1$. The remaining points of the blue ray represent
circles in the plane. Increasing $t$ moves the endpoint upwards
along the singular locus so that it intersects $C_+$ with sign $1$ for $t>0$. At $t=0$, it meets $L$.
}
  \label{fig:elliptic-tropical-normalised}
\end{figure}

These holomorphic planes sit inside a moduli space of bounded-energy holomorphic planes
\begin{equation}\label{eq:elliptic-normalised-family}w\mapsto (w,0,e^{t+i\theta})\end{equation}
 parametrised by $t\in \mathbb R$ and $\theta$. This family intersects $L$ at the point $w=0$  when $t=0$; see Figure~\ref{fig:elliptic-tropical-normalised}.

The wall $t=0$ is identified with the nodal boundary $\epsilon=0$ of
the half-cylinder family by attaching the constant disc at $w=0$.
Both are parametrised by $\theta$. To compare their orientations, we
must identify the chamber selected by the elliptic skein relation and
include the Spin-structure correction in the half-cylinder orientation.

\subsection{Orientation comparison in the model}
\label{subsec:elliptic-model-orientation}

We first orient the wall using a bounding chain for $L$, then orient the smoothing family using a comparison to the half-cylinders of Section~\ref{half-cylinder-section} and a Spin correction.

\subsubsection{The negative chamber orientation and bounding chain}
\label{subsec:elliptic-bounding-chain}

To identify the chamber in the elliptic skein relation, choose an appropriate bounding chain $C$ such that $\partial C=2L$ and satisfying \cite[Definition 2.1]{ekholm2019skeins}. The intersection of curves with $C$ gives a topological way of distinguishing the half of the moduli space with $t$ positive from the half of the moduli space with $t$ negative. Let
\begin{equation}\label{eq:elliptic-bounding-chain}C_{+}:=\{(x+iy,x-iy, e^{i\theta}e^{t})\text{ such that } x,y\in\mathbb R,\ \theta\in\mathbb T,\ t\in[0,\infty)\}\end{equation}
 oriented by $(x,y,\theta,t)$, and let $C_-$ be the same expression with $t\leq 0$, oriented by $(x,y,t, \theta)$. Then $\partial C_+=L=\partial C_-$.  Moreover, $C_{\pm}$ is the image of $L$ under the positive flow of the vector field $\pm J (-\partial_{\theta})$. Our required chain is $C=C_+ +C_-$.

Using the vector field $-\partial_\theta$ to determine framings, we see that the boundary of $u_{\epsilon,\theta}$ is the $0$-framed unknot in $L$.

Moreover, the planes $w\mapsto (w,0,e^t)$ intersect $C$ positively when $t>0$ and negatively when $t<0$.

Within our toric compactification $\bar Y$, these maps have trivial normal bundle in the $z_3$ direction, but normal bundle $O(-1)$ in the $z_2$ direction, because of our requirement that $\bar Y$ be compatible with the function $z_1z_2$. Accordingly, this moduli space is transversely cut out, and inherits the complex orientation $(t,\theta)$.

The chamber $t\leq0$ has the smaller intersection with $C$, so it is the negative-incidence chamber. Its
outward normal at $t=0$ is $\partial_t$, so the orientation
$(t,\theta)$ of the plane family induces the negative chamber orientation
$\partial_\theta$.

\subsubsection{The smoothing boundary orientation and Spin correction}
\label{subsec:elliptic-smoothing-model}

For $\epsilon>0$, these half-cylinders are defined in the region where the real part of $z_1z_2$ is greater than $0$. Composing with $h$, we get a family of holomorphic half-cylinders in $(X,L')$, whose orientation was calculated in Section~\ref{half-cylinder-section} and Remark~\ref{tilted-L}. In particular, within $(X,L')$ the orientation of the moduli space with an extra marked point on the boundary agrees with the orientation of $L'$.

The map $h$ changes the Spin structure along the boundary of these curves: the trivialisation of $TL'$ around the boundary pulls back to a trivialisation of $TL$ that differs from the chosen Spin structure by a nontrivial element of $\pi_1(SO(3))$. Therefore, the map $h$ reverses the orientation of the moduli space, so we must orient  the moduli space of curves with one marked point on the boundary so that the evaluation map to $L$ reverses orientations. The boundary evaluation is
\begin{equation}\label{eq:elliptic-boundary-evaluation}
u_{\epsilon,\theta}(e^{i\vartheta})
=(\epsilon e^{-i\vartheta},\epsilon e^{i\vartheta},e^{i\theta}),
\end{equation}
so $(\vartheta,\theta,\epsilon)$ reverses the orientation of $L$.
Removing the boundary marked point therefore gives the orientation
$(\theta,\epsilon)$ on the half-cylinder moduli space.

At $\epsilon=0$, the outward normal is $-\partial_\epsilon$.
The outward-normal-first convention applied to $(\theta,\epsilon)$
therefore gives the smoothing boundary orientation $\partial_\theta$.

\subsubsection*{Comparison at the nodal boundary}

Under the identification of the common wall by $\theta$, the two
calculations give
\begin{equation}\label{eq:elliptic-model-orientations}
\begin{aligned}
\text{smoothing boundary orientation:}\quad & \partial_\theta,\\
\text{negative chamber orientation:}\quad & \partial_\theta.
\end{aligned}
\end{equation}
The orientations agree, confirming the direct calculation of the comparison at all elliptic wall problems; see Lemma~\ref{lem:elliptic comparison}. Figure~\ref{fig:elliptic-moduli} summarises the resulting orientations near the wall.

\begin{figure}[htbp]
\centering
\begin{tikzpicture}[
  x=1cm, y=1cm,
  line width=0.8pt,
  line cap=round,
  line join=round,
  every node/.style={font=\small, inner sep=2pt},
  oriented/.style={
    postaction={decorate},
    decoration={markings,
      mark=at position 0.55 with
        {\arrow{Stealth[length=2.3mm,width=1.7mm]}}}
  }
]
  \coordinate (wall) at (0,0);

  \draw[oriented] (-4.1,-0.65)
    .. controls (-2.9,-0.13) and (-1.4,0) .. (wall);
  \draw[oriented] (wall)
    .. controls (1.5,0) and (2.8,-0.22) .. (4.3,-0.72);

  \draw[oriented] (1.38,3.48)
    .. controls (0.53,2.55) and (-0.06,1.10) .. (wall);

  \fill (wall) circle[radius=1.5pt];
  \node[anchor=north] at (0,-0.12) {$\mathcal M_{\rm node}$};
  \node[anchor=west] at (1.13,2.87)
    {$\mathcal M_{\mathrm{smooth}}$};
  \node[anchor=north] at (-2,-0.4)
    {$\mathcal M^-_{\mathrm{norm}}$};
  \node[anchor=north] at (2,-0.4)
    {$\mathcal M^+_{\mathrm{norm}}$};
\end{tikzpicture}
\caption{A schematic transverse slice of the moduli space near a curve with an elliptic boundary node and its normalisation. The implied orientation comparison is valid in both the holomorphic and anti-holomorphic orientation conventions.}
\label{fig:elliptic-moduli}
\end{figure}

\begin{remark}[Comparison of elliptic skein conventions]
\label{rem:elliptic-skein-convention}
In  \cite[v2, Definition 3.1]{worldsheetskein}, $v_{+}$ intersects the bounding chain $C$ negatively, and $v_-$ intersects the bounding chain positively, whereas our modified elliptic skein relation has $v_1$ intersecting $C$ positively and $v_{-1}$ intersecting $C$ negatively. Thus the chamber $t\leq0$ is the one containing $v_{-1}$ in
our convention. The agreement in
Equation~\eqref{eq:elliptic-model-orientations} gives our elliptic skein
sign, which is opposite to the sign in
\cite[Definition 3.1]{worldsheetskein}. We demonstrate that our signs enable a homomorphism from the worldsheet skein module to the HOMFLYPT skein module of $L$ in Lemma~\ref{skein-comparison}.
\end{remark}

\section{Connected hyperbolic crossing via cusp deformations}\label{connected-hyperbolic-section}

The smoothing boundary orientation at a connected hyperbolic
crossing agrees with   the negative chamber orientation. Following
\cite[Section 5.2 and Lemma 2.10]{ekholm2019skeins}, we compute this
sign in a cusp family joining the connected hyperbolic and elliptic
walls, then apply the elliptic comparison of
Lemma~\ref{lem:elliptic comparison}. Section~\ref{sec:ramified-covers} gives a second calculation of this sign that does not use the elliptic comparison.

\subsection{The normalised disc family and its walls}
\label{subsec:connected-normalised-family}

Our model is a two-parameter family of discs $v_{s,t}$ whose wall $t=0$ is elliptic for $s>0$ and connected hyperbolic for $s<0$. This family is a slice of a $12$-dimensional moduli space, transverse to the action of a  $10$-dimensional space $W$ of infinitesimal automorphisms of $(\mathbb CP^3,\mathbb RP^3)$ (complementary to the Lie algebra of the stabiliser of $v_{0,0}$).

 Explicitly, consider the family of holomorphic maps
\[v_{s,t}: (\mathbb D,\partial \mathbb D)\longrightarrow (\mathbb CP^3,\mathbb RP^3)\]
defined on  affine charts $(\mathbb H,\mathbb R)$ and $(\mathbb C^3,\mathbb R^3)$ as
\begin{equation}\label{eq:connected-cusp-family}v_{s,t}(z)= (z^2,z(z^2+s), tz)  \end{equation}
 When $(s,t)=(0,0)$  the above map has a cusp singularity at $z=0$ and elsewhere this curve is smooth. When $t\neq 0$, the boundary of this curve is smooth and embedded, and the interior does not intersect $\mathbb RP^3$.

\begin{figure}[htbp]
\centering
\begingroup
\usetikzlibrary{arrows.meta}
\definecolor{cuspBoundary}{HTML}{356DA4}
\definecolor{cuspInterior}{HTML}{B66A2C}
\begin{tikzpicture}[x=.78cm,y=.78cm,
  line cap=round,line join=round,
  every node/.style={font=\small,inner sep=2pt},
  boundary/.style={draw=cuspBoundary,line width=1.0pt},
  orientationtip/.style={-{Stealth[length=1.8mm,width=1.25mm]}},
  guide/.style={draw=black!13,line width=.35pt}
]
\node[font=\large]at(4.6,12.20){$v_{s,t}(z)=(z^2,z(z^2+s),tz)$};
\node[font=\normalsize]at(0.0,11.50){$s<0$};
\node[font=\normalsize]at(4.6,11.50){$s=0$};
\node[font=\normalsize]at(9.2,11.50){$s>0$};
\draw[guide](2.3,-2.65)--(2.3,11.12);
\draw[guide](6.9,-2.65)--(6.9,11.12);
\draw[guide](-2.10,6.90)--(11.30,6.90);
\draw[guide](-2.10,2.30)--(11.30,2.30);
\node[anchor=east,font=\normalsize]at(-2.45,9.20){$t>0$};
\begin{scope}[shift={(-0.45,9.20)}]
\clip(-1.55,-1.75)rectangle(2.40,1.75);
\draw[boundary](4.0000000,-5.4000000) .. controls (2.9118899,-2.4893054) and (2.0457767,-0.9105925) .. (1.4016604,-0.1203575);
\draw[boundary](1.2021663,0.1072682) .. controls (0.4007221,0.9502424) and (0.0000000,0.4751212) .. (0.0000000,0.0000000);
\draw[boundary](0.0000000,0.0000000) .. controls (0.0000000,-0.8666666) and (1.3333333,-1.7333334) .. (4.0000000,5.4000000);
\draw[boundary,orientationtip] (2.0736000,-1.1139840) .. controls (1.9392000,-0.8843467) and (1.8113333,-0.6829333) .. (1.6900000,-0.5070000);
\draw[boundary,orientationtip] (1.6900000,0.5070000) .. controls (1.8113333,0.6829333) and (1.9392000,0.8843467) .. (2.0736000,1.1139840);
\end{scope}
\node[text=black!75]at(0.00,7.18){positive crossing};
\begin{scope}[shift={(4.15,9.20)}]
\clip(-1.55,-1.75)rectangle(2.40,1.75);
\draw[boundary](4.0000000,-8.0000000) .. controls (1.3333333,0.0000000) and (0.0000000,0.0000000) .. (0.0000000,0.0000000);
\draw[boundary](0.0000000,0.0000000) .. controls (0.0000000,0.0000000) and (1.3333333,0.0000000) .. (4.0000000,8.0000000);
\draw[boundary,orientationtip] (1.1664000,-1.2597120) .. controls (1.0584000,-1.0847520) and (0.9579000,-0.9340920) .. (0.8649000,-0.8043570);
\draw[boundary,orientationtip] (0.8649000,0.8043570) .. controls (0.9579000,0.9340920) and (1.0584000,1.0847520) .. (1.1664000,1.2597120);
\end{scope}
\node[text=black!75]at(4.60,7.18){framing tangency};
\begin{scope}[shift={(8.75,9.20)}]
\clip(-1.55,-1.75)rectangle(2.40,1.75);
\draw[boundary](4.0000000,-10.0000000) .. controls (1.3333333,-1.3333334) and (0.0000000,-0.6666666) .. (0.0000000,0.0000000);
\draw[boundary](0.0000000,0.0000000) .. controls (0.0000000,0.6666666) and (1.3333333,1.3333334) .. (4.0000000,10.0000000);

\draw[cuspInterior,fill=white,line width=.85pt](-1.0000000,0.0000000)circle[radius=2.1pt];
\draw[boundary,orientationtip] (0.7396000,-1.4960560) .. controls (0.6593333,-1.3458453) and (0.5856000,-1.2124907) .. (0.5184000,-1.0932480);
\draw[boundary,orientationtip] (0.5184000,1.0932480) .. controls (0.5856000,1.2124907) and (0.6593333,1.3458453) .. (0.7396000,1.4960560);
\end{scope}
\node[text=black!75]at(9.20,7.18){intersection with $C_+$};
\node[anchor=east,font=\normalsize]at(-2.45,4.60){$t=0$};
\begin{scope}[shift={(-0.45,4.60)}]
\clip(-1.55,-1.75)rectangle(2.40,1.75);
\draw[boundary](4.0000000,-5.4000000) .. controls (1.3333333,1.7333334) and (0.0000000,0.8666666) .. (0.0000000,0.0000000);
\draw[boundary](0.0000000,0.0000000) .. controls (0.0000000,-0.8666666) and (1.3333333,-1.7333334) .. (4.0000000,5.4000000);
\fill[cuspBoundary](1.3000000,0.0000000)circle[radius=1.6pt];
\draw[boundary,orientationtip] (2.0736000,-1.1139840) .. controls (1.9392000,-0.8843467) and (1.8113333,-0.6829333) .. (1.6900000,-0.5070000);
\draw[boundary,orientationtip] (1.6900000,0.5070000) .. controls (1.8113333,0.6829333) and (1.9392000,0.8843467) .. (2.0736000,1.1139840);
\end{scope}
\node[text=black!75]at(0.00,2.58){hyperbolic crossing};
\begin{scope}[shift={(4.15,4.60)}]
\clip(-1.55,-1.75)rectangle(2.40,1.75);
\draw[boundary](4.0000000,-8.0000000) .. controls (1.3333333,0.0000000) and (0.0000000,0.0000000) .. (0.0000000,0.0000000);
\draw[boundary](0.0000000,0.0000000) .. controls (0.0000000,0.0000000) and (1.3333333,0.0000000) .. (4.0000000,8.0000000);
\draw[boundary,orientationtip] (1.1664000,-1.2597120) .. controls (1.0584000,-1.0847520) and (0.9579000,-0.9340920) .. (0.8649000,-0.8043570);
\draw[boundary,orientationtip] (0.8649000,0.8043570) .. controls (0.9579000,0.9340920) and (1.0584000,1.0847520) .. (1.1664000,1.2597120);
\end{scope}
\node[text=black!75]at(4.60,2.58){cusp};
\begin{scope}[shift={(8.75,4.60)}]
\clip(-1.55,-1.75)rectangle(2.40,1.75);
\draw[boundary](4.0000000,-10.0000000) .. controls (1.3333333,-1.3333334) and (0.0000000,-0.6666666) .. (0.0000000,0.0000000);
\draw[boundary](0.0000000,0.0000000) .. controls (0.0000000,0.6666666) and (1.3333333,1.3333334) .. (4.0000000,10.0000000);

\fill[cuspInterior](-1.0000000,0.0000000)circle[radius=2.0pt];
\draw[boundary,orientationtip] (0.7396000,-1.4960560) .. controls (0.6593333,-1.3458453) and (0.5856000,-1.2124907) .. (0.5184000,-1.0932480);
\draw[boundary,orientationtip] (0.5184000,1.0932480) .. controls (0.5856000,1.2124907) and (0.6593333,1.3458453) .. (0.7396000,1.4960560);
\end{scope}
\node[text=black!75]at(9.20,2.58){elliptic $L$ incidence};
\node[anchor=east,font=\normalsize]at(-2.45,0.00){$t<0$};
\begin{scope}[shift={(-0.45,0.00)}]
\clip(-1.55,-1.75)rectangle(2.40,1.75);
\draw[boundary](4.0000000,-5.4000000) .. controls (1.3333333,1.7333334) and (0.0000000,0.8666666) .. (0.0000000,0.0000000);
\draw[boundary](0.0000000,0.0000000) .. controls (0.0000000,-0.4751212) and (0.4007221,-0.9502424) .. (1.2021663,-0.1072682);
\draw[boundary](1.4016604,0.1203575) .. controls (2.0457767,0.9105925) and (2.9118899,2.4893054) .. (4.0000000,5.4000000);
\draw[boundary,orientationtip] (2.0736000,-1.1139840) .. controls (1.9392000,-0.8843467) and (1.8113333,-0.6829333) .. (1.6900000,-0.5070000);
\draw[boundary,orientationtip] (1.6900000,0.5070000) .. controls (1.8113333,0.6829333) and (1.9392000,0.8843467) .. (2.0736000,1.1139840);
\end{scope}
\node[text=black!75]at(0.00,-2.02){negative crossing};
\begin{scope}[shift={(4.15,0.00)}]
\clip(-1.55,-1.75)rectangle(2.40,1.75);
\draw[boundary](4.0000000,-8.0000000) .. controls (1.3333333,0.0000000) and (0.0000000,0.0000000) .. (0.0000000,0.0000000);
\draw[boundary](0.0000000,0.0000000) .. controls (0.0000000,0.0000000) and (1.3333333,0.0000000) .. (4.0000000,8.0000000);
\draw[boundary,orientationtip] (1.1664000,-1.2597120) .. controls (1.0584000,-1.0847520) and (0.9579000,-0.9340920) .. (0.8649000,-0.8043570);
\draw[boundary,orientationtip] (0.8649000,0.8043570) .. controls (0.9579000,0.9340920) and (1.0584000,1.0847520) .. (1.1664000,1.2597120);
\end{scope}
\node[text=black!75]at(4.60,-2.02){framing tangency};
\begin{scope}[shift={(8.75,0.00)}]
\clip(-1.55,-1.75)rectangle(2.40,1.75);
\draw[boundary](4.0000000,-10.0000000) .. controls (1.3333333,-1.3333334) and (0.0000000,-0.6666666) .. (0.0000000,0.0000000);
\draw[boundary](0.0000000,0.0000000) .. controls (0.0000000,0.6666666) and (1.3333333,1.3333334) .. (4.0000000,10.0000000);

\draw[cuspInterior,fill=white,line width=.85pt](-1.0000000,0.0000000)circle[radius=2.1pt];
\draw[boundary,orientationtip] (0.7396000,-1.4960560) .. controls (0.6593333,-1.3458453) and (0.5856000,-1.2124907) .. (0.5184000,-1.0932480);
\draw[boundary,orientationtip] (0.5184000,1.0932480) .. controls (0.5856000,1.2124907) and (0.6593333,1.3458453) .. (0.7396000,1.4960560);
\end{scope}
\node[text=black!75]at(9.20,-2.02){intersection with $C_-$};
\end{tikzpicture}
\endgroup
\caption{The intersection of  $v_{s,t}$ with $L$ and $C$.
For $s>0$, circles mark the interior point $z=i\sqrt{s}$,
whose image lies in $L$ when $t=0$ and in $C_\pm$ otherwise.}
\label{fig:connected-normalized-oriented}
\end{figure}

The wall $t=0$ has two parts (Figure~\ref{fig:connected-normalized-oriented}). For $s>0$, the interior point
$z=i\sqrt{s}$ maps to $L$, giving an elliptic wall crossing.
At $t=0$ and $s<0$, there is a hyperbolic crossing at
$z=\pm\sqrt{-s}$. Both points lie on the single boundary component
of the normalised disc, so this is a connected crossing. Using the locally defined projection to the $\mathbb R^2$ plane, the positive resolution of this crossing is $t>0$.
The cusp at $(s,t)=(0,0)$ separates these two types of wall.

\subsection{The nodal curves and their smoothings}
\label{subsec:connected-smoothing-family}

For $s\neq0$, the image of $v_{s,0}$ is half  of a nodal plane cubic in the plane $z_3=0$, locally described by
\begin{equation}\label{eq:connected-nodal-cubic}
z_2^2=z_1(z_1+s)^2.
\end{equation} Use $u_{s,0}$ to denote the corresponding nodal curves with boundary on $L$. Let us interpolate between $u_{1,0}$ and $u_{-1,0}$ in a family smooth curves.
Let $u_{\cos \theta,-\sin\theta}$ for $\theta\in [0,\pi]$ be the family of  curves with boundary on $L$ interpolating $u_{\pm 1,0}$, whose double is  the plane cubic
\[z_2^2=(z_1-r_1(\theta))(z_1-r_2(\theta))(z_1-r_3(\theta))\]
where the roots $r_k(\theta)$ linearly interpolate between $(r_1(0),r_2(0),r_3(0))=(-1,-1,0)$ and $(r_1(\pi),r_2(\pi),r_3(\pi))=(0,1,1)$, so\[(r_1(\theta),r_2(\theta),r_3(\theta))=(\theta/\pi-1,\,2\theta/\pi-1,\,\theta/\pi).\] The domains of these plane cubics are covers of $\mathbb CP^1$ branched at these roots and infinity, and have cross ratio $\theta/\pi$ linearly varying from 0 to $1$. For $\theta\in(0,\pi)$, the derivative of this family in $H^0D_u$ is therefore nonzero, and independent of any direction coming from an infinitesimal real projective automorphism. As $\theta$ approaches $0$ or $\pi$, plumbing coordinates near the colliding branch points are, to leading order $\epsilon=\theta^2$ or $(\pi-\theta)^2$ respectively, so the appropriate logarithmic derivative is also nonzero in $H^0(D_{u_{\pm 1,0}})$.

Consider the real scaling action $(z_1,z_2,z_3)\mapsto(\lambda z_1, \lambda^{3/2} z_2,z_3)$. This action sends $u_{s,0}$ to $u_{\lambda s, 0}$. For $\lambda>0$ and $\theta\in [0,\pi]$ define $u_{\lambda\cos\theta,-\lambda\sin\theta}$ by scaling $u_{\cos\theta,-\sin\theta}$. So in particular, $u_{\lambda\cos\theta,-\lambda\sin\theta}$ is half the  plane cubic defined by
\[z_2^2=(z_1-\lambda r_1(\theta))(z_1-\lambda r_2(\theta))(z_1-\lambda r_3(\theta))\]

This defines a family $u_{s,h}$ of curves for $h\leq 0$ and $(s,h)\neq (0,0)$, such that  $v_{s,0}$ is the normalisation of $u_{s,0}$; see Figure~\ref{fig:connected-smoothing-oriented}.

We will compare the boundary $h=0$  with the wall $t=0$ through their
common normalisation $v_{s,0}$: the node is elliptic for $s>0$ and
connected hyperbolic for $s<0$. Section~\ref{subsec:connected-smoothing-orientation}
compares the corresponding smoothing boundary orientations.

\begin{figure}[htbp]
\centering
\begingroup
\usetikzlibrary{arrows.meta}
\definecolor{smoothingBlue}{HTML}{356DA4}
\definecolor{smoothingOrange}{HTML}{B66A2C}
\begin{tikzpicture}[x=.78cm,y=.78cm,
  line cap=round,line join=round,
  every node/.style={font=\small,inner sep=2pt},
  outer/.style={draw=smoothingBlue,line width=1.0pt},
  oval/.style={draw=smoothingOrange,line width=1.0pt},
  guide/.style={draw=black!13,line width=.35pt}
]

\node[font=\normalsize]at(0.0,6.90){$s<0$};
\node[font=\normalsize]at(4.6,6.90){$s=0$};
\node[font=\normalsize]at(9.2,6.90){$s>0$};
\draw[guide](2.3,-2.65)--(2.3,6.52);
\draw[guide](6.9,-2.65)--(6.9,6.52);
\draw[guide](-2.10,2.30)--(11.30,2.30);
\node[anchor=east,font=\normalsize]at(-2.45,4.60){$h=0$};
\begin{scope}[shift={(-0.45,4.60)}]
\clip(-1.55,-1.75)rectangle(2.40,1.75);
\draw[outer](4.0000000,-5.4000000)
 .. controls (2.8535672,-2.3332924) and (1.9535672,-0.7451813) .. (1.3000000,0.0000000);
\draw[outer](1.3000000,0.0000000)
 .. controls (1.9535672,0.7451813) and (2.8535672,2.3332924) .. (4.0000000,5.4000000);
\draw[oval](1.3000000,0.0000000)
 .. controls (0.4333333,0.9881520) and (0.0000000,0.4940760) .. (0.0000000,0.0000000);
\draw[oval](0.0000000,0.0000000)
 .. controls (0.0000000,-0.4940760) and (0.4333333,-0.9881520) .. (1.3000000,0.0000000);
\fill[black!70](1.3000000,0.0000000)circle[radius=1.7pt];
\end{scope}
\node[text=black!75]at(0.00,2.58){hyperbolic node};
\begin{scope}[shift={(4.15,4.60)}]
\clip(-1.55,-1.75)rectangle(2.40,1.75);
\draw[black!32,line width=.8pt,densely dashed](4.0000000,-8.0000000)
 .. controls (1.3333333,0.0000000) and (0.0000000,0.0000000) .. (0.0000000,0.0000000);
\draw[black!32,line width=.8pt,densely dashed](0.0000000,0.0000000)
 .. controls (0.0000000,0.0000000) and (1.3333333,0.0000000) .. (4.0000000,8.0000000);
\end{scope}
\node[text=black!75]at(4.60,2.58){cusp; see Remark~\ref{rem:connected-cusp-limit}};
\begin{scope}[shift={(8.75,4.60)}]
\clip(-1.55,-1.75)rectangle(2.40,1.75);
\draw[outer](4.0000000,-10.0000000)
 .. controls (1.3333333,-1.3333334) and (0.0000000,-0.6666666) .. (0.0000000,0.0000000);
\draw[outer](0.0000000,0.0000000)
 .. controls (0.0000000,0.6666666) and (1.3333333,1.3333334) .. (4.0000000,10.0000000);
\fill[smoothingOrange](-1.0000000,0.0000000)circle[radius=2.0pt];
\end{scope}
\node[text=black!75]at(9.20,2.58){elliptic node};
\node[anchor=east,font=\normalsize]at(-2.45,0.00){$h<0$};
\begin{scope}[shift={(-0.45,0.00)}]
\clip(-1.55,-1.75)rectangle(2.40,1.75);
\draw[outer](2.5000000,-1.8713632)
 .. controls (2.3259708,-1.5282081) and (2.1736953,-1.2524561) .. (2.0431734,-1.0285271)
 .. controls (1.9126515,-0.8045982) and (1.8038833,-0.6325120) .. (1.7168687,-0.4973375)
 .. controls (1.6733614,-0.4297502) and (1.6352926,-0.3714031) .. (1.6026621,-0.3203921)
 .. controls (1.5700316,-0.2693810) and (1.5428396,-0.2257178) .. (1.5210859,-0.1873072)
 .. controls (1.4993323,-0.1488965) and (1.4830170,-0.1157465) .. (1.4721402,-0.0854074)
 .. controls (1.4612634,-0.0550681) and (1.4558250,-0.0275269) .. (1.4558250,0.0000000)
 .. controls (1.4558250,0.0275269) and (1.4612634,0.0550681) .. (1.4721402,0.0854074)
 .. controls (1.4830170,0.1157465) and (1.4993323,0.1488965) .. (1.5210859,0.1873072)
 .. controls (1.5428396,0.2257178) and (1.5700316,0.2693810) .. (1.6026621,0.3203921)
 .. controls (1.6352926,0.3714031) and (1.6733614,0.4297502) .. (1.7168687,0.4973375)
 .. controls (1.8038833,0.6325120) and (1.9126515,0.8045982) .. (2.0431734,1.0285271)
 .. controls (2.1736953,1.2524561) and (2.3259708,1.5282081) .. (2.5000000,1.8713632);
\draw[oval](1.1745846,0.0000000)
 .. controls (1.1745846,0.0209124) and (1.1707008,0.0418407) .. (1.1630078,0.0636046)
 .. controls (1.1553147,0.0853685) and (1.1438127,0.1079891) .. (1.1287222,0.1318672)
 .. controls (1.1136317,0.1557453) and (1.0949535,0.1808825) .. (1.0730455,0.2070944)
 .. controls (1.0511375,0.2333062) and (1.0260009,0.2605828) .. (0.9981173,0.2882265)
 .. controls (0.9702337,0.3158704) and (0.9396046,0.3438685) .. (0.9068169,0.3711852)
 .. controls (0.8740293,0.3985018) and (0.8390848,0.4251248) .. (0.8026532,0.4498668)
 .. controls (0.7662215,0.4746088) and (0.7283045,0.4974594) .. (0.6896289,0.5172480)
 .. controls (0.6509532,0.5370366) and (0.6115209,0.5537548) .. (0.5720875,0.5663913)
 .. controls (0.5326542,0.5790278) and (0.4932218,0.5875761) .. (0.4545462,0.5912980)
 .. controls (0.4158705,0.5950198) and (0.3779535,0.5939108) .. (0.3415219,0.5875867)
 .. controls (0.3050902,0.5812625) and (0.2701457,0.5697214) .. (0.2373581,0.5529785)
 .. controls (0.2045705,0.5362355) and (0.1739413,0.5142915) .. (0.1460578,0.4875700)
 .. controls (0.1181742,0.4608485) and (0.0930375,0.4293527) .. (0.0711295,0.3938865)
 .. controls (0.0492215,0.3584202) and (0.0305433,0.3189886) .. (0.0154528,0.2767145)
 .. controls (0.0003623,0.2344405) and (-0.0111397,0.1893305) .. (-0.0188327,0.1427369)
 .. controls (-0.0265258,0.0961433) and (-0.0304095,0.0480736) .. (-0.0304095,0.0000000)
 .. controls (-0.0304095,-0.0480736) and (-0.0265258,-0.0961433) .. (-0.0188327,-0.1427369)
 .. controls (-0.0111397,-0.1893305) and (0.0003623,-0.2344405) .. (0.0154528,-0.2767145)
 .. controls (0.0305433,-0.3189886) and (0.0492215,-0.3584202) .. (0.0711295,-0.3938865)
 .. controls (0.0930375,-0.4293527) and (0.1181742,-0.4608485) .. (0.1460578,-0.4875700)
 .. controls (0.1739413,-0.5142915) and (0.2045705,-0.5362355) .. (0.2373581,-0.5529785)
 .. controls (0.2701457,-0.5697214) and (0.3050902,-0.5812625) .. (0.3415219,-0.5875867)
 .. controls (0.3779535,-0.5939108) and (0.4158705,-0.5950198) .. (0.4545462,-0.5912980)
 .. controls (0.4932218,-0.5875761) and (0.5326542,-0.5790278) .. (0.5720875,-0.5663913)
 .. controls (0.6115209,-0.5537548) and (0.6509532,-0.5370366) .. (0.6896289,-0.5172480)
 .. controls (0.7283045,-0.4974594) and (0.7662215,-0.4746088) .. (0.8026532,-0.4498668)
 .. controls (0.8390848,-0.4251248) and (0.8740293,-0.3985018) .. (0.9068169,-0.3711852)
 .. controls (0.9396046,-0.3438685) and (0.9702337,-0.3158704) .. (0.9981173,-0.2882265)
 .. controls (1.0260009,-0.2605828) and (1.0511375,-0.2333062) .. (1.0730455,-0.2070944)
 .. controls (1.0949535,-0.1808825) and (1.1136317,-0.1557453) .. (1.1287222,-0.1318672)
 .. controls (1.1438127,-0.1079891) and (1.1553147,-0.0853685) .. (1.1630078,-0.0636046)
 .. controls (1.1707008,-0.0418407) and (1.1745846,-0.0209124) .. (1.1745846,-0.0000000) -- cycle;
\end{scope}
\begin{scope}[shift={(4.15,0.00)}]
\clip(-1.55,-1.75)rectangle(2.40,1.75);
\draw[outer](2.5000000,-3.9338276)
 .. controls (2.1241582,-3.0409970) and (1.7952966,-2.3597765) .. (1.5134152,-1.8372696)
 .. controls (1.2315338,-1.3147628) and (0.9966326,-0.9509511) .. (0.8087117,-0.6930993)
 .. controls (0.6207908,-0.4352475) and (0.4798501,-0.2833172) .. (0.3858897,-0.1852290)
 .. controls (0.3389094,-0.1361848) and (0.3036743,-0.1006314) .. (0.2801841,-0.0720020)
 .. controls (0.2566940,-0.0433727) and (0.2449490,-0.0216749) .. (0.2449490,0.0000000)
 .. controls (0.2449490,0.0216749) and (0.2566940,0.0433727) .. (0.2801841,0.0720020)
 .. controls (0.3036743,0.1006314) and (0.3389094,0.1361848) .. (0.3858897,0.1852290)
 .. controls (0.4798501,0.2833172) and (0.6207908,0.4352475) .. (0.8087117,0.6930993)
 .. controls (0.9966326,0.9509511) and (1.2315338,1.3147628) .. (1.5134152,1.8372696)
 .. controls (1.7952966,2.3597765) and (2.1241582,3.0409970) .. (2.5000000,3.9338276);
\draw[oval](0.0000000,0.0000000)
 .. controls (0.0000000,0.0079345) and (-0.0031877,0.0158800) .. (-0.0093228,0.0236338)
 .. controls (-0.0154579,0.0313877) and (-0.0245357,0.0389592) .. (-0.0358719,0.0458929)
 .. controls (-0.0472082,0.0528265) and (-0.0607940,0.0591101) .. (-0.0756055,0.0640637)
 .. controls (-0.0904170,0.0690173) and (-0.1064426,0.0726188) .. (-0.1224745,0.0742385)
 .. controls (-0.1385064,0.0758581) and (-0.1545319,0.0754777) .. (-0.1693434,0.0728307)
 .. controls (-0.1841550,0.0701837) and (-0.1977408,0.0652641) .. (-0.2090770,0.0583541)
 .. controls (-0.2204133,0.0514440) and (-0.2294910,0.0425517) .. (-0.2356262,0.0324912)
 .. controls (-0.2417613,0.0224307) and (-0.2449490,0.0112212) .. (-0.2449490,0.0000000)
 .. controls (-0.2449490,-0.0112212) and (-0.2417613,-0.0224307) .. (-0.2356262,-0.0324912)
 .. controls (-0.2294910,-0.0425517) and (-0.2204133,-0.0514440) .. (-0.2090770,-0.0583541)
 .. controls (-0.1977408,-0.0652641) and (-0.1841550,-0.0701837) .. (-0.1693434,-0.0728307)
 .. controls (-0.1545319,-0.0754777) and (-0.1385064,-0.0758581) .. (-0.1224745,-0.0742385)
 .. controls (-0.1064426,-0.0726188) and (-0.0904170,-0.0690173) .. (-0.0756055,-0.0640637)
 .. controls (-0.0607940,-0.0591101) and (-0.0472082,-0.0528265) .. (-0.0358719,-0.0458929)
 .. controls (-0.0245357,-0.0389592) and (-0.0154579,-0.0313877) .. (-0.0093228,-0.0236338)
 .. controls (-0.0031877,-0.0158800) and (-0.0000000,-0.0079345) .. (0.0000000,-0.0000000) -- cycle;
\end{scope}
\begin{scope}[shift={(8.75,0.00)}]
\clip(-1.55,-1.75)rectangle(2.40,1.75);
\draw[outer](2.5000000,-5.5167925)
 .. controls (2.0898695,-4.4131825) and (1.7310052,-3.5508430) .. (1.4234073,-2.8695172)
 .. controls (1.1158094,-2.1881915) and (0.8594779,-1.6878769) .. (0.6544126,-1.3084019)
 .. controls (0.4493473,-0.9289268) and (0.2955484,-0.6703115) .. (0.1930157,-0.4723725)
 .. controls (0.0904831,-0.2744335) and (0.0392168,-0.1371968) .. (0.0392168,0.0000000)
 .. controls (0.0392168,0.1371968) and (0.0904831,0.2744335) .. (0.1930157,0.4723725)
 .. controls (0.2955484,0.6703115) and (0.4493473,0.9289268) .. (0.6544126,1.3084019)
 .. controls (0.8594779,1.6878769) and (1.1158094,2.1881915) .. (1.4234073,2.8695172)
 .. controls (1.7310052,3.5508430) and (2.0898695,4.4131825) .. (2.5000000,5.5167925);
\draw[oval](-0.8795022,0.0000000)
 .. controls (-0.8795022,0.0175787) and (-0.8831488,0.0351588) .. (-0.8901671,0.0516885)
 .. controls (-0.8971855,0.0682184) and (-0.9075701,0.0836957) .. (-0.9205383,0.0970561)
 .. controls (-0.9335066,0.1104164) and (-0.9490482,0.1216475) .. (-0.9659921,0.1297779)
 .. controls (-0.9829359,0.1379084) and (-1.0012685,0.1429196) .. (-1.0196084,0.1441682)
 .. controls (-1.0379483,0.1454168) and (-1.0562809,0.1428875) .. (-1.0732247,0.1365247)
 .. controls (-1.0901685,0.1301620) and (-1.1057102,0.1199625) .. (-1.1186784,0.1066049)
 .. controls (-1.1316467,0.0932473) and (-1.1420313,0.0767434) .. (-1.1490497,0.0584459)
 .. controls (-1.1560680,0.0401485) and (-1.1597146,0.0200814) .. (-1.1597146,0.0000000)
 .. controls (-1.1597146,-0.0200814) and (-1.1560680,-0.0401485) .. (-1.1490497,-0.0584459)
 .. controls (-1.1420313,-0.0767434) and (-1.1316467,-0.0932473) .. (-1.1186784,-0.1066049)
 .. controls (-1.1057102,-0.1199625) and (-1.0901685,-0.1301620) .. (-1.0732247,-0.1365247)
 .. controls (-1.0562809,-0.1428875) and (-1.0379483,-0.1454168) .. (-1.0196084,-0.1441682)
 .. controls (-1.0012685,-0.1429196) and (-0.9829359,-0.1379084) .. (-0.9659921,-0.1297779)
 .. controls (-0.9490482,-0.1216475) and (-0.9335066,-0.1104164) .. (-0.9205383,-0.0970561)
 .. controls (-0.9075701,-0.0836957) and (-0.8971855,-0.0682184) .. (-0.8901671,-0.0516885)
 .. controls (-0.8831488,-0.0351588) and (-0.8795022,-0.0175787) .. (-0.8795022,-0.0000000) -- cycle;
\end{scope}
\end{tikzpicture}
\endgroup
\caption{Boundary images of $u_{s,h}$ in the same
$(z_1,z_2)$ projection.
The dashed cusp only determines the limiting image; the stable-map limits
at $(s,h)=(0,0)$ are described in
Remark~\ref{rem:connected-cusp-limit}.}
\label{fig:connected-smoothing-oriented}
\end{figure}

\subsection{Orientation comparison in the model}
\label{subsec:connected-model-orientation}

Figure~\ref{fig:connected-moduli} shows the walls, chambers and smoothing families in a transverse slice of the moduli space. We first compare the two chamber orientations using their normal lines.
We then compare their smoothing boundary orientations using the index bundle
of the family $u_{s,h}$.

\subsubsection{The bounding chain and elliptic chambers}
\label{subsec:connected-bounding-chain}

We can choose a bounding chain $C=C_++ C_-$ such that
\begin{itemize}\item  near $(0,0,0)$, $C_+$ agrees with  $\mathbb R^2\times \mathbb H$, and $C_-$ agrees with $\mathbb R^2\times \overline{\mathbb H}$,
\item the cusp curve $v_{0,0}$ intersects $C$ only at its boundary,
\item $v_{0,0}$ is tangent to $C$ only at the cusp.
\end{itemize}

A projective construction of this bounding chain is given in
Remark~\ref{rem:connected-bounding-chain-construction}.

Away from the cusp, the lifts of $v_{0,0}$ and $C$ to
the real oriented blowup of $\mathbb{CP}^3$ along $\mathbb{RP}^3$ are
disjoint. So, after restricting to sufficiently small
$s$ and $t$, every interior intersection of $v_{s,t}$ with $C$, and
every tangency, occurs in the neighbourhood
of the cusp image $(0,0,0)$ where $C_\pm$ agree with the standard local model.

It follows immediately from this local model that, for $s>0$ and $s,t$ sufficiently small, $v_{s,t}$ has a unique interior
intersection with $C$, occurring at
\begin{equation}\label{eq:connected-elliptic-intersection}
z=i\sqrt{s}.
\end{equation}
This intersection is positive and lies in $C_+$ when $t>0$, and is
negative and lies in $C_-$ when $t<0$.

\begin{figure}[htbp]
\begingroup
\usetikzlibrary{arrows.meta}
\definecolor{hyperbolicWall}{HTML}{356DA4}
\definecolor{ellipticWall}{HTML}{B66A2C}
\definecolor{normalisedFill}{HTML}{F3F7FA}
\definecolor{smoothingFill}{HTML}{F4F2EF}
\begin{tikzpicture}[
  x={(1cm,0cm)},y={(.52cm,.90cm)},z={(0cm,1cm)},
  line cap=round,line join=round,
  every node/.style={font=\small,inner sep=2pt},
  edge/.style={draw=black!37,line width=.55pt},
  coordaxis/.style={draw=black!55,line width=.65pt},
  tip/.style={-{Stealth[length=2.2mm,width=1.55mm]}},
  chamber/.style={align=center,font=\normalsize,text=black!85}
]
  \path[fill=smoothingFill]
    (-5,0,0)--(5,0,0)--(5,0,-4.05)--(-5,0,-4.05)--cycle;
  \draw[edge,densely dashed](-5,0,0)--(-5,0,-4.05);
  \draw[edge](-5,0,-1.98)--(-5,0,-4.05)--(5,0,-4.05)--(5,0,0);

  \draw[coordaxis,densely dashed](0,0,0)--(0,0,-1.98);
  \draw[coordaxis,tip](0,0,-1.98)--(0,0,-4.46)
    node[below,text=black]{$-h$};

  \path[fill=normalisedFill]
    (-5,-2.2,0)--(5,-2.2,0)--(5,2.2,0)--(-5,2.2,0)--cycle;
  \draw[edge]
    (-5,-2.2,0)--(5,-2.2,0)--(5,2.2,0)--(-5,2.2,0)--cycle;

  \draw[coordaxis,densely dashed](0,0,-.13)--(0,0,-1.98);
  \draw[edge,densely dashed](-5,0,0)--(-5,0,-1.98);
  \draw[coordaxis,tip](0,-2.2,0)--(0,2.72,0)
    node[above,text=black]{$t$};
  \draw[coordaxis,tip](5,0,0)--(5.78,0,0)
    node[right,text=black]{$s$};

  \draw[hyperbolicWall,line width=1.35pt](-5,0,0)--(0,0,0);
  \draw[ellipticWall,line width=1.35pt](0,0,0)--(5,0,0);
  \node[above=5pt,text=hyperbolicWall,align=center]at(-2.75,0,0)
    {hyperbolic wall};
  \node[above=5pt,text=ellipticWall,align=center]at(2.75,0,0)
    {elliptic wall};

  \node[chamber]at(-2.95,1.38,0)
    {positive crossing};
  \node[chamber]at(2.65,1.38,0)
    {positive intersection\\with $C$};
  \node[chamber]at(-2.50,-1.32,0)
    {negative crossing};
  \node[chamber]at(2.95,-1.32,0)
    {negative intersection\\with $C$};

  \node[above=8pt,align=center,font=\normalsize]at(-2.75,2.2,0)
    {normalised family $v_{s,t}$\\[-1pt]$(s,t)\text{-plane}$};
  \node[align=center,font=\normalsize,text=black!80]at(2.45,0,-3.02)
    {smoothing family $u_{s,h}$\\[3pt]$(s,h)\text{-half-plane},\quad h\leq0$};
  \node[align=center,text=black!65]at(-2.6,0,-3.02)
    {smooth curves\\[2pt]$h<0$};

  \fill[black!85](0,0,0)circle[radius=2.0pt];
  \node[anchor=east,text=black!85]at(-.12,.56,0){cusp};
\end{tikzpicture}
\endgroup
\caption{The wall and chamber structure of a transverse slice of the moduli space near $v_{0,0}$, with curves identified if they agree off $L$. (See Remark~\ref{rem:connected-cusp-limit} for the curves identified with $v_{0,0}$.) This picture suppresses a common oriented rank $10$ subbundle $W$. }
\label{fig:connected-moduli}
\end{figure}

\subsubsection{The chamber orientations}
\label{subsec:connected-wall-orientation}

The normalised deformation complex $D_{v_{s,t}}$ is unobstructed, with homology concentrated in degree zero. Its degree-zero cohomology is described by variations of four real homogeneous cubics, modulo infinitesimal scaling and  disc reparametrisations, and therefore has dimension \(16-1-3=12\).

The $15$-dimensional real projective group acts on the space of curves, and the stabiliser subgroup of $v_{0,0}$ is $5$ dimensional, consisting of the $4$-dimensional subgroup fixing the plane $\{z_3=0\}$ pointwise and the $1$-dimensional scaling action $(z_1,z_2,z_3)\mapsto (\lambda^2z_1,\lambda^3z_2,z_3)$. Choose a $10$-dimensional complementary subspace $W$ to the Lie algebra of this stabiliser subgroup, corresponding to $10$ linearly independent holomorphic vector fields tangent to $\mathbb RP^3$. Given any holomorphic curve $u$ with boundary on $\mathbb RP^3$, there is an induced map $W\to H^0 D_u$.  Since \(s,t\) are transverse to the cusp-preserving locus, these variations give, after restricting to a neighbourhood of the origin, a splitting
\[
H^0D_{v_{s,t}}
=\mathbb R\partial_s\oplus\mathbb R\partial_t\oplus W.
\]
Orient \(W\) so that this ordered splitting has the holomorphic orientation.

For \(s\ne0\), the incidence or crossing is transverse within the plane $\{z_3=0\}$. In both cases, the wall deformation space consists of the eight deformations within that plane and the three deformations of the plane itself. Hence
\[
H^0D_{\mathrm{wall}}=\mathbb R\partial_s\oplus W.
\]
The frame \((\partial_s,W)\) identifies the wall determinant lines on the two sides of the cusp.

\begin{lemma} \label{Lcross} Use the above trivialisation to identify $\det D_{\rm wall}$ at the elliptic wall ($s>0$) with $\det D_{\rm wall}$ at the connected hyperbolic wall ($s<0$). Then the negative chamber orientations at the two walls agree.
\end{lemma}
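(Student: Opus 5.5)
The plan is to show that, in both cases, the negative chamber orientation equals $-(\partial_s,W)$. Then the identification of the two wall determinant lines by the frame $(\partial_s,W)$ matches them.

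\textbf{Transporting the normalised orientation.} The normalised family $v_{s,t}$ is a smooth unobstructed family over a full neighbourhood of $(0,0)$. Its deformation complex has cohomology $H^0D_{v_{s,t}}=\mathbb R\partial_s\oplus\mathbb R\partial_t\oplus W$, and $W$ was oriented so that $\mathsf o_{\mathrm{norm}}=(\partial_s,\partial_t,W)$. Since $\mathsf o_{\mathrm{norm}}$ varies continuously, this single frame orients $\det D_{\mathrm{norm}}$ on both sides of the cusp. We may therefore work entirely with the outward-normal recipe of Remark~\ref{rem:transverse-wall-recipe}. If $\partial_t$ points into the positive chamber, then $\mathsf o_{\mathrm{norm}}=(+\partial_t)\wedge\mathsf o^-_\partial$. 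Since $(\partial_s,\partial_t,W)=-(\partial_t,\partial_s,W)$, this gives $\mathsf o^-_\partial=-(\partial_s,W)$ on whichever side this holds. The lemma thus reduces to one check: in both regimes $s>0$ and $s<0$, the abstract positive chamber of Section~\ref{subsec:wall-problems}, defined by the positive normal $n_+$, is the side $t>0$.

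\textbf{Hyperbolic side ($s<0$).} The crossing points are $z=\pm\sqrt{-s}$, and $I$ is the identity in the chart $\mathbb R^3$. The displacement from strand 2 to strand 1 is the $z_3$-component $t(z^{(1)}-z^{(2)})$, proportional to $t\,e_3$. The tangents $w_1,w_2$ lie in the plane $z_3=0$. I would check directly that $(w_1,w_2,e_3)$, with $w_i$ the velocities $\partial_\vartheta$ of the boundary strands, is oriented in $\mathbb R^3$ for the correct labelling of the strands. By Remark~\ref{rem:abstract-chambers}, the ray of $n_+$ does not depend on the labelling. This recovers the statement that the positive resolution is $t>0$.

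\textbf{Elliptic side ($s>0$): the main obstacle.} Here we must check that the chamber convention of Section~\ref{subsec:wall-problems}, that $F_q\oplus\phi(T_q\Sigma)\oplus\mathbb R n_+$ is an oriented decomposition of $E_q$, really selects $t>0$. It would not suffice to rely on the bounding-chain description, since that equivalence is exactly where a sign could slip. I would compute at $q=i\sqrt s$ directly:
\begin{itemize}
\item $\phi(\partial_z)=(2z,\,3z^2+s,\,t)=(2i\sqrt s,\,-2s,\,0)$ at $t=0$;
\item the variation $\partial_t$ displaces $v(q)$ by $(0,0,i\sqrt s)$, a positive multiple of $Je_3$.
\end{itemize}
Modulo $F_q=\mathbb R^3$, the complex line $\phi(T_q\Sigma)$ with its complex orientation contributes $(2\sqrt s\,Je_1,\,-2s\,Je_2)$. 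So the basis $(e_1,e_2,e_3,\phi(\partial_x),\phi(\partial_y),Je_3)$ has the sign of $e_1\wedge e_2\wedge e_3\wedge Je_1\wedge(-Je_2)\wedge Je_3$. Since $e_1\wedge e_2\wedge e_3\wedge Je_1\wedge Je_2\wedge Je_3$ is $-1$ times the complex orientation, this basis is positively oriented. Hence $n_+=+Je_3$, and the positive-incidence chamber is $t>0$. This is consistent with the positive intersection with $C_+$ found in \eqref{eq:connected-elliptic-intersection}.

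Combining the two sides, both negative chamber orientations equal $-(\partial_s,W)$. Therefore they agree under the identification by $(\partial_s,W)$.
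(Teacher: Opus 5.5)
Your proposal is correct and is essentially the paper's argument. Both show that $\partial_t$ points into the positive chamber on both sides of the cusp, so that the continuous normalised orientation $(\partial_s,\partial_t,W)$ and the outward-normal recipe give the negative chamber orientation $-(\partial_s,W)$ at both walls. The only difference is on the elliptic side: you check directly that $(e_1,e_2,e_3,\phi(\partial_x),\phi(\partial_y),Je_3)$ is positively oriented, whereas the paper uses the positive intersection with $C_+$ at $z=i\sqrt{s}$ for $t>0$; both give $t>0$. The hyperbolic determinant you leave to check is indeed $\det(w_1,w_2,e_3)=8(-s)^{3/2}>0$ for $p_1=+\sqrt{-s}$.
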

\begin{proof}
As all homology of $D_{\rm wall}$ is concentrated in degree $0$, Remark~\ref{rem:transverse-wall-recipe} allows us to orient $D_{\rm wall}$ as the boundary of the negative chamber. In both cases the  negative chamber is $t<0$: at the elliptic wall ($s>0$), the negative-incidence chamber is where $v_{s,t}$ intersects $C_-$, which is $t<0$ by Section~\ref{subsec:connected-bounding-chain}; at the hyperbolic wall ($s<0$), the negative-crossing resolution is $t<0$ by Section~\ref{subsec:connected-normalised-family}. So at both walls the outward normal   is  $\partial_t$. It follows that the negative chamber orientations at the two walls agree.
\end{proof}

\subsubsection{The smoothing boundary orientations}
\label{subsec:connected-smoothing-orientation}

The infinitesimal projective action of $W$ defines  a map $W\to H^0 D_u$. At each stable cusp limit $u$ (seen as a limit of $u_{s,h}$ as $s$ and $h$ approach zero), this map is injective: restriction to the nonconstant cusp component recovers the original ten variations. Similarly, at a nodal curve $u_{s,0}$, the composition $W\to H^0D_{u_{s,0}}\to H^0 D'_{\rm wall}=  H^0 D_{\rm wall}$ agrees with the inclusion of $W$ into $D_{\rm wall}$. So, for $s,h$ sufficiently small,  $W\to H^0D_{u_{s,h}}$ defines a  rank $10$ subbundle \(W\),  whose projection to the wall deformation space agrees with the \(W\) chosen above. By our construction of $u_{s,h}$, for $(s,h)\neq (0,0)$,  $W$ is also transverse to the span of $\partial_s, h\partial_h$ within $H^0 D_{u_{s,h}}$, because the angular direction changes conformal modulus and the radial direction is the cusp-preserving scaling, whose action is independent from $W$.

 Away from the stable cusp limits, the deformation complexes are unobstructed, and we apply Remark~\ref{rem:transverse-wall-recipe} to obtain the boundary orientation.
 Orient $\det D_{u_{s,h}}=\det H^0 D_{u_{s,h}}$ using the holomorphic orientation convention. Then,  quotienting by the oriented subbundle \(W\) leaves an oriented two-dimensional deformation space, parametrised by $(s,h)\neq (0,0)$, with $h\leq 0$.
The logarithmic outward normal $-h\partial_h$ points in the same direction as the outward normal $\partial_h$.
 The induced boundary orientations give the same orientation of the \(s\)-axis for \(s>0\) and \(s<0\). Therefore, the smoothing boundary orientations for $s>0$ and $s<0$  agree under the identification of Lemma~\ref{Lcross}.

\begin{lemma}\label{lem:connected-hyperbolic-orientation}
At a connected hyperbolic crossing, whose two preimages lie on the
same boundary component, the smoothing
boundary and negative chamber orientations of
$\det D_{\rm wall}$ agree.
\end{lemma}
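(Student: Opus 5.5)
The plan is to combine the cusp model of Section~\ref{connected-hyperbolic-section} with the elliptic comparison of Lemma~\ref{lem:elliptic comparison}, and then use Proposition~\ref{prop:wall-model-reduction} to pass from the model to a general connected hyperbolic wall problem. By that proposition, the relative sign between $\mathsf o^{\mathrm{sm}}_\partial$ and $\mathsf o^-_\partial$ depends only on the wall type. So it suffices to exhibit one connected hyperbolic wall problem where the two orientations agree. The connected walls $v_{s,0}$, $s<0$, of the family~\eqref{eq:connected-cusp-family} serve this purpose. The smoothing families $u_{s,h}$ and the subbundle $W$ from Section~\ref{subsec:connected-smoothing-orientation} trivialise $\det D_{\mathrm{wall}}$ along the whole $s$-axis minus the origin by the frame $(\partial_s,W)$.

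The key steps, in order, are as follows.

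\begin{enumerate}[(1)]
\item At an elliptic wall $s>0$, apply Lemma~\ref{lem:elliptic comparison}. This gives $\mathsf o^{\mathrm{sm}}_\partial=\mathsf o^-_\partial$ there, in the holomorphic convention.
\item Transport the negative chamber orientation across the cusp using Lemma~\ref{Lcross}. In both halves the negative chamber is $t<0$, so $\mathsf o^-_\partial$ at $s>0$ and at $s<0$ correspond under the identification by $(\partial_s,W)$.
\item Transport the smoothing boundary orientation across the cusp using the computation just before the lemma. The holomorphic orientation of $H^0D_{u_{s,h}}$ is continuous on the punctured half-plane $h\le 0$, $(s,h)\ne(0,0)$, which is connected. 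Quotienting by the oriented rank-$10$ bundle $W$ and taking outward normal $\partial_h$ (equivalently $-h\partial_h$) induces the same orientation of the $s$-axis on both sides. Hence $\mathsf o^{\mathrm{sm}}_\partial$ at $s>0$ and at $s<0$ also correspond.
\item Combine (1)--(3) to get $\mathsf o^{\mathrm{sm}}_\partial=\mathsf o^-_\partial$ at the connected hyperbolic wall $s<0$.
\item Extend to all connected hyperbolic wall problems by Proposition~\ref{prop:wall-model-reduction}.
\item For the anti-holomorphic statement of Theorem~\ref{thm:main-orientation-comparison}, use Remark~\ref{rem:antiholomorphic-skein-signs}. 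Here $\chi$ drops by one and $b_{\mathbb R}$ rises by one, so $\epsilon_{\mathrm{ah}}$ is unchanged and the same conclusion holds with superscripts $\mathrm{ah}$.
\end{enumerate}

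The main obstacle is justifying step (3) rigorously. One must check three things:
\begin{itemize}
\item the holomorphic orientation of $\det D_{u_{s,h}}$ is genuinely continuous across the arc $h<0$ that passes near the cusp;
\item $W$ stays transverse there, which is the claim that $W\to H^0D_{u}$ is injective at the stable cusp limits;
\item the identification $H^0D'_{\mathrm{wall}}\cong H^0D_{\mathrm{wall}}$ sends the $W$ of the smoothing side to the $W$ of the normalised side compatibly at both $s>0$ and $s<0$.
\end{itemize}
I would handle this by working on the boundary arc $(s,h)=\lambda(\cos\theta,-\sin\theta)$. The scaling action makes the radial direction $\lambda\partial_\lambda$ agree, up to a positive multiple, with $\partial_s$ at $\theta=0$ and with $-\partial_s$ at $\theta=\pi$, while the angular direction $\partial_\theta$ points into the interior at both ends. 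An oriented frame $(\partial_\theta,\lambda\partial_\lambda,W)$ along the arc then gives the comparison directly, and avoids the cusp limit entirely.

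Finally, I would double-check that the sign $(-1)^{\ind D_{\mathrm{wall}}}$ conventions in Remark~\ref{rem:transverse-wall-recipe} are applied identically at both ends of the arc, since $\ind D_{\mathrm{wall}}=11$ is odd.
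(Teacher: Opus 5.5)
Your steps (1)--(5) are the paper's proof. You reduce to the cusp family by Proposition~\ref{prop:wall-model-reduction}, and you match the negative chamber orientations across the cusp by Lemma~\ref{Lcross}, with outward normal $\partial_t$ on both sides. You match the smoothing boundary orientations through the oriented quotient $H^0D_{u_{s,h}}/W$ over the connected punctured half-plane $h\le 0$, and you import Lemma~\ref{lem:elliptic comparison}. Step (6) is a correct but unneeded extra.

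The explicit arc check you propose for step (3) has a sign error, and as written it would prove the opposite statement. Along $(s,h)=\lambda(\cos\theta,-\sin\theta)$ one has $\partial_\theta=\lambda(-\sin\theta\,\partial_s-\cos\theta\,\partial_h)$. So $\partial_\theta=-\lambda\partial_h$ points inward at $\theta=0$, but $\partial_\theta=+\lambda\partial_h$ points \emph{outward} at $\theta=\pi$. Agreement comes precisely from this flip cancelling the flip of $\lambda\partial_\lambda$ from $\lambda\partial_s$ to $-\lambda\partial_s$. With the outward normal $\partial_h$ placed first, the frame $(\partial_\theta,\lambda\partial_\lambda,W)$ then induces $(-\partial_s,W)$ at both ends. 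If $\partial_\theta$ really pointed inward at both ends, the same frame would induce $(-\partial_s,W)$ at $s>0$ and $(+\partial_s,W)$ at $s<0$. Combined with steps (1) and (2), that would give $\mathsf o^{\mathrm{sm}}_\partial=\mathsf o^+_\partial$ at the connected wall.

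The arc computation is unnecessary anyway. One has $\partial_\theta\wedge\lambda\partial_\lambda=\lambda^2\,\partial_s\wedge\partial_h$, and any continuous orientation on the connected punctured half-plane induces a single consistent boundary orientation of the $s$-axis; that is all the paper uses. The arc also does not let you bypass the cusp limits as you claim. It must lie in the small neighbourhood where the splitting $H^0D_{v_{s,t}}=\mathbb R\partial_s\oplus\mathbb R\partial_t\oplus W$ and the local bounding-chain picture hold. Transversality of $W$ along such an arc is exactly what the paper deduces from injectivity of $W\to H^0D_u$ at the stable cusp limits.
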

\begin{proof}

By the reduction to local standard models, Proposition~\ref{prop:wall-model-reduction},  it suffices to consider the above cusp family. Lemma~\ref{Lcross} identifies the negative chamber orientations at its elliptic and connected hyperbolic walls, while the preceding calculation identifies their smoothing boundary orientations. The elliptic comparison Lemma~\ref{lem:elliptic comparison} therefore proves the connected hyperbolic comparison.

\end{proof}

\subsection{Bounding chain and framing}

\begin{remark}[Construction of the bounding chain, by ChatGPT]
\label{rem:connected-bounding-chain-construction}
Here is the algebraic construction of the bounding chain used in
Section~\ref{subsec:connected-bounding-chain}.  We can take the image of   $C$ to be an appropriate perturbation of the submanifold
\begin{equation}\label{eq:connected-projective-chain}C':=\{[z_0,z_1,z_2,z_3]\text{ such that }(z_1+iz_2)(\bar z_0+i\bar z_3)=(\bar z_1+i\bar z_2)(z_0+i z_3)\}\end{equation}
subdivided into positive and negative halves by the sign of
\begin{equation}\label{eq:connected-projective-chain-halves}|z_1-iz_2|^2+|z_0-iz_3|^2 - |z_1+iz_2|^2-|z_0+iz_3|^2\ .\end{equation}
Writing $z=x+iy$, substitution of
$v_{0,0}(z)=[1:z^2:z^3:0]$ into the defining equation for
$ C'$ gives
\begin{equation}\label{eq:connected-projective-chain-check}
2y\bigl(y^2-3x^2+2ix\bigr)=0.
\end{equation}
For $y>0$, its imaginary part forces $x=0$, after which its real part
forces $y=0$. Thus there are no interior intersections. Along the
finite part of the boundary, the normal derivative vanishes only at
$x=0$; the second boundary tangency occurs at $z=\infty$.

Thus  $v_{0,0}$ intersects $C'$ only at its boundary, and is tangent at the cusp and $[0,0,1,0]$. An appropriate perturbation of $C'$ gives a bounding chain $C$ satisfying our conditions.
\end{remark}

\begin{remark}[Framing of $\partial v_{s,t}$]\label{framing-model}

Consider the framing of $\partial v_{s,t}$ determined by the vector field $\xi$.  At $s=0$ and $z=0$, the boundary tangent is $(0,0,t)$, so, for $t\neq 0$, it is parallel to the framing vector field $\xi$, with direction determined by the sign of $t$.  When $s<0$, the intersection
with $C$ disappears, while the framing of $\partial v_{s,t}$ changes
by a positive twist when $t>0$ and by a negative twist when $t<0$.
Consequently,
\begin{equation}\label{eq:connected-framing-invariance}
a^{v_{s,t}\cap C}\langle\partial v_{s,t}\rangle
\end{equation}
is independent of $s\neq 0$ in the HOMFLYPT skein module.
\end{remark}

\subsection{The cusp limit}
\begin{remark}[The cusp limit]\label{rem:connected-cusp-limit}
The stable-map limit of $u_{s,h}$ as $(s,h)$ approaches $(0,0)$ depends
on the path of approach. The limit of $u_{\lambda s,\lambda h}$ as
$\lambda\to0$ consists of the cusp curve $v_{0,0}$ attached at a
hyperbolic node to a (possibly nodal) constant annulus, with the same conformal modulus as $u_{s, h}$.
The extra boundary is described by real oriented blowup
of $\mathbb R\times(-\infty,0]$ at $(0,0)$. In the local quotient that forgets constant components on $L$, the exceptional boundary collapses to the cusp locus, which has real codimension two. So, these extra boundary components are not a problem for defining invariant skein-valued counts of curves.
\end{remark}

\section{Orienting ramified covers}\label{sec:ramified-covers}

This section orients covers of an immersed holomorphic curve using a choice of auxiliary complex structure $\mathcal J$ on the normal bundle, so that the boundary condition becomes $\mathcal J$-complex.  Katz and Liu used a similar method in \cite[Example 5.2.3 and Section 8.2]{KatzLiuDiscCovers}. We compare this orientation with the holomorphic and anti-holomorphic orientation conventions, first for an immersed curve (Lemma~\ref{Jcomparison1}) and then for its covers (Lemma~\ref{Jcomparison2}), and use it in Section~\ref{subsec:ramified-half-cylinders} to give an independent comparison of orientations at hyperbolic wall problems (Proposition~\ref{prop:ramified-hyperbolic}).

\smallskip

Consider a (logarithmically) immersed holomorphic curve $u\colon (\Sigma,\partial\Sigma)\to (\bar X, L)$ from a smooth bordered Riemann surface with no boundary marked points, with associated coupled bundle pair $(E,F,D_\Sigma,\phi) = (u^*T^{\log}\bar X, u^*T^{\log}L, D_\Sigma, du)$ as in Definition~\ref{def:coupled-bundle-pair}. So the coupling map $\phi=du^{\log}$ is injective,\footnote{Injectivity on logarithmic tangent spaces implies that $\Sigma$ has no nodes, unless we are working suitably in the logarithmic or exploded categories (cf.\ Section~\ref{sec:gluing}).} and we have a short exact sequence of bundle pairs
\[
0 \longrightarrow (T^{\log}\Sigma,T^{\log}\partial\Sigma) \xrightarrow{\;\phi\;} (E,F) \longrightarrow (N, N_{\mathbb R}) \longrightarrow 0.
\]
Here $N_{\mathbb R}=F/\phi(T^{\log}\partial\Sigma)$ is oriented by the boundary orientation $\partial_\vartheta$ of $T^{\log}\partial\Sigma$ and the orientation of $F$. Define
\[
D^N_\Sigma\colon \Omega^0(N,N_{\mathbb R}) \longrightarrow \Omega^{0,1}(N)
\]
as the corresponding quotient Cauchy--Riemann operator. This fits into an exact triple
\[
0 \longrightarrow \Conf_{\Sigma} \xrightarrow{\;\phi\;} D_{\Sigma} \longrightarrow D^N_\Sigma \longrightarrow 0,
\]
and the canonical projection $ D_{\Sigma,\phi} = \Cone(\Conf_\Sigma\xrightarrow{\,\phi\,} D_\Sigma) \to D^N_\Sigma$ is a quasi-isomorphism.

Now consider a proper stable holomorphic map
\[
f\colon (\widetilde\Sigma,\partial\widetilde\Sigma)\longrightarrow (\Sigma,\partial \Sigma)
\]
from a smooth bordered Riemann surface $\widetilde\Sigma$, together with the pulled-back coupled bundle pair $(f^*E,f^*F,D_{\widetilde\Sigma},f^*\phi\circ df^{\log})$, whose deformation complex is $D_{u\circ f} = D_{\widetilde\Sigma, f^*\phi\circ df^{\log}}$. For compatibility with the logarithmic structure, we require that the interior marked points of $\widetilde\Sigma$ are exactly the inverse images of the interior marked points of $\Sigma$ (in particular, no components are contracted to a marked point of $\Sigma$). Setting
\[
D_f := \Cone\bigl(\Conf_{\widetilde\Sigma} \xrightarrow{\;df^{\log}\;} f^*\Conf_\Sigma\bigr),
\]
we obtain an exact triple of deformation complexes
\begin{equation*}\label{Psif}\tag{$\mathfrak T_{f}$}
0 \longrightarrow D_{f} \longrightarrow D_{u\circ f} \longrightarrow D^{f^*N}_{\widetilde\Sigma} \longrightarrow 0,
\end{equation*}
where $D^{f^*N}_{\widetilde\Sigma}$ is the pullback of $D^N_\Sigma$ to the bundle pair $f^*(N,N_{\mathbb R})$ over $\widetilde\Sigma$. Orienting $D_{u\circ f}$ therefore reduces to orienting $D_{f}$ and the normal operator $D^{f^*N}_{\widetilde\Sigma}$. Below, we choose an auxiliary complex structure on $N$ to orient $D^{f^*N}_{\widetilde\Sigma}$.

Given a Spin structure $\mathfrak s$ on $F$, there is an induced Spin structure on $N_{\mathbb R}$. We choose an auxiliary complex structure $\mathcal J$ on $N$ satisfying the following conditions:
\begin{itemize}
\item $\mathcal J$ commutes with $J$;
\item $\mathcal J$ preserves $N_{\mathbb R}$ and induces the given quotient orientation on $N_{\mathbb R}=F/\phi(T^{\log}\partial\Sigma)$;
\item $\det_{\mathbb C}\mathcal J=1$, so that $N=N^+\oplus N^-$ splits into $J$-invariant, $\mathcal J$-complex line bundles on which $\mathcal J$ acts as $J$ on $N^+$ and as $-J$ on $N^-$; and
\item the negative eigenbundle $N^-$ admits a global  $\mathcal J$-complex trivialisation over $\Sigma$ whose composition with the $\mathcal J$-complex isomorphism $N_{\mathbb R}\xrightarrow{\;\cong\;} N^-|_{\partial\Sigma}$ (given by projection along $N^+|_{\partial\Sigma}$) induces a trivialisation of $N_{\mathbb R}$ compatible with the Spin structure $\mathfrak s$. So, there exists a frame $(\phi(\partial_{\vartheta}), e, \mathcal J e)$ compatible with $\mathfrak s$, and an extension of the projection of $e$ to $N^-\rvert_{\partial\Sigma}$ to a nonvanishing section of  $N^-$.\end{itemize}

Such an auxiliary complex structure can be constructed as follows. Choose a metric on $N_{\mathbb R}$, define $\mathcal J|_{N_{\mathbb R}}$ as the positive quarter-turn in this metric, and extend $\mathcal J$ to $N|_{\partial\Sigma} = N_{\mathbb R}\oplus JN_{\mathbb R}$ to commute with $J$. Next, choose a unit section $e$ of $N_{\mathbb R}$ such that $(\phi(\partial_\vartheta),e,\mathcal Je)$ is a frame of $F$ compatible with the Spin structure $\mathfrak s$. Then the projection  $\frac 12(e+J\mathcal Je)$ of $e$ to $N^-$ is a nonvanishing section of $N^-|_{\partial\Sigma}$. Extend this section to a nowhere-vanishing section of $N$ over all of $\Sigma$ and define $N^-$ to be its $J$-complex span. Finally, define $N^+$ as the orthogonal complement of $N^-$ with respect to a Hermitian metric on $(N,J)$ extending the chosen metric on $N_{\mathbb R}$.

Because $\mathcal J$ preserves $N_{\mathbb R}$ and commutes with $J$, the operator $\frac 12(D^N_\Sigma - \mathcal J D^N_\Sigma\mathcal J)$ is a $\mathcal J$-complex-linear Cauchy--Riemann operator on $(N,N_{\mathbb R})$ with the same principal symbol as $D^N_\Sigma$.  The determinant line of such a $\mathcal J$-complex-linear operator carries a canonical $\mathcal J$-complex orientation. Transporting this orientation back along the linear homotopy to $D^N_\Sigma$ defines an orientation $\mathsf o^{\mathcal J}(\det D^N_\Sigma)$, and similarly $\mathsf o^{\mathcal J}(\det D^{f^*N}_{\widetilde\Sigma})$ on the pullback.

\begin{lemma}\label{Jcomparison1}
Suppose that $\Sigma$ is connected of genus $g$ with $n$ interior marked points. Under the quasi-isomorphism $D_{\Sigma,\phi}\xrightarrow{\;\simeq\;} D^N_\Sigma$, the $\mathcal J$-complex orientation $\mathsf o^{\mathcal J}$ is related to the holomorphic orientation $\mathsf o_{\mathfrak s}$ (Definition~\ref{def:holomorphic-orientation}) and anti-holomorphic orientation $\mathsf o_{\mathfrak s}^{\mathrm{ah}}$ (Definition~\ref{def:antiholomorphic-orientation}) induced by the Spin structure $\mathfrak s$ by
\[
\mathsf o^{\mathcal J}
= (-1)^{1-g+b_{\mathbb R}(\Sigma)}\mathsf o_{\mathfrak s}
= (-1)^{b_{\mathbb R}(\Sigma)+n}\mathsf o_{\mathfrak s}^{\mathrm{ah}}.
\]
\end{lemma}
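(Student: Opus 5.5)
The second equality is formal, so the work is in the first. For a smooth connected domain, \eqref{eq:antiholomorphic-sign-index} gives $\mathsf o^{\mathrm{ah}}_{\mathfrak s}=(-1)^{1-g-n}\mathsf o_{\mathfrak s}$. Hence
\[
(-1)^{1-g+b_{\mathbb R}(\Sigma)}\mathsf o_{\mathfrak s}=(-1)^{b_{\mathbb R}(\Sigma)+n}(-1)^{1-g-n}\mathsf o_{\mathfrak s}=(-1)^{b_{\mathbb R}(\Sigma)+n}\mathsf o^{\mathrm{ah}}_{\mathfrak s}.
\]
So I would only prove $\mathsf o^{\mathcal J}=(-1)^{1-g+b_{\mathbb R}}\mathsf o_{\mathfrak s}$.

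\textbf{Reduction to the elliptic nodal limit.} Both orientations are defined by transport along homotopies and vary continuously in families. Their ratio is therefore locally constant, and I would move both to the same degenerate model.

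First, use the fourth condition on $\mathcal J$ to choose the frame $(\phi(\partial_\vartheta),e,\mathcal Je)$ of $F$ compatible with $\mathfrak s$. Exactly as in the footnote to Definition~\ref{def:holomorphic-orientation}, this frame lets us put $(E,F)$, the splitting $E\supset\phi(T^{\log}\Sigma)$, $N=N^+\oplus N^-$, and $\mathcal J$ into standard constant form over a collar. Then homotope $D_\Sigma$ to $\dbar$ there, as in Remark~\ref{rem:constant-coupling}, preserving compatibility with $\phi$. Next, stretch the neck along every boundary circle. The triple $\Conf_\Sigma\to D_\Sigma\to D^N_\Sigma$ and the $\mathcal J$-linearisation of $D^N_\Sigma$ are both carried through this homotopy.

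In the limit, the boundary components become stable elliptic nodes $q$ with $v_q=\phi(\partial_\vartheta)$, and we obtain $\Sigma_c$ closed with $n+b_{\mathbb R}$ marked points. By the constant-disc model (Remark~\ref{rem:disc-refinement-orientation}) together with Lemma~\ref{lem:gluing-orientation}, the holomorphic orientation factors as:
\begin{itemize}
\item the complex orientation on $\Sigma_c$, times
\item one standard punctured disc factor per boundary component, oriented by Lemma~\ref{lem:standard-punctured-disc}.
\end{itemize}
I would show that $\mathsf o^{\mathcal J}$ factors the same way, through the analogous gluing triple for $D^N$. On $\Sigma_c$ the factor is the $\mathcal J$-complex orientation of the normal operator; on each disc it is the $\mathcal J$-complex orientation of $\dbar$ on the constant pair $(N_q,N_{\mathbb R,q})$. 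The gluing complex at an interior node is $J$-complex on $\phi(T^{\log})\oplus N^+$ and $\mathcal J=-J$-complex on $N^-$. Its real dimension is even, so there are no Koszul signs, but it contributes the comparison sign for a rank-one conjugation, namely $-1$ per glued disc.

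\textbf{The closed piece and the disc pieces.} On $\Sigma_c$ the normal operator is homotopic to a $J$-complex operator on $N^+\oplus N^-$. The $\mathcal J$- and $J$-orientations of this operator differ by $(-1)^{\operatorname{ind}_{\mathbb C}}$ of the $N^-$ summand, because conjugating the complex structure on a complex line of complex dimension $k$ changes orientation by $(-1)^k$. Here $N^-$ is trivial with the ordinary (non-logarithmic) operator, so its complex index is $1-g$. This yields the factor $(-1)^{1-g}$.

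On each disc, I would compute directly with Lemma~\ref{lem:standard-punctured-disc}. The holomorphic orientation of $\mathbb R^3/\mathbb R\phi(\partial_\vartheta)$ is $([e],[\mathcal Je])$, which is the $\mathcal J$-complex orientation of $N_{\mathbb R,q}$ (the kernel of $\dbar$ on the disc). So the disc factors themselves agree, and the sign per boundary component comes only from the node gluing of $N^-$. Taken together with the closed piece, this would give the total $(-1)^{1-g+b_{\mathbb R}}$.

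\textbf{Main obstacle.} The delicate step is the per-boundary sign. It requires checking carefully that the $Q_{q,\phi}$ factor of Definition~\ref{def:Q orientation} (outward normal $\nu_q$, $JF_q$ oriented by $J$) matches the gluing of the $N^-$ line with its conjugate complex structure. It may be that $Q_{q,\phi}$ itself, rather than the node gluing, supplies the $-1$.

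To pin this down, I would first verify the formula on the simplest case: the standard punctured disc with $n=1$, $g=0$, $b_{\mathbb R}=1$, where the predicted sign is $(-1)^{1-0+1}=+1$. Then I would check a sphere with one node glued to a disc. Calibrating the local sign against these two examples fixes all conventions, after which additivity under gluing gives the general formula.
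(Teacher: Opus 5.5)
Your proposal is correct and follows the paper's strategy. You stretch the neck, get $(-1)^{1-g}$ from reversing the complex structure on the trivial line $N^-$ over the closed normalisation, pick up one further $-1$ per boundary component, and read off the anti-holomorphic version from \eqref{eq:antiholomorphic-sign-index}. The only difference is where the per-boundary sign is booked. The paper uses the elliptic node triple and finds the sign in $Q_{q,\phi}$, since the quotient orientation of $JF_q/\mathbb R\phi(\nu_q)$ is opposite to the $\mathcal J$-orientation of $JN_{\mathbb R,q}$. In your constant-disc model, the disc factor $C^{\mathbb R}_q$ agrees and the sign sits in the gluing complex $C_q\simeq N_q$. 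Because $C_q=C^{\mathbb R}_q\oplus Q_{q,\phi}$ orientation-preservingly, these are the same sign seen in two models. So the worry in your last paragraph dissolves, and the sign must be counted only once.
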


\begin{proof}
After stretching the neck, the comparison splits into a factor $(-1)^{1-g}$ from reversing the complex structure on $N^-$ over the closed normalisation, and a factor $(-1)^{b_{\mathbb R}(\Sigma)}$ from the node factors; the anti-holomorphic comparison then follows from~\eqref{eq:antiholomorphic-sign-index}.

By construction, the Spin structure $\mathfrak s$ on $F$ is represented by a trivialisation of $E|_{\partial \Sigma}$ preserving the splitting $E = \phi(T^{\log}\Sigma)\oplus N^-\oplus N^+$. Deform $D^N_\Sigma$ to a $\mathcal J$-complex-linear normal operator, and stretch the neck along each component of $\partial \Sigma$ as in Definition~\ref{def:holomorphic-orientation}(ii), keeping the splitting and trivialisation in standard form on the collars. Let $\Sigma'$ be the limiting nodal surface, which has an elliptic boundary node $q$ for each of the $b_{\mathbb R}(\Sigma)$ boundary components of $\Sigma$ and closed normalisation $\Sigma_c$. Since $\phi$ is injective at each $q$, every node $q$ is stable in the sense of Remark~\ref{rem:Q-geometry}. The elliptic boundary node exact triple~\eqref{eq:elliptic-orientation-triple} is therefore quasi-isomorphic to the $\mathcal J$-complex-linear exact triple
\[
0 \longrightarrow D^N_{\Sigma'} \longrightarrow D^N_{\Sigma_c} \longrightarrow \bigoplus_q H^0(Q_{q,\phi}) \longrightarrow 0,
\]
where the map to $H^0(Q_{q,\phi})$ restricts a section of $N$ to $q$ and projects to $H^0(Q_{q,\phi}) \cong JN_{\mathbb R,q}$ along $N_{\mathbb R,q}$.

On the closed surface $\Sigma_c$, we can deform $D^N_{\Sigma_c}$ through $\mathcal J$-complex-linear Fredholm operators to an operator preserving the splitting $N = N^-\oplus N^+$. On $N^+$, $\mathcal J=J$, so the $\mathcal J$-complex and $J$-complex orientations agree. On $N^-$, $\mathcal J=-J$; since $N^-$ is globally trivial, its complex Riemann--Roch index on $\Sigma_c$ is $1-g$. Reversing the complex structure on $N^-$ therefore changes the orientation of $\det D^N_{\Sigma_c}$ by $(-1)^{1-g}$:
\[
\mathsf o^{\mathcal J}(\det D^N_{\Sigma_c}) = (-1)^{1-g}\mathsf o(\det D^N_{\Sigma_c}),
\]
where $\mathsf o(\det D^N_{\Sigma_c})$ is the standard $J$-complex orientation.

At each of the $b_{\mathbb R}(\Sigma)$ elliptic nodes $q$, the outward normal is $\nu_q = -j\partial_\vartheta$, so $\phi(\nu_q) = -J\phi(\partial_\vartheta)$. By Definition~\ref{def:Q orientation} and Remark~\ref{rem:Q-geometry},  $H^0(Q_{q,\phi}) $ is given the orientation of the quotient $JF_q/\phi(\nu_q)$, which is opposite to the $\mathcal J$-complex orientation, defined by $JN_{\mathbb R}=J(F_q/\phi(\partial_\vartheta))$.  Across all $b_{\mathbb R}(\Sigma)$ nodes, this contributes a sign of $(-1)^{b_{\mathbb R}(\Sigma)}$. Via the additivity isomorphism of \eqref{eq:elliptic-orientation-triple}, we obtain
\[
\mathsf o^{\mathcal J}(\det D_{\Sigma'}^N) = (-1)^{1-g+b_{\mathbb R}(\Sigma)}\mathsf o_{\mathfrak s}(\det D_{\Sigma',\phi}).
\]
Transporting this relation back along the neck-stretching homotopy proves the first equality. For a smooth connected surface of genus $g$ with $n$ interior marked points, \eqref{eq:antiholomorphic-sign-index} gives the anti-holomorphic correction sign $\epsilon_{\mathrm{ah}}(\Sigma) = (-1)^{1-g-n}$, from which the second equality follows.
\end{proof}

To orient $D_{u\circ f}$ via~\eqref{Psif}, we also need to orient $D_f$. Assume that $f$ is an unramified cover over a collar neighbourhood  of $\partial \Sigma$, so that $df^{\log}\colon T^{\log}\widetilde\Sigma \to f^*T^{\log}\Sigma$ is an isomorphism near $\partial\widetilde \Sigma$. Consequently, there is a quasi-isomorphic subcomplex of $D_f$ in which all vector fields are required to vanish on a fixed neighbourhood of $\partial\widetilde \Sigma$.  Because the boundary condition is now zero, this is a complex-linear Fredholm complex, so it has a canonical complex orientation $\mathsf o(\det D_f)$. We can similarly excise regions in which $df$ is an isomorphism to obtain a quasi-isomorphic Fredholm complex supported in a neighbourhood of $df=0$. For instance, when $f$ is simply ramified over interior points $p_k\in \Sigma^\circ\setminus\{\text{marked points}\}$, $D_f$ has cohomology concentrated in degree $0$, canonically isomorphic to $\bigoplus_k T_{p_k}\Sigma$.

Define the orientation $\mathsf o^{\mathcal J}(\det D_{u\circ f})$ via the additivity isomorphism $\Psi_{\mathfrak T_f}$ of the exact triple~\eqref{Psif}, combining the complex orientation $\mathsf o(\det D_f)$ and the $\mathcal J$-complex orientation $\mathsf o^{\mathcal J}(\det D^{f^*N}_{\widetilde\Sigma})$:
\[
\mathsf o^{\mathcal J}(\det D_{u\circ f}) := \Psi_{\mathfrak T_f}\bigl(\mathsf o(\det D_f)\otimes\mathsf o^{\mathcal J}(\det D^{f^*N}_{\widetilde\Sigma})\bigr).
\]

\begin{lemma}\label{Jcomparison2}
Suppose that $\widetilde \Sigma$ is connected of genus $\widetilde g$ with $\widetilde n$ interior marked points, and that $f:\widetilde\Sigma\to \Sigma$ is an unramified cover near $\partial \Sigma$. Then
\[
\mathsf o^{\mathcal J}(\det D_{u\circ f})
= (-1)^{1-\widetilde g+b_{\mathbb R}(\widetilde \Sigma)}\mathsf o_{\mathfrak s}(\det D_{u\circ f})
= (-1)^{b_{\mathbb R}(\widetilde \Sigma)+\widetilde n}\mathsf o_{\mathfrak s}^{\mathrm{ah}}(\det D_{u\circ f}).
\]
\end{lemma}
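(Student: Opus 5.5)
Our plan is to reduce to Lemma~\ref{Jcomparison1} applied to the composite immersed curve $u\circ f$, by showing that the $\mathcal J$-orientation of $D_{u\circ f}$ defined through the triple~\eqref{Psif} agrees with the $\mathcal J$-orientation obtained by treating $u\circ f$ as an immersed curve in its own right. Here we use the pulled-back auxiliary structure $f^*\mathcal J$ on $f^*N$, whose data satisfy the required conditions: $f^*N^-$ is trivialised by the pulled-back section, and since $f$ is an unramified cover near the boundary, the pulled-back frame $(\phi(\partial_\vartheta),e,\mathcal Je)$ is compatible with the pulled-back Spin structure. The anti-holomorphic equality then follows from the holomorphic one using~\eqref{eq:antiholomorphic-sign-index}, which gives $\epsilon_{\mathrm{ah}}(\widetilde\Sigma)=(-1)^{1-\widetilde g-\widetilde n}$ for a smooth connected domain.

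First, we deform $f$ and the data so that the ramification region is separated from the boundary. We stretch the neck along each boundary component of $\widetilde\Sigma$ exactly as in the proof of Lemma~\ref{Jcomparison1}, keeping the splitting $E=\phi(T^{\log}\Sigma)\oplus N^-\oplus N^+$ in standard form on the collars. Because $f$ is unramified there, the collars of $\widetilde\Sigma$ are covers of collars of $\Sigma$, and this neck-stretching is compatible with~\eqref{Psif}. In the limit, the triple~\eqref{Psif} on $\widetilde\Sigma'$ maps to the corresponding triple on the closed normalisation $\widetilde\Sigma_c$, with quotient $\bigoplus_q Q_{q}$. The factor $D_f$ is supported away from the nodes and has even index, being complex. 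It therefore commutes past the node factors without Koszul signs, by Lemma~\ref{lem:exact-squares}.

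On the closed surface $\widetilde\Sigma_c$ we compare the two orientations of $\det D_{u\circ f}$. The holomorphic orientation is the complex orientation of $\Cone(\Conf\to D^{(1,0)})$. The triple $D_f\to D_{u\circ f}\to D^{f^*N}$ is an exact triple of complex Fredholm complexes once $D^{f^*N}$ is made $J$-linear and split as $f^*N^+\oplus f^*N^-$. Its additivity isomorphism is therefore complex-orientation preserving. Replacing $J$ by $-J$ on $f^*N^-$ costs $(-1)^{\ind_{\mathbb C}}$, where the index of a trivial line bundle on a genus $\widetilde g$ closed surface is $1-\widetilde g$. At each of the $b_{\mathbb R}(\widetilde\Sigma)$ stable nodes, the quotient orientation $JF_q/\phi(\nu_q)$ is opposite to the $\mathcal J$-orientation on $JN_{\mathbb R}$, exactly as in Lemma~\ref{Jcomparison1}. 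Combining these gives the sign $(-1)^{1-\widetilde g+b_{\mathbb R}(\widetilde\Sigma)}$. Transporting back along the homotopies completes the argument.

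The main obstacle is to check that the $\mathcal J$-linearisation homotopy of $D^{f^*N}$ and the complex orientation of $D_f$ commute with the neck-stretching and node triples, so that no extra sign appears from where $D_f$ sits in the ordering. We would handle this with the $3\times3$ grid argument of Lemma~\ref{lem:exact-squares}, using that every factor except possibly $D^{f^*N}$ has even index.
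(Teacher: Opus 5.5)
Your second and third paragraphs are essentially the paper's proof. You stretch the neck along $\partial\widetilde\Sigma$ with the pulled-back collar splitting, relate \eqref{Psif} on $\widetilde\Sigma'$ and $\widetilde\Sigma_c$ through the elliptic boundary node triples in a $3\times3$ grid (with the even-index covering factor $D_f$ unaffected by the nodes), take $(-1)^{1-\widetilde g}$ from reversing the complex structure on the trivial summand $f^*N^-$ over $\widetilde\Sigma_c$ and $(-1)^{b_{\mathbb R}(\widetilde\Sigma)}$ from the node factors, and convert with $\epsilon_{\mathrm{ah}}(\widetilde\Sigma)=(-1)^{1-\widetilde g-\widetilde n}$; the only flaw is your opening framing, since $u\circ f$ is not immersed at the ramification points and Lemma~\ref{Jcomparison1} cannot literally be applied to it, but the argument you actually carry out never uses that reduction.
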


\begin{proof}
Pulling back the collar trivialisation and splitting from the proof of Lemma~\ref{Jcomparison1}, we stretch the neck along each component of $\partial\widetilde\Sigma$. Throughout this homotopy, $\mathsf o^{\mathcal J}$ remains well-defined. In the limit, the nodal surface $\widetilde \Sigma'$ consists of a closed surface $\widetilde \Sigma_c$ of genus $\widetilde g$ with $b_{\mathbb R}(\widetilde\Sigma)$ elliptic boundary nodes $q$. Writing $(E', F', \phi')$ for the limiting coupled bundle pair on $\widetilde\Sigma'$, we have a commuting $3\times 3$ grid of exact triples:
\[
\begin{tikzcd}
0 \rar & D_{f'}\dar \rar & D_{f'_c} \rar\dar & \bigoplus_q \bigl(\mathbb R(\nu_q)\xrightarrow{\;\cong\;}\mathbb R(\nu_q)\bigr)[-1] \rar\dar & 0 \\
0 \rar & D_{\widetilde\Sigma',\phi'}\rar\dar & D_{\widetilde\Sigma_c,\phi'_c}\rar\dar & \bigoplus_q Q_{q,\phi'}\rar\dar & 0 \\
0 \rar & D_{\widetilde\Sigma'}^{f^*N}\rar & D_{\widetilde \Sigma_c}^{f^*N} \rar & \bigoplus_q H^0(Q_{q,\phi'}) \rar & 0
\end{tikzcd}
\]
In the top row, the quotient at each elliptic node $q$ is the acyclic complex $\mathbb R(\nu_q)\xrightarrow{\,\cong\,}\mathbb R(\nu_q)$ (oriented by coevaluation as in Appendix~\ref{app:coevaluation}), so the top row gives an orientation-preserving isomorphism $\det D_{f'}\cong \det D_{f'_c}$. The middle row is the elliptic boundary node exact triple~\eqref{eq:elliptic-orientation-triple} used to define the holomorphic orientation $\mathsf o_{\mathfrak s}$ in Definition~\ref{def:holomorphic-orientation}, and the bottom row is $\mathcal J$-complex-linear. Since all complexes in the right-hand column have even index, Lemma~\ref{lem:exact-squares} shows that commuting these factors introduces no Koszul signs.

After deforming $D^{f^*N}_{\widetilde\Sigma_c}$ through $\mathcal J$-complex-linear operators to a split operator on $f^*N = f^*N^-\oplus f^*N^+$, the complex Riemann--Roch index of the trivialised $f^*N^-$ summand on $\widetilde\Sigma_c$ is $1-\widetilde g$. Thus the $\mathcal J$-complex and $J$-complex orientations on the closed surface $\widetilde\Sigma_c$ satisfy
\[
\mathsf o^{\mathcal J}(\det D_{\widetilde\Sigma_c}^{f^*N}) = (-1)^{1-\widetilde g}\mathsf o(\det D_{\widetilde\Sigma_c}^{f^*N}).
\]
Furthermore, as in the proof of Lemma~\ref{Jcomparison1}, the $\mathcal J$-complex orientation of $\bigoplus_q H^0(Q_{q,\phi'})$ differs by $(-1)^{b_{\mathbb R}(\widetilde\Sigma)}$ from the orientation of $\bigotimes_q \det Q_{q,\phi'}$ specified in Definition~\ref{def:Q orientation}. Combining the columns via Lemma~\ref{lem:exact-squares} yields
\[
\mathsf o^{\mathcal J}(\det D_{\widetilde\Sigma',\phi'}) = (-1)^{1-\widetilde g+b_{\mathbb R}(\widetilde \Sigma)}\mathsf o_{\mathfrak s}(\det D_{\widetilde\Sigma',\phi'}).
\]
Transporting this relation back along the homotopy gives the comparison with $\mathsf o_{\mathfrak s}$, and multiplying by $\epsilon_{\mathrm{ah}}(\widetilde\Sigma) = (-1)^{1-\widetilde g-\widetilde n}$ from~\eqref{eq:antiholomorphic-sign-index} gives the comparison with $\mathsf o_{\mathfrak s}^{\mathrm{ah}}$.
\end{proof}

\subsection{Example: simply ramified covers of half-cylinders}\label{subsec:ramified-half-cylinders}

Let $u(z)=(z,1,1)$ be one of the holomorphic half-cylinders in $(X,L)=((\mathbb C^*)^3, \mathbb T^2\times\mathbb R)$ from Section~\ref{half-cylinder-section}, with domain $\mathbb D_0$. Using the $(\mathbb C^*)^3$-invariant trivialisation $(a,b,c)\mapsto ia\,z_1\partial_{z_1} + ib\,z_2\partial_{z_2} + c\,z_3\partial_{z_3}$ of $T^{\log}\bar X$ from Section~\ref{subsec:disconnected-setup}, the normal bundle pair $(N,N_{\mathbb R})$ is canonically trivialised by $(\partial_{\theta_2},\partial_t)$, and we may take the auxiliary complex structure $\mathcal J$ on $N$ to be the projection of $(a,b,c)\mapsto (ia,-c,b)$, which is compatible with the trivial Spin structure $\mathfrak s$ on $L$.

Now suppose that $f\colon\widetilde \Sigma\to \mathbb D_0$ is a connected ramified cover of $\mathbb D_0$, simply branched away from $0$ over a finite subset $S\subset \mathbb D_0^\circ\setminus \{0\}$, with arbitrary ramification profile over $0$ (so $\widetilde n = |f^{-1}(0)|$), and unramified elsewhere. Setting $W := H^0(D_u) = \mathbb R\partial_{\theta_2}\oplus\mathbb R\partial_t$, we have
\[
H^0(D_{u\circ f}) = H^0(D_f)\oplus H^0(D^{f^*N}_{\widetilde\Sigma}) = \left(\bigoplus_{p\in S} T_{p}\mathbb D_0\right)\oplus W,
\]
and
\[
H^1(D_{u\circ f}) = H^1(D^{f^*N}_{\widetilde\Sigma}) = W\otimes_{\mathbb R} H^1(\widetilde \Sigma;\mathbb R).
\]
Here, $H^1(\widetilde \Sigma;\mathbb R)$ is the cohomology of the compact surface $\widetilde \Sigma$ with marked points retained. The $\mathcal J$-complex orientation $\mathsf o^{\mathcal J}(\det D_{u\circ f})$ is induced by giving $W$ and $H^1(\widetilde\Sigma;\mathbb R)\otimes_{\mathbb R} W$ their $\mathcal J$-complex orientations and each $T_p\mathbb D_0$ its standard complex orientation. By the Riemann--Hurwitz formula for the doubled cover $2\widetilde\Sigma\to 2\mathbb D_0$, the Euler characteristic of $\widetilde\Sigma$ satisfies $\chi(\widetilde\Sigma) = -|S|$; since $\chi(\widetilde\Sigma) = 2-2\widetilde g-b_{\mathbb R}(\widetilde\Sigma)-\widetilde n\equiv b_{\mathbb R}(\widetilde\Sigma)+\widetilde n\pmod 2$, Lemma~\ref{Jcomparison2} gives
\[
\mathsf o_{\mathfrak s}^{\mathrm{ah}}(\det D_{u\circ f}) = (-1)^{|S|}\mathsf o^{\mathcal J}(\det D_{u\circ f}).
\]

We can specify a hyperbolic wall problem (Definition~\ref{def:wall-problem}) on $\widetilde\Sigma$ by choosing two distinct points $p_1, p_2\in \partial \widetilde \Sigma$ in the inverse image of a point $p_0\in \partial\mathbb D_0$, with $I$ the identity on $E_{p_1}=E_{p_2}$. Let $D_{\mathrm{wall}}$ be the corresponding wall complex~\eqref{eq:D-wall-def}. Varying $p_0\in \partial \mathbb D_0$ moves $p_1$ and $p_2$ in tandem, so
\[
H^0(D_{\mathrm{wall}}) = H^0(D_{u\circ f})\oplus T_{p_0}\partial\mathbb D_0,
\]
while the matching constraint at $(p_1,p_2)$ in the normal directions gives an exact sequence
\begin{equation}\label{extraW}
0\longrightarrow W \longrightarrow H^1(D_{\mathrm{wall}}) \longrightarrow H^1(D_{u\circ f}) \longrightarrow0.
\end{equation}
Identifying both $T_{p_1}\partial \widetilde \Sigma$ and $T_{p_2}\partial \widetilde \Sigma$ with $T_{p_0}\partial\mathbb D_0$ via $df$, the incidence map $K$ of~\eqref{eq:K-hyp-def} has kernel $T_{p_0}\partial\mathbb D_0$ (embedded diagonally) and cokernel $W$. Neither this identification nor the relation between the orientations of $\det D_{\mathrm{wall}}$ and $\det D_{u\circ f}$ induced by the incidence triple~\eqref{eq:wall-triangle} depends on whether $p_1$ and $p_2$ lie on the same boundary component of $\widetilde\Sigma$ or on distinct components.

Define the $\mathcal J$-complex positive chamber orientation $\mathsf o^{+,\mathcal J}_\partial$ and smoothing boundary orientation $\mathsf o^{\mathrm{sm},\mathcal J}_\partial$ of $\det D_{\mathrm{wall}}$ analogously to the holomorphic orientations (Definition~\ref{def:boundary-orientations}), by
\[
(-1)^{\ind D_{\mathrm{wall}}}
\Psi_{\mathfrak T_{\mathrm{inc}}}
\bigl(\mathsf o^{+,\mathcal J}_\partial\otimes\mathsf o_{\mathrm{inc}}\bigr)
=\mathsf o^{\mathcal J}_{\mathrm{norm}},
\qquad
\mathsf o^{\mathrm{sm},\mathcal J}_\partial
=\Psi_{\mathfrak T_{\mathrm{sm}}}
\bigl(\mathsf o^{\mathcal J}_{\mathrm{node}}\otimes r_{\mathrm{out}}^{[-1]}\bigr),
\]
where $\mathsf o^{\mathcal J}_{\rm node}$ is defined on the nodal curve by transporting the $\mathcal J$-complex orientation from covers obtained by smoothing this boundary node.
Since smoothing the hyperbolic node changes $b_{\mathbb R}(\widetilde\Sigma)$ by $\pm 1$ while preserving $\widetilde n$, Lemma~\ref{Jcomparison2} implies that these orientations are related to their anti-holomorphic counterparts by
\[
\mathsf o^{+,\mathcal J}_\partial
=(-1)^{b_{\mathbb R}(\widetilde \Sigma)+\widetilde n}\mathsf o^{+,\mathrm{ah}}_\partial,
\qquad
\mathsf o^{\mathrm{sm},\mathcal J}_\partial
=(-1)^{b_{\mathbb R}(\widetilde \Sigma)+1+\widetilde n}\mathsf o^{\mathrm{sm},\mathrm{ah}}_\partial,
\]
whereas the comparison between $\mathsf o^{\mathrm{sm},\mathcal J}_\partial$ and the holomorphic smoothing boundary orientation $\mathsf o^{\mathrm{sm}}_\partial$ depends on whether the hyperbolic wall problem is connected or disconnected.

\begin{lemma}\label{lem:J-positive-chamber}
The $\mathcal J$-complex positive chamber orientation $\mathsf o^{+,\mathcal J}_\partial$ of $\det D_{\mathrm{wall}}$ is the orientation determined by giving $T_{p_0}\partial \mathbb D_0$ the boundary orientation, the factors of $W$ the $\mathcal J$-complex orientation, and each $T_p\mathbb D_0$ ($p\in S$) the complex orientation.
\end{lemma}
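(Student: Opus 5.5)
The plan is to unwind the definition of $\mathsf o^{+,\mathcal J}_\partial$ through the incidence triple~\eqref{eq:wall-triangle}, using the explicit description of the cohomology just given. Here $\mathsf o^{\mathcal J}_{\mathrm{norm}}=\mathsf o^{\mathcal J}(\det D_{u\circ f})$ is, by construction, the orientation with $H^0=\bigl(\bigoplus_{p\in S}T_p\mathbb D_0\bigr)\oplus W$ and $H^1=W\otimes H^1(\widetilde\Sigma;\mathbb R)$. These pieces are oriented complex-linearly and $\mathcal J$-complex-linearly, so they are all even-dimensional and reorderings among them introduce no Koszul signs. The complex $\Cone(K)$ is quasi-isomorphic to its cohomology, which is $\ker K=T_{p_0}\partial\mathbb D_0$ in degree $-1$ and $\coker K=W$ in degree $0$. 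The incidence orientation $\mathsf o_{\mathrm{inc}}$ is obtained from the Euler isomorphism: the domain $T_{p_1}\partial\widetilde\Sigma\oplus T_{p_2}\partial\widetilde\Sigma$ is oriented by the ordered boundary orientations, and the codomain $F_{p_1}=\mathbb R\phi(\partial_\vartheta)\oplus W$ is oriented by $L$.

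\textbf{Step 1: compute $\mathsf o_{\mathrm{inc}}$ in terms of $T_{p_0}\partial\mathbb D_0$ and $W$.} Write $v_i$ for the positive boundary tangent at $p_i$; both map to $\partial_\vartheta$ at $p_0$. Then $K(v_1,0)=\phi(\partial_\vartheta)$ and $K(0,v_2)=-\phi(\partial_\vartheta)$. Choose the basis $(v_1+v_2,\,v_1)$ of the domain, which has the same orientation as $(v_1,v_2)$ up to the sign $-1$. The first vector is the diagonal kernel direction, and $v_1$ maps to $\phi(\partial_\vartheta)$, which is the first vector of the oriented frame $(\phi(\partial_\vartheta),\partial_{\theta_2},\partial_t)$ of $L$. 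Cancelling the pair $v_1\mapsto\phi(\partial_\vartheta)$ by the Euler isomorphism~\eqref{eq:euler-isomorphism-formula} leaves $\ker K$ oriented by the boundary orientation of $T_{p_0}\partial\mathbb D_0$, and $\coker K=W$ oriented by $(\partial_{\theta_2},\partial_t)$. The latter is the $\mathcal J$-complex orientation, since $\mathcal J\partial_{\theta_2}=\partial_t$. The task in this step is to track the overall sign carefully: it comes from the reordering of the domain basis, combined with the Koszul sign from the cancellation convention in the Appendix.

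\textbf{Step 2: apply the additivity isomorphism.} The sequence splits as $H^0(D_{\mathrm{wall}})=H^0(D_{u\circ f})\oplus T_{p_0}\partial\mathbb D_0$, and~\eqref{extraW} gives $H^1(D_{\mathrm{wall}})$ as an extension of $H^1(D_{u\circ f})$ by $W$. The long exact sequence of the incidence triple consists of the following maps:
\begin{itemize}
\item $H^{-1}\Cone(K)=T_{p_0}\partial\mathbb D_0\to H^0(D_{\mathrm{wall}})$, which is the inclusion of the extra factor;
\item $H^0(D_{u\circ f})\to H^0\Cone(K)=W$, which is zero on the ramification factors and is the identity on $W$;
\item the connecting map $W\to H^1(D_{\mathrm{wall}})$ of~\eqref{extraW}.
\end{itemize}
I would compute $\Psi_{\mathfrak T_{\mathrm{inc}}}$ via Appendix~\ref{app:additivity}, as the Euler isomorphism of this long exact sequence, and check that the two copies of $W$ appearing in degree $0$ and degree $1$ cancel against each other. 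Because both copies are $\mathcal J$-oriented and even-dimensional, this cancellation should give no sign. Combining with Step 1, the result is an identity of the form
\[
\mathsf o^{\mathcal J}_{\mathrm{norm}}=\pm\,\Psi_{\mathfrak T_{\mathrm{inc}}}\bigl(\mathsf o\otimes\mathsf o_{\mathrm{inc}}\bigr),
\]
where $\mathsf o$ is the orientation described in the lemma. Finally, the factor $(-1)^{\ind D_{\mathrm{wall}}}$ in the defining equation of $\mathsf o^{+,\mathcal J}_\partial$ accounts for moving the normal factor to the front.

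\textbf{Main obstacle.} The hard part is the sign bookkeeping, not the structure. It involves three sources of sign: the degree $-1$ factor $T_{p_0}\partial\mathbb D_0$ in $\Cone(K)$, the odd index of $D_{\mathrm{wall}}$, and the cancelling pair $v_1\mapsto\phi(\partial_\vartheta)$. As a sanity check I would first treat the unramified case $f=\mathrm{id}$, where $H^1=0$ and everything is concentrated in low degrees, using the outward-normal recipe of Remark~\ref{rem:transverse-wall-recipe} where applicable. I would then argue that the even-dimensional complex factors $T_p\mathbb D_0$ and $W\otimes H^1$ can be split off from the calculation without signs, by Lemma~\ref{lem:exact-squares}.
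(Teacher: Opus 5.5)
\textbf{The proof leaves the sign undetermined.} You end with $\mathsf o^{\mathcal J}_{\mathrm{norm}}=\pm\Psi_{\mathfrak T_{\mathrm{inc}}}(\mathsf o\otimes\mathsf o_{\mathrm{inc}})$. Since $(-1)^{\ind D_{\mathrm{wall}}}=-1$, the lemma is \emph{equivalent} to this sign being $-1$; with $+1$ you would get $\mathsf o^{+,\mathcal J}_\partial=-\mathsf o$. So the part you defer is the whole content of the statement.

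Your Step~1 conclusion is correct. With the paper's Euler convention, the negative orientation of $(v_1+v_2,v_1)$ is exactly undone by the reversed ordering of the left dual, so $\mathsf o_{\mathrm{inc}}=\partial_\vartheta^{[-1]}\otimes\mathsf o^{\mathcal J}(W)$ with no extra sign. Since you also expect no sign from the $W$ cancellation, the compensating $-1$ must come from pairing the degree $-1$ class $H^{-1}\Cone(K)=T_{p_0}\partial\mathbb D_0$ with the extra summand of $H^0(D_{\mathrm{wall}})$. Heuristically, this is a contraction of $\det T\otimes\det T^{[-1]}$ in the wrong order, costing $(-1)^{1\cdot 1}$. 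That is precisely the computation you do not do. Moreover, the paper gives no formula for $\Psi_{\mathfrak T}$ in terms of the long exact cohomology sequence: $\Psi$ is defined only through a quasi-isomorphic exact triple of finite-dimensional complexes and the square \eqref{eq:finite-additivity-diagram}. Such a formula, with its signs, would have to be derived before your Step~2 could be run.

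\textbf{The long exact sequence you describe is not exact.} The map $H^0(D_{u\circ f})\to H^0\Cone(K)=W$ is zero, not the identity on $W$. Translations in $W=H^0(D_u)$ are pulled back from $\mathbb D_0$, so they agree at $p_1$ and $p_2$. Branch-point motions can be supported away from $\partial\widetilde\Sigma$, so they vanish at $p_1$ and $p_2$. Exactness forces this in any case: $H^{-1}\Cone(K)$ is one-dimensional and $H^{-1}(D_{\mathrm{norm}})=0$, so $H^0(D_{\mathrm{wall}})=H^0(D_{u\circ f})\oplus T_{p_0}\partial\mathbb D_0$ surjects onto $H^0(D_{u\circ f})$; also, \eqref{extraW} makes the connecting map $W\to H^1(D_{\mathrm{wall}})$ injective. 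This second bullet also contradicts your own later pairing of $W$ in degrees $0$ and $1$.

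Your sanity check is not available either. The map $f=\mathrm{id}$ has only one preimage of $p_0$, so there is no hyperbolic wall problem. Remark~\ref{rem:transverse-wall-recipe} never applies here, because $W\subset H^1(D_{\mathrm{wall}})$ is nonzero.

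\textbf{How the paper handles this.} The paper sidesteps the cohomological bookkeeping. After a $\mathcal J$-complex splitting of \eqref{extraW}, it writes an explicit finite-dimensional model of \eqref{eq:wall-triangle}, with $D_{\mathrm{norm}}$ modelled by $T_{p_1}\partial\widetilde\Sigma\oplus T_{p_2}\partial\widetilde\Sigma\to H^0(D_{u\circ f})\oplus T_{p_0}\partial\mathbb D_0\oplus F_{p_0}\to H^1(D_{u\circ f})\oplus F_{p_0}/\mathbb R\phi(\partial_\vartheta)$. In this model $\Psi$ is just the graded wedge. The only sign is that $((\partial_\vartheta,\partial_\vartheta),(\partial_\vartheta,-\partial_\vartheta))$ is negatively oriented, and this $-1$ cancels $(-1)^{\ind D_{\mathrm{wall}}}$.
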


\begin{proof}
 By choosing a $\mathcal J$-complex splitting of (\ref{extraW}), we write $H^1(D_{\rm wall})=H^1(D_{u\circ f})\oplus W$.

The incidence exact triple~\eqref{eq:wall-triangle} is quasi-isomorphic to the following exact triple:
\[
\begin{tikzcd}
0\rar\dar
  & T_{p_1}\partial \widetilde \Sigma\oplus T_{p_2}\partial \widetilde \Sigma \rar{=} \dar{\bigl(0,\frac 12(df_{p_1}+df_{p_2}),\phi(df_{p_1}-df_{p_2})\bigr)}
  & T_{p_1}\partial \widetilde \Sigma\oplus T_{p_2}\partial \widetilde \Sigma\dar{K} \\
H^0(D_{u\circ f})\oplus T_{p_0}\partial \mathbb D_0 \rar \dar{0}
  & H^0(D_{u\circ f}) \oplus T_{p_0}\partial \mathbb D_0 \oplus F_{p_0} \rar \dar{(h,t,b)\mapsto (0,[b])}
  & F_{p_0} \dar \\
H^1(D_{u\circ f})\oplus W \rar{=}
  & H^1(D_{u\circ f})\oplus F_{p_0}/\mathbb R\phi(\partial_{\vartheta}) \rar
  & 0
\end{tikzcd}
\]
Here the middle column has cohomology $H^*(D_{u\circ f})$, with its acyclic summand split by the basis $\bigl((\partial_\vartheta,\partial_\vartheta),\, \frac 12(\partial_\vartheta,-\partial_\vartheta)\bigr)$ of $T_{p_1}\partial\widetilde\Sigma\oplus T_{p_2}\partial\widetilde\Sigma$, which maps to $(\partial_\vartheta, 0)$ in $T_{p_0}\partial\mathbb D_0\oplus F_{p_0}$ and $(0, \phi(\partial_\vartheta))$ in $T_{p_0}\partial\mathbb D_0\oplus F_{p_0}$. Since $\bigl((\partial_\vartheta,\partial_\vartheta),\, (\partial_\vartheta,-\partial_\vartheta)\bigr)$ is a negatively oriented basis of $T_{p_1}\partial \widetilde \Sigma\oplus T_{p_2}\partial \widetilde \Sigma$, the Euler isomorphism applied to the middle column reverses orientations when we use $\mathsf o^{\mathcal J}_{\mathrm{norm}}$ on $\det D_{u\circ f}$, the boundary orientation on $T_{p_0}\partial \mathbb D_0$, the $\mathcal J$-complex orientation on $F_{p_0}/\mathbb R\phi(\partial_{\vartheta})=W$, and the incidence orientation (Definition~\ref{def:incidence-orientation}) on the right-hand column.  Since $\ind D_{\mathrm{wall}}$ is odd, the defining relation $(-1)^{\ind D_{\mathrm{wall}}}\Psi_{\mathfrak T_{\mathrm{inc}}}\bigl(\mathsf o^{+,\mathcal J}_\partial\otimes\mathsf o_{\mathrm{inc}}\bigr)  = \mathsf o^{\mathcal J}_{\mathrm{norm}}$ implies that $\mathsf o^{+,\mathcal J}_\partial$ is the orientation of the left-hand column obtained from the boundary orientation of $T_{p_0}\partial \mathbb D_0$, the $\mathcal J$-complex orientation of $W$, and the $\mathsf o^{\mathcal J}_{\mathrm{norm}}$ orientation on $H^*(D_{u\circ f})$.
\end{proof}

\begin{lemma}\label{lem:J-smoothing}
The $\mathcal J$-complex smoothing boundary orientation $\mathsf o^{\mathrm{sm},\mathcal J}_\partial$ of $\det D_{\mathrm{wall}}$ agrees with the $\mathcal J$-complex positive chamber orientation $\mathsf o^{+,\mathcal J}_\partial$.
\end{lemma}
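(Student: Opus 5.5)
We plan to compute $\mathsf o^{\mathrm{sm},\mathcal J}_\partial$ through the smoothing triple~\eqref{eq:smoothing-triangle}, working entirely in the $\mathcal J$-complex splitting $D_{u\circ f}\to D_f\oplus D^{f^*N}$ of~\eqref{Psif}. The key observation is that the smoothing direction of the hyperbolic node lies in the ``cover'' part $D_f$, not in the normal part. Smoothing the node at $(p_1,p_2)$ over $p_0\in\partial\mathbb D_0$ produces a cover $f_\epsilon$ of $\mathbb D_0$ with one extra simple branch point $p_\epsilon\in\mathbb D_0^\circ$ near $p_0$. Locally this is the standard model $w^2=\zeta$ on a half-disc. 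We will therefore first verify that $\mathsf o^{\mathcal J}_{\mathrm{node}}$ is the transport of the orientation of $\bigl(\bigoplus_{p\in S}T_p\mathbb D_0\bigr)\oplus T_{p_\epsilon}\mathbb D_0\oplus W$ together with $H^1=W\otimes H^1(\widetilde\Sigma_\epsilon)$. Here the new $T_{p_\epsilon}\mathbb D_0$ carries its complex orientation, and the remaining factors carry their $\mathcal J$-complex orientations.

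Second, we identify the limit $\epsilon\to0$. The branch point $p_\epsilon$ approaches the boundary point $p_0$, and $T_{p_\epsilon}\mathbb D_0=\mathbb C$ splits as the tangential direction $T_{p_0}\partial\mathbb D_0$ plus the normal direction. By the plumbing picture $zw=\epsilon$ of Section~\ref{sec:gluing}, the distance of $p_\epsilon$ from $\partial\mathbb D_0$ is a positive multiple of a power of $\epsilon$. So the outward smoothing direction $-\epsilon\partial_\epsilon$ (Remark~\ref{rem:residue-sign}) corresponds to the outward normal $\nu$ of $\mathbb D_0$ at $p_0$.

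We also need the normal part. The node changes $H^1(\widetilde\Sigma)$ by one dimension, and correspondingly $H^1(D_{\mathrm{wall}})$ contains the extra copy of $W$ from~\eqref{extraW}. The plan is to check that, under the smoothing triple, the extra $W\otimes H^1$ summand of the smoothed cover matches this $W$ with $\mathcal J$-complex orientations on both sides. The point is that $\mathcal J$-complex orientations are canonical, so they are compatible with the continuous family of $\mathcal J$-linear operators through the node.

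Third, we compare orientations. The complex orientation of $T_{p_\epsilon}\mathbb D_0$ is $(\nu,j\nu)=(\nu,\partial_\vartheta)$. Removing the outward normal $\nu$ from the first slot, as in Remark~\ref{rem:transverse-wall-recipe}, leaves the boundary orientation $\partial_\vartheta$ of $T_{p_0}\partial\mathbb D_0$. The other factors are unchanged and even-dimensional, so reordering them produces no Koszul signs. Combined with Lemma~\ref{lem:J-positive-chamber}, this gives $\mathsf o^{\mathrm{sm},\mathcal J}_\partial=\mathsf o^{+,\mathcal J}_\partial$. The argument never uses whether $p_1,p_2$ lie on the same boundary component, which is the reason for the uniformity.

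The main obstacle is the second step, namely making precise that the smoothing triple's connecting map $\delta(r_{\mathrm{out}})$ equals, in $H^0(D_f)$, a \emph{positive} multiple of $\nu\in T_{p_\epsilon}\mathbb D_0$. Closely tied to this is justifying that the $W$ summand in $H^1$ transports without a sign through the degeneration. I would attack it in the explicit local model $\widetilde\Sigma_\epsilon=\{w^2=\zeta-i\epsilon\}$ over the upper half-plane. There I would compute the residue variation $v$ with $\rho(v)=r_{\mathrm{out}}$ directly, check its image under $df$, and use the $\epsilon=(1-r)^2$ normalisation from Section~\ref{sec:disconnected-crossing} as a consistency check on signs.
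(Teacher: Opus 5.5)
Your proposal is correct and is essentially the paper's argument. Smoothing the node creates one new simple interior branch point approaching $p_0$ at distance $\propto\sqrt\epsilon$, so $-\epsilon\partial_\epsilon$ is a positive multiple of the outward normal $\nu$. The nodal normal operator simply acquires the extra $\mathcal J$-complex-oriented copy of $W$ in $H^1$. Removing $\nu$ from the complex orientation $(\nu,\partial_\vartheta)$ of the new branch-point factor then leaves the boundary orientation $\partial_\vartheta$, which matches Lemma~\ref{lem:J-positive-chamber}. The paper handles the two points you flag as obstacles (positivity of the connecting class, and sign-free transport of $W$) at the same leading-order level you describe, so your proposed explicit local-model check would, if anything, make the argument more complete.
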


\begin{proof}
Smoothing the hyperbolic node of $\widetilde\Sigma_{\mathrm{node}}=\widetilde\Sigma/(p_1\sim p_2)$ resolves $f_{\rm node}$ into a family of smooth ramified covers $f_\epsilon\colon\widetilde\Sigma_\epsilon\to\mathbb D_0$ ($\epsilon>0$) with one additional simple interior branch point $p(\epsilon)\in\mathbb D_0^\circ$ approaching $p_0$ as $\epsilon\to 0$. In a local model, the distance of this new branch point to the boundary is proportional to  $\sqrt \epsilon$ to leading order. So, to leading order, the outward normal direction $-\epsilon\partial_{\epsilon}$ is sent to $\frac {(1-\abs z)}{2}\nu$, where $\nu$ is an outward normal to $\mathbb D_0$.

On the normal bundle pair, identifying $p_1$ and $p_2$ simply adds the matching cokernel $W$ to $H^1$, so the normal operator $D_{\widetilde\Sigma_{\mathrm{node}}}^{f^*N}$ has cohomology equal to that of the wall normal part, with all factors of $W$ carrying their $\mathcal J$-complex orientations.

On the covering factor, the new branch point contributes $T_{p(\epsilon)}\mathbb D_0$ to $H^0(D_{f_\epsilon})$, with its standard complex orientation, given by $(\nu,\partial_{\vartheta})$ as $\epsilon\to 0$. Contracting $\mathsf o^{\mathcal J}_{\mathrm{node}}$ with $r_{\mathrm{out}}^{[-1]}$ via $\Psi_{\mathfrak T_{\mathrm{sm}}}\bigl(\mathsf o^{\mathcal J}_{\mathrm{node}}\otimes r_{\mathrm{out}}^{[-1]}\bigr)$ removes the outward normal $\nu$ from the first slot and leaves the positive boundary orientation $\partial_\vartheta$ on $T_{p_0}\partial\mathbb D_0$, matching $\mathsf o^{+,\mathcal J}_\partial$.
\end{proof}

\begin{prop}[Hyperbolic comparisons via ramified covers]\label{prop:ramified-hyperbolic}
For the hyperbolic wall problems above, $\mathsf o^{\mathrm{sm},\mathcal J}_\partial = \mathsf o^{+,\mathcal J}_\partial$, whether $p_1$ and $p_2$ lie on the same boundary component of $\widetilde\Sigma$ or on distinct components. Consequently, $\mathsf o^{\mathrm{sm},\mathrm{ah}}_\partial=\mathsf o^{-,\mathrm{ah}}_\partial$ in both cases, whereas $\mathsf o^{\mathrm{sm}}_\partial=\mathsf o^-_\partial$ in the connected case and $\mathsf o^{\mathrm{sm}}_\partial=\mathsf o^+_\partial$ in the disconnected case. By Proposition~\ref{prop:wall-model-reduction}, the same comparisons hold at all hyperbolic wall problems.
\end{prop}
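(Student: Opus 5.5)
The first claim, $\mathsf o^{\mathrm{sm},\mathcal J}_\partial=\mathsf o^{+,\mathcal J}_\partial$ in both the connected and disconnected cases, is Lemma~\ref{lem:J-smoothing} combined with Lemma~\ref{lem:J-positive-chamber}. Neither lemma uses whether $p_1,p_2$ lie on the same boundary component. The incidence triple depends only on $p_0$, $W$ and the branch points $S$. The smoothing computation only introduces one new simple branch point near $p_0$.

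The remaining work is sign bookkeeping, which I will do through Lemma~\ref{Jcomparison2}. Write $b=b_{\mathbb R}(\widetilde\Sigma)$ for the normalised cover, $\widetilde g$ for its genus, and $\widetilde g_{\mathrm{sm}}$, $b_{\mathrm{sm}}$ for the smoothed cover. The marked-point count $\widetilde n$ is unchanged by smoothing.

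\textbf{Anti-holomorphic case.} The displayed relations before Lemma~\ref{lem:J-positive-chamber} give
\[\mathsf o^{+,\mathcal J}_\partial=(-1)^{b+\widetilde n}\mathsf o^{+,\mathrm{ah}}_\partial,\qquad \mathsf o^{\mathrm{sm},\mathcal J}_\partial=(-1)^{b+1+\widetilde n}\mathsf o^{\mathrm{sm},\mathrm{ah}}_\partial.\]
These hold because $b_{\mathrm{sm}}=b\pm1$, so $b_{\mathrm{sm}}$ has the parity of $b+1$. The boundary orientations inherit the same signs: the defining triples \eqref{eq:chamber-orientation} and \eqref{eq:smoothing-orientation} are the same for both conventions, and only $\mathsf o_{\mathrm{norm}}$ and $\mathsf o_{\mathrm{node}}$ change. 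Using $\mathsf o_{\mathrm{node}}$ transported from the smoothing, that orientation carries the smoothed sign. Equating the two displayed relations via the first claim gives $\mathsf o^{\mathrm{sm,ah}}_\partial=-\mathsf o^{+,\mathrm{ah}}_\partial=\mathsf o^{-,\mathrm{ah}}_\partial$.

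\textbf{Holomorphic case.} The sign is $(-1)^{1-g+b}$, so I must track the genus change, using $\chi=2-2g-b-n$ and $\chi_{\mathrm{sm}}=\chi-1$.
\begin{itemize}
\item Connected crossing: $b_{\mathrm{sm}}=b+1$ and $\widetilde g_{\mathrm{sm}}=\widetilde g$, so $1-\widetilde g+b$ changes parity. The relative sign flips and $\mathsf o^{\mathrm{sm}}_\partial=\mathsf o^-_\partial$.
\item Disconnected crossing: $b_{\mathrm{sm}}=b-1$, which forces $-2\Delta g+1=-1$, so $\widetilde g_{\mathrm{sm}}=\widetilde g+1$. Then $1-\widetilde g+b$ changes by $-2$, the relative sign is preserved, and $\mathsf o^{\mathrm{sm}}_\partial=\mathsf o^+_\partial$.
\end{itemize}

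\textbf{Extension to all hyperbolic walls.} By Proposition~\ref{prop:wall-model-reduction}, the sign depends only on the wall type. It therefore suffices to exhibit one instance of each type among these covers, each inside a connected $\widetilde\Sigma$ as Lemma~\ref{Jcomparison2} requires.
\begin{itemize}
\item Connected: a double cover of $\mathbb D_0$ branched over $0$ and one point of $S$ is a disc. Two preimages of $p_0$ then lie on its single boundary circle.
\item Disconnected: a double cover unramified over $0$ with two branch points in $S$ is an annulus. Its two boundary circles each contain one preimage of $p_0$.
\end{itemize}

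The main obstacle is justifying that $\mathsf o^{\mathcal J}_{\mathrm{node}}$ transports consistently. Lemma~\ref{Jcomparison2} applies to the smooth covers $f_\epsilon$, and the comparison sign is locally constant along the smoothing family, so it passes to the nodal limit. I will also check that the wall-problem convention signs $(-1)^{\ind}$ do not differ between the conventions; they do not, since both conventions use the same complexes.
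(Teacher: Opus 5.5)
Your sign bookkeeping is the paper's own proof. The first claim is Lemmas~\ref{lem:J-positive-chamber} and~\ref{lem:J-smoothing}. The anti-holomorphic case uses that $b_{\mathbb R}$ changes parity under smoothing while $\widetilde n$ is fixed. The holomorphic case uses that $1-\widetilde g+b_{\mathbb R}$ changes by $+1$ in the connected case, and by $-2$ in the disconnected case, where $\widetilde g$ increases by $1$. All of this goes through Lemma~\ref{Jcomparison2}, and that part is fine.

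The gap is in your last paragraph, where you go beyond the paper to exhibit one cover of each wall type: your connected example is not connected. Take a double cover of $\mathbb D_0$ branched over $0$ and one point $s\in S$. Its monodromy around $\partial\mathbb D_0$ is the product of two nontrivial elements of $\mathbb Z/2$, hence trivial. So $\partial\mathbb D_0$ lifts to two circles, one through each preimage of $p_0$. Equivalently, $\chi(\widetilde\Sigma)=-|S|=-1$ and $\widetilde n=1$ give $2\widetilde g+b_{\mathbb R}(\widetilde\Sigma)=2$ with $b_{\mathbb R}=2$. This cover is an annulus and the wall is disconnected; in fact it is the smoothing of the correct connected example.

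As written, your argument therefore only realises the disconnected type, so the appeal to Proposition~\ref{prop:wall-model-reduction} does not yet reach connected walls. The fix is immediate. Take $f(z)=z^2$ with $S=\emptyset$: this is a disc with one marked point whose single boundary circle contains both square roots of $p_0$. Alternatively, use any double cover with an odd total number of branch points (counting $0$ if $f$ ramifies there), so that the boundary monodromy is nontrivial. Your disconnected example, unramified over $0$ with two points of $S$, is correct.
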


\begin{proof}
The first statement combines Lemmas~\ref{lem:J-positive-chamber} and~\ref{lem:J-smoothing}. Applying the sign relations from Lemma~\ref{Jcomparison2}:
\begin{itemize}
\item In the anti-holomorphic convention,
\[
\mathsf o^{\mathrm{sm},\mathrm{ah}}_\partial
= (-1)^{b_{\mathbb R}(\widetilde\Sigma)+1+\widetilde n}\mathsf o^{\mathrm{sm},\mathcal J}_\partial
= (-1)^{b_{\mathbb R}(\widetilde\Sigma)+1+\widetilde n}\mathsf o^{+,\mathcal J}_\partial
= -\mathsf o^{+,\mathrm{ah}}_\partial
= \mathsf o^{-,\mathrm{ah}}_\partial
\]
in both the connected and disconnected cases.
\item In the holomorphic convention, since smoothing changes $1-\widetilde g+b_{\mathbb R}$ by $+1$ in the connected case and by $-2$ in the disconnected case, we obtain $\mathsf o^{\mathrm{sm}}_\partial = -\mathsf o^+_\partial = \mathsf o^-_\partial$ in the connected case and $\mathsf o^{\mathrm{sm}}_\partial = \mathsf o^+_\partial$ in the disconnected case.
\end{itemize}
The final statement follows from Proposition~\ref{prop:wall-model-reduction}.
\end{proof}

\appendix
\numberwithin{equation}{section}
\section{Determinant-line conventions}\label{app:determinant-lines}

This appendix records the determinant-line conventions used in the orientation comparisons.

Our orientation conventions depend on isomorphisms of determinant lines induced from short exact sequences of Fredholm operators and three-term Fredholm complexes, and our proofs rely on such additivity isomorphisms being continuous under homotopies.   A determinant-functor formalism was developed by Knudsen, Mumford and Deligne \cite{KnudsenDetFunctors,DeligneDetCohomology,KnudsenMumfordDetDiv} using abstract Picard categories.  Zinger constructed a concrete realisation~\cite{ZingerDetLine}\footnote{An updated version of Zinger's paper is available on  \href{https://www.math.stonybrook.edu/~azinger/research/DetLineBDupdated.pdf}{his website}.} of this abstract approach for Fredholm operators. Below, we extend Zinger's concrete conventions to bounded Fredholm complexes.  Our conventions were formulated in consultation with ChatGPT given access to the above papers.

\subsection{Graded determinant lines and duals}\label{app:graded-lines}

We use the convention that for a finite-dimensional real vector space $V$,  $\det V$ is the $1$-dimensional vector space $\Lambda^{\dim V} V$, with index
\[\ind \det V:=\dim V=:\ind V.\]   In the special case that $V=\{0\}$, $\det \{0\}=\mathbb R$. We consider the tensor product of such graded lines to be a graded commutative operation, with $\det \{0\}$ acting as a unit. So
\[\det V\otimes \det W\cong (-1)^{\ind V\ind W}\det W\otimes \det V\ .\] More precisely, there is a canonical isomorphism
\[\operatorname{Wedge}:\det V\otimes \det W\longrightarrow \det (V\oplus W)\]
sending $\alpha\otimes \beta$ to $(\iota_V)_*\alpha\wedge(\iota_W)_*\beta$, where $\iota_V:V\longrightarrow V\oplus W$ is the inclusion. The Koszul  sign $(-1)^{\ind V\ind W}$ appears when we reverse tensor factors and then wedge the result.
\[\begin{tikzcd} \det V\otimes \det W \rar{\operatorname{Wedge}}\dar{\alpha\otimes \beta\mapsto \beta\otimes \alpha} & \det (V\oplus W)\dar{(-1)^{\dim V\dim W}}
\\ \det W\otimes \det V\rar{\operatorname{Wedge}} & \det(W\oplus V)  \end{tikzcd}\]

 The right dual $\det V^{[1]}$ of $\det V$ means $\det (V^*)$ with index
  \[\ind \det V^{[1]}:=-\dim V\] together with the isomorphism
\[\operatorname{eval}:\det V\otimes \det V^{[1]}\cong \det \{0\}\] sending $(e_1\wedge\dotsb \wedge e_n)\otimes (e_n^*\wedge\dotsb \wedge e_1^*)$ to  $1\in \det\{0\}$.

Use \[\operatorname{coev}_V: \det \{0\}\longrightarrow \det V\otimes \det V^{[1]}\] for the inverse map. If $\alpha=e_1\wedge\dotsb \wedge e_n\in \det V$, use the notation $\alpha^{[1]}:=e_n^*\wedge \dotsb \wedge e_1^*\in \det V^{[1]}$. So, $\operatorname{eval}\alpha\otimes \alpha^{[1]}=1$.

An orientation of $\det V$ induces a dual orientation on $\det V^{[1]}$ so that $\operatorname{eval}$ is positive. In particular, if $\alpha\in \det V$ is positive, then $\alpha^{[1]}\in \det V^{[1]}$ is positive. So, if $V$ has a complex structure $J$, the complex orientation of $\det V^{[1]}$ is determined by
\[(e_1\wedge J e_1\wedge \dotsb )^{[1]}=(Je_1)^{[1]}\wedge e_1^{[1]}\wedge\dotsb \]

The left dual $\det V^{[-1]}$ is a different object from the right dual, and given by $\det V^*$ with index $-\dim V$ and the pairing
\[\operatorname{eval}:\det V^{[-1]}\otimes \det V \cong \det \{0\}\] sending $(e_1^*\wedge\dotsb \wedge e_n^*)\otimes (e_n\wedge\dotsb \wedge e_1)$ to  $1\in \det\{0\}$.
Similarly $\det V^{[k]}$ is a separate object for each $k$, with index $(-1)^k \dim V$ defined inductively so that $\det V^{[k+1]}$ and
$\det V^{[k-1]}$ are respectively the right and left dual of  $\det V^{[k]}$. This formalism was considered by Knudsen in \cite[Definition 2.20]{KnudsenDetFunctors}.  There are two canonical isomorphisms
\[\begin{tikzcd}\det V^{[2]}\rar[shift right]\rar[shift left]&  \det V\end{tikzcd}  \]
differing by  $(-1)^{\dim V}$: one induced from the identity map from $V^{**}\cong V$, and the other
sending $e_1\wedge\dotsb \wedge  e_n$ to $(-e_1)\wedge\dotsb \wedge (-e_n)$, compatible with $\det V^{[2]}$ being the right dual of $\det V^{[1]}$.   It is important to distinguish the right dual $\det V^{[1]}$ from the left dual $\det V^{[-1]}$ so that we can safely cancel $\det V\otimes \det V^{[1]}$ without introducing sign errors.  Accordingly, we consider $\det V^{[2]}$ and $\det V$ as separate objects, which are unsafe to identify unless $V$ is even dimensional.

 A short exact sequence $0\to V_1\to V_2\to V_3\to 0$ induces a canonical isomorphism $\det V_2\cong \det V_1\otimes \det V_3$, computed using any splitting $V_2\cong V_1\oplus V_3$.  We refer to this isomorphism as $\operatorname{Wedge}:\det V_1\otimes \det V_3\longrightarrow \det V_2$.

\subsection{Complexes, shifts and cones}\label{app:complex-conventions}

For $V_\bullet$  a finite-dimensional $\mathbb Z$-graded vector space,  define
\begin{equation}\label{eq:graded-complex-determinant}\det V_\bullet:=\bigotimes_k \det (V_k)^{[k]}\ ,\quad \ind V_\bullet:=\ind\det V_\bullet:=\sum_k(-1)^k\dim V_k, \end{equation} where the tensor product is ordered by increasing $k$.

It is helpful to think of $\det V_\bullet$ as the top exterior power of the vector space
\[
\widetilde V=\bigoplus_k\widetilde V_k,
\qquad
\widetilde V_k=
\begin{cases}
V_k,&k\text{ even},\\
V_k^*,&k\text{ odd},
\end{cases}
\]
(with $\det \widetilde V$ assigned the index $\sum_k(-1)^k\dim V_k$),
using the isomorphism
\[\operatorname{WEDGE}:\det V_\bullet\to \det \widetilde V, \quad \bigotimes_k\alpha_k\mapsto \bigwedge_k \alpha_k\]

Given an isomorphism $h:V_\bullet\to V'_\bullet$, there is an obvious  isomorphism $h_*: \det V_\bullet\to \det  V'_\bullet$ making the assignment $V_\bullet \mapsto \det V_\bullet$ functorial.

Define
\[
\operatorname{Wedge}:\det V_\bullet\otimes\det W_\bullet
\longrightarrow\det(V_\bullet\oplus W_\bullet)
\]
by regrouping factors of equal (cochain) degree using Koszul symmetry, then applying ordinary exterior multiplication in each degree, including ordinary exterior multiplication of covectors in odd degrees. So, the following diagram commutes.
\[\begin{tikzcd} \det V_\bullet\otimes\det W_\bullet \dar[swap]{\operatorname{WEDGE}\otimes\operatorname{WEDGE}} \rar{\operatorname{Wedge}} & \det (V_\bullet\oplus W_\bullet) \dar{\operatorname{WEDGE}}
\\  \det \widetilde V\otimes \det \widetilde W \rar{\operatorname{Wedge}} & \det \widetilde {V\oplus  W} \end{tikzcd}\]

Explicitly, writing \(a_i=\dim V_i\), \(b_i=\dim W_i\),
\begin{equation}\label{gwedge}
\begin{split}\operatorname{Wedge}
\left(
\bigotimes_i\alpha_i^{[i]}
\otimes
\bigotimes_i\beta_i^{[i]}
\right)
&=
(-1)^{\,\sum_{i>j}a_i b_j}
\bigotimes_i\left(\alpha_i^{[i]}\wedge\beta_i^{[i]}\right),
\\ & = (-1)^{\,\sum_{i>j}a_i b_j+\sum_{k\text{ odd}} a_kb_k }
\bigotimes_i\left(\alpha_i\wedge\beta_i\right)^{[i]}
\end{split}
\end{equation}
with all tensor products ordered by increasing degree.
 This defines an associative, graded commutative operation, because ordinary wedge products on $\tilde V$ are associative and graded commutative.
Note that the sign correction vanishes in the case that $a_i$ and $b_i$ are even for all nonzero $i$.
For a degreewise short exact sequence, define its graded wedge using any graded splitting; the resulting map is independent of that choice.

\begin{defn}[Safe complexes]\label{def:safe-complex}We say a cochain complex $(V_\bullet,d_\bullet)$ is safe if $\dim V_k$ is even for all $k\neq 0$, and otherwise refer to this cochain complex as unsafe. \end{defn}
We adopt this normative terminology to indicate that manipulations with determinant lines of unsafe cochain complexes often carry hidden Koszul signs.

Note the difference from conventions which identify $\det V^{[-1]}$ with $\det V^{[1]}$. Such an identification is not too dangerous for safe cochain complexes, because the two canonical isomorphisms between $\det V^{[-1]}$ and $\det V^{[1]}$ agree when $\dim V$ is even, but for unsafe cochain complexes, such an identification requires careful formulation in a Picard category~\cite{KnudsenDetFunctors}.

 With our conventions the shifted complex $(V_\bullet,d_\bullet)[n]$ has $V[n]_k:=V_{k-n}$ and $d[n]_{k}:=d_{k-n}$, so there are no signs introduced in the shifted complex.

 Given a chain map $u_\bullet:(V_\bullet,d_\bullet)\to (V'_\bullet, d'_\bullet)$, our convention for the cone $\operatorname{Cone}(u_\bullet) $ is the cochain complex $(C_\bullet, d''_\bullet)$ with
 \[C_k:=V'_k\oplus V_{k+1},\quad\text{ and } d''_k(v',v):=(d'_kv' +(-1)^{k+1}u_{k+1}v  , d_{k+1}v)\ .\] So, $\ind \operatorname{Cone}(u_\bullet)=\ind V'_{\bullet}-\ind V_{\bullet}$ and $V'_\bullet\to \operatorname{Cone}(u_\bullet)\to V_\bullet[-1]$ is a short exact sequence of cochain complexes.

 Note that the shift of a safe complex may not be safe, and similarly, the cone of a map between safe complexes need not be safe.

\subsection{Acyclic complexes and coevaluation}\label{app:coevaluation}

If the finite-dimensional cochain complex $(E_\bullet,d_\bullet)$ is acyclic, there is a natural isomorphism
\[\operatorname{coev}: \det \{0\}\longrightarrow \det E_\bullet \]
(depending on $d_\bullet$),  defined as follows:
Let $(V\to V)[k]$ indicate the complex with $V$ in degrees $k$ and $k+1$, with the identity map between them. The isomorphism
\[\operatorname{coev}_{V[k]}:\det\{0\}\longrightarrow \det V^{[k]}\otimes \det V^{[k+1]}=\det (V\to V)[k] \]
 sends $1$ to $\alpha^{[k]} \otimes \alpha^{[k+1]}$, where $\alpha^{[k]}$ is $e_1\wedge\dotsb \wedge e_n$ for $k$ even and $e_n^*\wedge \dotsb \wedge e_1^*$ for $k$ odd.
If
\[E_\bullet=\bigoplus_{k} (V_k\to V_k)[k] \]
we define
\[\operatorname{coev}_{E_\bullet}:\det\{0\}\longrightarrow \det E_\bullet \]
by
\[\operatorname{coev}_{E_\bullet}(1)=\bigotimes_{k}\left( \alpha_{k-1}^{[k]}\wedge \alpha_{k}^{[k]}\right)=\operatorname{Wedge}\left( \bigotimes_k \operatorname{coev}_{V_k[k]}(1)\right) \]
where $\alpha_{k}\in \det V_k\setminus 0$.  As $\operatorname{Wedge}$ is graded commutative, and the index of $(V_k\to V_k)[k]$ is $0$, the order does not matter in the above formula.

Similarly, we define the coevaluation for any acyclic complex $(C_\bullet, d_\bullet)$ by splitting it as above. This does not depend on the splitting, because
\[V_{2k-1}^*\oplus V_{2k-1}= \ker (d_{2k-2}^*\oplus d_{2k})\subset C_{2k-1}^*\oplus C_{2k} ,\] and  $(\alpha_{2k-2}^{[2k-1]}\wedge \alpha_{2k-1}^{[2k-1]})\otimes (\alpha_{2k-1}^{[2k]}\wedge \alpha_{2k}^{[2k]})$ does not depend on the choice of inclusion of $V_{2k}\subset C_{2k}$ or $V_{2k-2}^*\subset C_{2k-1}^*$  from the splitting. Graded commutativity of $\operatorname{Wedge}$ implies that for acyclic cochain complexes $C$ and $C'$,
\[
\operatorname{coev}_{C\oplus C'}(1)
=
\operatorname{Wedge}\bigl(
\operatorname{coev}_{C}(1)\otimes
\operatorname{coev}_{C'}(1)\bigr).
\]

\subsection{Fredholm determinants and the Euler isomorphism}\label{app:euler-isomorphism}

If  $(A_\bullet, D_\bullet)$ is a  bounded Fredholm complex, define
\begin{equation}\label{eq:fredholm-complex-determinant}\det D_\bullet :=\bigotimes_k \det H^k(A_{\bullet},D_\bullet)^{[k]}=\det (H^\bullet ((A_{\bullet},D_\bullet)),0)\ .\end{equation}
We say that $D_\bullet$ has \emph{safe} cohomology if $\dim H^k$ is even for all nonzero $k$.
In the particular case of a Fredholm map $D$, we consider the domain of $D$ to be in degree $0$, and the codomain to be in degree $1$, so
\begin{equation}\label{eq:fredholm-map-determinant}\det D:=\det \ker D\otimes \det(\coker D)^{[1]}.\end{equation}
A Fredholm map has safe cohomology if it is surjective, or has even-dimensional cokernel. If $D$ is infinite dimensional, we can obtain a safe finite-dimensional approximation of $D$ by choosing an even-dimensional subspace $V_1$ of the codomain, transverse to $D$, and restricting $D$ to $V_0:=D^{-1}(V_1)$.

Generalising the coevaluation map for acyclic complexes, for a finite-dimensional cochain complex $(V_\bullet, d_\bullet)$, there is a canonical isomorphism, called the Euler isomorphism
\begin{equation}\label{eq:euler-isomorphism}\Phi_{V_\bullet}:\det d_\bullet\longrightarrow \det V_\bullet\ .\end{equation}
The Euler isomorphism carries hidden Koszul signs, and different natural conventions are possible unless $d_\bullet$ has safe cohomology and $V_\bullet$ is safe.   We define $\Phi_{V_\bullet}$  by splitting $V_\bullet$ as
\[V_\bullet =H^\bullet\oplus \left (\bigoplus_{k} (E_k\to E_k)[k]\right)  \]
then
\begin{equation}\label{eq:euler-isomorphism-formula}\Phi_{V_\bullet} \left( \bigotimes_k h_k^{[k]}\right):= \bigotimes_{k} \left(\alpha_{k-1}^{[k]}\wedge h_{k}^{[k]} \wedge \alpha_{k}^{[k]}\right) \end{equation}
where $\alpha_k\in \det E_k\setminus 0$. To see that this does not depend on the splitting, use the canonical filtration \[ \operatorname{im}(d_{2k-1}^*\oplus d_{2k-1}) \subset \ker(d_{2k-2}^*\oplus d_{2k}) \subset V_{2k-1}^*\oplus V_{2k}. \]
Then,
\begin{itemize}\item
$ E_{2k-1}^{*}\oplus E_{2k-1} \subset V_{2k-1}^*\oplus V_{2k}$ is the image of $d_{2k-1}^*\oplus d_{2k-1}$,
\item $ (h_{2k-1}^{[2k-1]}\wedge \alpha_{2k-1}^{[2k-1]})\wedge (\alpha_{2k-1}^{[2k]}\wedge h_{2k}^{[2k]})$ in $\bigwedge \ker (d_{2k-2}^*\oplus d_{2k})$ does not depend on the choice of splitting into  $ (H^{2k-1}\oplus E_{2k-1})^*\oplus E_{2k-1}\oplus H^{2k}$,
\item and therefore $\alpha_{k-1}^{[k]}\wedge h_{k}^{[k]} \wedge \alpha_{k}^{[k]}$ does not depend on the splitting  of $V_{2m}=\ker d_{2m}\oplus E_{2m}$ or $V_{2m-1}^*=\ker d^*_{2m-2}\oplus E_{2m-2}^*$.
\end{itemize}

This Euler isomorphism  coincides with coevaluation for acyclic cochain complexes, and is compatible with $\operatorname{Wedge}$. In particular,
\[
\Phi_{V_\bullet} \left( \bigotimes_k h_k^{[k]}\right)=\operatorname{Wedge}\left(
\bigotimes_k h_k^{[k]}\otimes\operatorname{coev}_{C}(1)
\right),
\qquad
C=\bigoplus_k(E_k\to E_k)[k],
\]
because
\[
\begin{aligned}
\Phi_V\left(\bigotimes_k h_k^{[k]}\right)
&=
(-1)^{\sum_k\dim H^k\,\dim E_{k-1}}
\bigotimes_k
\left(
h_k^{[k]}\wedge\alpha_{k-1}^{[k]}
\wedge\alpha_k^{[k]}
\right)\\
&=
\operatorname{Wedge}\left(
\bigotimes_k h_k^{[k]}
\otimes\operatorname{coev}_{C}(1)
\right),
\end{aligned}
\]
where the second equality follows from the telescoping identity
\[
\sum_{i>j}\dim H^i\,
\bigl(\dim E_{j-1}+\dim E_j\bigr)
\equiv
\sum_i\dim H^i\,\dim E_{i-1}
\pmod2.
\]

 As $\operatorname{Wedge}$ is a graded commutative operation, and acyclic complexes have index $0$, it does not matter on which side we place the acyclic complex. Similarly,
 \[\operatorname{Wedge}\left(
h
\otimes\operatorname{coev}_{C}(1)
\otimes  h'
\otimes\operatorname{coev}_{C'}(1)\right)=\operatorname{Wedge}\left(
h \otimes  h'
\otimes\operatorname{coev}_{C\oplus C'}(1)\right)\] for acyclic complexes $C$ and $C'$, so   the Euler isomorphism commutes with $\operatorname{Wedge}$, and the following diagram commutes
\[\begin{tikzcd} \det V_\bullet \otimes \det V'_{\bullet} \ar{rr}{\operatorname{Wedge}} && \det (V_\bullet\oplus V'_\bullet)
\\ \det d_\bullet \otimes \det d'_\bullet \uar{\Phi_{V_\bullet}\otimes \Phi_{V'_\bullet}} \ar{rr}{\operatorname{Wedge}} && \det (d_\bullet\oplus d'_\bullet)\uar{\Phi_{V_\bullet\oplus V'_{\bullet}}}
\end{tikzcd}
\]

In our convention, given a map $f:V\to W$, the Euler isomorphism $\Phi: \det\ker f\otimes \det \coker f^{[1]}\to \det V\otimes \det W^{[1]}$ implies the same orientation relationship as writing $V=\ker f\oplus A$ and $W= \coker f\oplus f(A)$, so
\begin{equation}\label{eq:euler-splitting-sign}\det V\otimes \det W^{[1]}\cong \det\ker f\otimes \det A\otimes \det f(A)^{[1]}\otimes \det \coker f^{[1]}\ .\end{equation} This differs by a sign $(-1)^{\dim f(A)\dim \coker f}$ from the convention implied by writing $V=\ker f\oplus A$ and $W=f(A)\oplus \coker f$. If we shift this map so that $V$ is odd degree instead of degree $0$, the Euler isomorphism implies the reversed orientation relationship corresponding to $V=A\oplus \ker f$ and $W=f(A)\oplus \coker f$.

As explained in Remark~\ref{rem:smoothing-family-regularity}, families of operators $D_{\Sigma_t}\colon \Omega^{0}(E_t,F_t)\longrightarrow \Omega^{0,1}(E_t)$ over node-smoothing families do not have a fixed domain and codomain, but locally admit continuous finite-dimensional approximations $\bigl(D_{\Sigma_t}^{-1}(V_t)\to V_t\bigr)\hookrightarrow D_{\Sigma_t}$, unique up to stabilisation and homotopy. More generally, any family of Fredholm complexes $(A_{\bullet,t}, D_{\bullet,t})$ considered in this paper locally admits a continuous family of finite-dimensional cochain complexes $(V_{\bullet,t},d_{\bullet,t})$ with an injective quasi-isomorphism $(V_{\bullet,t}, d_{\bullet,t})\hookrightarrow (A_{\bullet,t}, D_{\bullet,t})$, again unique up to stabilisation and homotopy. We equip $\det D_{\bullet,t}$ with the structure of a continuous real line bundle by requiring the Euler isomorphism
\[
\Phi_{V_{\bullet,t}}\colon \det D_{\bullet,t} = \det d_{\bullet,t} \xrightarrow{\;\cong\;} \det V_{\bullet,t}
\]
to be continuous. Because $\Phi_{V_{\bullet,t}}$ commutes with $\operatorname{Wedge}$ and coevaluation on acyclic complexes, this topology is independent of the choice of finite-dimensional approximation.

\subsection{Additivity for exact triples}\label{app:additivity}

For a short exact sequence of Fredholm complexes,
\[\begin{tikzcd}0\rar & D'_\bullet \rar & D_\bullet \rar & D_\bullet''\rar& 0\end{tikzcd}\]
(also called an exact triple, $\mathfrak T$), we want a natural  isomorphism
\begin{equation}\label{eq:additivity-isomorphism}\begin{tikzcd}\det D'_\bullet \otimes \det D''_\bullet \rar{\Psi_{\mathfrak T}} &\det D_\bullet\end{tikzcd}\end{equation}
(called the additivity isomorphism) extending the case when all cohomology is supported in degree $0$. In the case of Fredholm maps, cohomology supported in degree 0 is equivalent to the transverse case that each Fredholm map is surjective. We need this isomorphism to be continuous under homotopies of exact triples (and hence compatible with the Euler isomorphisms) and natural with respect to quasi-isomorphisms of exact triples.

 Given an exact triple consisting of finite-dimensional complexes
\begin{equation*}\tag{$\mathfrak t$}\begin{tikzcd}(V'_\bullet,d'_\bullet) \rar & (V_\bullet,d_\bullet)\rar & (V''_\bullet,d''_\bullet)\end{tikzcd}\end{equation*}
we define $\Psi_{\mathfrak t}: \det d'\otimes \det d''\longrightarrow \det d$ as the isomorphism fitting into the following commutative diagram.
\begin{equation}\label{eq:finite-additivity-diagram}\begin{tikzcd} \det d_\bullet '\otimes\det d_\bullet ''\dar{\Phi_{V'}\otimes\Phi_{V''}} \rar{\Psi_{\mathfrak t}} & \det d_\bullet \dar{\Phi_{V}}
\\  \det V'_\bullet\otimes \det V''_{\bullet}\rar{\operatorname{Wedge}}& \det V_\bullet  \end{tikzcd}\end{equation}
This is easiest to interpret in the case that these are safe complexes, so the sign in $\operatorname{Wedge}$ is trivial.

For an exact triple $\mathfrak T$ of Fredholm complexes
\begin{equation*}\tag{$\mathfrak T$}\begin{tikzcd} 0 \rar & D'_\bullet \rar & D_\bullet \rar & D''_\bullet \rar & 0\end{tikzcd}\end{equation*}
we can construct a quasi-isomorphic exact triple $\mathfrak t$ of finite-dimensional complexes
\begin{equation*}\tag{$\mathfrak t$}\begin{tikzcd}0 \rar & (V'_\bullet,d'_\bullet) \rar  & (V_\bullet,d_\bullet)\rar & (V''_\bullet,d''_\bullet) \rar & 0
\end{tikzcd}\end{equation*}
and define $\Psi_{\mathfrak T}$ as $\Psi_{\mathfrak t}$ composed with the isomorphisms $\det D'_\bullet\otimes \det D''_\bullet\cong \det d'_{\bullet}\otimes \det d''_{\bullet}$ and $\det D_\bullet \cong \det d_\bullet$.

The resulting map does not depend on the chosen finite-dimensional model, because any two are related by stabilising compatibly to a larger exact triple of cochain complexes. This stabilisation  involves taking the direct sum with a triple of acyclic complexes, which introduces no further Koszul signs because $\operatorname{Wedge}$ commutes with the Euler isomorphism, and acyclic complexes have even index.

  For a continuous family of exact Fredholm triples, it is possible to locally choose a continuous injective quasi-isomorphism from an exact triple of finite-dimensional complexes, so the resulting construction is continuous.
Explicitly, for a continuous family of exact triples $\mathfrak T_t$ of two-term Fredholm operators $0\to D'_t\to D_t\to D''_t\to 0$, we locally choose continuous families of transverse target subspaces $V'_{1,t}$ for $D'_t$ and $V''_{1,t}$ for $D''_t$, choose a continuous lift $\tilde V''_{1,t}$ of $V''_{1,t}$ to the codomain of $D_t$, and set $V_{1,t}:=V'_{1,t}\oplus \tilde V''_{1,t}$. Taking the preimages $V'_{0,t}:=(D'_t)^{-1}(V'_{1,t})$, $V_{0,t}:=D_t^{-1}(V_{1,t})$, and $V''_{0,t}:=(D''_t)^{-1}(V''_{1,t})$ yields a continuous family of exact triples $\mathfrak t_t$ of finite-dimensional complexes with an injective quasi-isomorphism $\mathfrak t_t\hookrightarrow \mathfrak T_t$. More generally, when $\mathfrak T_t$ is a family of mapping cones of a coupling map $\phi_t$ between exact triples of $\Conf$ and $D$ operators (as in $\mathfrak T_{\mathrm{inc}}$, $\mathfrak T_{\mathrm{sm}}$, $\mathfrak T_{\mathrm{ell}}$, and $\mathfrak T_{\mathrm{n}}$), we first apply this construction to the $\Conf$ triple, and then choose the target subspaces for the $D$ triple to contain the image under $\phi_{1,t}$ of the chosen target subspaces for $\Conf$.

\subsection{Associativity and exact squares}\label{app:exact-squares}

When comparing orientations across simultaneous degenerations or excisions (such as in Lemma~\ref{lem:gluing-orientation} and Lemma~\ref{lem:wall-excision}), we use the compatibility of the additivity isomorphism with commuting $3\times 3$ grids of exact triples.

Consider a commutative diagram of bounded Fredholm complexes with exact rows and columns:
\begin{equation}\label{eq:exact-grid}
\begin{tikzcd}
 & 0 \dar & 0 \dar & 0 \dar & \\
0 \rar & D'_{1} \rar\dar & D_{1} \rar\dar & D''_{1} \rar\dar & 0 \\
0 \rar & D'_{2} \rar\dar & D_{2} \rar\dar & D''_{2} \rar\dar & 0 \\
0 \rar & D'_{3} \rar\dar & D_{3} \rar\dar & D''_{3} \rar\dar & 0 \\
 & 0 & 0 & 0 &
\end{tikzcd}
\end{equation}
Use $\Psi_i$ for the additivity isomorphisms from the rows, and $\Psi'$,$\Psi$,$\Psi''$ for the additivity isomorphisms from the columns. Then, there are two ways to calculate $\det D_2$ from the determinants of the corner complexes:
\[\Psi(\Psi_1 \otimes \Psi_3):(\det D'_1\otimes \det D''_1)\otimes (\det D'_3\otimes \det D''_{3})\longrightarrow \det D_2\]
\[\Psi_2(\Psi'\otimes \Psi''):(\det D'_1\otimes \det D'_3)\otimes (\det D''_1\otimes \det D''_{3})\longrightarrow \det D_2\]
The associativity and graded commutativity of $\operatorname{Wedge}$ imply that these two isomorphisms agree, once we identify $\det D''_1 \otimes \det D'_3$ with $\det D'_3\otimes \det D''_1$ with the appropriate Koszul sign.

\begin{lemma}[Additivity for exact squares]\label{lem:exact-squares}
Given an exact square and associated additivity isomorphisms as above, the following diagram commutes
\[\begin{tikzcd} \det D'_1\otimes \det D''_1\otimes \det D'_3\otimes \det D''_{3}\dar[swap]{(-1)^{\ind D''_1\ind D'_3}R } \ar{rr}{\Psi(\Psi_1 \otimes \Psi_3)}& & \det D_2
\\ \det D'_1\otimes \det D'_3\otimes \det D''_1\otimes \det D''_{3} \ar{urr}[swap]{\Psi_2(\Psi'\otimes \Psi'')}\end{tikzcd}\]
where $R$ is the unsigned isomorphism swapping the second and third tensor factors.

\end{lemma}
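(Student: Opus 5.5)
The plan is to reduce to finite-dimensional complexes and then to a purely linear-algebraic statement about graded wedges. By the construction of $\Psi_{\mathfrak T}$ in Appendix~\ref{app:additivity}, all nine additivity isomorphisms are computed from a quasi-isomorphic exact grid of finite-dimensional complexes, so that is the first thing to produce. I would take the same approach as for a single triple. First choose target subspaces for the corner complexes $D'_1,D''_1,D'_3,D''_3$, transverse to the relevant differentials. Then lift these compatibly into the middle row and column. Finally take preimages, obtaining a commutative $3\times3$ grid of finite-dimensional complexes $V$ with exact rows and columns and an injective quasi-isomorphism into \eqref{eq:exact-grid}. By naturality of $\Psi$ under quasi-isomorphisms of exact triples, and independence of the finite model (stabilisation by acyclic triples adds only even-index factors), it suffices to prove the lemma for this finite grid.

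For finite complexes, diagram \eqref{eq:finite-additivity-diagram} expresses each $\Psi$ as $\Phi^{-1}\circ\operatorname{Wedge}\circ(\Phi\otimes\Phi)$. The Euler isomorphisms on the outer entries cancel in pairs along both composites. The claim therefore reduces to the corresponding square for the graded vector spaces $\det V_{ij}$, with $\operatorname{Wedge}$ in place of $\Psi$. To check that square, choose degreewise splittings $V_2=V'_2\oplus V''_2$, $V'_2=V'_1\oplus V'_3$ and $V''_2=V''_1\oplus V''_3$. These give a single decomposition
\[
V_2\cong V'_1\oplus V''_1\oplus V'_3\oplus V''_3 .
\]
This decomposition simultaneously refines the column splitting $V_2=V_1\oplus V_3$, with $V_1=V'_1\oplus V''_1$ and $V_3=V'_3\oplus V''_3$. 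Both composites then become iterated $\operatorname{Wedge}$ maps into $\det V_2$. Associativity of the $\operatorname{Wedge}$ defined by \eqref{gwedge} identifies each with a fourfold wedge in the stated factor order. Graded commutativity of that wedge produces exactly the Koszul sign $(-1)^{\ind V''_1\ind V'_3}$ when the middle two factors are swapped.

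It remains to pass from the finite model back to the Fredholm grid. Here $\ind V''_1=\ind D''_1$ and $\ind V'_3=\ind D'_3$, since the Euler isomorphism preserves index, so the sign is the one claimed.

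The step I expect to be the main obstacle is the compatibility of the Euler isomorphisms with the intermediate wedges. Concretely, one needs $\Phi_{V_1}\otimes\Phi_{V_3}$ followed by $\operatorname{Wedge}$ to agree with $\Phi_{V_2}$ after the row wedges. The graded wedge on $\det V_\bullet$ commutes with $\Phi$ only for direct sums. For a non-split exact triple of complexes, $\Phi_V$ depends on a splitting $V=H\oplus\bigoplus(E\to E)$ that need not be compatible with the sub and quotient. I would handle this by observing that the finite-dimensional additivity isomorphism is defined by the square \eqref{eq:finite-additivity-diagram}, so no separate compatibility needs to be proved. What is really being checked is that the graded-vector-space square commutes, and that is the statement handled by associativity and graded commutativity of $\operatorname{WEDGE}$ on $\det\widetilde V$. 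The remaining care is to confirm that the degreewise splittings can be chosen compatibly in all degrees at once, which holds because exactness is degreewise.
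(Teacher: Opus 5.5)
Your proposal is correct and is essentially the paper's proof. The paper also reduces to a compatible finite-dimensional grid, then, via the Euler isomorphisms (one of its two suggested routes), to graded vector spaces with zero differential, where associativity and graded commutativity of $\operatorname{Wedge}$ produce the Koszul sign $(-1)^{\ind D''_1\ind D'_3}$. One small correction: an arbitrary section $V''_2\to V_2$ need not carry $V''_1$ into $V_1$, so your four-term decomposition refines the middle-column splitting only if you choose that section to restrict on $V''_1$ to a section of $V_1\to V''_1$ (the paper instead chooses the lift of $V''_3$ compatibly with both middle sequences), or else you must appeal to the independence of $\operatorname{Wedge}$ from the choice of splitting; this row--column compatibility, not compatibility across degrees, is the point that needs care.
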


\begin{proof}
By choosing a compatible quasi-isomorphic $3\times 3$ grid of bounded finite-dimensional cochain complexes (for instance, by inductively selecting transverse finite-dimensional approximations and compatible splittings), we reduce to the  finite-dimensional case. Then, by continuity of additivity isomorphisms using a simultaneous linear homotopy of the differentials to $0$, or by applying the Euler isomorphism,  we reduce to the case that all differentials are $0$, and all additivity isomorphisms coincide with $\operatorname{Wedge}$ in a splitting. So, we have reduced to the case of a commutative diagram of graded vector spaces as follows
\[\begin{tikzcd}
 & 0 \dar & 0 \dar & 0 \dar & \\
0 \rar & V'_{1,\bullet} \rar\dar & V'_{1,\bullet}\oplus V''_{1,\bullet} \rar\dar & V''_{1,\bullet} \rar\dar & 0 \\
0 \rar & V'_{1,\bullet}\oplus V'_{3,\bullet}  \rar\dar & V_{2,\bullet} \rar\dar & V''_{1,\bullet}\oplus V''_{3,\bullet} \rar\dar & 0 \\
0 \rar & V'_{3,\bullet} \rar\dar & V'_{3,\bullet}\oplus V''_{3,\bullet}  \rar\dar & V''_{3,\bullet} \rar\dar & 0 \\
 & 0 & 0 & 0 &
\end{tikzcd}\]

Given splittings as above, there is an induced inclusion  of $V'_{1}$, $V''_{1}$, and $V'_{3}$ into  $V_{2}$, and a contractible choice of inclusion $V''_3\subset V_2$ splitting the middle horizontal and vertical exact sequences. Then, for
\[\alpha'_1\in \det V'_{1,\bullet},\quad \alpha''_1\in \det V''_{1,\bullet},\quad \alpha'_3\in \det V'_{3,\bullet},\quad \alpha''_3\in \det V''_{3,\bullet},  \]
\[\Psi(\Psi_1 \otimes \Psi_3)(\alpha'_1\otimes \alpha''_1\otimes \alpha'_3\otimes \alpha''_3)=\operatorname{Wedge}(\alpha'_1\otimes \alpha''_1\otimes \alpha'_3\otimes \alpha''_3)\]
and
\[\Psi_2(\Psi' \otimes \Psi'')(\alpha'_1\otimes  \alpha'_3\otimes \alpha''_1\otimes \alpha''_3)=\operatorname{Wedge}(\alpha'_1\otimes \alpha'_3\otimes \alpha''_1 \otimes \alpha''_3)\]
which differ by the Koszul sign
 \[(-1)^{\ind V''_{1,\bullet}\ind V'_{3,\bullet}  }=(-1)^{\ind D''_{1,\bullet}\,\ind D'_{3,\bullet}}.\]
\end{proof}

\clearpage
\section{Notation}\label{app:notation}

\begin{center}
\captionsetup{type=table}
\footnotesize
\renewcommand{\arraystretch}{1.12}
\begin{tabular}{@{}>{\raggedright\arraybackslash}p{0.18\linewidth}>{\raggedright\arraybackslash}p{0.62\linewidth}>{\raggedright\arraybackslash}p{0.13\linewidth}@{}}
\toprule
\textbf{Notation} & \textbf{Meaning} & \textbf{Where}\\
\midrule
\multicolumn{3}{@{}l}{\emph{Complexes}}\\
$\Conf_\Sigma$ & $\dbar$ on $(T^{\log}\Sigma,T^{\log}\partial\Sigma)$; kernel: infinitesimal automorphisms, cokernel: variations of complex structure and gluing data & \S\ref{sec:conf}\\
$D_\Sigma$ & real Cauchy--Riemann operator on the bundle pair $(E,F)$, with the domain fixed & \S\ref{subsec:intro-linear-setup}\\
$D_{\Sigma,\phi}$ & $\Cone(\Conf_\Sigma\xrightarrow{\phi}D_\Sigma)$ in degrees $-1,0,1$: the full deformation complex of a coupled bundle pair & Def.~\ref{def:coupled-bundle-pair}\\
$D_{\mathrm{norm}}$ & $D_{\Sigma,\phi}$ for the normalised surface of a wall problem & Def.~\ref{def:wall-problem}\\
$K$, $\Cone(K)$ & incidence map; when $K$ is injective, $\coker K$ is the normal line to the wall & \eqref{eq:K-hyp-def}, \eqref{eq:K-ell-def}\\
$D_{\mathrm{wall}}\simeq D'_{\mathrm{wall}}$ & deformations that preserve the crossing or incidence & \eqref{eq:D-wall-def}, \S\ref{subsec:nodal-smoothing}\\
$D_{\mathrm{node}}$ & deformation complex of the nodal curve $\Sigma_{\mathrm{node}}$, including the smoothing direction & \S\ref{subsec:nodal-smoothing}\\
$\mathbb R_{\mathrm{res}}$, $r_{\mathrm{out}}$ & residue line in degree $-1$; $r_{\mathrm{out}}=-1$ gives the outward smoothing direction & \S\ref{subsec:nodal-smoothing}, Rem.~\ref{rem:residue-sign}\\
$Q_{q,\phi}$ & $\bigl(\mathbb R(\nu_q)\xrightarrow{\phi}JF_q\bigr)$, the factor at an elliptic boundary node; index $2$ & \S\ref{subsec:elliptic-orientation-triple}, Def.~\ref{def:Q orientation}\\
$C_{\mathrm n,\phi}$ & $\bigl(T^{\log}_{\mathrm n}\Sigma\xrightarrow{\phi}E_{\mathrm n}\bigr)$, the factor at an interior node; index $4$ & Lemma~\ref{lem:gluing-orientation}\\
$D_f$, $D^N_\Sigma$ & for a cover $f$ of an immersed curve: $D_f=\Cone(\Conf_{\widetilde\Sigma}\to f^*\Conf_\Sigma)$, and the normal operator $D^N_\Sigma$ & \S\ref{sec:ramified-covers}\\
\midrule
\multicolumn{3}{@{}l}{\emph{Short exact sequences (exact triples)}}\\
$\mathfrak T_{\mathrm{inc}}$ & $D_{\mathrm{wall}}\to D_{\mathrm{norm}}\to\Cone(K)$: incidence triple, used for the chamber orientations & \S\ref{subsec:incidence-triple}\\
$\mathfrak T_{\mathrm{sm}}$ & $D_{\mathrm{node}}\to D'_{\mathrm{wall}}\to\mathbb R_{\mathrm{res}}$: smoothing triple, used for the smoothing boundary orientation & \S\ref{subsec:nodal-smoothing}\\
$\mathfrak T_{\mathrm{ell}}$ & $D_{\Sigma,\phi}\to D_{\Sigma_c,\phi_c}\to\bigoplus_q Q_{q,\phi}$: elliptic boundary node triple, used to define the holomorphic orientation & \S\ref{subsec:elliptic-orientation-triple}\\
$\mathfrak T_{\mathrm n}$ & $D_{\Sigma,\phi}\to D_{\Sigma',\phi'}\to C_{\mathrm n,\phi}$: normalising an interior node & Lemma~\ref{lem:gluing-orientation}\\
\midrule
\multicolumn{3}{@{}l}{\emph{Orientations}}\\
$\mathfrak o$ & orientation of the bundle pair: a class of trivialisations of $F$, e.g.\ from a Spin structure & Def.~\ref{def:bundle-pair-orientation}\\
$\mathsf o_{\mathfrak o}$ & holomorphic orientation of $\det D_{\Sigma,\phi}$; written $\mathsf o_{\mathfrak s}$ in \S\ref{sec:ramified-covers}, where $\mathfrak o$ comes from a Spin structure $\mathfrak s$ & Def.~\ref{def:holomorphic-orientation}\\
$\mathsf o^{\mathrm{ah}}_{\mathfrak o}$ & anti-holomorphic orientation, $\mathsf o^{\mathrm{ah}}_{\mathfrak o}=\epsilon_{\mathrm{ah}}(\Sigma)\,\mathsf o_{\mathfrak o}$ & Def.~\ref{def:antiholomorphic-orientation}, \eqref{eq:antiholomorphic-sign-index}\\
$\mathsf o_{\mathrm{norm}}$, $\mathsf o_{\mathrm{node}}$ & orientations of $D_{\mathrm{norm}}$ and $D_{\mathrm{node}}$ from compatible orientation data & Defs.~\ref{def:compatible-orientation-data}, \ref{def:norm-node-orientations}\\
$\mathsf o_{\mathrm{inc}}$ & orientation of $\det\Cone(K)$, corresponding to the outward normal $-[n_+]$ of the positive chamber & Def.~\ref{def:incidence-orientation}\\
$\mathsf o^\pm_\partial$ & chamber orientations of $\det D_{\mathrm{wall}}$: $\Psi_{\mathfrak T_{\mathrm{inc}}}(\mathsf o^-_\partial\otimes\mathsf o_{\mathrm{inc}})=\mathsf o_{\mathrm{norm}}$ and $\mathsf o^+_\partial=-\mathsf o^-_\partial$ & Def.~\ref{def:boundary-orientations}(i)\\
$\mathsf o^{\mathrm{sm}}_\partial$ & smoothing boundary orientation: $\mathsf o^{\mathrm{sm}}_\partial=\Psi_{\mathfrak T_{\mathrm{sm}}}(\mathsf o_{\mathrm{node}}\otimes r_{\mathrm{out}}^{[-1]})$ & Def.~\ref{def:boundary-orientations}(ii)\\
$\mathsf o^{\pm,\mathrm{ah}}_\partial$, $\mathsf o^{\mathrm{sm},\mathrm{ah}}_\partial$ & the same, using $\mathsf o^{\mathrm{ah}}_{\mathrm{norm}}$ and $\mathsf o^{\mathrm{ah}}_{\mathrm{node}}$ & Def.~\ref{def:boundary-orientations}\\
$\mathsf o^{\mathcal J}$, $\mathsf o^{\pm,\mathcal J}_\partial$, $\mathsf o^{\mathrm{sm},\mathcal J}_\partial$ & auxiliary $\mathcal J$-complex orientations for covers of an immersed curve & \S\ref{sec:ramified-covers}\\
\bottomrule
\end{tabular}
\caption{Notation for the deformation complexes, exact triples and orientations used in Sections~\ref{sec:linear-setting}--\ref{sec:ramified-covers}.}
\label{tab:notation}
\end{center}

\bibliographystyle{plain}
\bibliography{skein-orientations}
 \end{document}